\documentclass[11pt]{amsart}
\usepackage[foot]{amsaddr}
\usepackage[a4paper,margin=2.65cm]{geometry}
\usepackage[english]{babel}
\usepackage{csquotes}
\usepackage[T1]{fontenc}
\usepackage{textcomp}
\usepackage{amsmath,amssymb,amsthm,mathtools,mathrsfs}
\usepackage{newtxtext}
\usepackage{newtxmath}
\usepackage[bb=boondox,cal=boondoxo,scr=boondoxo]{mathalfa}
\usepackage{enumitem}
\usepackage{tikz}
\usetikzlibrary{arrows.meta,positioning}
\usepackage{placeins}
\usepackage{needspace}
\definecolor{mixedJacobi}{HTML}{5B4B8A}
\definecolor{mixedLI}{HTML}{2563A6}
\definecolor{mixedLII}{HTML}{C56A18}
\definecolor{mixedHermite}{HTML}{74558D}
\usepackage[hidelinks]{hyperref}
\hypersetup{
	pdftitle={An Askey Scheme for Jacobi Systems of Mixed Type: Laguerre Limits of the First and Second Kinds and Hermite Limits},
	pdfauthor={Manuel Ma\~nas},
	pdfsubject={A confluence scheme for Jacobi mixed-type systems with direct Jacobi-to-Hermite and Laguerre-to-Hermite limits},
	pdfkeywords={mixed-type multiple orthogonal polynomials; Askey scheme; Jacobi systems; Laguerre systems of the first and second kinds; Hermite systems; Cauchy-Vandermonde determinants; confluent limits}
}
\usepackage{microtype}
\usepackage[
	backend=biber,
	style=numeric,
	sorting=nyt,
	giveninits=true,
	maxnames=99,
	doi=true,
	url=true,
	isbn=false
]{biblatex}

\DeclareFieldFormat[article]{title}{\mkbibemph{#1}}

\DeclareFieldFormat[article]{journaltitle}{#1}

\DeclareFieldFormat[article]{volume}{\textbf{#1}}

\DeclareFieldFormat{pages}{#1}
\DeclareFieldFormat{eid}{#1}

\renewbibmacro{in:}{%
	\ifentrytype{article}
	{}
	{\printtext{\bibstring{in}\intitlepunct}}%
}

\renewbibmacro*{journal+issuetitle}{%
	\usebibmacro{journal}%
	\setunit*{\addspace}%
	\printfield{volume}%
	\setunit{\addspace}%
	\printtext[parens]{\printfield{year}}%
	\setunit{\addspace}%
	\iffieldundef{pages}
		{\printfield{eid}}
		{\printfield{pages}}%
	\newunit
}

\renewbibmacro*{note+pages}{%
	\ifentrytype{article}
	{}
	{\printfield{note}%
		\setunit{\bibpagespunct}%
		\printfield{pages}}%
}
\usepackage{bigints}
\usepackage[renew-dots,renew-matrix]{nicematrix}
\mathtoolsset{showonlyrefs}
\allowdisplaybreaks

\newcommand{\dx}{\,\mathrm{d}x}

\newcommand{\e}{\mathrm{e}}
\newcommand{\M}{\mathcal{M}}
\newcommand{\one}{\mathbf{1}}
\newcommand{\pFq}[5]{\;{}_{#1}F_{#2}\left(\begin{matrix}#3\\#4\end{matrix};#5\right)}
\newcommand{\Jmat}{\boldsymbol{\mu}^{\,\mathrm J}}
\newcommand{\Lagmat}{\boldsymbol{\mu}^{\,\mathrm L}}
\newcommand{\LIImat}{\boldsymbol{\mu}^{\,\mathrm{LII}}}
\newcommand{\Lap}{\mathcal L}
\newcommand{\thetaop}{\mathscr D}

\renewcommand{\top}{\mathsf T}
\theoremstyle{plain}
\newtheorem{theorem}{Theorem}[section]
\newtheorem{proposition}[theorem]{Proposition}
\newtheorem{lemma}[theorem]{Lemma}
\newtheorem{corollary}[theorem]{Corollary}

\newtheoremstyle{definitionstyle}
{6pt}
{12pt}
{\normalfont}
{}
{\bfseries}
{.}
{0.5em}
{}

\theoremstyle{definitionstyle}
\newtheorem{definition}[theorem]{Definition}
\newtheorem{example}[theorem]{Example}

\newtheoremstyle{remarkstyle}
{6pt}
{12pt}
{\normalfont}
{}
{\itshape}
{.}
{0.5em}
{}

\theoremstyle{remarkstyle}
\newtheorem{remark}[theorem]{Remark}

\title[An Askey scheme for Jacobi systems of mixed type]
{An Askey Scheme for Jacobi Systems of Mixed Type:\\
Laguerre Limits of the First and Second Kinds and Hermite Limits}
\author{Manuel Ma\~nas}
\address{Department of Theoretical Physics, Faculty of Physical Sciences,
	Complutense University of Madrid, 28040 Madrid, Spain}
\email{manuel.manas@ucm.es}
\date{September 18, 2026}

\begin{document}
	
\begin{abstract}
	An Askey-type confluence scheme is developed for Jacobi multiple
	orthogonal systems of mixed type, with \(q\) row weights and \(p\)
	column weights. Rescaling the Jacobi variable near \(0\) and \(1\)
	produces Laguerre systems of the first and second kinds, respectively.
	A Hermite system with one Gaussian--gamma row and \(q-1\) exponential
	convolutions is obtained from Laguerre of the first kind, directly from
	the second kind, and directly from Jacobi.
	The construction provides explicit limiting
	weights and orthogonal forms, together with convergence of their
	mixed moments. For each fixed index, the recurrence coefficients
	and the entries of the bidiagonal factors also converge in the
	limit to Laguerre of the first kind under the stated nonvanishing conditions.
	
	The Laguerre family of the second kind is described by additive gamma convolutions.
	A closed product formula for its moment determinants is obtained for
	arbitrary positive real shape parameters and near-diagonal row
	multi-indices, yielding explicit normality conditions. Its orthogonal
	forms admit contour and Rodrigues representations, and their polynomial
	components have terminating hypergeometric expressions: multiple
	Kamp\'e de F\'eriet series for \(A\), and at most \(q+1\)
	Srivastava--Daoust series for each \(B\)-component when \(q\ge2\).
	
	For this Hermite family, normality holds at every Laguerre-admissible
	near-diagonal index under the stated separation conditions; these indices
	form an infinite set. The polynomial components have finite Hermite
	expansions and terminating Srivastava--Daoust representations. The
	three routes give the same limiting moments and normalized components,
	with a distinction between the direct Jacobi limit and the two
	iterated limits through Laguerre. An appendix treats limits with
	several derivative rows, including their exact loss of rank and
	divergence of components. The one-row reductions recover the classical
	multiple Laguerre system of the second kind and the classical multiple
	Hermite systems.
\end{abstract}

\keywords{mixed-type multiple orthogonal polynomials; Askey scheme;
	Jacobi systems; Laguerre systems of the first and second kinds; Hermite systems;
	Cauchy--Vandermonde determinants; confluent limits}

\subjclass[2020]{Primary 33C45; Secondary 42C05, 33C20, 44A10, 15A23}
\maketitle

\tableofcontents

\section{Introduction}
\label{sec:introduction}

Multiple orthogonal polynomials extend orthogonality in one variable
from a single measure to a finite system of measures. Their origins in
simultaneous rational approximation and Hermite--Pad\'e approximation
connect them with questions of convergence and irrationality; they also
occur in random matrix models and models of non-intersecting paths
\cite[Chapter~23]{Ismail2005} and~\cite{MartinezVanAssche2016}.

Classical multiple families provide explicit examples in which the
weights, polynomial formulas, and recurrence relations can be studied
together. They include Jacobi--Pi\~neiro, the two kinds of multiple
Laguerre polynomials, multiple Hermite, and the discrete Hahn, Meixner,
Kravchuk, and Charlier families
\cite{VanAsscheCoussement2001,BranquinhoDiazFoulquieManas2025Classical,
BranquinhoDiazFoulquieManasWolfs2024Discrete}. Their explicit structure
makes them useful both for applications and for understanding how
classical orthogonality changes when several weights are present.

Mixed-type orthogonality places polynomial components on both sides of
a rectangular matrix of moment functionals. For a rank-one matrix
weight, polynomial combinations of one family of weights are tested
against a second family, with independently prescribed degree bounds.
Finding explicit classical families in this setting requires more than
combining two ordinary multiple systems: the mixed moment matrices must
be nonsingular for the chosen pairs of multi-indices, and formulas for
both polynomial vectors must satisfy all the coupled conditions.
These questions have direct applications. Mixed Hermite systems describe
non-intersecting Brownian paths with several initial and final
positions~\cite{DaemsKuijlaars2007}; more generally, mixed orthogonality
is related to multicomponent Toda hierarchies
\cite{AlvarezFidalgoManas2011} and provides spectral representations
of banded matrices with positive bidiagonal factorizations
\cite{BranquinhoFoulquieManas2023Spectral}.

Recent constructions of mixed Bessel, Jacobi, and Pi\~neiro systems
provide explicit families for which these questions can be resolved.
The Bessel construction in~\cite{Manas2026BesselMixed} gives a matrix
weight on the unit circle of generically maximal rank, terminating
hypergeometric formulas for both polynomial vectors, and explicit
normality and recurrence results in the stated index ranges.
The Jacobi construction in~\cite{Manas2026HypergeometricMixed} combines
Mellin-convolution row weights with independent power column weights;
it gives hypergeometric representations, recurrence coefficients, and
bidiagonal factorizations, together with a related Laguerre family.
For the mixed Pi\~neiro system, contour and hypergeometric formulas and
explicit recurrence and factorization coefficients are obtained
in~\cite{PineiroMixed2026}. These results make the mixed families
available for a detailed study of limits, including the behaviour of
their polynomial components and recurrence matrices.

In the classical theory, the Askey scheme organizes orthogonal
polynomials of hypergeometric type through their limiting relations
\cite{KoekoekLeskySwarttouw2010}. Each vertex represents a family and
each arrow a specified limit of parameters and, when required, of the
variable and normalization. The diagram relates continuous and discrete
orthogonality and explains how families with fewer parameters are obtained
by scaling and parameter limits. Its continuous part includes the familiar
Jacobi-to-Laguerre and Laguerre-to-Hermite limits. In the multiple case,
such limits can produce distinct families from the same scalar
specialization: multiple Laguerre polynomials of the first kind vary
the powers of the variable and share an exponential factor, whereas
those of the second kind share a power and vary the exponential factors.

Parts of a multiple Askey scheme were developed for the classical
continuous families in~\cite{VanAsscheCoussement2001} and for multiple
Wilson and Jacobi--Pi\~neiro polynomials
in~\cite{BeckermannCoussementVanAssche2005}. Explicit type-I formulas
and limits for Hahn and its limiting families were derived
in~\cite{BranquinhoDiazFoulquieManas2023HahnTypeI}, with the corresponding
bidiagonal factorizations studied
in~\cite{BranquinhoDiazFoulquieManas2025HahnBidiagonal}.
Contour and hypergeometric representations for Jacobi--Pi\~neiro,
Laguerre of the first kind, and Hahn with an arbitrary number of weights are obtained
in~\cite{BranquinhoDiazFoulquieManasWolfs2025Integral}.
The continuous and discrete families with an arbitrary number of weights
are treated in~\cite{BranquinhoDiazFoulquieManas2025Classical,
BranquinhoDiazFoulquieManasWolfs2024Discrete}. In particular, the latter
work gives the limits of the weights, polynomials, and recurrence
coefficients along the discrete part of the multiple scheme.

The present article begins an extension of this programme to
multiple orthogonality of mixed type by constructing the continuous
Jacobi--Laguerre--Hermite part of the scheme. The starting point is the
Jacobi matrix of~\cite{Manas2026HypergeometricMixed}, formed from the
\(q\) Mellin-convolution weights \(w_j^{\mathrm J}\) of
Wolfs~\cite{Wolfs2024} and \(p\) independent powers:
\[
	\mathrm d\Jmat(x)
	=[w_j^{\mathrm J}(x)x^{\alpha_i}]_{1\le j\le q,\,1\le i\le p}\,\mathrm dx,
	\qquad 0<x<1.
\]
Its mixed-type forms, denoted by \(A\) and \(B\), combine the column
and row weights, respectively, with polynomial coefficients.
For \(p=1\), this gives Wolfs's Jacobi-like family; for \(q=1\), it
gives the Jacobi--Pi\~neiro system. Ordinary Jacobi orthogonality is
recovered when \(p=q=1\). When the Jacobi parameter vectors
\(\boldsymbol b\) and \(\boldsymbol a\) coincide, the row weights
reduce to powers of \(x\) and the mixed Pi\~neiro family is obtained.
This specialization is described in
Section~\ref{subsec:Jacobi-source-data}.

The Laguerre family of~\cite{Manas2026HypergeometricMixed} uses Mellin
convolutions of beta and gamma weights and successive applications of
the Euler operator \(-x\,\mathrm d/\mathrm dx\). It is called
Laguerre-like there and a \emph{mixed-type Laguerre system of the first kind} here, since
its one-row case is the classical multiple Laguerre system of the first
kind. The required results for Jacobi and Laguerre of the first kind are recalled below with
references to their proofs in
\cite[Sections~3--7]{Manas2026HypergeometricMixed}.
Rescaling the Jacobi variable near \(x=0\) and \(x=1\) gives
Laguerre systems of the first and second kinds, respectively. Centering
and rescaling the variable in the system of the first kind then gives a
Hermite system on the real line. A direct limit from the system of the
second kind gives the same Hermite rows, together with
convergence of their polynomial components at the admissible normal indices.
These limits are studied at the level of the mixed matrix measures,
moments, and normalized forms. For the limit from Jacobi to Laguerre of the first kind,
the recurrence and factorization coefficients also converge whenever
the required limiting minors are nonzero.
A direct rescaling of the Jacobi weights near \(x=1/2\) also gives the
Hermite family with \(q-1\) exponential convolutions of one base row,
without an intermediate Laguerre limit;
see Theorem~\ref{thm:Jacobi-Hermite-direct}.
The five limits are shown in Figure~\ref{fig:Askey-confluence-diagram}.

In the one-row reduction \(q=1\), the underlying confluence from multiple
Laguerre polynomials of the second kind to multiple Hermite polynomials
appears at the level of the weights and associated random-matrix processes
in Adler, van Moerbeke, and Wang
\cite[preprint version, equation~(217)]{AdlerVanMoerbekeWang2013}.
The mixed-type extension is established here in
Section~\ref{subsec:LII-Hermite-direct}, including convergence of the
individual polynomial components under the stated normality assumptions.
No previous formulation of this mixed-type confluence has been found
in the literature consulted.

\begin{figure}[!ht]
\centering
\begin{tikzpicture}[
	>={Stealth[length=2.4mm,width=1.6mm]},
	box/.style={rounded corners=2.2pt,align=center,font=\small,
		text width=32mm,minimum height=10mm,inner sep=2.5pt,line width=.9pt},
	J/.style={box,draw=mixedJacobi,fill=mixedJacobi!7},
	LI/.style={box,draw=mixedLI,fill=mixedLI!7},
	LII/.style={box,draw=mixedLII,fill=mixedLII!7},
	H/.style={box,draw=mixedHermite,fill=mixedHermite!8},
	limit/.style={->,line width=.9pt,draw=black!58},
	annotation/.style={font=\scriptsize,fill=white,inner sep=1.5pt}
]
	\node[J] (J) at (0,5.00) {Jacobi\\of mixed type};
	\node[LI] (LI) at (-3.10,2.50) {Laguerre\\of the first kind\\of mixed type};
	\node[LII] (LII) at (3.10,2.50) {Laguerre\\of the second kind\\of mixed type};
	\node[H] (H) at (0,0) {Hermite\\of mixed type};

	\draw[limit] (J) -- node[annotation,midway,below,sloped]
		{near \(x=0\)} node[annotation,midway,above,sloped]
		{\(t\to+\infty\)} (LI);
	\draw[limit] (J) -- node[annotation,midway,below,sloped]
		{near \(x=1\)} node[annotation,midway,above,sloped]
		{\(\tau\to+\infty\)} (LII);
	\draw[limit] (LI) -- node[annotation,midway,above,sloped]
		{\(\tau\to+\infty\)} (H);
	\draw[limit] (LII) -- node[annotation,midway,above,sloped]
		{\(\tau\to+\infty\)} (H);
	\draw[limit] (J.south) -- node[annotation,pos=.5,right,align=left]
		{\(\tau\to+\infty\)} (H.north);
\end{tikzpicture}
\caption{The Askey scheme for the Jacobi system of mixed type.
Rescaling near \(x=0\), together with a triangular change of row weights,
gives Laguerre of the first kind; rescaling near \(x=1\) gives Laguerre
of the second kind. Centering and rescaling the variable in the system
of the first kind gives Hermite. The direct limit from the second kind
gives the same Hermite system, by
Theorem~\ref{thm:LII-Hermite-direct-matrix} and
Corollary~\ref{cor:LII-Hermite-direct-components}. The vertical
Jacobi--Hermite arrow is the direct limit of
Theorem~\ref{thm:Jacobi-Hermite-direct}. The Hermite node has one
Gaussian--gamma row and \(q-1\) exponential convolutions; the first-kind
route uses the Laguerre family with one gamma factor.
The agreement of the three routes to Hermite, with the two Laguerre
routes interpreted as iterated limits, is stated in
Corollary~\ref{cor:Hermite-three-route-agreement}.
All four systems have
continuous measures. The admissibility and separation conditions at the
Hermite node are stated in Section~\ref{subsec:Laguerre-to-Hermite-confluence}.}
\label{fig:Askey-confluence-diagram}
\end{figure}

Throughout, Jacobi, Laguerre of the first and second kinds, and Hermite
denote the systems of mixed type. The suffix ``-like'' is reserved for the
corresponding one-column reductions \(p=1\).

For the limit near \(x=0\), the selected denominator parameters in the
Mellin transforms tend to infinity simultaneously. Some rescaled row
weights then have the same leading term. Divided differences of those
weights give a triangular change of basis whose limits contain the
successive Euler derivatives required for Laguerre of the first kind. At \(p=1\),
Wolfs obtained the weight-vector limit by letting the parameters tend to
infinity one at a time~\cite[Lemma~3.1]{Wolfs2024}. The simultaneous
limit considered here also gives convergence of the mixed moments and
normalized forms. For each fixed finite part of the recurrence matrix,
its coefficients and bidiagonal factors converge whenever the limiting
shifted moment minors used to define them are nonzero.

The limit near \(x=1\) gives the Laguerre system of the second kind. For positive
\(\rho_h\) and \(d_h\), and column parameters \(\xi_i\) satisfying
\(\rho_h+\xi_i>0\), set
\[
	a_h(\tau)+1=\tau\rho_h,
	\qquad
	b_h(\tau)-a_h(\tau)=d_h>0,
	\qquad
	\alpha_i(\tau)=\tau\xi_i,
\]
and use \(x=1-y/\tau\), with \(\tau\to+\infty\). After normalization, the
beta weights tend to gamma weights, and their Mellin convolutions tend
to ordinary additive convolutions. The limiting row weights \(F_j\)
have Laplace transforms
\[
	\Lap[F_j](z)
	=
	(z+\rho_j)^{-1}\prod_{h=1}^{q}(z+\rho_h)^{-d_h}.
\]
The column powers tend to
\((\e^{-\xi_1y},\ldots,\e^{-\xi_py})\). For \(q=1\), writing
\(d=d_1\) and \(\rho=\rho_1\), the entries are
constant multiples of \(y^d\e^{-(\rho+\xi_i)y}\), the classical multiple
Laguerre weights of the second kind
\cite[Section~3.3]{VanAsscheCoussement2001}
\cite[Equations~(19) and~(21)]{BranquinhoDiazFoulquieManas2025Classical}.
Classical product formulas for Cauchy--Vandermonde determinants with repeated
nodes and prescribed poles were developed in
\cite{GascaMartinezMuehlbach1989Rational,CarstensenMuehlbach1992NevilleAitken,
Muehlbach1993CauchyVandermonde,Muehlbach2000Interpolation}. In the present
mixed-type moment problem, division by a common analytic factor and a
triangular change of polynomial basis reduce the determinant to that
classical form. This gives a closed product formula for arbitrary
positive real \(d_h\) and every near-diagonal row multi-index, whose
entries differ by at most one. The determinant is nonzero when the
\(\rho_h\)'s with positive row indices are pairwise distinct and the
\(\xi_i\)'s with positive column indices are pairwise distinct. The
\(A\)-components are terminating multiple Kamp\'e de F\'eriet
polynomials. For \(q\ge2\), each \(B\)-component is a linear combination
of at most \(q+1\) terminating Srivastava--Daoust generalized Lauricella
functions, with explicit parameters and polynomial prefactors. The number
of terms is independent of the multi-indices. These are formulas for the
individual polynomial components; the complete \(B\)-form also has a
finite confluent Lauricella representation.
The relation of the row weights to the derivative-type families
of Wolfs~\cite[Proposition~2.4]{Wolfs2025DerivativeType} is also described.

The Hermite family has a Gaussian--gamma convolution \(F\) as its
last row and \(q-1\) exponential convolutions \(K_{\rho_h,1}F\) as
the remaining rows. Its connection determinant is nonzero at every
Laguerre-admissible index under the stated separation conditions. These
indices form an infinite near-diagonal class. The \(A\)-problem is
strongly normal, and the \(B\)-problem has a unique nonzero normalized
form. Each \(A\)-component is one terminating Srivastava--Daoust function,
equivalently a finite Hermite sum. Each \(B\)-component is a combination
of a number of terminating functions bounded independently of the
degrees. Both its finite-pole and polynomial contributions are explicit.

The choice of one final row is important. Retaining several Euler rows
in the first-kind Laguerre source gives \(F,F',\ldots\) in the
limit. Pearson relations then restrict nonzero normalized \(B\)-forms
to finitely many admissible indices. Appendix~\ref{app:Hermite-derivative-limits}
preserves the general confluence and determines its rank loss, polynomial
relations, and component divergence. It also compares the
derivative-only rows with coinciding-centre limits of the Gaussian
systems of Daems and Kuijlaars~\cite{DaemsKuijlaars2007}. In the main
Hermite family, the extra convolution rows give a different system.

The paper is organized as follows. Section~\ref{sec:source-families} fixes
the notation and recalls the weights and forms of Jacobi and Laguerre of the first kind.
Section~\ref{sec:left-endpoint-confluence} obtains Laguerre of the first kind by rescaling
the Jacobi variable near \(x=0\).
Section~\ref{sec:right-LII-confluence} obtains Laguerre of the second kind by rescaling
near \(x=1\) and proves the product formula for its moment determinants.
Section~\ref{subsec:Laguerre-to-Hermite-confluence} establishes the Hermite
family, its explicit components and normality at admissible indices,
the classical multiple-Hermite reductions, and the three confluence
routes. Appendix~\ref{app:Hermite-derivative-limits} treats the general
first-kind limits with several derivative rows and their restrictions.

\section{Mixed-type forms for Jacobi and Laguerre of the first kind}
\label{sec:source-families}

The Jacobi systems and the Laguerre systems of the first kind used in the
confluences are recalled here, together with the normalization of their mixed-type forms. The
notation for the special functions is fixed first. The recalled moment
determinant for Laguerre of the first kind gives the precise normality conditions required in
the subsequent limits.

\subsection{Basic notation, special functions, and transforms}
\label{subsec:hypergeometric-notation}

Throughout, \(\mathbb N\coloneq\{1,2,\ldots\}\),
\(\mathbb N_0\coloneq\{0,1,2,\ldots\}\), and
\(\mathbb R_+\coloneq(0,\infty)\).
The vector \(\one_d\coloneq(1,\ldots,1)\) has \(d\) entries,
\(\boldsymbol e_j^{(d)}\) is the \(j\)-th coordinate vector, and
\(I_d\) is the \(d\times d\) identity matrix; dimension superscripts on
coordinate vectors are omitted when clear from the context. The symbol
\(I\) denotes the identity operator when used as an operator.
The Kronecker symbol \(\delta_{i,j}\) is one for \(i=j\) and zero
otherwise, and \(\boldsymbol 1_E(x)\) is one for \(x\in E\) and zero
otherwise. For a multi-index \(\boldsymbol\nu\in\mathbb N_0^d\), put
\(|\boldsymbol\nu|\coloneq\sum_{j=1}^d\nu_j\).
Empty sums are zero and empty products are one.

For any variable \(u\), the differentiation operators are denoted by
\[
 \partial_u\coloneq\frac{\partial}{\partial u},\qquad
 \partial_u^k\coloneq\frac{\partial^k}{\partial u^k}\quad(k\in\mathbb N),
 \qquad \partial_u^0\coloneq I.
\]
All other variables and parameters are held fixed. For a function of one
variable, these are ordinary derivatives; differentiation in a complex
variable means holomorphic differentiation.

For a fixed compact set \(K\) and a positive scale \(r_\tau\), the notation
\(E_\tau(z)=\mathrm O_K(r_\tau)\) as \(\tau\to+\infty\) means that
\[
 \sup_{z\in K}|E_\tau(z)|\le C_Kr_\tau,\qquad \tau\ge \tau_K,
\]
where \(C_K\) and \(\tau_K\) are independent of \(\tau\) and \(z\), but may
depend on \(K\) and the fixed parameters. Other subscripts on
\(\mathrm O\) likewise indicate permitted dependence of the constant.
For polynomials, a coefficientwise estimate bounds each coefficient in
the stated basis; for finite matrices or vectors, it bounds each entry.
The degree bounds and dimensions are fixed in these estimates.

The gamma function is given by
\[
\Gamma(z)\coloneq\int_0^\infty t^{z-1}\e^{-t}\,\mathrm dt,
\qquad \operatorname{Re}z>0,
\]
and elsewhere by its meromorphic continuation. For \(x>0\), complex
powers mean \(x^z\coloneq\exp(z\log x)\), with the real logarithm.

The Pochhammer symbol is denoted by
\[
(a)_0\coloneq1,
\qquad
(a)_k\coloneq\prod_{j=0}^{k-1}(a+j),
\quad k\ge1.
\]
When \(a\notin\mathbb Z_{\le0}\), this finite product also equals
\(\Gamma(a+k)/\Gamma(a)\). Thus the product definition remains meaningful
at the nonpositive integers, where the quotient of Gamma functions need not
be. The falling factorial and generalized binomial coefficient are
\[
(a)_{\underline k}\coloneq\prod_{j=0}^{k-1}(a-j),
\qquad \binom ak\coloneq\frac{(a)_{\underline k}}{k!},
\qquad k\in\mathbb N_0.
\]
Thus \((a)_{\underline0}=1\). For \(\alpha_1,\ldots,\alpha_d\in\mathbb N_0\)
with \(\alpha_1+\cdots+\alpha_d=k\), the multinomial coefficient is
\(\binom{k}{\alpha_1,\ldots,\alpha_d}\coloneq
k!/(\alpha_1!\cdots\alpha_d!)\).
For a parameter vector
\[
\boldsymbol a=(a_1,\ldots,a_r),
\]
the shorthand notation
\[
\Gamma(\boldsymbol a)\coloneq\prod_{\rho=1}^r\Gamma(a_\rho),
\qquad (\boldsymbol a)_k\coloneq\prod_{\rho=1}^r(a_\rho)_k
\]
is used. In particular,
\[
\Gamma(z\one_r+\boldsymbol a)
=
\prod_{\rho=1}^{r}\Gamma(z+a_\rho),
\qquad
(z\one_r+\boldsymbol a)_k
=
\prod_{\rho=1}^{r}(z+a_\rho)_k.
\]
More generally, if
\[
\boldsymbol c=(c_1,\ldots,c_r),
\qquad
\boldsymbol \nu=(\nu_1,\ldots,\nu_r)\in\mathbb N_0^r,
\]
the notation
\[
(\boldsymbol c)_{\boldsymbol \nu}
\coloneq
\prod_{\rho=1}^{r}(c_\rho)_{\nu_\rho}
\]
is used.
Parameters \(a_1,\ldots,a_r\) are called pairwise nonresonant when
\(a_i-a_j\notin\mathbb Z\) for every \(i\ne j\).

For parameter vectors
\[
\boldsymbol a=(a_1,\ldots,a_p),
\qquad
\boldsymbol b=(b_1,\ldots,b_q),
\]
the generalized hypergeometric function is
\begin{equation}
	\label{eq:generalized-hypergeometric-definition}
	\pFq{p}{q}
	{\boldsymbol a}
	{\boldsymbol b}
	{z}
	\coloneq
	\sum_{k=0}^{\infty}
	\frac{(\boldsymbol a)_k}{(\boldsymbol b)_k}
	\frac{z^k}{k!},
\end{equation}
where the series is understood in its domain of convergence, or by analytic
continuation when appropriate.  The hypergeometric series occurring as
polynomial components below are terminating series.

The classical Kamp\'e de F\'eriet double hypergeometric series is also used;
see \cite[Chapter~1]{SrivastavaKarlsson1985}. If the parameter strings
\(\boldsymbol a,\boldsymbol b,\boldsymbol c,\boldsymbol d,\boldsymbol e,\boldsymbol f\) have respective lengths
\(p,q,k,\ell,m,n\), then
\begin{equation}
	\label{eq:standard-KdF-definition}
	F_{\ell:m;n}^{p:q;k}
	\left(
	\begin{matrix}
		\boldsymbol a:\boldsymbol b;\boldsymbol c\\
		\boldsymbol d:\boldsymbol e;\boldsymbol f
	\end{matrix}
	\middle|x,y
	\right)
	\coloneq
	\sum_{r,\lambda\ge0}
	\frac{
		(\boldsymbol a)_{r+\lambda}
		(\boldsymbol b)_r
		(\boldsymbol c)_\lambda
	}{
		(\boldsymbol d)_{r+\lambda}
		(\boldsymbol e)_r
		(\boldsymbol f)_\lambda
	}
	\frac{x^r}{r!}\frac{y^\lambda}{\lambda!}.
\end{equation}
Thus the first parameter pair is coupled to \(r+\lambda\), while the second
and third pairs depend only on \(r\) and \(\lambda\), respectively.  Empty
parameter strings are allowed.

The following multivariable extension will also be used. If the coupled parameter
strings are \(\boldsymbol a,\boldsymbol d\), and the strings attached to the
\(v\)-th summation variable are
\(\boldsymbol b^{(v)},\boldsymbol e^{(v)}\), with respective lengths
\(p,\ell\) and \(q_v,m_v\), then
\begin{equation}
	\label{eq:multiple-KdF-definition}
	F_{\ell:m_1;\ldots;m_R}^{p:q_1;\ldots;q_R}
	\left[
	\begin{array}{c}
		\boldsymbol a:
		\boldsymbol b^{(1)};\ldots;\boldsymbol b^{(R)}\\
		\boldsymbol d:
		\boldsymbol e^{(1)};\ldots;\boldsymbol e^{(R)}
	\end{array}
	\middle|x_1,\ldots,x_R
	\right]
	\coloneq
	\sum_{\nu_1,\ldots,\nu_R\ge0}
	\frac{(\boldsymbol a)_{|\boldsymbol\nu|}}
	{(\boldsymbol d)_{|\boldsymbol\nu|}}
	\prod_{v=1}^{R}
	\frac{(\boldsymbol b^{(v)})_{\nu_v}}
	{(\boldsymbol e^{(v)})_{\nu_v}}
	\frac{x_v^{\nu_v}}{\nu_v!}.
\end{equation}
For \(R=2\), this is \eqref{eq:standard-KdF-definition}. Empty individual
strings are allowed. A negative integer in the coupled upper string makes
the series a polynomial whenever no lower parameter vanishes; exceptional
terminating cases are understood by polynomial continuation.
Here polynomial continuation means coefficientwise continuation of the
complete finite expression, including its prefactor, after cancellation
of removable singularities in the parameters.

The Srivastava--Daoust generalized Lauricella series also allows
Pochhammer indices that are sums of selected summation indices
\cite[p.~284, equation~(5)]{SaigoSrivastava1994}.
The specialization needed here is defined as follows. Let
\(\boldsymbol\theta_\mu,\boldsymbol\psi_\nu\in\mathbb N_0^R\),
\(\mu\in\{1,\ldots,A\}\), \(\nu\in\{1,\ldots,C\}\), and let
\(\boldsymbol b^{(v)},\boldsymbol e^{(v)}\) have lengths \(B_v,D_v\).
With \(\boldsymbol\theta\cdot\boldsymbol k
\coloneq\sum_{v=1}^R\theta_vk_v\), put
\begin{equation}
 F_{C:D_1;\ldots;D_R}^{A:B_1;\ldots;B_R}
 \left[\begin{array}{c}
 (a_\mu:\boldsymbol\theta_\mu)_{\mu=1}^A:
 \boldsymbol b^{(1)};\ldots;\boldsymbol b^{(R)}\\
 (c_\nu:\boldsymbol\psi_\nu)_{\nu=1}^C:
 \boldsymbol e^{(1)};\ldots;\boldsymbol e^{(R)}
 \end{array}\,\middle|\,\boldsymbol z\right]
 \coloneq
 \sum_{\boldsymbol k\in\mathbb N_0^R}
 \frac{\prod_{\mu=1}^A(a_\mu)_{\boldsymbol\theta_\mu\cdot\boldsymbol k}}
      {\prod_{\nu=1}^C(c_\nu)_{\boldsymbol\psi_\nu\cdot\boldsymbol k}}
 \prod_{v=1}^R
 \frac{(\boldsymbol b^{(v)})_{k_v}}{(\boldsymbol e^{(v)})_{k_v}}
 \frac{z_v^{k_v}}{k_v!}.
 \label{eq:Srivastava-Daoust-definition}
\end{equation}
Taking every coefficient vector equal to \(\one_R\) recovers
\eqref{eq:multiple-KdF-definition}. Below, all coefficient vectors have
entries \(0\), \(1\), or \(2\). A coupled upper parameter
\((-N:\boldsymbol e)\), with positive integer entries in
\(\boldsymbol e\), restricts the sum to
\(\boldsymbol e\cdot\boldsymbol k\le N\). Every specialization below
is a finite sum. When a common parameter is cancelled before evaluation,
the corresponding polynomial interpretation is stated explicitly.

Several component formulas use the following polynomial form of a
terminating multiple Kamp\'e de F\'eriet series. For \(m\in\mathbb N_0\),
\(\boldsymbol b,\boldsymbol x\in\mathbb C^R\), and \(v\in\mathbb C\), define
\begin{equation}
 \mathfrak C_m(\boldsymbol b,\boldsymbol x;v)\coloneq
 \sum_{|\boldsymbol\alpha|\le m}
 \frac{v^{m-|\boldsymbol\alpha|}}{(m-|\boldsymbol\alpha|)!}
 \prod_{h=1}^R\frac{(b_h)_{\alpha_h}x_h^{\alpha_h}}{\alpha_h!}.
 \label{eq:finite-exponential-KdF}
\end{equation}
For \(v\ne0\), its identification with \eqref{eq:multiple-KdF-definition} is
\begin{equation}
 \mathfrak C_m(\boldsymbol b,\boldsymbol x;v)
 =\frac{v^m}{m!}
 F_{0:0;\ldots;0}^{1:1;\ldots;1}
 \left[\begin{array}{c}-m:b_1;\ldots;b_R\\-:-;\ldots;-\end{array}
 \,\middle|-\frac{x_1}{v},\ldots,-\frac{x_R}{v}\right].
 \label{eq:finite-exponential-KdF-identification}
\end{equation}
The finite sum defines its value also at \(v=0\). Empty vectors give
\(\mathfrak C_m(\varnothing,\varnothing;v)=v^m/m!\). For an additional
parameter \(w\), define the polynomial
\begin{equation}
 \mathfrak E_m(\boldsymbol b,\boldsymbol x;v,w)\coloneq
 \sum_{|\boldsymbol\alpha|+2a\le m}
 \frac{v^{m-|\boldsymbol\alpha|-2a}w^a}
      {(m-|\boldsymbol\alpha|-2a)!a!}
 \prod_{h=1}^R\frac{(b_h)_{\alpha_h}x_h^{\alpha_h}}{\alpha_h!}.
 \label{eq:finite-quadratic-KdF}
\end{equation}
For \(v\ne0\), this is one terminating Srivastava--Daoust function
of \(R+1\) arguments:
\begin{equation}
 \mathfrak E_m(\boldsymbol b,\boldsymbol x;v,w)
 =\frac{v^m}{m!}F_{0:0;\ldots;0}^{1:1;\ldots;1;0}
 \left[\begin{array}{c}
 (-m:(\one_R,2)):b_1;\ldots;b_R;-\\-:-;\ldots;-;-
 \end{array}\,\middle|-\frac{x_1}{v},\ldots,-\frac{x_R}{v},\frac{w}{v^2}\right].
 \label{eq:finite-quadratic-SD-identification}
\end{equation}
The finite polynomial in \eqref{eq:finite-quadratic-KdF} supplies its value
at \(v=0\). The entry \(2\) records the quadratic term of the exponential.
The coefficient identities explaining these definitions are
\[
 [u^m]\e^{vu}\prod_h(1-x_hu)^{-b_h}
 =\mathfrak C_m(\boldsymbol b,\boldsymbol x;v),\qquad
 [u^m]\e^{vu+wu^2}\prod_h(1-x_hu)^{-b_h}
 =\mathfrak E_m(\boldsymbol b,\boldsymbol x;v,w).
\]
Both follow by multiplying the exponential and binomial series at \(u=0\).

The Lauricella function \(F_D\), used below only in terminating form, is the
special case
\begin{equation}
	\label{eq:Lauricella-FD-definition}
	F_D^{(R)}
	\left(a;b_1,\ldots,b_R;c;x_1,\ldots,x_R\right)
	\coloneq
	\sum_{\nu_1,\ldots,\nu_R\ge0}
	\frac{(a)_{|\boldsymbol\nu|}}
	{(c)_{|\boldsymbol\nu|}}
	\prod_{v=1}^R\frac{(b_v)_{\nu_v}}{\nu_v!}x_v^{\nu_v}.
\end{equation}

The Mellin transform of a function \(f\) on \(\mathbb R_+\) is denoted by
\begin{equation}
	\label{eq:Mellin-transform-notation}
	\mathcal M[f](s)
	\coloneq
	\int_0^\infty x^{s-1}f(x)\dx,
	\qquad
	s\in\mathcal S_f,
\end{equation}
where \(\mathcal S_f\) denotes the fundamental strip of \(f\), that is, the
vertical strip of values of \(s\) for which the integral converges.  If \(f\)
is supported on \((0,1)\), the same notation is used with the integral
restricted to \((0,1)\). The elementary shift rule
\[
\mathcal M[x^\ell f](s)=\mathcal M[f](s+\ell),
\qquad
\ell\in\mathbb N_0,
\]
is used whenever both sides are defined.

For functions on \(\mathbb R_+\), the Mellin convolution is
\[
(f*g)(x)
\coloneq
\int_0^\infty
f(t)g\left(\frac{x}{t}\right)\frac{\mathrm{d}t}{t}.
\]
Whenever the integrals are convergent, the Mellin transform turns Mellin
convolution into multiplication:
\begin{equation}
	\label{eq:Mellin-convolution-product}
	\mathcal M[f*g](s)
	=
	\mathcal M[f](s)\mathcal M[g](s).
\end{equation}
These identities will also be used in the standard meromorphic-continuation
sense when both sides have meromorphic continuations.

Additive convolution is distinguished from Mellin convolution by the
subscript \(+\):
\[
(f*_+g)(y)\coloneq\int_{\mathbb R}f(u)g(y-u)\,\mathrm du.
\]
If both functions vanish on \((-\infty,0]\), this reduces to
\[
(f*_+g)(y)=\int_0^y f(u)g(y-u)\,\mathrm du,
\qquad y>0.
\]
Each convolution is used when its integral converges absolutely.
Iterated convolutions are understood associatively under the same
integrability condition.

For \(0\le m\le q\) and \(0\le n\le p\), the Meijer \(G\)-function
is defined by the Mellin--Barnes integral
\begin{equation}
	\label{eq:Meijer-G-definition}
	G_{p,q}^{m,n}
	\left(
	x
	\left|
	\begin{matrix}
		a_1,\ldots,a_p\\
		b_1,\ldots,b_q
	\end{matrix}
	\right.
	\right)
	\coloneq
	\frac{1}{2\pi\mathrm{i}}
	\bigintsss_{L}
	\frac{
		\prod_{j=1}^{m}\Gamma(b_j+s)
		\prod_{j=1}^{n}\Gamma(1-a_j-s)
	}{
		\prod_{j=m+1}^{q}\Gamma(1-b_j-s)
		\prod_{j=n+1}^{p}\Gamma(a_j+s)
	}
	x^{-s}\,\mathrm{d}s,
\end{equation}
where the contour \(L\) separates the poles of the gamma factors
\(\Gamma(b_j+s)\), \(j\in\{1,\ldots,m\}\), from those of
\(\Gamma(1-a_j-s)\), \(j\in\{1,\ldots,n\}\).
With this convention,
\begin{equation}
	\label{eq:Meijer-G-Mellin-transform}
	\int_0^\infty
	x^{z-1}
	G_{p,q}^{m,n}
	\left(
	x
	\left|
	\begin{matrix}
		a_1,\ldots,a_p\\
		b_1,\ldots,b_q
	\end{matrix}
	\right.
	\right)
	\dx
	=
	\frac{
		\prod_{j=1}^{m}\Gamma(b_j+z)
		\prod_{j=1}^{n}\Gamma(1-a_j-z)
	}{
		\prod_{j=m+1}^{q}\Gamma(1-b_j-z)
		\prod_{j=n+1}^{p}\Gamma(a_j+z)
	},
\end{equation}
in the corresponding fundamental strip. Standard contour and convergence
conditions are given in the NIST Digital Library of Mathematical
Functions~\cite[\S16.17, Eq.~(16.17.1)]{DLMFMeijerG}.

The space \(\mathcal D'(\mathbb R)\) consists of the continuous linear
functionals on \(C_c^\infty(\mathbb R)\), the space of smooth compactly
supported test functions with its usual test-function topology.
Convergence of distributions means convergence against each such test
function: \(\langle T_\tau,\varphi\rangle
\xrightarrow[\tau\to+\infty]{}\langle T,\varphi\rangle\) for every
\(\varphi\in C_c^\infty(\mathbb R)\).
Locally integrable functions are identified with distributions by
\(\langle f,\varphi\rangle\coloneq
\int_{\mathbb R}f(t)\varphi(t)\,\mathrm dt\), and distributional
derivatives satisfy \(\langle T',\varphi\rangle\coloneq
-\langle T,\varphi'\rangle\).
For finite positive measures, weak convergence means convergence of
their integrals against every bounded continuous function. It is denoted
by an arrow labelled \(\mathrm w\). Convergence of densities in this
sense always refers to their associated measures.

\subsection{Mixed-type forms}

Let \(\mathbb R[x]\) denote the ring of polynomials in one variable with
real coefficients, and put
\[
	\mathbb P_N\coloneq\{P\in\mathbb R[x]:\deg P\le N\},
	\qquad N\in\mathbb N_0,
	\qquad \mathbb P_{-1}\coloneq\{0\}.
\]
The polynomial variable is indicated when needed; complex coefficients are
used for contour calculations, and \(\deg0\coloneq-\infty\).
For a polynomial \(P\), the notation
\([x^d]P\) denotes its coefficient of \(x^d\).
If \(P(x)=\sum_{k=0}^d p_kx^k\), then
\(P(\partial_u)\coloneq\sum_{k=0}^d p_k\partial_u^k\) is the
corresponding constant-coefficient differential operator in the variable
\(u\).

Let \(p,q\in\mathbb N\), let \(I\subseteq\mathbb R\) be an interval, and let
\[
	\mathrm d\boldsymbol\mu(x)
	=
	[U_j(x)V_i(x)]_{1\le j\le q,\,1\le i\le p}\,\mathrm dx
\]
be a rank-one \(q\times p\) matrix of measures on \(I\). For
\(\boldsymbol n\in\mathbb N_0^p\) and
\(\boldsymbol m\in\mathbb N_0^q\), the mixed-type \(A\)-form is
\[
	\mathcal A_{\boldsymbol n,\boldsymbol m}(x)
	=
	\sum_{i=1}^p A_i(x)V_i(x),
	\qquad
	\deg A_i\le n_i-1,
\]
and, in the balance \(|\boldsymbol n|=|\boldsymbol m|+1\), it satisfies
\[
	\int_I x^kU_j(x)\mathcal A_{\boldsymbol n,\boldsymbol m}(x)\,\mathrm dx=0,
	\qquad 0\le k<m_j.
\]
The dual mixed-type \(B\)-form is
\[
	\mathcal B_{\boldsymbol n,\boldsymbol m}(x)
	=
	\sum_{j=1}^q B_j(x)U_j(x),
	\qquad
	\deg B_j\le m_j-1,
\]
and, in the balance \(|\boldsymbol m|=|\boldsymbol n|+1\), it satisfies
\[
	\int_I x^kV_i(x)\mathcal B_{\boldsymbol n,\boldsymbol m}(x)\,\mathrm dx=0,
	\qquad 0\le k<n_i.
\]
An index equal to zero prescribes a zero polynomial component. In either
balance, the problem is called \emph{weakly normal} if the space of
polynomial component vectors satisfying the homogeneous moment conditions
is one-dimensional. It is called \emph{strongly normal} if, in addition,
every component with a positive index attains its prescribed maximal
degree. Unless strong normality is expressly stated, normality below means
weak normality. An invertible square moment matrix obtained by adjoining
one normalization condition proves weak normality and fixes a unique
representative; maximal component degrees require a separate argument.
This definition concerns component vectors. For a polynomially dependent
row family, a nonzero vector may represent the zero \(B\)-form; a
normalization by a nonzero moment then requires a separate existence
statement. This distinction occurs in
Corollary~\ref{cor:Hermite-solution-dimensions}.

The square and rectangular moment matrices refer to the same entry
formula. For row index \(\boldsymbol m\) and column index
\(\boldsymbol n\), the entry indexed by \((j,k)\) and \((i,\ell)\) is
\[
\int_I x^{k+\ell}U_j(x)V_i(x)\,\mathrm dx,
\qquad 0\le k<m_j,\quad 0\le\ell<n_i.
\]
Their sizes are \(|\boldsymbol m|\times|\boldsymbol n|\); the ordering of
the index pairs is specified for each determinant. An adjacent moment
is the first additional moment in one family, obtained by increasing
one multi-index component by one.

An ordered family of continuous real functions \(f_1,\ldots,f_N\) on
an interval is a Chebyshev system if every linear combination with coefficients not all zero has
at most \(N-1\) distinct zeros. For sufficiently differentiable
functions, it is an extended Chebyshev system if zeros are counted with
multiplicity, and it is complete if the same property holds for every
initial subfamily. A family of weights \(U_1,\ldots,U_q\) is an
AT (algebraic Chebyshev) system for \(\boldsymbol m\) if the functions
\(x^kU_j(x)\), \(1\le j\le q\), \(0\le k<m_j\), form a Chebyshev
system. It is AT for all multi-indices if this holds for every
\(\boldsymbol m\); a multiple orthogonality system is called perfect
when all its multi-indices are normal.

\begin{definition}[Near-diagonal multi-index]
	\label{def:near-diagonal}
	A multi-index \(\boldsymbol m=(m_1,\ldots,m_q)\in\mathbb N_0^q\) is said to be
	near the diagonal if
	\[
		|m_i-m_j|\le1,
		\qquad i,j\in\{1,\ldots,q\}.
\]
\end{definition}
Equivalently, \(\max_i m_i-\min_i m_i\le1\). In particular, zero components
are allowed; if one component vanishes, all components belong to
\(\{0,1\}\).

For \(d\in\mathbb N\), the ordered \(d\)-step-line is the sequence
\[
	\boldsymbol\sigma_d(N)
	\coloneq
	(\underbrace{\rho_d(N)+1,\ldots,\rho_d(N)+1}_{\ell_d(N)},
	 \underbrace{\rho_d(N),\ldots,\rho_d(N)}_{d-\ell_d(N)}),
	\qquad
	N=d\rho_d(N)+\ell_d(N),
\]
where \(N\in\mathbb N_0\), \(0\le\ell_d(N)<d\). Equivalently,
\(\rho_d(N)=\lfloor N/d\rfloor\) and
\(\ell_d(N)=N-d\rho_d(N)\). In the coordinate-vector notation above,
\[
	\boldsymbol\sigma_d(N+1)-\boldsymbol\sigma_d(N)
	=\boldsymbol e_{\ell_d(N)+1}^{(d)}.
\]
For a vector \(\boldsymbol c=(c_1,\ldots,c_d)\), the notation
\(\boldsymbol c^{*j}\) means that its \(j\)-th component has been deleted.

\subsection{Jacobi weights and mixed-type forms}
\label{subsec:Jacobi-source-data}

For \(-1<a<b\), define
\begin{equation}
	\label{eq:beta-weight-definition}
	\mathcal B_{a,b}(x)
	\coloneq
	\frac{x^a(1-x)^{b-a-1}}{\Gamma(b-a)}
	\boldsymbol 1_{(0,1)}(x),
	\qquad
	\M[\mathcal B_{a,b}](z)=\frac{\Gamma(z+a)}{\Gamma(z+b)}.
\end{equation}
For
\(\boldsymbol a=(a_1,\ldots,a_q)\) and
\(\boldsymbol b=(b_1,\ldots,b_q)\), assume
\[
	-1<a_j<b_j,\qquad 1\le j\le q,
\]
and put
\begin{equation}
	w_0^{\mathrm J}
	\coloneq
	\mathcal B_{a_1,b_1}*\cdots*\mathcal B_{a_q,b_q},
	\label{eq:Jacobi-source-base}\qquad
	w_j^{\mathrm J}(x)
	\coloneq
	w_0^{\mathrm J}(x;\boldsymbol a,\boldsymbol b+\boldsymbol e_j),
	\qquad 1\le j\le q.
\end{equation}
Thus
\[
	\M[w_j^{\mathrm J}](z)
	=
	\frac{\Gamma(z\one_q+\boldsymbol a)}
	{\Gamma(z\one_q+\boldsymbol b+\boldsymbol e_j)}.
\]
Let \(\boldsymbol\alpha\in\mathbb R^p\) satisfy
\[
	\alpha_i+a_j>-1,
	\qquad 1\le i\le p,\quad 1\le j\le q.
\]
With \(V_i(x)=x^{\alpha_i}\), the Jacobi matrix is
\begin{equation}
	\label{eq:mixed-Jacobi-like-matrix}
	\mathrm d\Jmat(x)
	=
	[w_j^{\mathrm J}(x)x^{\alpha_i}]_{j,i}\,\mathrm dx,
	\qquad 0<x<1.
\end{equation}

The mixed Pi\~neiro specialization is obtained when
\(b_h\downarrow a_h\), \(h\in\{1,\ldots,q\}\). Indeed,
\begin{equation*}
	\M[w_j^{\mathrm J}](z)
	=\frac{\Gamma(z\one_q+\boldsymbol a)}
	{(z+b_j)\Gamma(z\one_q+\boldsymbol b)}
	\xrightarrow[\boldsymbol b\to\boldsymbol a]{}
	\frac{1}{z+a_j},
	\qquad \operatorname{Re}z>-\min_h a_h.
\end{equation*}
The last expression is the Mellin transform of
\(x^{a_j}\boldsymbol 1_{(0,1)}(x)\). Hence every fixed mixed moment
converges to that of \([x^{a_j+\alpha_i}]_{j,i}\,\mathrm dx\) on
\((0,1)\), which is the mixed Pi\~neiro matrix of
\cite[Section~5]{PineiroMixed2026}, with row parameters
\(\beta_j=a_j\) in the notation of that paper.

For multi-indices \(\boldsymbol n\in\mathbb N_0^p\) and
\(\boldsymbol m\in\mathbb N_0^q\), define
\[
	D_{\boldsymbol\alpha,\boldsymbol n}(t)
	\coloneq
	((t+1)\one_p-\boldsymbol\alpha-\boldsymbol n)_{\boldsymbol n},
	\qquad
	P_{\boldsymbol b,\boldsymbol m}(t)
	\coloneq
	((t+1)\one_q+\boldsymbol b)_{\boldsymbol m}.
\]
The two transform formulas needed below are recalled from
\cite[Theorem~3.10]{Manas2026HypergeometricMixed}, where they are proved.

\begin{theorem}[Normalized Jacobi representatives]
	\label{thm:mixed-Jacobi-like-forms}
	Assume the parameter hypotheses above and that the row multi-index
	\(\boldsymbol m\) is near the diagonal. If
	\(|\boldsymbol n|=|\boldsymbol m|+1\) and the nodes
	\[
		\{\alpha_i+k:1\le i\le p,\ 0\le k<n_i\}
\]
	are pairwise distinct, the contour representative is
	\begin{equation}
		\label{eq:A-contour}
		\mathcal A_{\boldsymbol n,\boldsymbol m}^{\mathrm J}(x)
		=
		\frac{(-1)^{|\boldsymbol n|}}{2\pi\mathrm i}
		\int_\Sigma
		\frac{P_{\boldsymbol b,\boldsymbol m}(t)}
		{D_{\boldsymbol\alpha,\boldsymbol n}(t)}
		\frac{\Gamma(t\one_q+\boldsymbol b+\one_q)}
		{\Gamma(t\one_q+\boldsymbol a+\one_q)}x^t\,\mathrm dt,
	\end{equation}
	where \(\Sigma\) is a positively oriented finite union of simple closed
	contours enclosing precisely those nodes and no other pole of the
	integrand. If
	\(|\boldsymbol m|=|\boldsymbol n|+1\), the Mellin representative is fixed by
	\begin{equation}
		\label{eq:B-Mellin}
		\M[\mathcal B_{\boldsymbol n,\boldsymbol m}^{\mathrm J}](z)
		=
		-
		\frac{\Gamma(z\one_q+\boldsymbol a)}
		{\Gamma(z\one_q+\boldsymbol b+\boldsymbol m)}
		(\boldsymbol\alpha+(1-z)\one_p)_{\boldsymbol n}.
	\end{equation}
	Whenever the corresponding moment determinant is nonzero, these formulas
	are the unique mixed-type representatives with the stated normalization.
\end{theorem}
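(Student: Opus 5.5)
The plan is to verify directly that each formula has the required shape (polynomial components of the correct degrees times the prescribed weights) and satisfies the orthogonality conditions. Uniqueness then follows from the nonvanishing of the moment determinant: the solution space is one-dimensional, and the stated normalization selects one representative.

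\emph{The \(A\)-form.} Since the nodes \(\alpha_i+k\) are pairwise distinct, every pole of \(1/D_{\boldsymbol\alpha,\boldsymbol n}\) enclosed by \(\Sigma\) is simple. The factor \(x^t\) is entire in \(t\). Hence the residue theorem writes \eqref{eq:A-contour} as \(\sum_i\sum_{k<n_i}c_{i,k}x^{\alpha_i+k}=\sum_iA_i(x)x^{\alpha_i}\) with \(\deg A_i\le n_i-1\).

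For orthogonality, fix \(j\) and \(0\le k<m_j\). Integrate \(x^kw_j^{\mathrm J}(x)\) against the integrand and interchange the integrals (Fubini, on a compact contour). This is legitimate because, near the nodes, \(\operatorname{Re}(t+k+1+a_h)>0\) by \(\alpha_i+a_h>-1\), so \(\M[w_j^{\mathrm J}](t+k+1)\) is defined there. The integrand becomes \(P_{\boldsymbol b,\boldsymbol m}(t)/D_{\boldsymbol\alpha,\boldsymbol n}(t)\) times
\[
\prod_h\frac{\Gamma(t+1+b_h)}{\Gamma(t+1+k+b_h+\delta_{h,j})}\cdot\frac{\Gamma(t+1+k+a_h)}{\Gamma(t+1+a_h)}
=\prod_h\frac{(t+1+a_h)_k}{(t+1+b_h)_{k+\delta_{h,j}}}.
\]
Near-diagonality is used here. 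For \(h\ne j\) we have \(k\le m_j-1\le m_h\), and for \(h=j\) we have \(k+1\le m_j\). Hence each \((t+1+b_h)_{k+\delta_{h,j}}\) divides \((t+1+b_h)_{m_h}\), and the whole expression is a polynomial divided by \(D_{\boldsymbol\alpha,\boldsymbol n}\). The numerator has degree \(|\boldsymbol m|-qk-1+qk=|\boldsymbol m|-1\), while \(D_{\boldsymbol\alpha,\boldsymbol n}\) has degree \(|\boldsymbol n|=|\boldsymbol m|+1\). Thus the integrand is a rational function whose only poles are the nodes and which decays like \(t^{-2}\). The sum of its residues at the nodes is therefore zero, so the integral vanishes.

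\emph{The \(B\)-form: orthogonality.} Given \eqref{eq:B-Mellin}, orthogonality is immediate. The relevant moment is \(\int_0^1x^{\ell+\alpha_i}\mathcal B\,\mathrm dx=\M[\mathcal B](1+\alpha_i+\ell)\). This contains the factor \((\alpha_i+1-z)_{n_i}\) evaluated at \(z=1+\alpha_i+\ell\), which equals \((-\ell)_{n_i}=0\) for \(0\le\ell<n_i\).

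\emph{The \(B\)-form: realizability.} The real work is to show that \eqref{eq:B-Mellin} is the Mellin transform of some \(\sum_jB_jw_j^{\mathrm J}\) with \(\deg B_j\le m_j-1\). Write
\[
\frac{\Gamma(z\one_q+\boldsymbol a)}{\Gamma(z\one_q+\boldsymbol b+\boldsymbol m)}=\frac{\M[w_0^{\mathrm J}](z)}{(z\one_q+\boldsymbol b)_{\boldsymbol m}}.
\]
The right side of \eqref{eq:B-Mellin} is then \(\M[w_0^{\mathrm J}]\,R(z)/(z\one_q+\boldsymbol b)_{\boldsymbol m}\), where \(\deg R\le|\boldsymbol n|=|\boldsymbol m|-1\). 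On the other hand,
\[
\M[x^kw_j^{\mathrm J}](z)=\M[w_0^{\mathrm J}](z)\,\frac{(z\one_q+\boldsymbol a)_k}{(z\one_q+\boldsymbol b)_k\,(z+b_j+k)}.
\]
It therefore suffices to prove a lemma: for near-diagonal \(\boldsymbol m\), the \(|\boldsymbol m|\) rational functions
\[
\frac{(z\one_q+\boldsymbol a)_k}{(z\one_q+\boldsymbol b)_k(z+b_j+k)},\qquad 0\le k<m_j,
\]
span the space \(\{R/(z\one_q+\boldsymbol b)_{\boldsymbol m}:\deg R\le|\boldsymbol m|-1\}\), which also has dimension \(|\boldsymbol m|\).

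Near-diagonality again ensures that each of these functions lies in that space. Spanning is the main obstacle. I would first try induction along the step-line ordering of \(\boldsymbol m\): when one \(m_j\) increases by one, the newly admitted generator should contribute a new pole \(z=-b_j-m_j+1\), giving a triangular structure. If this fails for coincident or resonant \(b\)'s, I would prove the result for generic \(\boldsymbol b\) and argue by continuity. Alternatively, I would use the injectivity of the moment matrix, which the determinant hypothesis supplies, to deduce the spanning property by dimension count. Once the lemma holds, inverse Mellin transformation gives the \(B\)-form with the required structure. Finally, the overall constant \(-1\) fixes the normalization, and uniqueness follows from the nonzero moment determinant.
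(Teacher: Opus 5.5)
Your treatment of the $A$-form is correct. The residue expansion gives the required shape, and Fubini is legitimate on small circles about the nodes. Near-diagonality cancels every factor $(t+1+b_h)_{k+\delta_{h,j}}$, so the integrand is a rational function that is $\mathrm O(t^{-2})$ and has poles only at the enclosed nodes. The paper does not prove this theorem; it recalls it from the companion paper. Your argument is the same residue-at-infinity mechanism the paper uses for the second-kind analogue in Theorem~\ref{thm:Jacobi-LII-form-confluence}.

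For the uniqueness clause you also need the contour vector to be nonzero, and this is immediate. At the top node $\alpha_i+n_i-1$ the residue of $1/D_{\boldsymbol\alpha,\boldsymbol n}$ is nonzero. There $P_{\boldsymbol b,\boldsymbol m}$ and the gamma quotient are positive, by $\alpha_i+a_h>-1$ and $b_h>a_h$. Hence every $A_i$ with $n_i\ge1$ has a nonzero coefficient of $x^{\alpha_i+n_i-1}$.

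The realizability step for \eqref{eq:B-Mellin} is where the proposal falls short. Your spanning lemma is false under the standing parameter hypotheses alone. So neither the step-line induction nor the fallback ``generic $\boldsymbol b$ plus continuity'' can prove it: nonvanishing of the coefficient determinant is an open condition and is not inherited in the limit.

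Besides the obvious failure $b_1=b_2$ at $\boldsymbol m=(1,1)$, take $q=2$, $\boldsymbol a=(0,3/2)$, $\boldsymbol b=(1/2,2)$, $\boldsymbol m=(2,1)$. Then $\M[w_1^{\mathrm J}](z)=1/(z(z+1))$ and $\M[w_2^{\mathrm J}](z)=(z+\tfrac12)/(z(z+1)(z+2))$, so $xw_1^{\mathrm J}=-\tfrac13w_1^{\mathrm J}+\tfrac43w_2^{\mathrm J}$. Over the common denominator $(z+\tfrac12)(z+\tfrac32)(z+2)$, the numerators of your three generators are $(z+\tfrac32)(z+2)$, $(z+\tfrac12)(z+\tfrac32)$ and $z(z+\tfrac32)$. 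All of them vanish at $-\tfrac32$, whereas $(\boldsymbol\alpha+(1-z)\one_p)_{\boldsymbol n}$ cannot vanish there because $\alpha_i=\alpha_i+a_1>-1$. Hence \eqref{eq:B-Mellin} is then not the transform of any admissible $B$-form. In your induction, the new pole $-b_1-1$ is cancelled exactly by the zero $-a_2$ of $(z\one_q+\boldsymbol a)_k$.

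The statement is rescued only by its determinant hypothesis, so your third option is the proof and should replace the other two. Write $R_{j,k}$ for the numerator of $\M[x^kw_j^{\mathrm J}]/\M[w_0^{\mathrm J}]$ over $(z\one_q+\boldsymbol b)_{\boldsymbol m}$. Suppose $\sum_{j,k}\mu_{j,k}R_{j,k}\equiv0$. Then $\sum_{j,k}\mu_{j,k}x^kw_j^{\mathrm J}$ has zero Mellin transform, so all its moments vanish. Nonvanishing of the square moment determinant for a column index $\boldsymbol n+\boldsymbol e_{i_0}$ then forces $\mu=0$. An injective map between two spaces of dimension $|\boldsymbol m|$ is onto, which is your lemma.

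You can also bypass the lemma. Put $g=\M[w_0^{\mathrm J}]/(z\one_q+\boldsymbol b)_{\boldsymbol m}$. The nonzero solution vector, which is unique up to scale, has Mellin transform $gS$ with $\deg S\le|\boldsymbol m|-1$. The polynomial $S$ vanishes at the $|\boldsymbol n|$ points $1+\alpha_i+\ell$, $\ell<n_i$, which are distinct because equal nodes would give equal columns in the nonzero determinant. Hence $S$ is a multiple of $(\boldsymbol\alpha+(1-z)\one_p)_{\boldsymbol n}$, and the multiple is nonzero by the same injectivity.

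This is how the paper argues in the second-kind setting (Corollary~\ref{cor:LII-component-normality}). The same vanishing of connection determinants for the first kind is illustrated in the example following Proposition~\ref{prop:AT-normality-Laguerre-admissible}.
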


\subsection{Weights and mixed-type forms for Laguerre of the first kind}

Write \(q=r+s\), with \(r\ge0\) and \(s\ge1\), and set
\[
	\mathcal G_a(x)=x^a\e^{-x}\boldsymbol 1_{(0,\infty)}(x),
	\qquad \M[\mathcal G_a](z)=\Gamma(z+a).
\]
Take
\[
	\boldsymbol a\in(-1,\infty)^q,\qquad
	\boldsymbol b\in(-1,\infty)^r,\qquad
	a_h<b_h\quad(1\le h\le r).
\]
The base density and its Mellin transform are
\begin{align}
	w_0^{\mathrm L}(x;\boldsymbol a,\boldsymbol b)
	&\coloneq
	\mathcal B_{a_1,b_1}*\cdots*\mathcal B_{a_r,b_r}
	*\mathcal G_{a_{r+1}}*\cdots*\mathcal G_{a_q}(x),
	\label{eq:w0-explicit-convolution}
	\\
	\M[w_0^{\mathrm L}](z)
	&=
	\frac{\Gamma(z\one_q+\boldsymbol a)}
	{\Gamma(z\one_r+\boldsymbol b)}.
	\label{eq:w0-Laguerre-Mellin}
\end{align}
Define
\begin{equation}
	\label{eq:wh-def}
	w_h^{\mathrm L}(x)
	\coloneq
	w_0^{\mathrm L}(x;\boldsymbol a,\boldsymbol b+\boldsymbol e_h),
	\qquad 1\le h\le r,
\end{equation}
and, with \(\thetaop=-x\mathrm d/\mathrm dx\),
\[
	v_\ell^{\mathrm L}(x)
	\coloneq
	\thetaop^{\ell-1}w_0^{\mathrm L}(x),
	\qquad 1\le\ell\le s.
\]
The row vector is
\(\mathbf U^{\mathrm L}=(w_1^{\mathrm L},\ldots,w_r^{\mathrm L},
v_1^{\mathrm L},\ldots,v_s^{\mathrm L})\), and the mixed-type matrix is
\begin{equation}
	\label{eq:mixed-Laguerre-matrix}
	\mathrm d\Lagmat(x)
	=
	[U_j^{\mathrm L}(x)x^{\beta_i}]_{j,i}\,\mathrm dx,
	\qquad x>0,
\end{equation}
where \(\boldsymbol\beta\in\mathbb R^p\) is required to satisfy
\[
	\beta_i+a_j>-1,
	\qquad 1\le i\le p,\quad 1\le j\le q.
\]

There is one notational reversal to keep explicit. In the generic convention
above the power index is written first, whereas the explicit Laguerre
notation writes the row index first:
\[
	\mathcal A^{\mathrm L}_{\boldsymbol\lambda,\boldsymbol m}
	=\mathcal A_{\boldsymbol m,\boldsymbol\lambda},
	\qquad
	\mathcal B^{\mathrm L}_{\boldsymbol\lambda,\boldsymbol m}
	=\mathcal B_{\boldsymbol m,\boldsymbol\lambda}.
\]

For
\(\boldsymbol\lambda=(\boldsymbol n,\boldsymbol\eta)
\in\mathbb N_0^r\times\mathbb N_0^s\), put
\(N=|\boldsymbol n|+|\boldsymbol\eta|\). The index is called
Laguerre admissible when the full multi-index is near the diagonal and
	\(\boldsymbol\eta=\boldsymbol\sigma_s(|\boldsymbol\eta|)\) lies on the
	ordered \(s\)-step-line. In particular, the two
inequalities needed later are
\begin{align}
	n_h-1&\le n_u,
	&&n_h\ge1,
	\label{eq:Laguerre-admissibility-nn}
	\\
	\eta_\ell-1&\le n_u,
	&&\eta_\ell\ge1.
	\label{eq:Laguerre-admissibility-etan}
\end{align}

\Needspace{6\baselineskip}
The following determinant formula and normality statements are recalled
from \cite[Section~4]{Manas2026HypergeometricMixed}, where they are proved.
They specify the conditions under which the Laguerre representatives of the first kind
used below have the required nonzero normalization.

For \(\boldsymbol\lambda=(\boldsymbol n,\boldsymbol\eta)\), put
\(N\coloneq|\boldsymbol\lambda|\) and define
\begin{equation}
 D_{\boldsymbol\lambda}(z)\coloneq(z\one_r+\boldsymbol b)_{\boldsymbol n},\;
 G_0(z)\coloneq\frac{\Gamma(z\one_q+\boldsymbol a)}{\Gamma(z\one_r+\boldsymbol b)},\;
 c_{\boldsymbol\lambda}\coloneq
 \prod_{h<j\le r}(b_j-b_h)^{\min(n_h,n_j)}
 \prod_{\substack{1\le j\le r\\1\le\rho\le q\\0\le u<n_j}}(b_j+1-a_\rho)_u.
 \label{eq:Laguerre-normality-coefficient-determinant}
\end{equation}
For \(|\boldsymbol m|=N\), order the column nodes
\(z_v=\beta_i+k+1\), \(0\le k<m_i\), by increasing degree and then
component, and use the analogous row order.
In this order the moment matrix for Laguerre of the first kind is
\[
G_{\boldsymbol\lambda,\boldsymbol m}\coloneq
\left[\int_0^\infty x^{k+\ell+\beta_i}U_j^{\mathrm L}(x)\,\mathrm dx
\right]_{(j,k),(i,\ell)},
\qquad 0\le k<\lambda_j,\quad 0\le\ell<m_i,
\]
where \(\lambda_j=n_j\) for \(1\le j\le r\) and
\(\lambda_{r+\ell}=\eta_\ell\) for \(1\le\ell\le s\).

\begin{proposition}[Exact Laguerre determinants and normality]
\label{prop:AT-normality-Laguerre-admissible}
Assume the standing integrability hypotheses and that
\(\boldsymbol\lambda\) is Laguerre admissible.
For \(|\boldsymbol m|=N\), the square mixed moment matrix satisfies
\begin{equation}
 \det G_{\boldsymbol\lambda,\boldsymbol m}
 =c_{\boldsymbol\lambda}\prod_{u<v}(z_v-z_u)
   \prod_{v=1}^N\frac{G_0(z_v)}{D_{\boldsymbol\lambda}(z_v)}.
 \label{eq:Laguerre-normality-moment-determinant}
\end{equation}
Suppose \(c_{\boldsymbol\lambda}\ne0\) and
\(\beta_i-\beta_j\notin\mathbb Z\) for \(i\ne j\). Then the
\(A\)-balance \(|\boldsymbol m|=N+1\) has a one-dimensional solution
space, and every component with positive prescribed length has its
maximal degree. For \(N\ge1\), the \(B\)-balance
\(|\boldsymbol m|=N-1\) also has a one-dimensional solution space and
its complete form is nonzero. All its components attain their maximal
degrees if and only if the minors obtained by deleting each highest-degree
weighted row from the rectangular moment matrix are nonzero.
\end{proposition}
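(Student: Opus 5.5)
The determinant formula \eqref{eq:Laguerre-normality-moment-determinant} comes from a Mellin-transform reduction. Every entry of \(G_{\boldsymbol\lambda,\boldsymbol m}\) is a Mellin moment of the form \(\M[U_j^{\mathrm L}](z_v+k)\), with column node \(z_v=\beta_i+\ell+1\).

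For the rows \(w_h^{\mathrm L}\),
\[
\M[w_h^{\mathrm L}](z)=\frac{G_0(z)}{z+b_h}.
\]
For the Euler rows, \(\M[\thetaop^{\ell-1}w_0^{\mathrm L}](z)=z^{\ell-1}G_0(z)\), by integration by parts. Using \(\Gamma(z+k+a)=(z+a)_k\Gamma(z+a)\), each row entry becomes
\[
G_0(z)\,R_{j,k}(z),\qquad R_{j,k}(z)=\frac{(z\one_q+\boldsymbol a)_k}{(z\one_r+\boldsymbol b)_k}\cdot\frac{1}{z+b_j+k}
\]
for the \(w\)-rows, and \(z^{\ell-1}(z\one_q+\boldsymbol a)_k/(z\one_r+\boldsymbol b)_k\) for the \(v\)-rows. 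Factoring \(G_0(z_v)/D_{\boldsymbol\lambda}(z_v)\) out of each column leaves the entries
\[
D_{\boldsymbol\lambda}(z)\,R_{j,k}(z).
\]

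The next step is to show that, for admissible \(\boldsymbol\lambda\), these \(N\) functions are polynomials of degree \(<N\). Here admissibility enters: \eqref{eq:Laguerre-admissibility-nn} and \eqref{eq:Laguerre-admissibility-etan} guarantee that the denominators \((z+b_h)_{k}\), together with the pole at \(z+b_j+k\), are absorbed by \((z+b_h)_{n_h}\). The step-line condition keeps the Euler degrees \(\ell-1+k(q-r)\) within the bound.

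A triangular change of basis within each row block, by divided differences in \(k\), then reduces the family to a basis of monomials times known constants. The matrix becomes a Vandermonde matrix in the \(z_v\), giving \(\prod_{u<v}(z_v-z_u)\). The leading constant is the product of the triangular pivots. I expect the pivots to be the factors \((b_j+1-a_\rho)_u\), arising as residues \(\prod_\rho(-b_j-u+a_\rho)_{\cdot}\)-type values, together with \((b_j-b_h)^{\min(n_h,n_j)}\) from the partial fractions between blocks. Computing this constant \(c_{\boldsymbol\lambda}\) exactly is the main obstacle. I would do it by evaluating the polynomial entries at the poles \(z=-b_j-u\) and using a Cauchy--Vandermonde argument, checking the near-diagonal bookkeeping carefully.

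For normality, \(\beta_i-\beta_j\notin\mathbb Z\) makes the \(z_v\) distinct, and \(G_0(z_v)\ne0\) and \(D_{\boldsymbol\lambda}(z_v)\) is finite by the integrability hypotheses. Hence the square determinant is nonzero whenever \(c_{\boldsymbol\lambda}\ne0\).

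For the \(A\)-balance \(|\boldsymbol m|=N+1\), the \(N\times(N+1)\) system has full rank, so its kernel is one-dimensional. Suppose some component \(A_i\) failed to reach degree \(m_i-1\). Then the solution would solve the square system for \(\boldsymbol m-\boldsymbol e_i\), whose determinant is again of the above form and nonzero. That forces the solution to vanish, a contradiction.

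For the \(B\)-balance, one uses the transposed rectangular matrix with \(|\boldsymbol m|=N-1\). Its \((N-1)\times N\) minors include determinants for smaller admissible-type indices. I would select \(\boldsymbol\lambda-\boldsymbol e_j\) still admissible, namely the last step-line entry or a maximal \(n_h\), to get rank \(N-1\) and hence a one-dimensional kernel.

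The complete \(B\)-form is nonzero because the row functions \(x^kU_j^{\mathrm L}\) are linearly independent. This follows from the Mellin transforms: the functions \(G_0R_{j,k}\) are independent rational multiples of \(G_0\).

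Finally, a component \(B_j\) attains maximal degree exactly when the kernel vector has a nonzero coordinate at the highest-degree row of block \(j\). By Cramer's rule, that coordinate is, up to sign, the minor obtained by deleting that row, which gives the stated equivalence.
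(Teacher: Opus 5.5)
You have the right framework. It is also the one the paper uses for its own analogous determinant evaluations (Proposition~\ref{prop:LII-rational-basis}, Theorem~\ref{thm:LII-moment-determinants}, Proposition~\ref{prop:canonical-Hermite-vector-edge-determinant}); for this statement the paper only cites \cite{Manas2026HypergeometricMixed}. After removing \(G_0(z_v)/D_{\boldsymbol\lambda}(z_v)\) from each column, the matrix factors as the Vandermonde matrix in the nodes times the coefficient matrix \(C\) of the \(N\) polynomials \(P_{j,k}=D_{\boldsymbol\lambda}R_{j,k}\in\mathbb P_{N-1}\). Your admissibility check of polynomiality and degree is correct, with one slip: the Euler rows carry \((z+k)^{\ell-1}\), not \(z^{\ell-1}\), since \(\M[v_\ell](z+k)=(z+k)^{\ell-1}G_0(z+k)\).

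The gap is that the whole content of \eqref{eq:Laguerre-normality-moment-determinant} is then the identity \(\det C=\pm c_{\boldsymbol\lambda}\), with the sign fixed by the orderings, and you do not prove it. You state what you expect the pivots to be and call the computation the main obstacle. The mechanism you sketch would not produce it either. Inside a fixed row block, \(P_{h,k}\) has degree \(|\boldsymbol n|-1+sk\), so no triangular change within the block reduces it to monomials. Also, \((b_j-b_h)^{\min(n_h,n_j)}\) is not a product of pivots but what survives a cancellation. Because the normality hypothesis is phrased through \(c_{\boldsymbol\lambda}\), every later step depends on this missing identity: nonsingular square minors, independence of the \(x^kU_j^{\mathrm L}\), and nonvanishing of the \(B\)-form. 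Independence is not automatic, as the example right after the proposition shows.

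The computation goes through as follows. Order the pairs level-first (by \(k\), then by row) and compare \(R_{j,k}\) with the partial-fraction basis \(\{(z+b_h+u)^{-1}:0\le u<n_h\}\cup\{z^d:0\le d<|\boldsymbol\eta|\}\).

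At level \(k\), \(R_{h,k}\) has one new simple pole, at \(-b_h-k\), with residue \(\prod_\rho(a_\rho-b_h-k)_k/\prod_u(b_u-b_h-k)_k\). Its older poles \(-b_u-i\), \(i<k\), and its polynomial part, of degree \(sk-1\), are already spanned because \(n_u,\eta_s\ge n_h-1\ge k\).

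Likewise \(R_{\ell,k}\) contributes the new monomial \(z^{sk+\ell-1}\) with coefficient one. Its poles are older ones, since \(n_u\ge\eta_\ell-1\ge k\), and the step-line makes \((\ell,k)\mapsto sk+\ell-1\) a bijection onto \(\{0,\ldots,|\boldsymbol\eta|-1\}\).

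The partial-fraction basis multiplied by \(D_{\boldsymbol\lambda}\) has coefficient determinant equal, up to sign, to the Vandermonde product of the roots \(\{b_h+u\}\). Use \((a_\rho-b_h-k)_k=(-1)^k(b_h+1-a_\rho)_k\). The factorials then cancel. For each pair \(h<j\), every factor \(b_j-b_h+d\) with \(d\ne0\) cancels between the root Vandermonde and the residue denominators, exactly because \(|n_h-n_j|\le1\). What remains is \((b_j-b_h)^{\min(n_h,n_j)}\). Coincident poles, for integer \(b_j-b_h\), follow by continuity.

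Once this is in place, your \(A\)-argument and your Cramer-rule criterion for \(B\) are fine. For the \(B\)-rank it is simpler to adjoin one column node and use \(G_{\boldsymbol\lambda,\boldsymbol m+\boldsymbol e_{i_0}}\) with the same \(\boldsymbol\lambda\). This gives rank \(N-1\) and a nonzero adjacent moment, hence a nonzero form, in one step. Your route through \(\boldsymbol\lambda-\boldsymbol e_j\) additionally needs two things. First, you must lower a global maximum, on the \(\boldsymbol\eta\)-side the last entry equal to \(\eta_1\), so that \(\boldsymbol\lambda-\boldsymbol e_j\) stays admissible. Second, you need \(c_{\boldsymbol\lambda-\boldsymbol e_j}\ne0\), which holds because lowering an entry only deletes factors of \(c_{\boldsymbol\lambda}\). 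Finally, what matters about \(D_{\boldsymbol\lambda}(z_v)\) is that it is nonzero, not that it is finite.
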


The condition \(c_{\boldsymbol\lambda}\ne0\) concerns the whole row
family, not only the paired differences \(b_h-a_h\). For example,
\(\boldsymbol a=(0,7/2)\), \(b_1=5/2\), and
\(\boldsymbol\lambda=(2,1)\) give
\(w_1=\e^{-x}\), \(v_1=(x+5/2)\e^{-x}\), and
\(c_{\boldsymbol\lambda}=0\). Thus these rows are dependent even
though \(b_1-a_1> |\boldsymbol\lambda|-1\).

\begin{remark}[Reduction to classical multiple Laguerre of the first kind]
	\label{rem:multiple-Laguerre-I-edge}
	For \(q=1\), necessarily \(r=0\) and \(s=1\), and
	\[
		x^{\beta_i}U_1^{\mathrm L}(x)
		=x^{a_1+\beta_i}\e^{-x}.
\]
	Thus the one-row specialization is the classical system of multiple
	Laguerre polynomials of the first kind.
\end{remark}

For \(\boldsymbol m\in\mathbb N_0^p\), set
\[
	D_{\boldsymbol\beta,\boldsymbol m}(t)
	\coloneq
	\prod_{i=1}^p(\beta_i-t)_{m_i}.
\]

The formulas in the next theorem are those of
\cite[Section~4]{Manas2026HypergeometricMixed}. Their normalization
under the present hypotheses is ensured by
Proposition~\ref{prop:AT-normality-Laguerre-admissible}.

\begin{theorem}[Normalized Laguerre representatives of the first kind]
	\label{thm:mixed-Laguerre-forms}
	Let \(\boldsymbol\lambda=(\boldsymbol n,\boldsymbol\eta)\) be Laguerre
	admissible. Assume the standing integrability conditions,
	\(c_{\boldsymbol\lambda}\ne0\) in
	\eqref{eq:Laguerre-normality-coefficient-determinant}, and
	\(\beta_i-\beta_j\notin\mathbb Z\) for \(i\ne j\). If
	\(|\boldsymbol m|=N+1\) and the nodes
	\[
		\{\beta_i+k:1\le i\le p,\ 0\le k<m_i\}
\]
	are pairwise distinct, then
	\begin{equation}
		\label{eq:Laguerre-A-contour}
		\mathcal A_{\boldsymbol\lambda,\boldsymbol m}^{\mathrm L}(x)
		=
		\frac{1}{2\pi\mathrm i}\int_\Sigma
		\frac{\Gamma(t\one_r+\boldsymbol b+\boldsymbol n+\one_r)}
		{\Gamma(t\one_q+\boldsymbol a+\one_q)}
		\frac{x^t}{D_{\boldsymbol\beta,\boldsymbol m}(t)}\,\mathrm dt.
	\end{equation}
	Here \(\Sigma\) is a positively oriented finite union of simple closed
	contours enclosing precisely those nodes and no other pole of the
	integrand. If \(N\ge1\) and \(|\boldsymbol m|=N-1\), then
	\begin{equation}
		\label{eq:Laguerre-B-Mellin}
		\M[\mathcal B_{\boldsymbol\lambda,\boldsymbol m}^{\mathrm L}](z)
		=
		\frac{\Gamma(z\one_q+\boldsymbol a)}
		{\Gamma(z\one_r+\boldsymbol b+\boldsymbol n)}
		(\boldsymbol\beta+(1-z)\one_p)_{\boldsymbol m}.
	\end{equation}
	When the \(a_\rho\)'s are pairwise nonresonant, the same complete form is
	\begin{multline}
		\label{eq:Laguerre-B-complete-hypergeometric}
		\mathcal B_{\boldsymbol\lambda,\boldsymbol m}^{\mathrm L}(x)
		=\sum_{\rho=1}^q
		\frac{\Gamma(\boldsymbol a^{*\rho}-a_\rho\one_{q-1})
		(\boldsymbol\beta+(a_\rho+1)\one_p)_{\boldsymbol m}}
		{\Gamma(\boldsymbol b+\boldsymbol n-a_\rho\one_r)}x^{a_\rho}
		\\*[-2pt]
		\times
		\pFq{r+p}{q+p-1}
		{(a_\rho+1)\one_r-\boldsymbol b-\boldsymbol n,\,
		\boldsymbol\beta+\boldsymbol m+(a_\rho+1)\one_p}
		{(a_\rho+1)\one_{q-1}-\boldsymbol a^{*\rho},\,
		\boldsymbol\beta+(a_\rho+1)\one_p}
		{(-1)^s x}.
	\end{multline}
	Equivalently,
	\begin{equation}
		\label{eq:Laguerre-B-complete-Meijer-G}
		\mathcal B_{\boldsymbol\lambda,\boldsymbol m}^{\mathrm L}(x)
		=
		G_{p+r,p+q}^{q,p}
		\left(x\,\middle|\,
		\begin{matrix}-\boldsymbol\beta-\boldsymbol m,\,\boldsymbol b+\boldsymbol n\\
		\boldsymbol a,\,-\boldsymbol\beta\end{matrix}\right).
	\end{equation}
\end{theorem}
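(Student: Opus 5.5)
The statement is recalled from \cite[Section~4]{Manas2026HypergeometricMixed}; to reprove it I would verify each formula through its Mellin transform and then invoke Proposition~\ref{prop:AT-normality-Laguerre-admissible} for uniqueness. The basic tool is the shift rule
\[
\M[x^kU_j^{\mathrm L}](z)=\M[U_j^{\mathrm L}](z+k),
\]
together with the explicit transforms of the rows:
\[
\M[w_h^{\mathrm L}](z)=\frac{G_0(z)}{z+b_h},\qquad
\M[v_\ell^{\mathrm L}](z)=z^{\ell-1}G_0(z).
\]
The second identity holds because \(\M[\thetaop f](z)=z\M[f](z)\), after integration by parts with vanishing boundary terms. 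So every moment \(\int_0^\infty x^{k+\beta_i+\ell}U_j\,\mathrm dx\) is \(G_0\) at a shifted node, times a rational factor.

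\textbf{The \(B\)-form.} Define \(\mathcal B\) by the Mellin transform \eqref{eq:Laguerre-B-Mellin}.
\begin{enumerate}[label=(\roman*)]
\item Show it lies in the span of \(x^kU_j^{\mathrm L}\) with the correct degrees. Write
\[
\frac{1}{(z\one_r+\boldsymbol b)_{\boldsymbol n}}\,(\boldsymbol\beta+(1-z)\one_p)_{\boldsymbol m}.
\]
Expand it by partial fractions in \(z\) into the terms \(1/(z+b_h+k)\), \(k<n_h\), plus a polynomial part. Since \(|\boldsymbol m|=N-1\), the polynomial part has degree \(\le|\boldsymbol\eta|-1\) when \(\boldsymbol b\) is distinct.
\item Match the pieces to rows. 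The pole terms correspond to \(x^kw_h\), via shifted transforms \(G_0(z+k)/(z+k+b_h)\) and a triangular change. The polynomial part corresponds to the \(v_\ell\). Here the step-line condition on \(\boldsymbol\eta\) ensures the needed Euler powers are available.
\item Get the orthogonality. The moments \(\int x^{\beta_i+k}\mathcal B\,\mathrm dx=\M[\mathcal B](\beta_i+k+1)\) vanish for \(k<m_i\), because the factor \((\boldsymbol\beta+(1-z)\one_p)_{\boldsymbol m}\) vanishes at \(z=\beta_i+k+1\).
\end{enumerate}

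\textbf{The \(A\)-form.} For the contour formula \eqref{eq:Laguerre-A-contour}, evaluate by residues at the simple nodes \(t=\beta_i+k\). This gives \(\sum_i A_i(x)x^{\beta_i}\), with \(\deg A_i\le m_i-1\). The moment against \(x^kU_j\) is obtained by exchanging integrals and using the Mellin transforms above. The \(\Gamma(t+a_\rho+1)\) factors cancel against \(G_0(t+k+1)\), leaving
\[
\frac{1}{2\pi\mathrm i}\int_\Sigma\frac{R_{j,k}(t)}{D_{\boldsymbol\beta,\boldsymbol m}(t)}\,\mathrm dt,
\]
where \(R_{j,k}\) is a polynomial. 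For \(w_h\), \(R_{j,k}\) is \(\prod_u(t+b_u+1)_{n_u}\) divided by \((t+k+1+b_h)\), with \(k<n_h\), and this is still a polynomial. For \(v_\ell\), \(R_{j,k}\) is the same product times \((t+k+1)^{\ell-1}\). Admissibility \eqref{eq:Laguerre-admissibility-nn}–\eqref{eq:Laguerre-admissibility-etan} bounds \(\deg R_{j,k}\le|\boldsymbol m|-2\). The contour then deforms to infinity with vanishing integral, since the degree deficit is at least \(2\).

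\textbf{Remaining representations and the main obstacle.} The hypergeometric form \eqref{eq:Laguerre-B-complete-hypergeometric} comes from inverting the Mellin transform and closing the contour to the left. The simple poles of \(\Gamma(z+a_\rho)\) at \(z=-a_\rho-\nu\) are simple because the \(a_\rho\) are nonresonant. Summing residues gives the \({}_{r+p}F_{q+p-1}\) series. The sign \((-1)^s\) comes from the reflection formula applied to the \(q-1-r=s-1\) surplus gamma quotients. Comparing with \eqref{eq:Meijer-G-Mellin-transform} gives the \(G\)-form directly. Uniqueness and normalization then follow from Proposition~\ref{prop:AT-normality-Laguerre-admissible}. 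The main obstacle I expect is step (ii) for the \(B\)-form: proving the partial-fraction and polynomial parts land exactly in the admissible span with the right degrees. When the \(b_h\) differ by integers, I would first prove the distinct case and then pass to coincidences by continuity in \(\boldsymbol b\), which is legitimate because \(c_{\boldsymbol\lambda}\ne0\).
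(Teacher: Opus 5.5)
Your overall architecture is right: row transforms, residue at infinity for the $A$-orthogonality, zeros of the Pochhammer factor for the $B$-orthogonality, the left residue series, and the Meijer-$G$ comparison. Step (ii), however, which you flag yourself, is a genuine gap as planned. You invoke $c_{\boldsymbol\lambda}\ne0$ only for the continuity in $\boldsymbol b$, but it is needed already in your ``distinct case''. Since $G_0(z+k)=G_0(z)\,(z\one_q+\boldsymbol a)_k/(z\one_r+\boldsymbol b)_k$, the transform of $x^kw_h$ is $G_0(z)$ times a rational function with three features: the new pole at $-b_h-k$, poles at $-b_u-j$ ($j<k$), and a polynomial part of degree $sk-1$. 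The $x^kv_\ell$ also carry poles. So ``poles $\leftrightarrow w_h$, polynomial part $\leftrightarrow v_\ell$'' describes only leading terms. Your triangular change is invertible only if its diagonal entries, the residues at $-b_h-k$, are nonzero. Those residues contain $\prod_\rho(b_h+1-a_\rho)_k$, and they can vanish even when the $b_h$ are distinct. In the paper's example $\boldsymbol a=(0,7/2)$, $b_1=5/2$, $\boldsymbol\lambda=(2,1)$, the Mellin transforms of $w_1,xw_1,v_1$ span only $\Gamma(z)\mathbb P_1$. But \eqref{eq:Laguerre-B-Mellin} equals $\Gamma(z)(\boldsymbol\beta+(1-z)\one_p)_{\boldsymbol m}/(z+7/2)$, which has a genuine pole at $-7/2$ because $\beta_i>-1$. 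So no decomposition exists there.

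The missing step is short. Multiplying by $D_{\boldsymbol\lambda}(z)/G_0(z)$ turns the transforms of the $x^kU_j$ into exactly the $N$ polynomials on the left of \eqref{eq:B-component-polynomial-system}, and these lie in $\mathbb P_{N-1}$ by admissibility. Evaluating at $N$ distinct nodes factors the square moment matrix as $\operatorname{diag}\bigl(G_0(z_v)/D_{\boldsymbol\lambda}(z_v)\bigr)$ times a Vandermonde matrix times their coefficient matrix $C$. Comparing with \eqref{eq:Laguerre-normality-moment-determinant} gives $\det C=\pm c_{\boldsymbol\lambda}$. Thus $c_{\boldsymbol\lambda}\ne0$ says precisely that these polynomials form a basis of $\mathbb P_{N-1}$, so $(\boldsymbol\beta+(1-z)\one_p)_{\boldsymbol m}$ lies in their span. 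This needs no partial fractions, no distinctness of the $b_h$, and no continuity argument. The paper's alternative derivation in Proposition~\ref{prop:LI-B-terminating-coefficients} instead constructs the coefficients explicitly from a telescoping identity for $1/(t-z)$, under stronger genericity assumptions.

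On the $A$-side your method is the one the paper uses in Lemma~\ref{lem:canonical-Hermite-direct-A-orthogonality}, but your numerators are wrong for $k\ge1$. The factors $\Gamma(t+a_\rho+1)$ do not cancel completely: they leave $(t\one_q+\boldsymbol a+\one_q)_k$, and the $b$-gammas leave shifted Pochhammer symbols. For $x^kw_h$ the numerator is
\[
(t\one_q+\boldsymbol a+\one_q)_k\,(t+b_h+k+2)_{n_h-k-1}\prod_{u\ne h}(t+b_u+k+1)_{n_u-k},
\]
of degree $sk+|\boldsymbol n|-1$. For $x^kv_\ell$ it is $(t+k+1)^{\ell-1}(t\one_q+\boldsymbol a+\one_q)_k\prod_u(t+b_u+k+1)_{n_u-k}$, of degree $sk+|\boldsymbol n|+\ell-1$. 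With your formula, admissibility would play almost no role, which should have been a warning sign. With the correct one, near-diagonality ($n_u\ge k$) is what makes these expressions polynomials at all. The bound $N-1$ then becomes $sk\le|\boldsymbol\eta|$, resp.\ $sk+\ell\le|\boldsymbol\eta|$, which uses $n_h-1\le\eta_s$ and the ordered step-line. With these two corrections the proposal goes through.
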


In particular, the individual \(A\)-components in
\cite[Section~4]{Manas2026HypergeometricMixed} are terminating
\({}_{p+r}F_{p+q-1}\) polynomials. For \(m_i\ge1\), define
\[
 \kappa_i^{\mathrm L}\coloneq-
 \frac{\Gamma((\beta_i+1)\one_r+\boldsymbol b+\boldsymbol n)}
 {(m_i-1)!\,\Gamma((\beta_i+1)\one_q+\boldsymbol a)
  (\boldsymbol\beta^{*i}-\beta_i\one_{p-1})_{\boldsymbol m^{*i}}}.
\]
With the normalization of \eqref{eq:Laguerre-A-contour}, the formula is
\begin{equation}
 A_{\boldsymbol\lambda,\boldsymbol m}^{(i),\mathrm L}(x)
 =\kappa_i^{\mathrm L}
 \pFq{p+r}{p+q-1}
 {1-m_i,\;(\beta_i+1)\one_r+\boldsymbol b+\boldsymbol n,\;
  (\beta_i+1)\one_{p-1}-\boldsymbol\beta^{*i}-\boldsymbol m^{*i}}
 {(\beta_i+1)\one_q+\boldsymbol a,\;
  (\beta_i+1)\one_{p-1}-\boldsymbol\beta^{*i}}{x}.
 \label{eq:LI-individual-A-terminating}
\end{equation}
In \eqref{eq:LI-individual-A-terminating}, the upper parameter
\(1-m_i\) terminates the series at degree \(m_i-1\);
the component is zero when \(m_i=0\). This is the residue evaluation of
the cited contour formula, with the same normalization.

If
\[
	B^{(h)}(x)=\sum_{k=0}^{n_h-1}b_{h,k}x^k,
	\qquad
	D^{(\ell)}(x)=\sum_{k=0}^{\eta_\ell-1}d_{\ell,k}x^k,
\]
then the polynomial reconstruction in
\cite[Section~4]{Manas2026HypergeometricMixed} determines the
components of the \(B\)-form through
\begin{multline}
	\label{eq:B-component-polynomial-system}
	\sum_{h=1}^r\sum_{k=0}^{n_h-1}
	b_{h,k}(z\one_q+\boldsymbol a)_k
	(z\one_r+\boldsymbol b+k\one_r+\boldsymbol e_h)_{\boldsymbol n-k\one_r-\boldsymbol e_h}
	\\*
	+
	\sum_{\ell=1}^s\sum_{k=0}^{\eta_\ell-1}
	d_{\ell,k}(z\one_q+\boldsymbol a)_k
	(z\one_r+\boldsymbol b+k\one_r)_{\boldsymbol n-k\one_r}(z+k)^{\ell-1}
	=
	(\boldsymbol\beta+(1-z)\one_p)_{\boldsymbol m}.
\end{multline}

\section{From Jacobi to Laguerre of the first kind by rescaling near zero}
\label{sec:left-endpoint-confluence}

The limit is first established for one Mellin-normalized beta weight and
then applied simultaneously to the last \(s\) factors of the Jacobi weight vector.
Writing \(b=a+1+\kappa\), for \(0<x<\kappa\) this weight satisfies
\begin{equation}
\label{eq:beta-to-gamma-confluence}
\kappa^{a}\Gamma(\kappa+1)
\mathcal B_{a,b}\!\left(\frac{x}{\kappa}\right)
=
x^{a}\left(1-\frac{x}{\kappa}\right)^{\!\kappa}
\;\xrightarrow[\kappa\to+\infty]{}\;
x^{a}\e^{-x}
=
\mathcal G_a(x),
\end{equation}
uniformly on compact subsets of \((0,\infty)\), with relative error
\(\mathrm{O}(\kappa^{-1})\) as \(\kappa\to+\infty\).
The \(q=1\) Jacobi weight used below satisfies
\(w_1(\,\cdot\,;a,b)=\mathcal B_{a,b+1}\). Hence the choice
\(b=a+\kappa\) for \(w_1\) is exactly the choice
\(b+1=a+1+\kappa\) in \eqref{eq:beta-to-gamma-confluence}.

Let \(q=r+s\).  Fix pairwise distinct positive numbers
\(\omega_1,\ldots,\omega_s\) and introduce the Jacobi lower parameters
\begin{equation}
	\label{eq:Jacobi-Laguerre-confluent-parameters}
	b_{r+\ell}(t)\coloneq t\omega_\ell,
	\qquad
	\kappa_\ell(t)\coloneq
	t\omega_\ell-a_{r+\ell}-1,
	\qquad
	\ell\in\{1,\ldots,s\}.
\end{equation}
They are admissible for all sufficiently large \(t\).  Set
\begin{equation}
	\label{eq:Jacobi-Laguerre-scaling-constants}
	c_t\coloneq\prod_{\ell=1}^{s}\kappa_\ell(t),
	\qquad
	d_t\coloneq
	\prod_{\ell=1}^{s}
	\kappa_\ell(t)^{a_{r+\ell}}\Gamma(\kappa_\ell(t)+1),
\end{equation}
and write
\[
\boldsymbol b^{\,\mathrm J}(t)
\coloneq
\bigl(b_1,\ldots,b_r,b_{r+1}(t),\ldots,b_q(t)\bigr).
\]
Unless stated otherwise, \(t\to+\infty\), while \(x>0\),
\(\boldsymbol a\), \(\boldsymbol b\), \(\omega_1,\ldots,\omega_s\), the power
exponents, and the multi-indices remain fixed. When Mellin transforms are
used, \(z\) ranges over a compact subset of a strip on which all transforms
in question are analytic. Only \(b_{r+\ell}(t)\), \(\kappa_\ell(t)\),
\(c_t\), and \(d_t\) depend on \(t\).
For the scaled Jacobi base weight
\[
W_0^{(t)}(x)
\coloneq
d_t\,w_0\!\left(\frac{x}{c_t};
	\boldsymbol a,\boldsymbol b^{\,\mathrm J}(t)\right)
\]
the Mellin transform is
\begin{equation}
	\label{eq:Jacobi-Laguerre-base-Mellin-limit}
	\mathcal M[W_0^{(t)}](z)
	=
	d_t c_t^z
	\frac{\Gamma(z\one_q+\boldsymbol a)}
	{\Gamma(z\one_r+\boldsymbol b)
	 \prod_{\ell=1}^{s}\Gamma(z+t\omega_\ell)}
	\xrightarrow[t\to+\infty]{}
	\frac{\Gamma(z\one_q+\boldsymbol a)}{\Gamma(z\one_r+\boldsymbol b)}.
\end{equation}
Indeed,
\[
\frac{
	\kappa_\ell(t)^{z+a_{r+\ell}}
	\Gamma(\kappa_\ell(t)+1)
}{
	\Gamma(\kappa_\ell(t)+z+a_{r+\ell}+1)
}
\xrightarrow[t\to+\infty]{}1
\]
locally uniformly in \(z\). Thus
\(\mathcal M[W_0^{(t)}](z)\xrightarrow[t\to+\infty]{}\mathcal M[w_0](z)\) locally uniformly on
compact subsets of every strip where both transforms are analytic, where \(w_0\) is the
Laguerre base weight of the first kind \eqref{eq:w0-explicit-convolution}. In particular,
every fixed Mellin moment converges.

For \(h\in\{1,\ldots,r\}\), define
\[
W_h^{(t)}(x)
\coloneq
d_t\,w_h\!\left(\frac{x}{c_t};
	\boldsymbol a,\boldsymbol b^{\,\mathrm J}(t)\right).
\]
Its Mellin transform is
\[
\mathcal M[W_h^{(t)}](z)
=
\frac{\mathcal M[W_0^{(t)}](z)}{z+b_h},
\]
so every fixed Mellin moment of \(W_h^{(t)}\) converges to the corresponding
moment of \(w_h\) in \eqref{eq:wh-def}.

After this common change of scale, the moments of each of the last \(s\)
Jacobi weights are of order \(t^{-1}\). Put
\[
	\varepsilon_\ell(t)\coloneq (t\omega_\ell)^{-1},
	\qquad
	Z_\ell^{(t)}(x)\coloneq
	d_t\,w_{r+\ell}\!\left(
		\frac{x}{c_t};\boldsymbol a,\boldsymbol b^{\,\mathrm J}(t)
	\right).
\]
The Mellin moments of \(t\omega_\ell Z_\ell^{(t)}\) tend to those of \(w_0\).
Successive linear combinations recover the Laguerre weights of the first kind
\(w_0,\thetaop w_0,\ldots,\thetaop^{s-1}w_0\).
The \(k\)-th new weight uses only the first \(k\) old weights;
this is the reason for the triangular coefficients below.

For distinct nodes \(z_1,\ldots,z_k\), the Newton divided difference
of a function \(f\) is
\[
[z_1,\ldots,z_k]f\coloneq
\sum_{\ell=1}^k\frac{f(z_\ell)}{\prod_{h\ne\ell}(z_\ell-z_h)},
\qquad [z_1]f=f(z_1).
\]
For an analytic function, the value at coinciding nodes is
the continuous extension; in particular,
\([z,\ldots,z]f=f^{(k-1)}(z)/(k-1)!\) with \(k\) repeated nodes.
The same coefficients occur in the following change of weights.

For \(1\le \ell\le k\le s\), set
	\begin{equation}
		\label{eq:Jacobi-Laguerre-Newton-matrix}
		R_{k,\ell}^{(t)}
		\coloneq
		\frac{(-1)^{k-1}}{
			\varepsilon_\ell(t)
			\displaystyle\prod_{\substack{1\le h\le k\\h\ne\ell}}
			\bigl(\varepsilon_\ell(t)-\varepsilon_h(t)\bigr)},
		\qquad
		R_{k,\ell}^{(t)}\coloneq0\quad(\ell>k).
	\end{equation}
	Set \(R_t\coloneq[R_{k,\ell}^{(t)}]_{k,\ell=1}^{s}\), and define
	\begin{equation}
		\label{eq:Jacobi-Laguerre-Newton-weights}
		\widehat V_k^{(t)}(x)
		\coloneq
		\sum_{\ell=1}^{k}R_{k,\ell}^{(t)}Z_\ell^{(t)}(x).
	\end{equation}

\begin{lemma}[Linear combinations recovering the derivative weights]
	\label{lem:Jacobi-Laguerre-Newton-confluence}
	For \(1\le k\le s\), the coefficients above satisfy, identically in \(z\),
	\begin{equation}
		\label{eq:Jacobi-Laguerre-Newton-identity}
		\sum_{\ell=1}^{k}
		\frac{R_{k,\ell}^{(t)}}{z+t\omega_\ell}
		=
		\frac{z^{k-1}}{
			\prod_{h=1}^{k}
			\bigl(1+\varepsilon_h(t)z\bigr)}.
	\end{equation}
	Consequently,
	\begin{equation}
		\label{eq:Jacobi-Laguerre-Newton-Mellin-limit}
		\mathcal M[\widehat V_k^{(t)}](z)
		=
		\mathcal M[W_0^{(t)}](z)
		\frac{z^{k-1}}{
			\prod_{h=1}^{k}(1+\varepsilon_h(t)z)}
		\xrightarrow[t\to+\infty]{}
		z^{k-1}\mathcal M[w_0](z)
		=
		\mathcal M[v_k](z)
	\end{equation}
	locally uniformly on every compact subset of a strip where both transforms
	are analytic.
\end{lemma}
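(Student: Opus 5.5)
Rewrite each summand of \eqref{eq:Jacobi-Laguerre-Newton-identity} in the \(\varepsilon\)-variables. Since \(t\omega_\ell=\varepsilon_\ell(t)^{-1}\), we have
\[
\frac{1}{z+t\omega_\ell}=\frac{\varepsilon_\ell}{1+\varepsilon_\ell z}.
\]
Then
\[
\sum_{\ell=1}^{k}\frac{R_{k,\ell}^{(t)}}{z+t\omega_\ell}
=(-1)^{k-1}\sum_{\ell=1}^{k}\frac{1}{(1+\varepsilon_\ell z)\prod_{h\ne\ell}(\varepsilon_\ell-\varepsilon_h)},
\]
because the factor \(\varepsilon_\ell\) cancels against the \(\varepsilon_\ell^{-1}\) in \(R_{k,\ell}^{(t)}\). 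The right-hand side is \((-1)^{k-1}\) times the divided difference \([\varepsilon_1,\ldots,\varepsilon_k]f\) of \(f(\varepsilon)=(1+\varepsilon z)^{-1}\). The \(\varepsilon_\ell\) are distinct because the \(\omega_\ell\) are.

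To evaluate this divided difference, I would use the partial-fraction identity
\[
\frac{1}{\prod_{h=1}^k(u-c_h)}=\sum_\ell\frac{1}{(u-c_\ell)\prod_{h\ne\ell}(c_\ell-c_h)}
\]
with \(c_h=-\varepsilon_h^{-1}\)-type nodes, or alternatively argue by induction on \(k\) via the Newton recursion
\[
[\varepsilon_1,\ldots,\varepsilon_k]f=\frac{[\varepsilon_2,\ldots,\varepsilon_k]f-[\varepsilon_1,\ldots,\varepsilon_{k-1}]f}{\varepsilon_k-\varepsilon_1}.
\]
The induction claim is
\[
[\varepsilon_1,\ldots,\varepsilon_k]f=\frac{(-z)^{k-1}}{\prod_{h=1}^k(1+\varepsilon_h z)}.
\]
For \(k=1\) this is trivial. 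For the inductive step, the difference of the two lower-order expressions equals
\[
\frac{(-z)^{k-2}}{\prod_{h=2}^{k-1}(1+\varepsilon_h z)}\left(\frac{1}{1+\varepsilon_k z}-\frac{1}{1+\varepsilon_1 z}\right),
\]
and the bracket is \(\dfrac{(\varepsilon_1-\varepsilon_k)z}{(1+\varepsilon_1z)(1+\varepsilon_kz)}\). Dividing by \(\varepsilon_k-\varepsilon_1\) produces the extra factor \(-z\). Multiplying the result by \((-1)^{k-1}\) gives \eqref{eq:Jacobi-Laguerre-Newton-identity} as an identity of rational functions in \(z\), valid away from the poles and hence identically.

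For \eqref{eq:Jacobi-Laguerre-Newton-Mellin-limit}, first compute the Mellin transform of each \(Z_\ell^{(t)}\). By definition, \(w_{r+\ell}\) has lower parameter \(b_{r+\ell}(t)+1\), so its Mellin transform is \(\M[w_0](z)/(z+b_{r+\ell}(t))\), exactly as for \(W_h^{(t)}\). With the same scaling by \(d_t\) and \(c_t\), this gives
\[
\M[Z_\ell^{(t)}](z)=\frac{\M[W_0^{(t)}](z)}{z+t\omega_\ell}.
\]
Linearity of the Mellin transform together with \eqref{eq:Jacobi-Laguerre-Newton-identity} then yields the displayed equality.

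For the limit, note that \(\varepsilon_h(t)\to0\), so \(\prod_h(1+\varepsilon_h z)\to1\) uniformly on compact sets. Also \(\M[W_0^{(t)}]\to\M[w_0]\) locally uniformly by \eqref{eq:Jacobi-Laguerre-base-Mellin-limit}. Finally, I would identify \(z^{k-1}\M[w_0](z)\) with \(\M[\thetaop^{k-1}w_0](z)\). Integrating by parts gives \(\M[\thetaop f](z)=z\M[f](z)\), since \(\M[-xf'](z)=z\M[f](z)\). The boundary terms vanish in the strip of analyticity because \(w_0\) decays at infinity and behaves like a power at \(0\).

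The main obstacle is justifying these boundary terms and interchanges carefully; the algebraic identity itself is routine once it is recognized as a divided difference.
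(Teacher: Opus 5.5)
Your proof is correct. Apart from the rational identity, it matches the paper's proof: both use \(\M[Z_\ell^{(t)}](z)=\M[W_0^{(t)}](z)/(z+t\omega_\ell)\), linearity, \eqref{eq:Jacobi-Laguerre-base-Mellin-limit}, and \(\varepsilon_h(t)\to0\).

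For \eqref{eq:Jacobi-Laguerre-Newton-identity} the two routes differ. The paper works directly in \(z\). The right-hand side is a proper rational function with simple poles at \(z=-t\omega_\ell=-1/\varepsilon_\ell(t)\), and its residue there is exactly \(R_{k,\ell}^{(t)}\). So the identity is just its partial-fraction decomposition. You instead pass to the variables \(\varepsilon_\ell\) and recognize the left side as \((-1)^{k-1}[\varepsilon_1,\ldots,\varepsilon_k]f\) with \(f(\varepsilon)=(1+\varepsilon z)^{-1}\). You then evaluate it by Newton's recursion, and your inductive step is right. The recursion holds for the symmetric definition the paper uses, which is a standard fact worth citing. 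The residue check is a one-line verification. Your route explains why the divided-difference coefficients introduced before \eqref{eq:Jacobi-Laguerre-Newton-matrix} are the right ones.

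For the final identification \(z^{k-1}\M[w_0]=\M[\thetaop^{k-1}w_0]\), the paper simply uses the rule \(\M[\thetaop f](z)=z\M[f](z)\). Your integration-by-parts sketch needs one correction: near \(x=0\), \(\thetaop^jw_0\) is \(\mathrm O(x^{\min_\rho a_\rho}(1+|\log x|^{M_j}))\) in general, not a pure power. This still kills the boundary terms for \(\operatorname{Re}z>-\min_\rho a_\rho\). Alternatively, Mellin inversion avoids integration by parts altogether, since \(\M[w_0]\) decays exponentially on vertical lines when \(s\ge1\).
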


\begin{proof}
	Equation~\eqref{eq:Jacobi-Laguerre-Newton-identity} is the partial-fraction
	decomposition of its right-hand side. Indeed, the coefficient of
	\((z+t\omega_\ell)^{-1}\) is precisely \(R_{k,\ell}^{(t)}\). Since
	\(
	\mathcal M[Z_\ell^{(t)}](z)
	=
	\mathcal M[W_0^{(t)}](z)/(z+t\omega_\ell)
	\), this proves \eqref{eq:Jacobi-Laguerre-Newton-Mellin-limit}.

	The matrix \(R_t\) is lower
	triangular, with
	\[
	R_{k,k}^{(t)}
	=
	\frac{(-1)^{k-1}}{
		\varepsilon_k(t)\prod_{h<k}
		(\varepsilon_k(t)-\varepsilon_h(t))}
	\ne0.
\]
	This change of weights preserves every finite polynomial space used on
	the step-line. Indeed, if
	\(m_1\ge\cdots\ge m_s\), \(\widehat{\boldsymbol v}=R_t\boldsymbol v\),
	and \(Q_k\in\mathbb P_{m_k-1}\), then the coefficient of \(v_\ell\) in
	\(\sum_kQ_k\widehat v_k\) is
	\(\sum_{k\ge\ell}R_{k,\ell}^{(t)}Q_k\in\mathbb P_{m_\ell-1}\).
	The inverse matrix has the same triangularity, so the two polynomial
	spaces coincide. On the step-line, \(m_1\ge\cdots\ge m_s\). Hence, at each
	polynomial degree, the components whose degree bounds reach that degree
	are \(1,\ldots,k\) for some \(k\).
	The leading \(k\times k\) block of \(R_t\) is invertible, so the
	orthogonality conditions before and after the change of weights are
	equivalent.
\end{proof}

Thus the transformed vector
\begin{equation}
	\label{eq:Jacobi-Laguerre-Newton-vector}
	\widehat{\mathbf U}^{(t)}
	\coloneq
	\bigl(W_1^{(t)},\ldots,W_r^{(t)},
	\widehat V_1^{(t)},\ldots,\widehat V_s^{(t)}\bigr)
\end{equation}
converges to the Laguerre vector of the first kind
\(\mathbf U^{\mathrm L}=(w_1,\ldots,w_r,v_1,\ldots,v_s)\) at every fixed
Mellin moment. Since the first \(k\) transformed derivative weights use only
the first \(k\) original derivative weights, the transformation preserves all the
polynomial spaces that occur in the admissible step-line range. This uses
the ordering \(\eta_1\ge\cdots\ge\eta_s\), which is part of the requirement
that \(\boldsymbol\eta\) lie on the step-line.

Put \(\boldsymbol\lambda=(\boldsymbol n,\boldsymbol\eta)\), set
\(\boldsymbol\alpha=\boldsymbol\beta\), and identify the Jacobi indices by
\(\boldsymbol n^{\,\mathrm J}=\boldsymbol m\) and
\(\boldsymbol m^{\,\mathrm J}=\boldsymbol\lambda\). Thus the reversal in the
Laguerre notation is explicit: the same pair is written
\((\boldsymbol m,\boldsymbol\lambda)\) on the Jacobi side and
\((\boldsymbol\lambda,\boldsymbol m)\) on the Laguerre side. Assume that
\(\boldsymbol\lambda\) is Laguerre admissible and that the parameter and
node-distinctness assumptions in
Theorems~\ref{thm:mixed-Jacobi-like-forms} and
\ref{thm:mixed-Laguerre-forms} hold. The two
mixed-type forms have different balance conditions, so their limits are stated
separately.

The condition \(c_{\boldsymbol\lambda}\ne0\) in
Proposition~\ref{prop:AT-normality-Laguerre-admissible} ensures a nonzero
limiting representative in each balance. In the second balance, a moment
at an additional node outside the prescribed set is nonzero because its
Mellin numerator has precisely the prescribed zeros. This provides a
nonvanishing normalization for the component limits. Suppose first that
\(|\boldsymbol m|=|\boldsymbol\lambda|+1\). Then
\begin{equation}
	\label{eq:Jacobi-Laguerre-A-form-limit}
	\frac{
		\mathcal A_{\boldsymbol m,(\boldsymbol n,\boldsymbol\eta)}^{\mathrm J}(x/c_t)
	}{
		\prod_{\ell=1}^{s}
		\Gamma(t\omega_\ell+\eta_\ell+1)
	}
	\xrightarrow[t\to+\infty]{}
	\mathcal A_{(\boldsymbol n,\boldsymbol\eta),\boldsymbol m}^{\mathrm L}(x)
\end{equation}
coefficientwise in its polynomial components. More precisely, for the
coefficient of \(x^{\beta_i+k}\), \(0\le k<m_i\), the part that depends on
the large parameters is
\[
	\prod_{\ell=1}^{s}
	\frac{\Gamma(\beta_i+k+t\omega_\ell+\eta_\ell+1)}
	{\Gamma(t\omega_\ell+\eta_\ell+1)\,
	 \kappa_\ell(t)^{\beta_i+k}}
	\xrightarrow[t\to+\infty]{}1.
\]

For the \(\mathcal B\)-form, suppose that
\(|\boldsymbol\lambda|\ge1\) and
\(|\boldsymbol\lambda|=|\boldsymbol m|+1\). Then
\begin{equation}
	\label{eq:Jacobi-Laguerre-B-form-limit}
	\mathcal M\!\left[
	-d_t\prod_{\ell=1}^{s}\kappa_\ell(t)^{\eta_\ell}
	\mathcal B_{\boldsymbol m,(\boldsymbol n,\boldsymbol\eta)}^{\mathrm J}
	(\,\cdot\,/c_t)
	\right](z)
	\xrightarrow[t\to+\infty]{}
	\mathcal M\!\left[
	\mathcal B_{(\boldsymbol n,\boldsymbol\eta),\boldsymbol m}^{\mathrm L}
	\right](z)
\end{equation}
locally uniformly on compact subsets of any common strip of analyticity;
hence every fixed Mellin moment converges. The transform on the left is
\begin{equation}
	\label{eq:Jacobi-Laguerre-B-direct-Mellin-limit}
	\mathcal M[W_0^{(t)}](z)
	\frac{
		(\boldsymbol\beta+(1-z)\one_p)_{\boldsymbol m}
	}{
		\prod_{h=1}^{r}(z+b_h)_{n_h}
	}
	\prod_{\ell=1}^{s}
	\frac{\kappa_\ell(t)^{\eta_\ell}}
		{(z+t\omega_\ell)_{\eta_\ell}},
\end{equation}
which converges locally uniformly to
\eqref{eq:Laguerre-B-Mellin}. If
\(a_\rho-a_\sigma\notin\mathbb Z\) for \(\rho\ne\sigma\), its residue expansion
is \eqref{eq:Laguerre-B-complete-hypergeometric}. In the resonant case, the
same limiting form is represented by \eqref{eq:Laguerre-B-complete-Meijer-G}.

\paragraph{Terminating coefficient formulas for the \(B\)-components of Laguerre of the first kind.}
The polynomial components of Laguerre of the first kind are obtained in
\cite[Section~4]{Manas2026HypergeometricMixed} by a finite triangular
reconstruction. An alternative proof is given below, together with a
direct evaluation of their coefficients by terminating hypergeometric
sums. These formulas satisfy the same polynomial identity
\eqref{eq:B-component-polynomial-system}, with the same normalization.
They concern the individual polynomial components, rather than the
complete weighted form. Retain \(q=r+s\), \(N=|\boldsymbol n|+|\boldsymbol\eta|\),
and \(|\boldsymbol m|=N-1\).
Define
\[
 \mathsf P(t)\coloneq\prod_{\rho=1}^q(t+a_\rho),\qquad
 \mathsf Q(t)\coloneq\prod_{h=1}^r(t+b_h),\qquad
 c_h\coloneq\frac{\prod_\rho(a_\rho-b_h)}
                  {\prod_{u\ne h}(b_u-b_h)}.
\]
Let \(\sum_{j=0}^s e_jt^j\) be the polynomial part of
\(\mathsf P(t)/\mathsf Q(t)\). Its coefficients are explicitly
\[
 e_j\coloneq
 \mathfrak C_{s-j}((-\one_q,\one_r),(-\boldsymbol a,-\boldsymbol b);0),
 \qquad 0\le j\le s.
\]
For \(\rho\in\{1,\ldots,q\}\) and \(k\in\mathbb N_0\), put
\begin{equation}
 \Omega_{\rho,k}\coloneq
 \frac{(\boldsymbol\beta+(a_\rho+1)\one_p)_{\boldsymbol m}
       (\boldsymbol b-a_\rho\one_r)_{(k+1)\one_r}}
      {k!\,(\boldsymbol b-a_\rho\one_r)_{\boldsymbol n}
       (\boldsymbol a^{*\rho}-a_\rho\one_{q-1})_{(k+1)\one_{q-1}}}.
 \label{eq:LI-B-explicit-residue-prefactor}
\end{equation}
The following ordered parameter strings have lengths \(d=p+r+q\) and
\(d-1\), respectively:
\[
\begin{aligned}
 \boldsymbol u_{\rho,k}&\coloneq
 (-k,\ \boldsymbol\beta+(a_\rho+1)\one_p+\boldsymbol m,\
       (a_\rho+1)\one_r-\boldsymbol b-\boldsymbol n,\
       (a_\rho-k)\one_{q-1}-\boldsymbol a^{*\rho}),\\
 \boldsymbol v_{\rho,k}&\coloneq
 (\boldsymbol\beta+(a_\rho+1)\one_p,\
       (a_\rho-k)\one_r-\boldsymbol b,\
       (a_\rho+1)\one_{q-1}-\boldsymbol a^{*\rho}).
\end{aligned}
\]
The notation \(\boldsymbol v_{\rho,k}+\boldsymbol e_{p+h}^{(d-1)}\)
increments only its entry corresponding to \(b_h\).

\begin{proposition}[Terminating hypergeometric coefficients for Laguerre of the first kind]
 \label{prop:LI-B-terminating-coefficients}
 Assume the hypotheses of Theorem~\ref{thm:mixed-Laguerre-forms}.
 In addition, assume that \(b_h\) are pairwise distinct, that
 \(a_\rho-a_\sigma\notin\mathbb Z\) for \(\rho\ne\sigma\),
 that \(a_\rho-b_h\notin\mathbb Z\), and that the displayed
 hypergeometric denominators are nonzero.
 The individual components
 \[
 B^{(h)}(x)=\sum_{k=0}^{n_h-1}b_{h,k}x^k,\qquad
 D^{(\ell)}(x)=\sum_{k=0}^{\eta_\ell-1}d_{\ell,k}x^k
 \]
 have the finite coefficient formulas
 \begin{equation}
 b_{h,k}=-c_h\sum_{\rho=1}^q
 \frac{\Omega_{\rho,k}}{b_h+k-a_\rho}
 \pFq{d}{d-1}{\boldsymbol u_{\rho,k}}
 {\boldsymbol v_{\rho,k}+\boldsymbol e_{p+h}^{(d-1)}}{1},
 \qquad 0\le k<n_h,
 \label{eq:LI-B-beta-terminating-coefficients}
 \end{equation}
 and
 \begin{equation}
 d_{\ell,k}=\sum_{\rho=1}^q\Omega_{\rho,k}
 \sum_{v=0}^{s-\ell}e_{\ell+v}(k-a_\rho)^v
 \pFq{d+v}{d+v-1}
 {\boldsymbol u_{\rho,k},\;(a_\rho-k+1)\one_v}
 {\boldsymbol v_{\rho,k},\;(a_\rho-k)\one_v}{1},
 \qquad 0\le k<\eta_\ell.
 \label{eq:LI-B-Euler-terminating-coefficients}
 \end{equation}
 Every series terminates at \(j=k\). These components satisfy
 \eqref{eq:B-component-polynomial-system}, with its stated normalization.
\end{proposition}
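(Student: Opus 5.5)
We start from the polynomial identity \eqref{eq:B-component-polynomial-system}, which determines the components uniquely under the hypotheses of Theorem~\ref{thm:mixed-Laguerre-forms}. The strategy is to divide it by \(\prod_\rho(z+a_\rho)_{k}\)-type factors and extract coefficients by residues at the points \(z=-a_\rho-j\). Dividing both sides by \((z\one_q+\boldsymbol a)_{N'}\) with \(N'\) large (or equivalently working with the Mellin transform \eqref{eq:Laguerre-B-Mellin}) turns the problem into one about the meromorphic function
\[
\Phi(z)=\frac{(\boldsymbol\beta+(1-z)\one_p)_{\boldsymbol m}}{(z\one_q+\boldsymbol a)_{\infty}\text{-normalized}}\,\frac{\Gamma(z\one_q+\boldsymbol a)}{\Gamma(z\one_r+\boldsymbol b+\boldsymbol n)}.
\]
Concretely, \(\mathcal M[\mathcal B^{\mathrm L}](z)\) equals \(G_0(z)\) times the left side of \eqref{eq:B-component-polynomial-system} divided by \((z\one_r+\boldsymbol b)_{\boldsymbol n}\). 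Each basis term \(G_0(z)(z\one_q+\boldsymbol a)_k\,(z\one_r+\boldsymbol b+k\one_r+\boldsymbol e_h)_{\boldsymbol n-k\one_r-\boldsymbol e_h}/(z\one_r+\boldsymbol b)_{\boldsymbol n}\) is the Mellin transform of \(x^kw_h\). Similarly, the \(\ell\)-terms are those of \(x^kv_\ell\), giving \(\Gamma(z+k\,\one_q+\boldsymbol a)(z+k)^{\ell-1}/\Gamma(z\one_r+\boldsymbol b+k\one_r)\).

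\textbf{Step 1 (triangular inversion via partial fractions in \(t=z+k\)).} I would write \(X(t)=\sum_h\sum_k b_{h,k}\dots\) and, for each fixed \(k\), observe that the \(k\)-th layer contributes
\[
\Gamma(t\one_q+\boldsymbol a)\Gamma(t\one_r+\boldsymbol b)^{-1}\Bigl[\sum_h\frac{b_{h,k}}{t+b_h}+\sum_\ell d_{\ell,k}t^{\ell-1}\Bigr]
\]
after a shift \(t=z+k\). The bracket is a rational function in \(t\) with simple poles at \(-b_h\) and a polynomial part of degree \(<s\). Its coefficients are recovered by the decomposition of \(\mathsf P/\mathsf Q\): the residues \(c_h\) at \(-b_h\) and the polynomial part \(\sum e_jt^j\). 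This explains the appearance of \(c_h\) and \(e_{\ell+v}\).

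\textbf{Step 2 (residues at \(z=-a_\rho-j\)).} Take the Mellin inversion of \eqref{eq:Laguerre-B-Mellin} in the nonresonant case, i.e.\ expand \eqref{eq:Laguerre-B-complete-hypergeometric} in powers \(x^{a_\rho+j}\). Compare this with the expansion of \(\sum_k x^k(\ldots)\), where \(x^kw_h\) and \(x^kv_\ell\) are themselves expanded in residue series at the same poles. Because the nonresonance hypotheses \(a_\rho-a_\sigma\notin\mathbb Z\) and \(a_\rho-b_h\notin\mathbb Z\) separate the poles, one can solve layer by layer. The coefficient of \(x^k\) in the \(k\)-layer is isolated by a finite alternating sum over \(j\le k\) (a Newton-type inversion of the triangular system with entries \((-a_\rho-j+k\ldots)\)). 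This produces the terminating \({}_dF_{d-1}(1)\) with numerator \(-k\) and summation variable \(j\le k\). The factor \(\Omega_{\rho,k}\) collects the residue of \(\Gamma(\boldsymbol a^{*\rho}-a_\rho)\) and the shifted Pochhammers. The extra \(\boldsymbol e_{p+h}\) shift and the denominator \(b_h+k-a_\rho\) come from the \(1/(t+b_h)\) factor evaluated at the residue point. The \(((a_\rho-k+1)/(a_\rho-k))_j\) pairs with \((k-a_\rho)^v\) come from \(t^v\) evaluated at \(t=-a_\rho-j+k\), written as \((k-a_\rho)^v\bigl((a_\rho-k+1)_j/(a_\rho-k)_j\bigr)^v\).

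\textbf{Step 3 (verification and normalization).} Having obtained candidate coefficients, I would check directly that they satisfy \eqref{eq:B-component-polynomial-system}. Both sides are polynomials in \(z\) of degree \(\le N-1+\dots\), so it suffices to check values at sufficiently many points. Evaluating at the poles via the residue computation achieves this, and uniqueness from Proposition~\ref{prop:AT-normality-Laguerre-admissible} then fixes the normalization.

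The main obstacle is Step 2: organizing the double residue bookkeeping so that the triangular inversion in \(j\) collapses exactly into a single terminating \({}_dF_{d-1}\) at unit argument with the stated parameter strings. I would first do the case \(r=0\), \(s=1\) to fix signs, and then insert the partial-fraction factors.
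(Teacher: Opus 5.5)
There is a genuine gap, and it sits exactly where you locate the main obstacle. Comparing coefficients of \(x^{a_\rho+j}\) is legitimate under the nonresonance hypotheses. Equivalently, divide \eqref{eq:B-component-polynomial-system} by \((z\one_r+\boldsymbol b)_{\boldsymbol n}\) to get \(f(z)=\sum_kR_k(z)\,\Xi_k(z+k)\), and evaluate at \(z=-a_\rho-j\). Here \(f(t)=(\boldsymbol\beta+(1-t)\one_p)_{\boldsymbol m}/(t\one_r+\boldsymbol b)_{\boldsymbol n}\), \(R_k(t)=(t\one_q+\boldsymbol a)_{k\one_q}/(t\one_r+\boldsymbol b)_{k\one_r}\), and \(\Xi_k(u)=\sum_hb_{h,k}/(u+b_h)+\sum_\ell d_{\ell,k}u^{\ell-1}\).

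But this only produces a block lower-triangular linear system. Its diagonal blocks are, up to nonzero row factors, submatrices of the fixed Cauchy--Vandermonde matrix \([(b_h-a_\rho)^{-1}\mid(-a_\rho)^{\ell-1}]\). Its off-diagonal blocks couple level \(j\) to every earlier layer \(k<j\) through the values \(\Xi_k(k-a_\rho-j)\). Solving it layer by layer gives a recursion, not a closed formula. Nothing in your Step~2 shows why the layer-\(k\) coefficients should involve only the residues of \(f/R_{k+1}\) at \(-a_\rho-j\), \(0\le j\le k\). That is precisely what makes each of them a single terminating \({}_dF_{d-1}(1)\). Your matching of \(\Omega_{\rho,k}\), of the shift \(\boldsymbol e^{(d-1)}_{p+h}\), and of the ratios \((a_\rho-k+1)_j/(a_\rho-k)_j\) describes the answer rather than deriving it.

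Step~1 is also misattributed: the coefficients of \(\Xi_k\) are the unknowns themselves, not partial-fraction data of \(\mathsf P/\mathsf Q\). The constants \(c_h\) and \(e_j\) enter through the partial fractions in \(u\) of the kernel \(\bigl(\mathsf P(v)/\mathsf Q(v)-\mathsf P(u)/\mathsf Q(u)\bigr)/(v-u)=\sum_{\ell=1}^sE_\ell(v)u^{\ell-1}-\sum_{h=1}^rc_h/\bigl((v+b_h)(u+b_h)\bigr)\), with \(E_\ell(v)=\sum_{j\ge\ell}e_jv^{j-\ell}\). At \(v=-a_\rho\) this kernel is proportional to the Lagrange basis function for interpolation at the nodes \(-a_\rho\), so it encodes the inverse of your diagonal block.

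The missing ingredient is the telescoping identity used in the paper. Since \(R_{k+1}(t)=R_k(t)\mathsf P(t+k)/\mathsf Q(t+k)\),
\[
\frac1{t-z}=\sum_{k=0}^{K-1}\frac{R_k(z)}{R_{k+1}(t)}\,
\frac{\mathsf P(t+k)\mathsf Q(z+k)-\mathsf P(z+k)\mathsf Q(t+k)}{(t-z)\,\mathsf Q(z+k)\,\mathsf Q(t+k)}
+\frac{R_K(z)}{R_K(t)(t-z)},
\]
with \(K\) the largest entry of \((\boldsymbol n,\boldsymbol\eta)\); the \(k\)-th summand is \(\bigl(R_k(z)/R_k(t)-R_{k+1}(z)/R_{k+1}(t)\bigr)/(t-z)\). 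Multiply by \(f(t)\) and integrate over small circles around the nodes \(-a_\rho-j\), \(0\le j<K\):
\begin{itemize}
\item the left side gives zero;
\item the remainder gives \(-f(z)\), because \(f/R_K\) has only the enclosed poles and vanishes at infinity;
\item the kernel decomposition, with \(u=z+k\) and \(v=t+k\), yields \(b_{h,k}\) and \(d_{\ell,k}\) directly as contour integrals of \(f/R_{k+1}\) against \(-c_h/(t+k+b_h)\) and \(E_\ell(t+k)\).
\end{itemize}
This inverts your whole triangular system at once, and the residues at \(-a_\rho-j\), \(j\le k\), give the stated series.

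Finally, your Step~3 cannot appeal to uniqueness until the candidate components are known to satisfy \(\deg B^{(h)}<n_h\) and \(\deg D^{(\ell)}<\eta_\ell\), since uniqueness holds only within the prescribed degrees. The paper proves this as follows. For \(k\ge n_h\) (respectively \(k\ge\eta_\ell\)), every beta denominator in the integrand cancels and its degree at infinity is at most \(-2\). This uses near-diagonality and, for the Euler rows, the ordered step-line bound \(|\boldsymbol\eta|\le sk+\ell-1\).
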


\begin{proof}
 Write
 \[
 f(t)\coloneq
 \frac{(\boldsymbol\beta+(1-t)\one_p)_{\boldsymbol m}}
      {(t\one_r+\boldsymbol b)_{\boldsymbol n}},\qquad
 R_k(t)\coloneq
 \frac{(t\one_q+\boldsymbol a)_{k\one_q}}
      {(t\one_r+\boldsymbol b)_{k\one_r}}.
 \]
 Let \(K\) be the largest entry of \((\boldsymbol n,\boldsymbol\eta)\).
 A telescoping rational identity gives
 \[
 \frac1{t-z}
 =\sum_{k=0}^{K-1}\frac{R_k(z)}{R_{k+1}(t)}
   \frac{\mathsf P(t+k)\mathsf Q(z+k)-\mathsf P(z+k)\mathsf Q(t+k)}
        {(t-z)\mathsf Q(z+k)\mathsf Q(t+k)}
   +\frac{R_K(z)}{R_K(t)(t-z)}.
 \]
 Indeed, the \(k\)-th summand is
 \(\{R_k(z)/R_k(t)-R_{k+1}(z)/R_{k+1}(t)\}/(t-z)\).
 Let \(\Gamma\) be a positively oriented union of small closed contours
 enclosing all nodes \(-a_\rho-j\), \(0\le j<K\), but none of the
 beta poles or the point \(z\). Multiply the identity by \(f(t)\) and
 integrate over \(\Gamma\). The integral on the left is zero.
 The rational function \(f/R_K\) has only the enclosed gamma nodes as
 poles and vanishes at infinity, so the final integral equals \(-f(z)\).

 The rational kernel in each remaining summand has a simple explicit
 decomposition. If
 \[
 E_\ell(v)\coloneq\sum_{j=\ell}^s e_jv^{j-\ell},
 \qquad 1\le\ell\le s,
 \]
 partial fractions of \(\mathsf P/\mathsf Q\) give
 \[
 \frac{\mathsf P(v)\mathsf Q(u)-\mathsf P(u)\mathsf Q(v)}
      {(v-u)\mathsf Q(u)\mathsf Q(v)}
 =\sum_{\ell=1}^s E_\ell(v)u^{\ell-1}
   -\sum_{h=1}^r\frac{c_h}{(v+b_h)(u+b_h)}.
 \]
 Thus the preceding contour identity expresses \(f(z)\) as
 \[
 \sum_{k=0}^{K-1}R_k(z)
 \left(\sum_{h=1}^r\frac{b_{h,k}}{z+k+b_h}
             +\sum_{\ell=1}^s d_{\ell,k}(z+k)^{\ell-1}\right),
 \]
 where
 \[
 b_{h,k}=-\frac{c_h}{2\pi\mathrm i}
       \int_\Gamma\frac{f(t)}{R_{k+1}(t)(t+k+b_h)}\,\mathrm dt,\qquad
 d_{\ell,k}=\frac1{2\pi\mathrm i}
       \int_\Gamma\frac{f(t)E_\ell(t+k)}{R_{k+1}(t)}\,\mathrm dt.
 \]
 Only the nodes with \(0\le j\le k\) contribute. The residue of
 \(f/R_{k+1}\) at \(-a_\rho-j\), divided by its residue at
 \(-a_\rho\), is
 \[
 \frac{(\boldsymbol u_{\rho,k})_j}
      {(\boldsymbol v_{\rho,k})_j\,j!}.
 \]
 The initial residue is \(\Omega_{\rho,k}\).
 Multiplication by \((t+k+b_h)^{-1}\) increments precisely the
 denominator parameter \(a_\rho-k-b_h\), with prefactor
 \((b_h+k-a_\rho)^{-1}\). This proves
 \eqref{eq:LI-B-beta-terminating-coefficients}.
 For the Euler coefficient, use
 \[
 (k-a_\rho-j)^v
 =(k-a_\rho)^v
 \left(\frac{(a_\rho-k+1)_j}{(a_\rho-k)_j}\right)^v.
 \]
 Expansion of \(E_\ell\) now gives
 \eqref{eq:LI-B-Euler-terminating-coefficients}.

 It remains to justify the component degree bounds rather than merely a
 common bound \(K-1\). If \(k\ge n_h\), near diagonality cancels every
 beta denominator in the integrand for \(b_{h,k}\), and its degree at
 infinity is \(|\boldsymbol\eta|-s(k+1)-2\le-2\).
 Its contour integral is therefore zero. If \(k\ge\eta_\ell\),
 the same cancellation holds for \(d_{\ell,k}\).
 The ordered step-line condition gives
 \(|\boldsymbol\eta|\le sk+\ell-1\), so this integrand also has degree
 at most \(-2\) at infinity. Hence \(d_{\ell,k}=0\) in that range.
 Finally, multiplication by the common Mellin factor \(G_0(z)\)
 identifies \(R_k(z)/(z+k+b_h)\) with the transform of \(x^kw_h\),
 and \(R_k(z)(z+k)^{\ell-1}\) with that of \(x^kv_\ell\).
 This proves the claimed polynomial identity and normalization.
\end{proof}

\paragraph{Moments and recurrence coefficients on the step-line.}
For a rank-one system with row vector
\(\mathbf U=(U_1,\ldots,U_q)\) and power vector
\(\mathbf V=(V_1,\ldots,V_p)\) on an interval \(I\), enumerate the two scalar bases by
\[
	\mathscr U_N(x)
	\coloneq x^{\rho_q(N)}U_{\ell_q(N)+1}(x),
	\qquad
	\mathscr V_M(x)
	\coloneq x^{\rho_p(M)}V_{\ell_p(M)+1}(x),
\]
and define the scalar moment matrix
\[
	\mathscr M_{N,M}
	\coloneq\int_I\mathscr U_N(x)\mathscr V_M(x)\,\mathrm dx,
	\qquad N,M\ge0.
\]
The step-line index pairs are
\[
	(\boldsymbol\sigma_p(N),\boldsymbol\sigma_q(N+1))
	\quad\text{on the \(B\)-side},\qquad
	(\boldsymbol\sigma_p(N+1),\boldsymbol\sigma_q(N))
	\quad\text{on the \(A\)-side}.
\]
The representatives fixed earlier by the contour and Mellin formulas will
be denoted here by an additional subscript \({\rm exp}\). Thus
\begin{align*}
	\mathcal B_{N,{\rm exp}}^{\mathrm J}
	\coloneq
	\mathcal B^{\mathrm J}_{\boldsymbol\sigma_p(N),
		\boldsymbol\sigma_q(N+1)},
	&
	\mathcal A_{N,{\rm exp}}^{\mathrm J}
	\coloneq
	\mathcal A^{\mathrm J}_{\boldsymbol\sigma_p(N+1),
		\boldsymbol\sigma_q(N)},\\
	\mathcal B_{N,{\rm exp}}^{\mathrm L}
	\coloneq
	\mathcal B^{\mathrm L}_{\boldsymbol\sigma_q(N+1),
		\boldsymbol\sigma_p(N)},
	&
	\mathcal A_{N,{\rm exp}}^{\mathrm L}
	\coloneq
	\mathcal A^{\mathrm L}_{\boldsymbol\sigma_q(N),
		\boldsymbol\sigma_p(N+1)}.
\end{align*}
The second line displays once more the reversed Laguerre index order.

A leading minor of order \(n\) is the determinant of the submatrix
with indices \(0,\ldots,n-1\) on both sides. When all required leading
minors are nonzero, the Gauss--Borel factorization is
\[
\mathscr M=S^{-1}H\widetilde S^{-\top},
\]
where \(S\) and \(\widetilde S\) are lower triangular with unit
diagonal and \(H\) is nonsingular and diagonal. For the infinite moment
matrix, this notation is understood through its compatible finite
leading submatrices.

These explicit representatives need not have the Gauss--Borel
normalization. To separate the two conventions, put
\[
	j_N\coloneq\ell_q(N)+1
\]
and write
\[
	\mathcal B_{N,{\rm exp}}^{\mathrm S}(x)
	=\sum_{j=1}^q
	B_{N,{\rm exp}}^{\mathrm S,(j)}(x)U_j^{\mathrm S}(x),
	\qquad \mathrm S\in\{\mathrm J,\mathrm L\}.
\]
The integration intervals are \(I_{\mathrm J}\coloneq(0,1)\) and
\(I_{\mathrm L}\coloneq(0,\infty)\). Define
\begin{equation}
	h_N^{\mathrm S}
	\coloneq
	[x^{\rho_q(N)}]B_{N,{\rm exp}}^{\mathrm S,(j_N)}(x),
	\label{eq:step-line-h-normalizer}\qquad
	g_N^{\mathrm S}
	\coloneq
	\int_{I_{\mathrm S}}
	\mathcal A_{N,{\rm exp}}^{\mathrm S}(x)
	\frac{\mathcal B_{N,{\rm exp}}^{\mathrm S}(x)}
	{h_N^{\mathrm S}}\,\mathrm dx.
\end{equation}
Whenever the relevant leading minors are nonzero, both constants are
nonzero. The Gauss--Borel normalized forms are, by definition,
\begin{equation}
	\mathsf B_N^{\mathrm S}
	\coloneq
	\frac{\mathcal B_{N,{\rm exp}}^{\mathrm S}}{h_N^{\mathrm S}},
	\qquad
	\mathsf A_N^{\mathrm S}
	\coloneq
	\frac{\mathcal A_{N,{\rm exp}}^{\mathrm S}}{g_N^{\mathrm S}},
	\qquad \mathrm S\in\{\mathrm J,\mathrm L\}.
	\label{eq:explicit-to-step-line-normalization}
\end{equation}
	Thus the coefficient of \(x^{\rho_q(N)}\) in component \(j_N\) of
	\(\mathsf B_N^{\mathrm S}\) is one. The biorthogonality relation
	\cite[Proposition~6.1]{Manas2026HypergeometricMixed} reads
	\[
		\int_{I_{\mathrm S}}\mathsf A_N^{\mathrm S}(x)\mathsf B_M^{\mathrm S}(x)\,\mathrm dx
		=\delta_{N,M}.
\]
The symbols \(\widehat{\mathsf A}_N^{(t)}\) and
\(\widehat{\mathsf B}_N^{(t)}\) denote the forms selected by these same two
conditions for the transformed vector
\(\widehat{\mathbf U}^{(t)}\), with integration over \((0,c_t)\).
These conditions fix their normalization.

Let
\[
	\mathsf{\mathbf B}=(\mathsf B_0,\mathsf B_1,\ldots)^\top,
	\qquad
	\mathsf{\mathbf A}=(\mathsf A_0,\mathsf A_1,\ldots)^\top.
\]
By \cite[Theorem~6.2]{Manas2026HypergeometricMixed}, the recurrence
matrix in this normalization satisfies
\[
	x\mathsf{\mathbf B}=T\mathsf{\mathbf B},
	\qquad
	x\mathsf{\mathbf A}=T^\top\mathsf{\mathbf A};
\]
It is \((p,q)\)-banded, meaning \(T_{N,M}=0\) unless
\(N-p\le M\le N+q\), and \(T_{N,N+q}=1\).
Accordingly, \(T_{\mathrm J}(t)\), \(\widehat T_t\), and \(T_{\mathrm L}\)
denote the matrices of
\(\mathsf B_N^{\mathrm J}\),
\(\widehat{\mathsf B}_N^{(t)}\), and
\(\mathsf B_N^{\mathrm L}\), respectively.

Finally, let \(\Lambda\) be the unilateral shift,
\(\Lambda_{N,M}=\delta_{N+1,M}\) for \(N,M\ge0\). The finite portions of the
row and column Christoffel chains used below are the Gauss--Borel
factorizations of
\[
	\Lambda^a\mathscr M,\quad 0\le a\le q,
	\qquad\text{and}\qquad
	\mathscr M(\Lambda^b)^\top,\quad 0\le b\le p.
\]
The factorization in \cite[Section~7]{Manas2026HypergeometricMixed}
associates with consecutive steps the normalized upper bidiagonal factors
\(U_1,\ldots,U_q\) and lower bidiagonal factors
\(L_1,\ldots,L_p\), in the order
\[
	T=L_1\cdots L_p\,U_q\cdots U_1.
\]
Here every \(L_i\) has unit main diagonal and every \(U_j\) has unit first
superdiagonal; these conditions remove the otherwise possible diagonal
redistribution between consecutive factors.
On every fixed truncation, the entries of these factors, as well as those
of \(T\), are rational functions of the finitely many leading minors in the
two displayed chains.

\begin{proposition}[Stability at each fixed index]
	\label{prop:Jacobi-Laguerre-finite-GB-stability}
	Let \(\widehat{\mathscr M}^{(t)}\) be the moment matrix of
	\eqref{eq:Jacobi-Laguerre-Newton-vector}, and let
	\(\mathscr M^{\mathrm L}\) be the moment matrix for Laguerre of the first kind. Fix \(K\).
	Assume that every leading minor of the following shifted matrices used up
	to level \(K\) in either Christoffel chain is nonzero:
	\[
	\mathscr M^{\mathrm L},
	\qquad
	\Lambda^a\mathscr M^{\mathrm L}\quad(0\le a\le q),
	\qquad
	\mathscr M^{\mathrm L}(\Lambda^b)^{\top}\quad(0\le b\le p).
\]
	Then the corresponding minors for
	\(\widehat{\mathscr M}^{(t)}\) are nonzero for all sufficiently large \(t\).
	At every index not exceeding
	\(K\), the Gauss--Borel forms
	\(\widehat{\mathsf A}_N^{(t)},\widehat{\mathsf B}_N^{(t)}\), the recurrence
	coefficients, and the entries of the normalized lower and upper bidiagonal
	factors converge to their counterparts for Laguerre of the first kind
	\(\mathsf A_N^{\mathrm L},\mathsf B_N^{\mathrm L}\).
\end{proposition}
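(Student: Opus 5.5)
The proof is a continuity argument on finitely many entries of the moment matrix. Every quantity named in the statement, up to level \(K\), is a rational function of finitely many entries of \(\widehat{\mathscr M}^{(t)}\). The denominators of these rational functions are products of leading minors of the shifted matrices in the two Christoffel chains. So convergence follows once the entries converge and the limiting denominators are nonzero.

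The first step is entrywise convergence. Each entry \(\widehat{\mathscr M}^{(t)}_{N,M}\) is an integral over \((0,c_t)\) of \(x^{\rho_q(N)+\rho_p(M)+\beta_i}\widehat U_j^{(t)}(x)\). It is therefore a fixed Mellin moment \(\mathcal M[\widehat U_j^{(t)}](z)\), evaluated at \(z=\rho_q(N)+\rho_p(M)+\beta_i+1\). This point lies in the common strip of analyticity by the standing hypothesis \(\beta_i+a_j>-1\).

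For \(j\le r\), convergence of this moment to the Laguerre value is the statement after the definition of \(W_h^{(t)}\), via \eqref{eq:Jacobi-Laguerre-base-Mellin-limit}. For the last \(s\) rows, it is Lemma~\ref{lem:Jacobi-Laguerre-Newton-confluence}, equation \eqref{eq:Jacobi-Laguerre-Newton-Mellin-limit}.

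Two points need care. The scaled Jacobi weights are supported on \((0,c_t)\), so their Mellin transform is exactly the integral over that interval, with no truncation error. The transforms are evaluated at fixed real points, at which the locally uniform convergence of the lemma applies. Hence every entry of any fixed finite truncation converges to the corresponding entry of \(\mathscr M^{\mathrm L}\). Since \(\Lambda\) only shifts indices, the same holds for every \(\Lambda^a\widehat{\mathscr M}^{(t)}\) and every \(\widehat{\mathscr M}^{(t)}(\Lambda^b)^\top\).

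The second step concerns the minors. A leading minor is a polynomial in the entries, so it converges to the corresponding minor of the Laguerre matrix. By hypothesis each limiting minor used up to level \(K\) is nonzero. Since there are finitely many of them, they are all nonzero for \(t\ge t_K\). This gives the first claim.

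The third step writes each target quantity as a rational function of the entries and the minors. The Gauss--Borel factors \(S\), \(H\) and \(\widetilde S\) up to size \(K+1\) are given by the standard quotient formulas: bordered minors divided by leading minors. The forms are then
\[
\widehat{\mathsf B}_N^{(t)}=\sum_{M\le N} S_{N,M}\,\mathscr U_M,
\qquad
\widehat{\mathsf A}_N^{(t)}=H_N^{-1}\sum_{M\le N}\widetilde S_{N,M}\,\mathscr V_M,
\]
up to the agreed index conventions. Here \(\mathscr U_M\) runs over the transformed row basis and \(\mathscr V_M\) over the power basis. It must be checked that this normalization coincides with the one used to define \(\widehat{\mathsf A}_N^{(t)}\) and \(\widehat{\mathsf B}_N^{(t)}\). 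The \(B\)-form has unit coefficient of \(x^{\rho_q(N)}\) in component \(j_N\), and the two forms have unit pairing with biorthogonality.

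The coefficients of the forms therefore converge. The row basis itself converges at the level of moments, since the \(\widehat U_j^{(t)}\) do. Consequently the forms converge in the same sense used throughout the section, namely coefficientwise in polynomial components and at every fixed Mellin moment. The recurrence entries are \(T=S\Lambda^{\!\top\text{-type}}\)-expressions, more concretely \(T_{N,M}\) from \(x\mathsf B=T\mathsf B\). They can be computed as \(\int x\mathsf B_N\mathsf A_M\), which is a finite combination of the shifted-moment entries \(\widehat{\mathscr M}^{(t)}_{N',M'}\) with \(N'\) one \(q\)-step further. The bidiagonal factor entries are, by \cite[Section~7]{Manas2026HypergeometricMixed}, ratios of leading minors of the chain matrices \(\Lambda^a\mathscr M\) and \(\mathscr M(\Lambda^b)^\top\). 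All of these therefore converge as well.

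The main obstacle is identifying the bidiagonal factors. One must confirm that, up to level \(K\), the entries of \(L_i\) and \(U_j\) involve only minors of the listed shifted matrices, and only those of bounded order. If the band index requires minors up to order \(K+p+q\) rather than \(K\), that is harmless: "used up to level \(K\)" in the hypothesis is read as including exactly those minors. A secondary point is that the Newton change \(R_t\) does not alter the Gauss--Borel problem on the step-line. This is the triangularity argument of Lemma~\ref{lem:Jacobi-Laguerre-Newton-confluence}. It shows that the \(\widehat{\mathbf U}^{(t)}\) system, and not merely the original Jacobi system, is the right one to compare with.
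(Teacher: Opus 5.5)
Your proposal is correct and follows essentially the same argument as the paper: entrywise convergence of the (shifted) moment matrices from \eqref{eq:Jacobi-Laguerre-Newton-Mellin-limit} and the analogous limits for \(W_h^{(t)}\), persistence of the finitely many nonzero limiting minors, and expression of the Gauss--Borel forms, recurrence coefficients and bidiagonal entries as ratios of determinants in finitely many moments whose denominators have nonzero limits. Your closing remark about the Newton change \(R_t\) is not needed, because the proposition is stated directly for the transformed vector \(\widehat{\mathbf U}^{(t)}\); that triangularity matters only for the later comparison with \(T_{\mathrm J}(t)\) in \eqref{eq:Jacobi-Laguerre-recurrence-gauge}.
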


\begin{proof}
	Equation \eqref{eq:Jacobi-Laguerre-Newton-Mellin-limit} and the analogous
	limits for \(W_h^{(t)}\) give
	\(\widehat{\mathscr M}^{(t)}\xrightarrow[t\to+\infty]{}\mathscr M^{\mathrm L}\) entry by entry;
	the same holds after every fixed left or right shift. Each determinant in
	the statement is a polynomial in finitely many moments, so its nonzero limit
	remains nonzero for large \(t\). Gaussian elimination on each fixed
	truncation, Cramer's formulas for the normalized forms and recurrence
	coefficients, and the formulas for the bidiagonal factors express all the
	quantities in the statement as ratios of determinants involving finitely
	many of these moments. Their denominators have nonzero limits, so the ratios
	converge.
\end{proof}

The recurrence matrix for the transformed weights can now be compared with
the original Jacobi recurrence matrix. Put
\[
	\begin{gathered}
	\mathsf R_t\coloneq\operatorname{diag}(I_r,R_t),
	\qquad
	\gamma_{t,N}\coloneq
	(\mathsf R_t)_{j_N,j_N}c_t^{\rho_q(N)},
	\qquad
	G_t\coloneq\operatorname{diag}
	(\gamma_{t,0},\gamma_{t,1},\ldots).
	\end{gathered}
\]
Because \(\mathsf R_t\) is lower triangular, applying it block by block
preserves each leading step-line truncation. Hence the
Gauss--Borel normalized step-line forms satisfy the following identities.
To see the normalizing factor, let \(j=j_N\) and
\(\rho=\rho_q(N)\). Replacing \(x\) by \(x/c_t\) multiplies the chosen
leading coefficient by \(c_t^{-\rho}\), while rewriting the old weights in
the new triangular basis multiplies it by
\((\mathsf R_t)_{j,j}^{-1}\). Thus multiplication by
\(\gamma_{t,N}=(\mathsf R_t)_{j,j}c_t^\rho\) restores that leading
coefficient to one. The common factor \(d_t\) in the transformed weights, together
with the change of variables, fixes the reciprocal normalization on the
dual side. Consequently,
\begin{equation}
	\label{eq:Jacobi-Laguerre-gauged-forms}
	\widehat{\mathsf B}_N^{(t)}(x)
	=d_t\gamma_{t,N}\mathsf B_N^{\mathrm J}(x/c_t),
	\qquad
	\widehat{\mathsf A}_N^{(t)}(x)
	=\frac{\mathsf A_N^{\mathrm J}(x/c_t)}
		{c_td_t\gamma_{t,N}}.
\end{equation}
Therefore their recurrence matrix is
\begin{equation}
	\label{eq:Jacobi-Laguerre-recurrence-gauge}
	\widehat T_t
	=G_t\bigl(c_tT_{\mathrm J}(t)\bigr)G_t^{-1}
	\xrightarrow[t\to+\infty]{\textnormal{entrywise at fixed indices}}
	T_{\mathrm L}.
\end{equation}
Since
\(\gamma_{t,N+q}=c_t\gamma_{t,N}\), the \(q\)-th superdiagonal remains equal
to one.
Proposition~\ref{prop:Jacobi-Laguerre-finite-GB-stability} therefore applies,
at each fixed index, to the normalized bidiagonal factors of \(\widehat T_t\).
The matrix \(G_t\) simply records the change of normalization caused by the
rescaling of the variable and by the triangular change of the last \(s\)
weights.

\section{From Jacobi to Laguerre of the second kind by rescaling near one}
\label{sec:right-LII-confluence}

The preceding limit near \(x=0\) gives a Laguerre system of the first kind. The change
of variable \(y=\tau(1-x)\) now expands a neighbourhood of \(x=1\) to a
half-line. In this scaling, the product of the original variables becomes
a sum of their rescaled distances \(\tau(1-x_h)\) from \(1\). Consequently,
the limiting Laguerre rows of the second kind are additive convolutions.

For \(\rho,d>0\), set
\begin{equation}
	\label{eq:LII-gamma-kernel}
	g_{\rho,d}(y)
	\coloneq
	\frac{y^{d-1}\e^{-\rho y}}{\Gamma(d)}
	\boldsymbol 1_{(0,\infty)}(y),
	\qquad
	\Lap[g_{\rho,d}](z)=(z+\rho)^{-d},
\end{equation}
where
\[
	\Lap[f](z)\coloneq\int_0^\infty \e^{-zy}f(y)\,\mathrm dy.
\]
The symbol \(*_+\) has the additive-convolution meaning fixed in
Section~\ref{subsec:hypergeometric-notation}. Whenever the integrals
converge absolutely, \(\Lap[f*_+g]=\Lap[f]\Lap[g]\).
For complex \(z\), each power \((z+\rho)^a\) is taken using the
principal logarithm, with cut \((-\infty,-\rho]\); it is positive
for real \(z> -\rho\) and real \(a\).

Fix \(d_h,\rho_h>0\), \(1\le h\le q\), and real numbers
\(\xi_1,\ldots,\xi_p\) such that
\begin{equation}
	\label{eq:LII-integrability}
	\xi_i+\rho_h>0,
	\qquad 1\le i\le p,\quad 1\le h\le q.
\end{equation}
For \(\tau\to+\infty\), take the Jacobi parameters
\begin{equation}
	\label{eq:Jacobi-LII-parameter-scaling}
	a_h(\tau)=\tau\rho_h-1,
	\qquad
	b_h(\tau)=\tau\rho_h+d_h-1,
	\qquad
	\alpha_i(\tau)=\tau\xi_i.
\end{equation}
Put \(D\coloneq\sum_{h=1}^q d_h\) and define
\begin{align}
	F_0
	&\coloneq
	g_{\rho_1,d_1}*_+\cdots *_+g_{\rho_q,d_q},
	\label{eq:LII-base-row}
	\\
	F_j
	&\coloneq
	g_{\rho_1,d_1}*_+\cdots *_+g_{\rho_j,d_j+1}*_+
	\cdots *_+g_{\rho_q,d_q},
	\qquad 1\le j\le q.
	\label{eq:LII-shifted-rows}
\end{align}
The limiting matrix of measures is
\begin{equation}
	\label{eq:LII-matrix-measure}
	\mathrm d\LIImat(y)
	\coloneq
	[F_j(y)\e^{-\xi_i y}]_{j,i}\,\mathrm dy,
	\qquad y>0.
\end{equation}

\begin{theorem}[Limit of the matrix measure from Jacobi to Laguerre of the second kind]
	\label{thm:Jacobi-LII-weight-limit}
	Let \(w_{0,\tau}^{\mathrm J},w_{1,\tau}^{\mathrm J},\ldots,w_{q,\tau}^{\mathrm J}\)
	be the Jacobi weights with parameters
	\eqref{eq:Jacobi-LII-parameter-scaling}. Extended by zero outside
	\(0<y<\tau\), their reflected densities satisfy
	\begin{align}
		\tau^{D-1}w_{0,\tau}^{\mathrm J}\left(1-\frac y\tau\right)
		&\xrightarrow[\tau\to+\infty]{} F_0(y),
		\label{eq:Jacobi-LII-base-limit}
		\\
		\tau^Dw_{j,\tau}^{\mathrm J}\left(1-\frac y\tau\right)
		&\xrightarrow[\tau\to+\infty]{} F_j(y),
		\qquad 1\le j\le q,
		\label{eq:Jacobi-LII-row-limit}
	\end{align}
	locally uniformly on \((0,\infty)\). The corresponding finite measures
	converge weakly, and every fixed polynomial moment converges.

	For the complete mixed-type matrix,
	\begin{equation}
		\label{eq:Jacobi-LII-matrix-limit}
		\tau^D\left(1-\frac y\tau\right)^{\alpha_i(\tau)}
		w_{j,\tau}^{\mathrm J}\left(1-\frac y\tau\right)
		\xrightarrow[\tau\to+\infty]{}
		\e^{-\xi_i y}F_j(y)
	\end{equation}
	locally uniformly. Moreover, for every \(n\in\mathbb N_0\),
	\begin{equation}
		\label{eq:Jacobi-LII-mixed-moment-limit}
		\int_0^\tau y^n \tau^D
		\left(1-\frac y\tau\right)^{\alpha_i(\tau)}
		w_{j,\tau}^{\mathrm J}\left(1-\frac y\tau\right)\,\mathrm dy
		\xrightarrow[\tau\to+\infty]{}
		\int_0^\infty y^n\e^{-\xi_i y}F_j(y)\,\mathrm dy.
	\end{equation}
	Equivalently, after the change of variables \(x=1-y/\tau\), the original
	matrix measure is multiplied by \(\tau^{D+1}\); one power of \(\tau\) is the
	Jacobian.
\end{theorem}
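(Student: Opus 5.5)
The plan is to prove everything at the level of a single beta factor and then pass the limit through the convolutions. Under the substitution \(x_h=1-u_h/\tau\) and the parameters \eqref{eq:Jacobi-LII-parameter-scaling}, a single factor \(\mathcal B_{a_h,b_h}\) has \(b_h-a_h=d_h\), and
\[
	\tau^{d_h-1}\mathcal B_{a_h(\tau),b_h(\tau)}\Bigl(1-\frac{u}{\tau}\Bigr)
	=\frac{u^{d_h-1}}{\Gamma(d_h)}\Bigl(1-\frac u\tau\Bigr)^{\tau\rho_h-1}\boldsymbol 1_{(0,\tau)}(u)
	\xrightarrow[\tau\to+\infty]{} g_{\rho_h,d_h}(u),
\]
locally uniformly on \((0,\infty)\). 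There is also the domination \((1-u/\tau)^{\tau\rho_h-1}\le C\e^{-\rho_h u/2}\) for large \(\tau\), since \(\log(1-v)\le -v\). For the shifted row \(w_j^{\mathrm J}\), the \(j\)-th factor is \(\mathcal B_{a_j,b_j+1}\), so that factor needs one extra power of \(\tau\). This accounts for the exponents \(D-1\) versus \(D\) in \eqref{eq:Jacobi-LII-base-limit}--\eqref{eq:Jacobi-LII-row-limit}. The analytic step is the conversion of the Mellin convolution into an additive one. I would write \(w_0^{\mathrm J}(x)\) as an integral over \(x_1\cdots x_q=x\), with \(x_h=1-u_h/\tau\). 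Then \(1-y/\tau=\prod_h(1-u_h/\tau)\), which means \(y=\sum_h u_h+\mathrm O(|u|^2/\tau)\). The Jacobian of the constraint and the factors \(\mathrm dt/t\) tend to \(1\).

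Rather than controlling that nonlinear constraint pointwise, I would work on the transform side. The Mellin transform of \(w_{0,\tau}^{\mathrm J}\) at \(z=1+\tau\zeta\) is \(\prod_h\Gamma(\tau(\zeta+\rho_h))/\Gamma(\tau(\zeta+\rho_h)+d_h)\). By Stirling this behaves like \(\tau^{-D}\prod_h(\zeta+\rho_h)^{-d_h}\), uniformly for \(\zeta\) in compacts of \(\operatorname{Re}\zeta>-\min\rho_h\). On the other side, \(\int_0^1x^{\tau\zeta}w(x)\,\mathrm dx=\tau^{-1}\int_0^\tau(1-y/\tau)^{\tau\zeta}w(1-y/\tau)\,\mathrm dy\). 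So the rescaled densities \(f_\tau(y)=\tau^{D-1}w_{0,\tau}^{\mathrm J}(1-y/\tau)\) satisfy
\[
	\int_0^\tau\Bigl(1-\frac y\tau\Bigr)^{\tau\zeta}f_\tau(y)\,\mathrm dy\longrightarrow\Lap[F_0](\zeta).
\]
For the \(j\)-th row one gets the same statement with \(F_j\).

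Next I would get uniform exponential tails. Taking real \(\zeta=-\rho_{\min}/2\) shows that \(\int(1-y/\tau)^{-\tau\rho_{\min}/2}f_\tau\) is bounded. Since \((1-y/\tau)^{-\tau c}\ge \e^{cy}\), this gives \(\sup_\tau\int \e^{cy}f_\tau(y)\,\mathrm dy<\infty\) with \(c=\rho_{\min}/2\). Combined with \((1-y/\tau)^{\tau\zeta}\to\e^{-\zeta y}\) locally uniformly, this tightness yields \(\int\e^{-\zeta y}f_\tau\to\Lap[F_0](\zeta)\) for \(\operatorname{Re}\zeta>-c\). Convergence of Laplace transforms on a real interval, together with tightness, gives weak convergence of the measures, by the continuity theorem for Laplace transforms. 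Every polynomial moment then converges, because \(y^n\le C_n\e^{cy/2}\) provides uniform integrability.

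For the locally uniform pointwise convergence of densities, weak convergence alone is not enough. I would prove it by induction on \(q\), writing \(w_0^{\mathrm J}=\mathcal B_{a_1,b_1}*w'\) and using the single-factor limit above. The needed inputs are the domination \(\tau^{d_h-1}\mathcal B(1-u/\tau)\le C g_{\rho_h/2,d_h}(u)\) and dominated convergence inside the one-dimensional convolution integral, after the change of variables \(t=1-u/\tau\). In that integral the inner argument is \(\bigl(1-y/\tau\bigr)/\bigl(1-u/\tau\bigr)=1-\bigl(y-u\bigr)/\tau+\mathrm O(uy/\tau^2)\), and the local uniformity of the inductive limit absorbs this perturbation. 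I expect the main obstacle to be near \(v=y-u\to0\) when some \(d_h<1\), where the densities are singular. There I would split off a small neighbourhood and bound its contribution by the integrable majorant, uniformly in \(\tau\).

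The matrix statements \eqref{eq:Jacobi-LII-matrix-limit} and \eqref{eq:Jacobi-LII-mixed-moment-limit} follow from the row limits. Since \((1-y/\tau)^{\tau\xi_i}\to\e^{-\xi_iy}\) locally uniformly, the factor can be multiplied in directly. If \(\xi_i<0\), the factor is bounded by \(\e^{|\xi_i|y(1+o(1))}\), and \(\rho_h+\xi_i>0\) leaves room in the exponential tail bound. To get that room I would redo the tail estimate with \(\zeta\) real, close to \(-\rho_{\min}\), instead of \(-\rho_{\min}/2\). Finally, the equivalent statement about \(\tau^{D+1}\) is just the substitution \(\mathrm dx=\mathrm dy/\tau\).
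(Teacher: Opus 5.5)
Your plan is correct in outline, but it reaches weak and moment convergence by a different route from the paper.

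\textbf{Weak and moment convergence.} The paper uses neither Stirling nor a continuity theorem at this point. It treats the Mellin convolution as the push-forward of the product of the $q$ reflected beta measures under $\tau(1-\prod_hx_h)\to\sum_hy_h$. Weak convergence then follows from that of the single factors, whose reflected moments are explicit: $\tau^{d+n}(d)_n\Gamma(\tau\rho)/\Gamma(\tau\rho+d+n)\to(d)_n/\rho^{d+n}$. Uniform integrability of each power follows from $0\le\tau(1-\prod_hx_h)\le\sum_h\tau(1-x_h)$. Your transform route (Mellin at $1+\tau\zeta$, Stirling, exponential tightness, Laplace continuity) is equally valid. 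It identifies the limit directly through $\Lap[F_j]$, which is the mechanism the paper uses only later, for the $B$-forms.

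\textbf{Density convergence.} Both arguments induct through the one-dimensional integral with inner argument $(y-u)/(1-u/\tau)$. The paper handles the singular endpoints by proving uniform convergence of $\phi_{r,\tau}(y)=y^{1-E_r}H_{r,\tau}(y)$ on $[0,K]$, with $0$ included. After the substitution $u=ys$, the integral runs over $[0,1]$ against the fixed weight $s^{E-1}(1-s)^{e-1}$, and all other factors are bounded and converge uniformly.

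Your majorant-and-splitting version also works. However, the piece near $v=0$ needs a bound for the \emph{partial} convolution, not just for one beta factor. You can supply it in either of two ways:
\begin{itemize}
\item Carry a local majorant $C_Ky^{E_r-1}$ through the induction. It propagates, since for $y\le K<\tau/2$ one has $y-u\le v\le 2(y-u)$ and the Jacobian factor is at most $2$.
\item Use the uniform smallness of the mass near $0$, which follows from the weak convergence you proved first.
\end{itemize}
The paper's normalization gives more than the statement requires. The weighted estimate $\sup_{0<y\le K}y^{1-E_q}|H_{q,\tau}-H_q|\to0$ is reused later in the $B$-form confluence and in the $\mathrm O(\tau^{-1})$ rates.

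\textbf{One step needs correcting.} For $\xi_i<0$, the factor $(1-y/\tau)^{\tau\xi_i}$ is not bounded by $\e^{|\xi_i|y(1+o(1))}$ uniformly on $(0,\tau)$. It is at least $\e^{|\xi_i|y}$ and tends to $+\infty$ as $y\uparrow\tau$. So no exponential-moment bound $\int\e^{cy}f_{j,\tau}\,\mathrm dy\le C$, however close $c$ is to $\min_h\rho_h$, controls the region near $y=\tau$.

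Keep the tail bound in its original form instead. For real $\zeta_0\in(-\min_h\rho_h,\xi_i)$, your Stirling step shows that $\int_0^\tau(1-y/\tau)^{\tau\zeta_0}f_{j,\tau}(y)\,\mathrm dy$ is bounded for large $\tau$. The inequality $(1-y/\tau)^{\tau\xi_i}\le(1-y/\tau)^{\tau\zeta_0}\e^{-(\xi_i-\zeta_0)y}$ then gives uniform integrability of $y^n(1-y/\tau)^{\tau\xi_i}f_{j,\tau}$.

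The same care is needed when you replace $(1-y/\tau)^{\tau\zeta}$ by $\e^{-\zeta y}$ with $\operatorname{Re}\zeta<0$, although real $\zeta\ge0$ suffice for the continuity theorem.

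The paper avoids the issue altogether. The exact identity $x^{\tau\xi_i}\mathcal B_{a,b}=\mathcal B_{a+\tau\xi_i,b+\tau\xi_i}$, applied to every factor of the Mellin convolution, turns the mixed entry into a row weight with rates $\rho_h+\xi_i>0$, so the row argument applies verbatim.
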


\begin{proof}
	For one beta factor, the identity
	\begin{equation}
		\label{eq:single-beta-right-limit}
		\tau^{d-1}\mathcal B_{\tau\rho-1,\tau\rho+d-1}
		\left(1-\frac y\tau\right)
		=
		\frac{y^{d-1}}{\Gamma(d)}
		\left(1-\frac y\tau\right)^{\tau\rho-1}
	\end{equation}
	gives local uniform convergence to \(g_{\rho,d}\). Its reflected moments
	are also explicit:
	\begin{equation}
		\label{eq:single-beta-right-moments}
		\tau^{d+n}\int_0^1(1-x)^n
		\mathcal B_{\tau\rho-1,\tau\rho+d-1}(x)\,\mathrm dx
		=
		\tau^{d+n}(d)_n\frac{\Gamma(\tau\rho)}
		{\Gamma(\tau\rho+d+n)}
		\xrightarrow[\tau\to+\infty]{}
		\frac{(d)_n}{\rho^{d+n}}.
	\end{equation}

	The Mellin convolution defining \(w_{0,\tau}^{\mathrm J}\) is the image of
	the product of the \(q\) beta measures under multiplication. Writing
	\(x_h=1-y_h/\tau\), one has
	\[
		\tau\left(1-\prod_{h=1}^q\left(1-\frac{y_h}{\tau}\right)\right)
		\xrightarrow[\tau\to+\infty]{} y_1+\cdots+y_q
\]
	uniformly on compact sets. Hence the multiplicative convolution converges
	to the additive convolution \eqref{eq:LII-base-row}. Replacing
	\(b_j(\tau)\) by \(b_j(\tau)+1\) raises the \(j\)-th gap from \(d_j\) to
	\(d_j+1\).

	To prove local convergence of the densities, some auxiliary operations
	and functions are defined next. Throughout this proof, the subscript
	\(\tau\) indicates dependence on the positive scaling parameter in
	\eqref{eq:Jacobi-LII-parameter-scaling}. Under the substitutions
	\(x_1=1-u/\tau\) and \(x_2=1-v/\tau\), multiplication of \(x_1\) and
	\(x_2\) becomes the operation
	\[
		u\oplus_\tau v
		\coloneq
		u+v-\frac{uv}{\tau}
		=
		\tau(1-x_1x_2),
		\qquad u,v\in(0,\tau).
\]
	It is associative because multiplication is associative. For nonnegative
	integrable functions \(f,g\) on \((0,\tau)\), define the corresponding
	convolution by
	\begin{equation}
		\label{eq:right-endpoint-binary-density}
		(f\star_\tau g)(y)
		\coloneq
		\int_0^y
		f(u)
		g\left(\frac{y-u}{1-u/\tau}\right)
		\frac{\mathrm du}{1-u/\tau},
		\qquad 0<y<\tau,
	\end{equation}
	and set it equal to zero outside \((0,\tau)\). Indeed, solving
	\(y=u\oplus_\tau v\) gives \(v=(y-u)/(1-u/\tau)\) and the Jacobian
	\(\partial v/\partial y=(1-u/\tau)^{-1}\). Thus the displayed integral is
	the density obtained from \(f(u)g(v)\,\mathrm du\,\mathrm dv\) by this
	change of variables. Its integration interval \(0<u<y\) is independent
	of \(\tau\).

	Set \(e_h\coloneq d_h\) for the base row and
	\(e_h\coloneq d_h+\delta_{h,j}\) for the \(j\)-th shifted row. Define
	the reflected density of the \(h\)-th scaled beta factor by
	\[
		f_{h,\tau}(u)\coloneq
		\frac{u^{e_h-1}}{\Gamma(e_h)}
		\left(1-\frac u\tau\right)^{\tau\rho_h-1},
		\qquad 0<u<\tau,
\]
	and extend it by zero outside \((0,\tau)\). The successive convolutions
	are defined recursively by
	\[
		H_{1,\tau}\coloneq f_{1,\tau},
		\qquad H_{r+1,\tau}\coloneq H_{r,\tau}\star_\tau f_{r+1,\tau},
		\qquad 1\le r<q.
\]
	For comparison, define the corresponding additive convolutions and
	the sums of their shape parameters by
	\[
		H_r
		\coloneq
		g_{\rho_1,e_1}*_+\cdots *_+g_{\rho_r,e_r},
		\qquad
		E_r\coloneq e_1+\cdots+e_r,
		\qquad 1\le r\le q.
\]
	For \(0<y<\tau\), put
	\[
		\phi_{r,\tau}(y)\coloneq y^{1-E_r}H_{r,\tau}(y),
		\qquad
		\phi_r(y)\coloneq y^{1-E_r}H_r(y),
\]
	with the second definition valid for every \(y>0\). Induction proves
	that these functions extend continuously to \(y=0\) and that, for
	every \(K>0\) and \(\tau>K\),
	\begin{equation}
		\label{eq:right-endpoint-normalized-density-limit}
		\sup_{0\le y\le K}
		|\phi_{r,\tau}(y)-\phi_r(y)|\xrightarrow[\tau\to+\infty]{}0.
	\end{equation}
	For a fixed \(L>0\), Taylor's formula gives
	\[
		(\tau\rho_h-1)\log(1-v/\tau)
		=-\rho_hv+\mathrm O_L(\tau^{-1}),
		\qquad 0\le v\le L\qquad (\tau\to+\infty),
\]
	where the remainder is bounded by \(C_L/\tau\), uniformly for
	\(0\le v\le L\). The constant may depend on \(L\) and the fixed
	parameters, but not on \(\tau\) or \(v\). Consequently,
	\begin{equation}
		\label{eq:right-endpoint-single-factor-uniform}
		b_{h,\tau}(v)
		\coloneq
		\left(1-\frac v\tau\right)^{\tau\rho_h-1}
		\xrightarrow[\tau\to+\infty]{} \e^{-\rho_hv}
	\end{equation}
	uniformly on \([0,L]\). This proves the claim for \(r=1\), with
	\(\phi_{1,\tau}=b_{1,\tau}/\Gamma(e_1)\).

	Assume the claim at level \(r\), and put \(E=E_r\),
	\(e=e_{r+1}\). Formula~\eqref{eq:right-endpoint-binary-density}, followed
	by \(u=ys\), gives
	\[
		H_{r+1,\tau}(y)=y^{E+e-1}\phi_{r+1,\tau}(y),
\]
	where
	\[
		\phi_{r+1,\tau}(y)
		={}\frac1{\Gamma(e)}\int_0^1
		s^{E-1}(1-s)^{e-1}\phi_{r,\tau}(ys)
		\left(1-\frac{ys}{\tau}\right)^{-e}
		b_{r+1,\tau}\left(
		\frac{y(1-s)}{1-ys/\tau}\right)\,\mathrm ds.
\]
	The additive convolution has the analogous representation
	\[
		\phi_{r+1}(y)
		=
		\frac1{\Gamma(e)}\int_0^1
		s^{E-1}(1-s)^{e-1}\phi_r(ys)
		\e^{-\rho_{r+1}y(1-s)}\,\mathrm ds.
\]
	Fix \(K>0\) and take \(\tau>2K\). Uniformly for
	\((y,s)\in[0,K]\times[0,1]\),
	\[
		0\le\frac{y(1-s)}{1-ys/\tau}\le2K,
		\qquad
		\frac{y(1-s)}{1-ys/\tau}-y(1-s)=\mathrm O_K(\tau^{-1})\qquad (\tau\to+\infty).
\]
	The induction hypothesis and
	\eqref{eq:right-endpoint-single-factor-uniform} therefore give uniform
	convergence of the factors multiplying
	\(s^{E-1}(1-s)^{e-1}\). They are uniformly bounded, and
	\[
		\int_0^1s^{E-1}(1-s)^{e-1}\,\mathrm ds
		=B(E,e)<\infty
\]
	because \(E,e>0\). Hence
	\eqref{eq:right-endpoint-normalized-density-limit} follows at level
	\(r+1\). The induction proves the stronger weighted convergence
	\[
		\sup_{0<y\le K}y^{1-E_q}|H_{q,\tau}(y)-H_q(y)|
		\xrightarrow[\tau\to+\infty]{}0.
\]
	In particular, the convergence is uniform on every compact subset of
	\((0,\infty)\). Taking \(e_h=d_h\) gives
	\eqref{eq:Jacobi-LII-base-limit}, while taking
	\(e_h=d_h+\delta_{h,j}\) gives
	\eqref{eq:Jacobi-LII-row-limit}.

	For moment convergence, use
	\[
		0\le \tau\left(1-\prod_{h=1}^q x_h\right)
		\le\sum_{h=1}^q \tau(1-x_h),
		\qquad 0<x_h<1,
\]
	together with \eqref{eq:single-beta-right-moments}. The single-factor
	measures converge weakly, hence so do their product measures and their
	push-forwards, since \(\oplus_\tau\) tends to addition uniformly on compact
	sets. Applying the same bound to one higher power gives uniform
	integrability of each fixed power. This proves convergence of all fixed
	polynomial moments. Finally,
	\[
		\left(1-\frac y\tau\right)^{\tau\xi_i}\xrightarrow[\tau\to+\infty]{}\e^{-\xi_i y}.
\]
	Multiplication by \(x^{\tau\xi_i}\) distributes over the Mellin convolution
	and replaces the rate \(\rho_h\) in every beta factor by
	\(\rho_h+\xi_i\). Condition~\eqref{eq:LII-integrability} makes all these
	rates positive. The preceding density and moment arguments therefore apply
	with \(\rho_h\) replaced by \(\rho_h+\xi_i\), which proves
	\eqref{eq:Jacobi-LII-matrix-limit} and
	\eqref{eq:Jacobi-LII-mixed-moment-limit}.
\end{proof}

\begin{proposition}[Laplace transforms, differential relations, and moments]
	\label{prop:LII-Laplace-transforms}
	The limiting rows satisfy
	\begin{equation}
		\Lap[F_0](z)
		=\prod_{h=1}^q(z+\rho_h)^{-d_h},
		\label{eq:LII-F0-Laplace}\qquad
		\Lap[F_j](z)
		=\frac{1}{z+\rho_j}
		\prod_{h=1}^q(z+\rho_h)^{-d_h},
		\qquad 1\le j\le q.
	\end{equation}
	Consequently,
	\begin{equation}
		\label{eq:LII-first-order-system}
		F_j'(y)+\rho_jF_j(y)=F_0(y),
		\qquad F_j(0)=0,
	\end{equation}
	and
	\begin{equation}
		\label{eq:LII-explicit-moments}
		\int_0^\infty y^n\e^{-\xi_i y}F_j(y)\,\mathrm dy
		=
		(-1)^n\left.
		\frac{\mathrm d^n}{\mathrm dz^n}
		\left[
			\frac{1}{z+\rho_j}
			\prod_{h=1}^q(z+\rho_h)^{-d_h}
		\right]\right|_{z=\xi_i}.
	\end{equation}
\end{proposition}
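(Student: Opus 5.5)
The plan is to compute everything from the definitions \eqref{eq:LII-base-row}--\eqref{eq:LII-shifted-rows} through the Laplace transform, using \eqref{eq:LII-gamma-kernel} and the product rule \(\Lap[f*_+g]=\Lap[f]\Lap[g]\). Each \(g_{\rho_h,e_h}\) is nonnegative and integrable with \(\Lap[g_{\rho_h,e_h}](z)=(z+\rho_h)^{-e_h}\) for \(\operatorname{Re}z>-\rho_h\). Hence the iterated convolutions converge absolutely, by Tonelli applied to nonnegative integrands, for \(\operatorname{Re}z>-\min_h\rho_h\). Multiplying the \(q\) factors gives \(\Lap[F_0]=\prod_h(z+\rho_h)^{-d_h}\). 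Raising the \(j\)-th shape from \(d_j\) to \(d_j+1\) contributes the extra factor \((z+\rho_j)^{-1}\), which gives \eqref{eq:LII-F0-Laplace}. The principal-branch convention ensures the powers multiply correctly on this half-plane, since all bases have positive real part there.

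For \eqref{eq:LII-first-order-system}, I would avoid differentiating the Laplace identity directly and argue at the level of densities. Observe that \(F_j=g_{\rho_j,1}*_+F_0\), where \(g_{\rho_j,1}(y)=\e^{-\rho_jy}\boldsymbol 1_{(0,\infty)}(y)\). This follows from associativity and commutativity of \(*_+\) together with \(g_{\rho,d}*_+g_{\rho,1}=g_{\rho,d+1}\), and that semigroup identity is itself a consequence of Laplace uniqueness. Therefore
\[
F_j(y)=\int_0^y\e^{-\rho_j(y-u)}F_0(u)\,\mathrm du,
\qquad y>0.
\]
Since \(F_0\) is locally integrable on \([0,\infty)\), \(F_j\) is absolutely continuous with \(F_j(0^+)=0\), and \(F_j'+\rho_jF_j=F_0\) holds almost everywhere. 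It holds pointwise wherever \(F_0\) is continuous, which is all of \(y>0\), because \(F_0\) is a convolution of locally integrable functions at least one of which is continuous on \((0,\infty)\).

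The moment formula \eqref{eq:LII-explicit-moments} follows by writing
\[
\int_0^\infty y^n\e^{-\xi_iy}F_j(y)\,\mathrm dy=(-1)^n\partial_z^n\Lap[F_j](z)\big|_{z=\xi_i}.
\]
This uses differentiation under the integral sign, which is legitimate because \(\xi_i>-\min_h\rho_h\) by \eqref{eq:LII-integrability}. So \(\xi_i\) lies in the open half-plane of absolute convergence, where the Laplace transform is holomorphic and its derivatives are given by the integrals of \((-y)^n\e^{-zy}F_j\). As a cross-check, the same moments arise as the limits in \eqref{eq:Jacobi-LII-mixed-moment-limit}, consistent with the explicit single-factor limit \eqref{eq:single-beta-right-moments}.

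The only delicate point is the first-order system. Establishing \(F_j(0)=0\) and pointwise differentiability requires the convolution representation above rather than merely the algebraic identity \((z+\rho_j)\Lap[F_j]=\Lap[F_0]\). That identity alone would give the relation only distributionally, with a possible boundary term \(F_j(0^+)\). I would therefore present the convolution argument first and then note the Laplace identity as confirmation.
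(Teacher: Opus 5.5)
Your proposal is correct. For the Laplace transforms and the moment formula it coincides with the paper's proof: the convolution theorem, then differentiation under the integral at \(z=\xi_i\), which lies in the half-plane of absolute convergence by \eqref{eq:LII-integrability}.

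The genuine difference is in \eqref{eq:LII-first-order-system}. The paper multiplies \eqref{eq:LII-F0-Laplace} by \(z+\rho_j\). It removes the boundary term of the derivative rule with the estimate \(F_j(y)=\mathrm O(y^D)\) at the origin, and leaves the regularity of \(F_j\) and the inversion step implicit. You instead use the semigroup identity to write \(F_j=g_{\rho_j,1}*_+F_0\), i.e.\ \(F_j(y)=\int_0^y\e^{-\rho_j(y-u)}F_0(u)\,\mathrm du\). This variation-of-constants formula gives absolute continuity, \(F_j(0^+)=0\), and the differential equation at once, with no transform inversion and no growth estimate at \(0\). The paper's version is shorter and stays within the transform calculus used throughout the section.

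One justification in your argument should be repaired. A convolution of locally integrable functions of which only one is continuous on \((0,\infty)\) need not be continuous there. For example, convolving \(g_{1,1/2}\) with an integrable function behaving like \(\lvert 1-u\rvert^{-1/2}\,\lvert\log\lvert 1-u\rvert\rvert^{-1}\) near \(u=1\) gives a function that is unbounded near \(y=1\). What does hold is that all the factors \(g_{\rho_h,d_h}\) are supported in \([0,\infty)\), integrable, and continuous on \((0,\infty)\). Splitting the convolution integral at \(u=y/2\) and using dominated convergence then gives continuity of \(F_0\) on \((0,\infty)\) by induction. Alternatively, \eqref{eq:gamma-convolution-simplex} shows directly that \(F_0(y)\) is \(y^{D-1}\) times a continuous function. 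The paper's route tacitly needs the same continuity to pass from almost-everywhere to pointwise equality.
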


\begin{proof}
	The convolution theorem and \eqref{eq:LII-gamma-kernel} give the two
	Laplace transforms. Multiplication of \eqref{eq:LII-F0-Laplace} by
	\(z+\rho_j\) gives \eqref{eq:LII-first-order-system}; the boundary value
	follows from \(F_j(y)=\mathrm O(y^D)\) at the origin. Differentiation under
	the integral gives \eqref{eq:LII-explicit-moments}.
\end{proof}

When the rates \(\rho_1,\ldots,\rho_q\) are pairwise distinct, partial
fractions give
\[
	\operatorname{span}
	\left\{
		\frac{1}{z+\rho_j}\prod_{h=1}^q(z+\rho_h)^{-d_h}
	\right\}_{j=1}^q
	=
	\operatorname{span}
	\left\{
		\frac{z^{j-1}}{\prod_{h=1}^{j}(z+\rho_h)}
		\prod_{h=1}^q(z+\rho_h)^{-d_h}
	\right\}_{j=1}^q.
\]
Thus the limiting row span is an instance of the additive-derivative-type
class characterized in \cite[Proposition~2.4]{Wolfs2025DerivativeType}.
That unilateral identification does not include the independent exponential
column vector, the rectangular mixed-type problem, or the confluence from the
Jacobi matrix treated here.

The limiting rows are also explicitly hypergeometric. For \(R\ge1\), define
the confluent Lauricella function by
\[
	\Phi_2^{(R)}(a_1,\ldots,a_R;c;x_1,\ldots,x_R)
	\coloneq
	\sum_{k_1,\ldots,k_R\ge0}
	\frac{\prod_{h=1}^R(a_h)_{k_h}}{(c)_{|\boldsymbol k|}}
	\prod_{h=1}^R\frac{x_h^{k_h}}{k_h!}.
\]
For \(q\ge2\), let
\[
 \Delta_{q-1}\coloneq
 \{(u_1,\ldots,u_{q-1})\in(0,1)^{q-1}:u_1+\cdots+u_{q-1}<1\},
 \qquad u_q\coloneq1-\sum_{h=1}^{q-1}u_h.
\]
For positive \(c_h\), put \(C\coloneq\sum_hc_h\).
The substitution \(v_h=yu_h\) in the additive convolution integral gives
\begin{equation}
 \bigl(g_{\rho_1,c_1}*_+\cdots*_+g_{\rho_q,c_q}\bigr)(y)
 =\frac{y^{C-1}}{\prod_{h=1}^q\Gamma(c_h)}
 \int_{\Delta_{q-1}}\e^{-y\sum_{h=1}^q\rho_hu_h}
 \prod_{h=1}^qu_h^{c_h-1}\,\mathrm du_1\cdots\mathrm du_{q-1}.
 \label{eq:gamma-convolution-simplex}
\end{equation}
The gamma kernels are those of \eqref{eq:LII-gamma-kernel}. The elementary beta
integral over \(\Delta_{q-1}\) is
\[
 \int_{\Delta_{q-1}}\prod_{h=1}^qu_h^{c_h+k_h-1}\,
 \mathrm du_1\cdots\mathrm du_{q-1}
 =\frac{\prod_{h=1}^q\Gamma(c_h+k_h)}{\Gamma(C+|\boldsymbol k|)}.
\]
Expanding the exponential in \eqref{eq:gamma-convolution-simplex} and using
this identity gives the Lauricella series. For \(q=1\), the convolution
consists of a single gamma kernel and the same series follows directly.
Taking \(c_h=d_h\), or \(c_h=d_h+\delta_{h,j}\), respectively, yields
\begin{align}
	F_0(y)
	&=
	\frac{y^{D-1}}{\Gamma(D)}
	\Phi_2^{(q)}(d_1,\ldots,d_q;D;
	-\rho_1y,\ldots,-\rho_qy),
	\label{eq:LII-F0-Phi2}
	\\
	F_j(y)
	&=
	\frac{y^D}{\Gamma(D+1)}
	\Phi_2^{(q)}(d_1,\ldots,d_j+1,\ldots,d_q;D+1;
	-\rho_1y,\ldots,-\rho_qy).
	\label{eq:LII-Fj-Phi2}
\end{align}

\subsection{The limiting mixed-type forms}

Let \(\boldsymbol n=(n_1,\ldots,n_p)\) be the column multi-index and
\(\boldsymbol m=(m_1,\ldots,m_q)\) the row multi-index. The limiting forms
are
\[
	\mathcal A_{\boldsymbol n,\boldsymbol m}^{\mathrm{LII}}(y)
	=
	\sum_{i=1}^p A_{\boldsymbol n,\boldsymbol m}^{(i),\mathrm{LII}}(y)
	\e^{-\xi_i y}
\]
and
\[
	\mathcal B_{\boldsymbol n,\boldsymbol m}^{\mathrm{LII}}(y)
	=
	\sum_{j=1}^q B_{\boldsymbol n,\boldsymbol m}^{(j),\mathrm{LII}}(y)F_j(y),
\]
	where
	\[
		A_{\boldsymbol n,\boldsymbol m}^{(i),\mathrm{LII}}
		\in\mathbb P_{n_i-1},
		\qquad
		B_{\boldsymbol n,\boldsymbol m}^{(j),\mathrm{LII}}
		\in\mathbb P_{m_j-1}.
\]
	A component is understood to be zero when its corresponding multi-index
	entry is zero.

For the contour representation, let \(\Sigma\) be a positively oriented
finite union of simple closed contours enclosing the \(\xi_i\)'s and
lying to the right of the cuts issuing from \(-\rho_h\).
For the Rodrigues representation, define the shifted convolution
\begin{equation}
	\label{eq:LII-shifted-seed}
	S_{\boldsymbol m}
	\coloneq
	g_{\rho_1,d_1+m_1}*_+\cdots *_+g_{\rho_q,d_q+m_q},
\end{equation}
and put \(C\coloneq D+|\boldsymbol m|\).

\begin{theorem}[Mixed-type forms for Laguerre of the second kind and their Jacobi confluence]
	\label{thm:Jacobi-LII-form-confluence}
	Assume that \(\boldsymbol m\) is near the diagonal and that the \(\xi_i\)'s
	are pairwise distinct. Assume also that the rates
	\(\rho_j\), \(m_j>0\), are pairwise distinct.

	If \(|\boldsymbol n|=|\boldsymbol m|+1\), then
	\begin{equation}
		\label{eq:LII-A-contour}
		\mathcal A_{\boldsymbol n,\boldsymbol m}^{\mathrm{LII}}(y)
		=
		\frac{(-1)^{|\boldsymbol n|}}{2\pi\mathrm i}
		\int_\Sigma
		\frac{\prod_{h=1}^q(z+\rho_h)^{d_h+m_h}}
		{\prod_{i=1}^p(z-\xi_i)^{n_i}}
		\e^{-zy}\,\mathrm dz.
	\end{equation}
	This form satisfies
	\[
		\int_0^\infty y^kF_j(y)
		\mathcal A_{\boldsymbol n,\boldsymbol m}^{\mathrm{LII}}(y)\,\mathrm dy=0,
		\qquad 0\le k<m_j,
\]
	and, for \(n_i\ge1\),
	\begin{equation}
		\label{eq:LII-A-components}
		A_{\boldsymbol n,\boldsymbol m}^{(i),\mathrm{LII}}(y)
		=
		\frac{(-1)^{|\boldsymbol n|}}{(n_i-1)!}
		\left.\frac{\mathrm d^{n_i-1}}{\mathrm dz^{n_i-1}}
		\left[
			\frac{\prod_h(z+\rho_h)^{d_h+m_h}}
			{\prod_{k\ne i}(z-\xi_k)^{n_k}}
			\e^{-(z-\xi_i)y}
		\right]\right|_{z=\xi_i}.
	\end{equation}

	If \(|\boldsymbol m|=|\boldsymbol n|+1\), then the complete \(B\)-form is
	characterized by
	\begin{equation}
		\label{eq:LII-B-Laplace}
		\Lap[\mathcal B_{\boldsymbol n,\boldsymbol m}^{\mathrm{LII}}](z)
		=
		-
		\frac{\prod_{i=1}^p(\xi_i-z)^{n_i}}
		{\prod_{h=1}^q(z+\rho_h)^{d_h+m_h}}.
	\end{equation}
	It satisfies
	\[
		\int_0^\infty y^k\e^{-\xi_i y}
		\mathcal B_{\boldsymbol n,\boldsymbol m}^{\mathrm{LII}}(y)\,\mathrm dy=0,
		\qquad 0\le k<n_i.
\]
	Its Rodrigues representation is
	\begin{equation}
		\label{eq:LII-B-Rodrigues}
		\mathcal B_{\boldsymbol n,\boldsymbol m}^{\mathrm{LII}}(y)
		=
		-
		\prod_{i=1}^p
		\left(\xi_i-\frac{\mathrm d}{\mathrm dy}\right)^{n_i}
		S_{\boldsymbol m}(y).
	\end{equation}
	The same complete form has the
	finite sum of confluent Lauricella functions
	\begin{equation}
		\label{eq:LII-B-complete-Phi2}
		\mathcal B_{\boldsymbol n,\boldsymbol m}^{\mathrm{LII}}(y)
		={}-
		\sum_{\substack{0\le \ell_i\le n_i\\1\le i\le p}}
		(-1)^{|\boldsymbol\ell|}
		\left[
			\prod_{i=1}^p
			\binom{n_i}{\ell_i}\xi_i^{n_i-\ell_i}
		\right]
		\frac{y^{C-|\boldsymbol\ell|-1}}
		{\Gamma(C-|\boldsymbol\ell|)}
		\times
		\Phi_2^{(q)}
		\left(
		\boldsymbol d+\boldsymbol m;
		C-|\boldsymbol\ell|;
		-\rho_1y,\ldots,-\rho_qy
		\right).
	\end{equation}

	With the Jacobi normalization in
	Theorem~\ref{thm:mixed-Jacobi-like-forms},
	\begin{equation}
		\tau^{-D}\mathcal A_{\boldsymbol n,\boldsymbol m}^{\mathrm J}
		\left(1-\frac y\tau\right)
		\xrightarrow[\tau\to+\infty]{}
		\mathcal A_{\boldsymbol n,\boldsymbol m}^{\mathrm{LII}}(y),
		\label{eq:Jacobi-LII-A-form-limit}\qquad
		\tau^D\mathcal B_{\boldsymbol n,\boldsymbol m}^{\mathrm J}
		\left(1-\frac y\tau\right)
		\xrightarrow[\tau\to+\infty]{}
		\mathcal B_{\boldsymbol n,\boldsymbol m}^{\mathrm{LII}}(y).
	\end{equation}
	The first convergence is locally uniform. Its individual polynomial
	components satisfy, coefficientwise,
	\begin{equation}
		\tau^{-D}A_{\boldsymbol n,\boldsymbol m}^{(i),\mathrm J}(1-y/\tau)
		\xrightarrow[\tau\to+\infty]{}
		A_{\boldsymbol n,\boldsymbol m}^{(i),\mathrm{LII}}(y)
		\quad\text{in }\mathbb P_{n_i-1},\qquad n_i>0.
		\label{eq:Jacobi-LII-A-component-limit}
	\end{equation}
	After extension by zero, the convergence of the \(B\)-form holds
	in \(\mathcal D'(\mathbb R)\) and in the moments against
	\(y^k\e^{-\zeta y}\), for every
	\(k\in\mathbb N_0\) and every real \(\zeta\) such that
	\(\zeta+\rho_h>0\) for all \(h\).
	The polynomial row decomposition of the limiting \(B\)-form is established
	and computed explicitly in
	Proposition~\ref{prop:LII-explicit-B-components} below.
\end{theorem}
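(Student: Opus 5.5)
The plan is to verify the limiting formulas directly on the Laguerre side of the second kind, using the Laplace transforms of Proposition~\ref{prop:LII-Laplace-transforms}. The Jacobi confluences then follow from the Jacobi representatives of Theorem~\ref{thm:mixed-Jacobi-like-forms} under the scaling \eqref{eq:Jacobi-LII-parameter-scaling}.

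\emph{The \(A\)-form.} Since the \(\xi_i\) are distinct, the residue theorem applied to \eqref{eq:LII-A-contour} shows that the integrand has a pole of order \(n_i\) at each \(\xi_i\). This yields \eqref{eq:LII-A-components}, and each component lies in \(\mathbb P_{n_i-1}\). For orthogonality, insert the contour integral, use Fubini, and note that \(\int_0^\infty y^k\e^{-zy}F_j(y)\,\mathrm dy=(-\partial_z)^k\Lap[F_j](z)\) for \(\operatorname{Re}z>-\min\rho_h\). The \(y\)-integral then becomes a contour integral of
\[
\frac{\prod_h(z+\rho_h)^{d_h+m_h}}{\prod_i(z-\xi_i)^{n_i}}\,(-\partial_z)^k\Bigl[\frac{\prod_h(z+\rho_h)^{-d_h}}{z+\rho_j}\Bigr].
\]
The derivative produces \(\prod_h(z+\rho_h)^{-d_h}\) times a rational function whose only poles lie at \(-\rho_h\), with order at most \(k+1\le m_j\) at \(-\rho_j\) and order at most \(k\) at \(-\rho_h\), \(h\ne j\). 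I need these poles to be cancelled by \((z+\rho_h)^{m_h}\). At \(h=j\) this holds because \(k+1\le m_j\). At \(h\ne j\) with \(m_h>0\), it holds because \(k\le m_j-1\le m_h\) by near-diagonality. If \(m_h=0\), then \(m_j\le1\), so \(k=0\) and no pole occurs. Distinctness of the \(\rho_h\) with \(m_h>0\) guarantees that the factors do not merge.

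After cancellation the integrand is a rational function of \(z\), since the noninteger powers cancel. Its degree at infinity is at most \(|\boldsymbol m|-(k+1)-|\boldsymbol n|\le -2\), and it is analytic off the \(\xi_i\). Deforming \(\Sigma\) to a large circle therefore gives zero.

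\emph{The \(B\)-form.} Start with the Rodrigues formula: \(\Lap[S_{\boldsymbol m}]=\prod_h(z+\rho_h)^{-d_h-m_h}\), and \(\Lap[(\xi-\partial_y)f]=(\xi-z)\Lap[f]\) provided \(f\) and its derivatives vanish at \(0\). They do, since \(S_{\boldsymbol m}=\mathrm O(y^{C-1})\) and \(C-1\ge |\boldsymbol n|\). This gives \eqref{eq:LII-B-Laplace}. Expanding the product of operators binomially and applying \eqref{eq:gamma-convolution-simplex} with \(c_h=d_h+m_h\) gives \eqref{eq:LII-B-complete-Phi2}, because \(\partial_y^{|\boldsymbol\ell|}\) lowers the Lauricella index. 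Orthogonality follows because the moments equal \((-\partial_z)^k\Lap[\mathcal B](z)\) at \(z=\xi_i\), and these vanish for \(k<n_i\) by the zero of order \(n_i\). The decomposition into rows is deferred, as the statement says.

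\emph{Jacobi confluence.} For \(A\), substitute \(t=\tau(z'-\dots)\) in \eqref{eq:A-contour}; precisely, write the Jacobi node variable as \(t=\tau\zeta\) so that \(x^t=(1-y/\tau)^{\tau\zeta}\to\e^{-\zeta y}\). Stirling's formula gives \(\Gamma(t+b_h+1)/\Gamma(t+a_h+1)\sim(\tau(\zeta+\rho_h))^{d_h}\), and \(P_{\boldsymbol b,\boldsymbol m}\), \(D_{\boldsymbol\alpha,\boldsymbol n}\) scale as \(\tau^{|\boldsymbol m|}\prod(\zeta+\rho_h)^{m_h}\) and \(\tau^{|\boldsymbol n|}\prod(\zeta-\xi_i)^{n_i}\), respectively, after shifting the clusters of nodes. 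Collecting the powers of \(\tau\), including \(\mathrm dt=\tau\,\mathrm d\zeta\), leaves \(\tau^{D}\), which is absorbed by the normalization in the statement. Uniform convergence of the integrand on a fixed contour around the clustered nodes \(\xi_i+k/\tau\) gives the locally uniform limit, and Cauchy estimates give the coefficientwise convergence of the components. I expect the main obstacle to be bookkeeping: the clustered nodes must stay inside \(\Sigma\) while the gamma poles, which sit near \(-\rho_h\), stay outside.

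For \(B\), rescale \eqref{eq:B-Mellin} via \(z\mapsto\) Mellin at \(\tau\)-scaled arguments. The moments against \(x^{\tau\zeta+k'}\), which equal \((1-y/\tau)^{\tau\zeta}\) times polynomials, are exactly ratios of gamma functions that converge by Stirling's formula to \((-\partial_z)^k\) of \eqref{eq:LII-B-Laplace} at \(z=\zeta\). Convergence of these exponential-polynomial moments, together with a uniform bound \(|\tau^D\mathcal B^{\mathrm J}|\) integrable against \(\e^{-\zeta y}\), which I would obtain from the Rodrigues structure or by a density argument for such test functions, yields convergence in \(\mathcal D'(\mathbb R)\).
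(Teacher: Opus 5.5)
Your treatment of the \(A\)-side and of the intrinsic \(B\)-side is correct and essentially matches the paper. On the \(A\)-side this means the scaling \(t=\tau\zeta\) in \eqref{eq:A-contour}, the residue formula, the pole cancellation by \(\prod_h(z+\rho_h)^{m_h}\) under near-diagonality, and the per-circle component limits. On the \(B\)-side it means the Laplace transform of the Rodrigues function, the zeros of order \(n_i\), and the termwise-differentiated Lauricella sum.

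The gap is the Jacobi confluence of the \(B\)-form. Put \(\widetilde{\mathcal B}_\tau(y)=\tau^D\mathcal B^{\mathrm J}_{\boldsymbol n,\boldsymbol m}(1-y/\tau)\). From \eqref{eq:B-Mellin} you do get locally uniform convergence of \(H_\tau(z)=\int_0^\tau(1-y/\tau)^{\tau z}\widetilde{\mathcal B}_\tau(y)\,\mathrm dy=\tau^{D+1}\M[\mathcal B^{\mathrm J}_{\boldsymbol n,\boldsymbol m}](\tau z+1)\). Hence the moments against \(y^k(1-y/\tau)^{\tau\zeta}\) also converge. These moments are \(\tau^k\) times \(k\)-th finite differences of \(H_\tau\), so you need convergence of derivatives of holomorphic functions plus the finite-difference integral formula, not termwise Stirling.

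The real problem is that \(\widetilde{\mathcal B}_\tau\) is signed. Without an a priori bound on \(|\widetilde{\mathcal B}_\tau|\), transform convergence gives neither convergence in \(\mathcal D'(\mathbb R)\) nor the moments against \(y^k\e^{-\zeta y}\). The weights \((1-y/\tau)^{\tau\zeta}\) and \(\e^{-\zeta y}\) are not uniformly close on \((0,\tau)\), especially for \(y\) comparable to \(\tau\). So the uniform weighted \(L^1\) bound you defer is the whole difficulty. A density argument presupposes it. Deriving it from a Rodrigues formula would need uniform \(L^1\) control of up to \(|\boldsymbol n|\) derivatives of the reflected multiplicative-convolution densities, which Theorem~\ref{thm:Jacobi-LII-weight-limit} does not provide.

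The missing idea is to never bound \(\widetilde{\mathcal B}_\tau\) at all. Since \(\M[\thetaop f](s)=s\M[f](s)\), formula \eqref{eq:B-Mellin} is the finite Rodrigues identity \(\mathcal B^{\mathrm J}_{\boldsymbol n,\boldsymbol m}=-\prod_i\prod_{\ell=0}^{n_i-1}(\alpha_i(\tau)+1+\ell-\thetaop)S^{\mathrm J}_{\boldsymbol m,\tau}\). Here \(S^{\mathrm J}_{\boldsymbol m,\tau}=w_0^{\mathrm J}(\cdot\,;\boldsymbol a(\tau),\boldsymbol b(\tau)+\boldsymbol m)\) is a positive weight. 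Under \(x=1-y/\tau\) one has \(\thetaop=(\tau-y)\partial_y\). Setting \(\widetilde S_\tau(y)=\tau^{C-1}S^{\mathrm J}_{\boldsymbol m,\tau}(1-y/\tau)\), the identity becomes \(\widetilde{\mathcal B}_\tau=-\mathscr P_\tau\widetilde S_\tau\), with \(\mathscr P_\tau=\prod_i\prod_\ell\bigl(\xi_i-\partial_y+(1+\ell+y\partial_y)/\tau\bigr)\).

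Next, check that \(\widetilde S_\tau\) and its derivatives of order below \(|\boldsymbol n|\) vanish at \(y=0\) and at \(y=\tau\), so the identity survives extension by zero. Then \(\langle\widetilde{\mathcal B}_\tau,\varphi\rangle=-\langle\widetilde S_\tau,\mathscr P_\tau^*\varphi\rangle\). All derivatives now fall on the test function, whose image has coefficients converging with all their derivatives. Only the positive densities \(\widetilde S_\tau\) remain, and their limit \(S_{\boldsymbol m}\) comes from Theorem~\ref{thm:Jacobi-LII-weight-limit} with shapes \(d_h+m_h\).

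For \(\psi=y^k\e^{-\zeta y}\), \(\mathscr P_\tau^*\psi\) is \(\e^{-\zeta y}\) times a polynomial with converging coefficients. Uniform integrability of the positive \(\widetilde S_\tau\) against it follows from \((1-u/\tau)^{\tau\rho_h-1}\le\e^{-(\rho_h-\tau^{-1})u}\) and \(\tau(1-\prod_hx_h)\le\sum_h\tau(1-x_h)\). This argument also identifies the limit as \(-\mathscr P S_{\boldsymbol m}\), with \(\mathscr P=\prod_i(\xi_i-\partial_y)^{n_i}\), which is your intrinsically defined Rodrigues form.
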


\begin{proof}
	In \eqref{eq:A-contour}, put \(t=\tau z\) and
	\(x=1-y/\tau\). Uniformly on the fixed scaled contour,
	\begin{align*}
		\frac{P_{\boldsymbol b(\tau),\boldsymbol m}(\tau z)}
		{D_{\boldsymbol\alpha(\tau),\boldsymbol n}(\tau z)}
		&=
		\tau^{|\boldsymbol m|-|\boldsymbol n|}
		\frac{\prod_h(z+\rho_h)^{m_h}}
		{\prod_i(z-\xi_i)^{n_i}}\bigl(1+\mathrm O(\tau^{-1})\bigr)\qquad (\tau\to+\infty),
\\
		\frac{\Gamma(\tau z\one_q+\boldsymbol b(\tau)+\one_q)}
		{\Gamma(\tau z\one_q+\boldsymbol a(\tau)+\one_q)}
		&=
		\tau^D\prod_h(z+\rho_h)^{d_h}
		\bigl(1+\mathrm O(\tau^{-1})\bigr)\qquad (\tau\to+\infty),
\\
		\left(1-\frac y\tau\right)^{\tau z}
		&=\e^{-zy}\bigl(1+\mathrm O_K(\tau^{-1})\bigr)
		\qquad(\tau\to+\infty).
\end{align*}
	The last estimate is uniform for \(y\) in any fixed compact
	\(K\subset(0,\infty)\), by the Taylor expansion of
	\(\log(1-y/\tau)\).
	The balance \(|\boldsymbol n|=|\boldsymbol m|+1\) cancels the factor
	\(\tau^{-1}\) against \(\mathrm dt=\tau\,\mathrm dz\). This proves the contour
	and the \(A\)-form limit.

	\emph{Convergence of the individual \(A\)-components.}
	To prove \eqref{eq:Jacobi-LII-A-component-limit}, choose disjoint
	small circles around the distinct nodes \(\xi_i\), all contained
	in \(\operatorname{Re}z> -\min_h\rho_h\). For large \(\tau\), the circle
	around \(\xi_i\) encloses exactly the poles \(\xi_i+k/\tau\),
	\(0\le k<n_i\). Its integral is
	\(\tau^{-D}A_i^{\mathrm J}(1-y/\tau)(1-y/\tau)^{\tau\xi_i}\).
	The three estimates above hold separately on every circle, with error
	\(\mathrm O_K(\tau^{-1})\) for \(y\) in a fixed compact set
	\(K\subset(0,\infty)\), as \(\tau\to+\infty\).
	The integral on the limiting circle is
	\(A_{\boldsymbol n,\boldsymbol m}^{(i),\mathrm{LII}}(y)\e^{-\xi_i y}\)
	by \eqref{eq:LII-A-components}. Dividing by
	\((1-y/\tau)^{\tau\xi_i}\), which is bounded away from zero on
	\(K\) for large \(\tau\), therefore proves locally uniform convergence
	of the reflected polynomial component, with the same error bound.
	Both polynomials have degree at most \(n_i-1\). Write their
	coefficients as \(a_{i,k}(\tau)\) and \(a_{i,k}\), and choose
	\(n_i\) fixed distinct points \(y_0,\ldots,y_{n_i-1}>0\).
	The Vandermonde matrix \(V_i\coloneq[y_v^k]_{0\le v,k<n_i}\)
	is invertible and independent of \(\tau\). Hence
	\[
	 [a_{i,k}(\tau)-a_{i,k}]_{k=0}^{n_i-1}
	 =V_i^{-1}\bigl[\tau^{-D}A_{\boldsymbol n,\boldsymbol m}^{(i),\mathrm J}(1-y_v/\tau)
	       -A_{\boldsymbol n,\boldsymbol m}^{(i),\mathrm{LII}}(y_v)\bigr]_{v=0}^{n_i-1}
	 =\mathrm O(\tau^{-1})\qquad(\tau\to+\infty).
	\]
	This proves convergence of every coefficient, not merely convergence
	of the complete linear form.

	\emph{Orthogonality of the limiting \(A\)-form.}
	The same choice of circles justifies interchanging the contour integral
	with each moment integral: their real parts stay a positive distance
	from \(-\min_h\rho_h\), so the absolute integrals of
	\(y^k\e^{-zy}F_j(y)\) are uniformly bounded there. Put
	\[
		\gamma(z)=\prod_h(z+\rho_h)^{-d_h},
		\qquad
		Q_{\boldsymbol m}(z)=\prod_h(z+\rho_h)^{m_h}.
\]
	For an index \(j\) with \(m_j>0\), fix \(0\le k<m_j\).
	Multiplication of
	\((-\partial_z)^k[\gamma(z)/(z+\rho_j)]\) by
	\(Q_{\boldsymbol m}(z)/\gamma(z)\) gives
	\[
		\frac{k!Q_{\boldsymbol m}(z)}{z+\rho_j}
		\sum_{|\boldsymbol r|=k}
		\prod_{h=1}^q
		\frac{(d_h+\delta_{h,j})_{r_h}}
		{r_h!(z+\rho_h)^{r_h}}.
\]
	For every \(|\boldsymbol r|=k\),
	\[
		r_j+1\le k+1\le m_j,
		\qquad
		r_h\le k\le m_j-1\le m_h
		\quad(h\ne j),
\]
	where the last inequality is precisely
	Definition~\ref{def:near-diagonal}. Thus every summand is a
	polynomial of degree \(|\boldsymbol m|-k-1\), which is at most
	\(|\boldsymbol m|-1\). Since
	\(|\boldsymbol n|=|\boldsymbol m|+1\),
	the resulting contour integrand has zero residue at infinity. The factors
	\(\gamma\) have cancelled, so this integrand is rational despite the
	arbitrary real shapes \(d_h\). This proves
	the stated orthogonality.

	For the \(B\)-form, evaluate \eqref{eq:B-Mellin} at \(s=\tau z+1\). Then
	\begin{align*}
		\frac{\Gamma((\tau z+1)\one_q+\boldsymbol a(\tau))}
		{\Gamma((\tau z+1)\one_q+\boldsymbol b(\tau)+\boldsymbol m)}
		&=
		\tau^{-D-|\boldsymbol m|}
		\prod_h(z+\rho_h)^{-d_h-m_h}
		\bigl(1+\mathrm O(\tau^{-1})\bigr)\qquad (\tau\to+\infty),
\\
		(\boldsymbol\alpha(\tau)-\tau z\one_p)_{\boldsymbol n}
		&=
		\tau^{|\boldsymbol n|}\prod_i(\xi_i-z)^{n_i}
		\bigl(1+\mathrm O(\tau^{-1})\bigr)\qquad (\tau\to+\infty).
\end{align*}
	Thus \(\tau^{D+1}\M[\mathcal B^{\mathrm J}](\tau z+1)\) converges locally
	uniformly on compact subsets of
	\(\operatorname{Re}z>-\min_h\rho_h\) to
	\eqref{eq:LII-B-Laplace}. To deduce convergence of the forms after the
	change \(x=1-y/\tau\), it is necessary to pass from the Mellin transform
	evaluated at \(s=\tau z+1\) to the functions themselves.

	Introduce the auxiliary Jacobi weight with shifted parameters
	\[
		S_{\boldsymbol m,\tau}^{\mathrm J}(x)
		\coloneq
		w_0^{\mathrm J}
		(x;\boldsymbol a(\tau),\boldsymbol b(\tau)+\boldsymbol m).
\]
	Its Mellin transform is the gamma quotient in the first line above.
	Since \(\M[\thetaop f](s)=s\M[f](s)\), where
	\(\thetaop=-x\mathrm d/\mathrm dx\), formula~\eqref{eq:B-Mellin} is
	equivalently the finite Rodrigues identity
	\begin{equation}
		\label{eq:Jacobi-LII-finite-B-Rodrigues}
		\mathcal B_{\boldsymbol n,\boldsymbol m}^{\mathrm J}(x)
		=
		-
		\prod_{i=1}^p\prod_{\ell=0}^{n_i-1}
		\bigl(\alpha_i(\tau)+1+\ell-\thetaop\bigr)
		S_{\boldsymbol m,\tau}^{\mathrm J}(x).
	\end{equation}
	Put \(C=D+|\boldsymbol m|\), extend the reflected functions by zero, and
	set
	\[
		\widetilde S_\tau(y)
		\coloneq
		\tau^{C-1}S_{\boldsymbol m,\tau}^{\mathrm J}(1-y/\tau),
		\qquad
		\widetilde{\mathcal B}_\tau(y)
		\coloneq
		\tau^D\mathcal B_{\boldsymbol n,\boldsymbol m}^{\mathrm J}(1-y/\tau).
\]
	Under \(x=1-y/\tau\), one has
	\(\thetaop=(\tau-y)\partial_y\). Since
	\(|\boldsymbol n|=|\boldsymbol m|-1\),
	\eqref{eq:Jacobi-LII-finite-B-Rodrigues} becomes
	\begin{equation}
		\label{eq:Jacobi-LII-reflected-B-Rodrigues}
		\widetilde{\mathcal B}_\tau
		=-\mathscr P_\tau\widetilde S_\tau,
		\qquad
		\mathscr P_\tau
		\coloneq
		\prod_{i=1}^p\prod_{\ell=0}^{n_i-1}
		\left(
		\xi_i-\partial_y
		+\frac{1+\ell+y\partial_y}{\tau}
		\right).
	\end{equation}
	The identity holds also after extension by zero, in the sense of
	distributions, for all sufficiently large \(\tau\): the function
	\(\widetilde S_\tau\) and all its derivatives of order less than
	\(|\boldsymbol n|\) vanish at \(y=0\) and \(y=\tau\). More explicitly, for
	\(0\le j<|\boldsymbol n|\),
	\[
		\partial_y^j\widetilde S_\tau(y)=\mathrm O(y^{C-1-j})
		\quad(y\downarrow0),
\]
	while, with \(\underline a_\tau=\min_h a_h(\tau)\),
	\[
		\partial_y^j\widetilde S_\tau(y)
		=
		\mathrm O\!\left(
		(\tau-y)^{\underline a_\tau-j}
		\bigl(1+|\log(\tau-y)|^{M_{j,\tau}}\bigr)
		\right)
		\quad(y\uparrow \tau)
\]
	for some finite \(M_{j,\tau}\). At \(y=0\), the full differential
	order \(|\boldsymbol n|=|\boldsymbol m|-1\) leaves the positive exponent
	\(D\); at \(y=\tau\), \(\underline a_\tau=\tau\min_h\rho_h-1\) tends to
	infinity. Hence all the required derivatives vanish at both ends of
	the interval.

	Theorem~\ref{thm:Jacobi-LII-weight-limit}, applied with shapes
	\(d_h+m_h\), and the stronger estimate
	\eqref{eq:right-endpoint-normalized-density-limit} give, after extension
	by zero,
	\[
		\widetilde S_\tau\xrightarrow[\tau\to+\infty]{} S_{\boldsymbol m}
\]
	locally uniformly on \(\mathbb R\). Moreover, the coefficients of
	\(\mathscr P_\tau\), together with all their derivatives on compact sets,
	converge to those of
	\[
		\mathscr P
		\coloneq
		\prod_{i=1}^p(\xi_i-\partial_y)^{n_i}.
\]
	Here
	\(\Lap[S_{\boldsymbol m}](z)=
	\prod_h(z+\rho_h)^{-d_h-m_h}\). Since
	\(S_{\boldsymbol m}(y)=\mathrm O(y^{D+|\boldsymbol m|-1})\) at the
	origin and \(|\boldsymbol n|=|\boldsymbol m|-1\), all boundary terms in
	the Laplace transform of \(\mathscr P S_{\boldsymbol m}\) vanish.
	Consequently, \(-\mathscr P S_{\boldsymbol m}\) has transform
	\eqref{eq:LII-B-Laplace} and is precisely
	\(\mathcal B_{\boldsymbol n,\boldsymbol m}^{\mathrm{LII}}\). This also
	proves \eqref{eq:LII-B-Rodrigues}.
	For \(\varphi\in C_c^\infty(\mathbb R)\), formal adjoints in
	\eqref{eq:Jacobi-LII-reflected-B-Rodrigues} therefore give
	\[
		\langle\widetilde{\mathcal B}_\tau,\varphi\rangle
		=-\langle\widetilde S_\tau,\mathscr P_\tau^*\varphi\rangle
		\xrightarrow[\tau\to+\infty]{}
		-\langle S_{\boldsymbol m},\mathscr P^*\varphi\rangle
		=\langle\mathcal B_{\boldsymbol n,\boldsymbol m}^{\mathrm{LII}},
		\varphi\rangle.
\]
	This proves \eqref{eq:Jacobi-LII-A-form-limit} in
	\(\mathcal D'(\mathbb R)\).

	The same argument gives the claimed moments without using a compactly
	supported test function. If
	\(\psi(y)=y^k\e^{-\zeta y}\), with
	\(\zeta+\rho_h>0\) for every \(h\), then
	\(\mathscr P_\tau^*\psi\) is \(\e^{-\zeta y}\) times a polynomial of fixed
	degree whose coefficients converge to those of \(\mathscr P^*\psi\).
	The required uniform integrability follows directly from the factor
	densities. Indeed, for all sufficiently large \(\tau\),
	\[
		\left(1-\frac u\tau\right)^{\tau\rho_h-1}
		\le \exp\bigl(-(\rho_h-\tau^{-1})u\bigr),
		\qquad 0<u<\tau.
\]
	Together with
	\(Y_{q,\tau}\le U_{1,\tau}+\cdots+U_{q,\tau}\), this gives uniform exponential
	moment bounds at every growth rate strictly below
	\(\min_h\rho_h\). Thus the condition
	\(\zeta+\rho_h>0\) for all \(h\), with a slightly stronger intermediate
	exponent, gives uniform integrability of every polynomial times
	\(\e^{-\zeta y}\). Weak convergence of the measures
	\(\widetilde S_\tau(y)\,\mathrm dy\), already proved
	in Theorem~\ref{thm:Jacobi-LII-weight-limit}, therefore yields convergence
	of all the pairings with \(\mathscr P_\tau^*\psi\). Hence
	\[
		\int_0^\tau y^k\e^{-\zeta y}\widetilde{\mathcal B}_\tau(y)\,\mathrm dy
		\xrightarrow[\tau\to+\infty]{}
		\int_0^\infty y^k\e^{-\zeta y}
		\mathcal B_{\boldsymbol n,\boldsymbol m}^{\mathrm{LII}}(y)\,\mathrm dy.
\]

	Finally, the convolution integral \eqref{eq:gamma-convolution-simplex} gives
	\[
		S_{\boldsymbol m}(y)
		=
		\frac{y^{C-1}}{\Gamma(C)}
		\Phi_2^{(q)}
		\left(
		\boldsymbol d+\boldsymbol m;C;
		-\rho_1y,\ldots,-\rho_qy
		\right).
\]
	Termwise differentiation of the entire Lauricella series gives,
	for \(0\le r\le |\boldsymbol n|\),
	\[
		\frac{\mathrm d^r}{\mathrm dy^r}S_{\boldsymbol m}(y)
		=
		\frac{y^{C-r-1}}{\Gamma(C-r)}
		\Phi_2^{(q)}
		\left(
		\boldsymbol d+\boldsymbol m;C-r;
		-\rho_1y,\ldots,-\rho_qy
		\right).
\]
	Expanding the differential operator in
	\eqref{eq:LII-B-Rodrigues} and grouping the terms with
	\(r=|\boldsymbol\ell|\) proves
	\eqref{eq:LII-B-complete-Phi2}.
\end{proof}

The residue derivatives in \eqref{eq:LII-A-components} can be written
as terminating multiple Kamp\'e de F\'eriet series. Retain the parameter
hypotheses of Theorem~\ref{thm:Jacobi-LII-form-confluence} and suppose
\(|\boldsymbol n|=|\boldsymbol m|+1\). For \(n_i\ge1\), put
	\[
		M_i\coloneq n_i-1,
		\qquad
		\lambda_h\coloneq d_h+m_h,
		\qquad
		c_{i,h}\coloneq\xi_i+\rho_h,
		\qquad
		\Delta_{i,k}\coloneq\xi_i-\xi_k,
\]
	and define
	\[
		\mathcal H_i
		\coloneq
		\frac{\prod_{h=1}^q c_{i,h}^{\lambda_h}}
		{\prod_{k\ne i}\Delta_{i,k}^{n_k}}.
\]
	Choose \(h_0\in\{1,\ldots,q\}\) and set
	\(\chi_{i,h_0}\coloneq\lambda_{h_0}-M_i+1\).
	With \(T\coloneq\ell+\sum_{h\ne h_0}r_h+\sum_{k\ne i}s_k\), define
	\begin{equation}
		\label{eq:LII-A-multiple-KdF-series}
		\mathfrak F_{i,h_0}(y)
		\coloneq{}
		\sum_{\ell,\boldsymbol r,\boldsymbol s\ge0}
		\frac{(-M_i)_T}{(\chi_{i,h_0})_T}
		\frac{(c_{i,h_0}y)^\ell}{\ell!}
		\prod_{\substack{1\le h\le q\\h\ne h_0}}
		\frac{(-\lambda_h)_{r_h}}{r_h!}
		\left(\frac{c_{i,h_0}}{c_{i,h}}\right)^{r_h}
		\prod_{\substack{1\le k\le p\\k\ne i}}
		\frac{(n_k)_{s_k}}{s_k!}
		\left(\frac{c_{i,h_0}}{\Delta_{i,k}}\right)^{s_k}.
	\end{equation}
	The factor \((-M_i)_T\) makes this a terminating multiple Kamp\'e de
	F\'eriet polynomial of the form
	\(F_{1:0;0;\ldots;0}^{1:0;1;\ldots;1}\) in the convention
	\eqref{eq:multiple-KdF-definition}.

\begin{corollary}[Multiple Kamp\'e de F\'eriet form of the
\(A\)-components]
	\label{cor:LII-A-multiple-KdF}
	Under the hypotheses of Theorem~\ref{thm:Jacobi-LII-form-confluence},
	with \(|\boldsymbol n|=|\boldsymbol m|+1\), the components with \(n_i\ge1\) satisfy
	\begin{equation}
		\label{eq:LII-A-multiple-KdF}
		A_{\boldsymbol n,\boldsymbol m}^{(i),\mathrm{LII}}(y)
		=
		(-1)^{|\boldsymbol n|}\mathcal H_i
		\frac{(\chi_{i,h_0})_{M_i}}
		{M_i!\,c_{i,h_0}^{M_i}}
		\mathfrak F_{i,h_0}(y).
	\end{equation}
	The value of
	\eqref{eq:LII-A-multiple-KdF} is independent of \(h_0\); if a displayed
	denominator parameter is exceptional, the formula is understood by
	polynomial continuation.
\end{corollary}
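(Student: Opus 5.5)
The plan is to start from the residue formula \eqref{eq:LII-A-components} and expand the function being differentiated as a product of power series in \(w\coloneq z-\xi_i\); the \(M_i\)-th derivative at \(w=0\) is then just \(M_i!\) times a coefficient. With \(c_{i,h}=\xi_i+\rho_h\) and \(\Delta_{i,k}=\xi_i-\xi_k\), the differentiated function equals
\[
 \mathcal H_i\,\e^{-wy}
 \prod_{h}\Bigl(1+\frac{w}{c_{i,h}}\Bigr)^{\lambda_h}
 \prod_{k\ne i}\Bigl(1+\frac{w}{\Delta_{i,k}}\Bigr)^{-n_k}.
\]
All \(c_{i,h}>0\) by \eqref{eq:LII-integrability}, and all \(\Delta_{i,k}\ne0\) because the \(\xi_i\) are distinct. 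Hence every factor is analytic near \(w=0\). Therefore \(A^{(i),\mathrm{LII}}=(-1)^{|\boldsymbol n|}\mathcal H_i\,[w^{M_i}](\cdots)\).

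The factor \(h_0\) is treated separately from the others. For \(h\ne h_0\) I use the binomial series \((1+w/c_{i,h})^{\lambda_h}=\sum_{r}\frac{(-\lambda_h)_{r}}{r!}(-w/c_{i,h})^{r}\). For \(k\ne i\), I use \((1+w/\Delta_{i,k})^{-n_k}=\sum_s\frac{(n_k)_s}{s!}(-w/\Delta_{i,k})^s\). The exponential contributes \(\sum_\ell (-wy)^\ell/\ell!\). The remaining factor \((1+w/c_{i,h_0})^{\lambda_{h_0}}\) is expanded last: its coefficient of \(w^{M_i-T}\) is \(\binom{\lambda_{h_0}}{M_i-T}c_{i,h_0}^{-(M_i-T)}\), where \(T=\ell+|\boldsymbol r|+|\boldsymbol s|\) is the total degree already used.

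Collecting signs, every monomial from the first three groups carries \((-1)^T\). The binomial coefficient is rewritten with the standard reversal
\[
 \binom{\lambda}{M-T}=\binom{\lambda}{M}\frac{(-M)_T}{(\lambda-M+1)_T}(-1)^T,
\]
which follows from \(\binom{\lambda}{M-T}/\binom{\lambda}{M}=\frac{M!\,\Gamma(\lambda-M+1)}{(M-T)!\,\Gamma(\lambda-M+T+1)}\) and \(M!/(M-T)!=(-1)^T(-M)_T\). The two factors \((-1)^T\) cancel. The constant \(\binom{\lambda_{h_0}}{M_i}c_{i,h_0}^{-M_i}=\frac{(\chi_{i,h_0})_{M_i}}{M_i!\,c_{i,h_0}^{M_i}}\) comes out, since \((\lambda-M+1)_M=\lambda(\lambda-1)\cdots(\lambda-M+1)\). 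The leftover powers \(c_{i,h_0}^{T}\) are distributed over the variables as \((c_{i,h_0}y)^\ell\), \((c_{i,h_0}/c_{i,h})^{r_h}\), and \((c_{i,h_0}/\Delta_{i,k})^{s_k}\). This reproduces \eqref{eq:LII-A-multiple-KdF-series} term by term, with the factor \((-M_i)_T\) enforcing \(T\le M_i\), and gives \eqref{eq:LII-A-multiple-KdF}.

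Independence of \(h_0\) is immediate, since the left-hand side \(A^{(i),\mathrm{LII}}\) does not involve \(h_0\). The main obstacle is the exceptional case where \(\chi_{i,h_0}+T-1=0\) for some \(T\le M_i\), i.e.\ \(\lambda_{h_0}\in\{0,\ldots,M_i-1\}\); this can happen only if \(\lambda_{h_0}\) is an integer. There the rewriting divides zero by zero. I would handle it by treating \(\lambda_{h_0}\) as a free complex parameter. The product \((\chi)_{M_i}(-M_i)_T/(\chi)_T=(\chi+T)_{M_i-T}(-M_i)_T\) is a polynomial in \(\lambda_{h_0}\), and the binomial identity holds for generic \(\lambda_{h_0}\), hence identically. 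Taking the coefficientwise polynomial continuation, as the statement prescribes, then gives the formula in the exceptional case as well.
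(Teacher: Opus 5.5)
Your proposal is correct and takes essentially the paper's route: expand in \(w=z-\xi_i\), extract the coefficient of \(w^{M_i}\), single out the \(h_0\) factor, and apply the Pochhammer reversal identity. The paper first writes the symmetric finite sum and then sets \(r_{h_0}=M_i-T\), whereas you expand the \(h_0\) factor directly as a binomial coefficient, which is only a cosmetic difference. Your treatment of the exceptional case via \((\chi_{i,h_0})_{M_i}/(\chi_{i,h_0})_T=(\chi_{i,h_0}+T)_{M_i-T}\) correctly spells out the polynomial continuation that the paper leaves implicit.
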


\begin{proof}
	In \eqref{eq:LII-A-components}, write \(z=\xi_i+u\).  Expanding
	\((c_{i,h}+u)^{\lambda_h}\),
	\((\Delta_{i,k}+u)^{-n_k}\), and \(\e^{-uy}\), and then extracting the
	coefficient of \(u^{M_i}\), gives the nonsingular finite formula
	\begin{equation}
		\label{eq:LII-A-homogeneous-finite-sum}
		\begin{aligned}
		A_{\boldsymbol n,\boldsymbol m}^{(i),\mathrm{LII}}(y)
		={}&(-1)^{|\boldsymbol n|+M_i}\mathcal H_i
		\sum_{\ell+|\boldsymbol r|+|\boldsymbol s|=M_i}
		\frac{y^\ell}{\ell!}
		\prod_{h=1}^q
		\frac{(-\lambda_h)_{r_h}}
		{r_h!c_{i,h}^{r_h}}
		\prod_{k\ne i}
		\frac{(n_k)_{s_k}}
		{s_k!\Delta_{i,k}^{s_k}}.
		\end{aligned}
	\end{equation}
	Choose \(h_0\) and put
	\(r_{h_0}=M_i-T\).  The identity
	\[
		\frac{(-\lambda_{h_0})_{M_i-T}}
		{(M_i-T)!c_{i,h_0}^{M_i-T}}
		=
		\frac{(-\lambda_{h_0})_{M_i}}
		{M_i!c_{i,h_0}^{M_i}}
		\frac{(-M_i)_T}{(\chi_{i,h_0})_T}
		c_{i,h_0}^{T}
\]
	rewrites \eqref{eq:LII-A-homogeneous-finite-sum} as
	\eqref{eq:LII-A-multiple-KdF-series}.  Finally,
	\((-\lambda_{h_0})_{M_i}=(-1)^{M_i}(\chi_{i,h_0})_{M_i}\), which gives
	the prefactor in \eqref{eq:LII-A-multiple-KdF}.
\end{proof}

\subsection{Rational reconstruction and normality}

Put
\[
	\gamma(z)\coloneq\prod_{h=1}^q(z+\rho_h)^{-d_h},
	\qquad
	Q_{\boldsymbol m}(z)\coloneq\prod_{h=1}^q(z+\rho_h)^{m_h},
\]
and, for \(0\le k<m_j\), define
\begin{equation}
	\label{eq:LII-rational-basis}
	R_{j,k}(z)
	\coloneq
	\gamma(z)^{-1}(-\partial_z)^k
	\left[\frac{\gamma(z)}{z+\rho_j}\right],
	\qquad
	\Phi_{j,k}(z)\coloneq Q_{\boldsymbol m}(z)R_{j,k}(z).
\end{equation}

For a near-diagonal \(\boldsymbol m\), put \(N\coloneq|\boldsymbol m|\)
and order the pairs \((j,k)\)
lexicographically, and let \(C_{\boldsymbol m}\) be the matrix whose
\((j,k)\)-th column consists of the coefficients of \(\Phi_{j,k}\)
at \(1,z,\ldots,z^{N-1}\).

\begin{proposition}[Near-diagonal rational basis]
	\label{prop:LII-rational-basis}
	If \(\boldsymbol m\) is near the diagonal, every
	\(\Phi_{j,k}\) belongs to \(\mathbb P_{N-1}\), and
	\begin{equation}
		\label{eq:LII-rational-basis-determinant}
		\det C_{\boldsymbol m}
		=
		(-1)^{\sum_j\binom{m_j}{2}}
		\left[\prod_{j=1}^q\prod_{k=0}^{m_j-1}(d_j+1)_k\right]
		\prod_{1\le j<h\le q}
		(\rho_h-\rho_j)^{m_jm_h}.
	\end{equation}
	Consequently, these \(N\) polynomials form a basis if and only if
	\[
		\rho_j\ne\rho_h
		\qquad\textnormal{whenever }m_jm_h>0.
\]
\end{proposition}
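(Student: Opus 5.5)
First I will show that each \(\Phi_{j,k}\) is a polynomial of degree at most \(N-1\). Expanding \((-\partial_z)^k[\gamma/(z+\rho_j)]\) by the Leibniz rule, exactly as in the orthogonality part of the proof of Theorem~\ref{thm:Jacobi-LII-form-confluence}, gives
\[
R_{j,k}(z)=\frac{k!}{z+\rho_j}\sum_{|\boldsymbol r|=k}\prod_{h=1}^q\frac{(d_h+\delta_{h,j})_{r_h}}{r_h!\,(z+\rho_h)^{r_h}}.
\]
Near-diagonality gives \(r_j+1\le m_j\) and \(r_h\le m_h\) for \(h\ne j\). Hence \(Q_{\boldsymbol m}R_{j,k}\) is a sum of polynomials of degree \(N-k-1\le N-1\).

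For the determinant I will not expand directly. Instead I use a partial-fraction and valuation argument on the rational functions \(R_{j,k}\), which span the same space as \(\Phi_{j,k}/Q_{\boldsymbol m}\).

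\emph{Triangular structure.} Fix \(j\). The functions \(R_{j,0},\dots,R_{j,m_j-1}\) have poles only at the \(-\rho_h\). At \(-\rho_j\) the pole order of \(R_{j,k}\) is exactly \(k+1\). Its leading Laurent coefficient there comes only from \(\boldsymbol r=k\boldsymbol e_j\) and equals
\[
(d_j+1)_k\cdot \prod_{h\ne j}(\rho_h-\rho_j)^{0}\cdot(\text{unit}).
\]
More precisely, \(R_{j,k}(z)=(d_j+1)_k(z+\rho_j)^{-k-1}+\text{lower order at }-\rho_j\). At \(-\rho_h\) with \(h\ne j\), the pole order is at most \(k\le m_j-1\le m_h\).

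So I will evaluate \(\det C_{\boldsymbol m}\) by writing the map
\[
\mathbb C^N\to\mathbb P_{N-1},\qquad c\mapsto Q_{\boldsymbol m}\sum c_{j,k}R_{j,k},
\]
in the partial-fraction basis
\[
\bigl\{Q_{\boldsymbol m}(z)(z+\rho_j)^{-k-1}\bigr\}_{j,\,0\le k<m_j}.
\]
This is legitimate when the relevant \(\rho\)'s are distinct. In that basis, the \(j\)-block has leading coefficients \((d_j+1)_k\). The obstacle is the cross terms: \(R_{j,k}\) also has poles at other \(-\rho_h\), so the matrix is not block triangular.

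\emph{Handling the cross terms.} My first attempt is to prove that the ordered basis of partial fractions makes the coefficient matrix triangular for a suitable total order, namely by increasing \(k\) and then by \(j\). The point to check is this: the pole order of \(R_{j,k}\) at \(-\rho_h\), \(h\ne j\), is at most \(k\) (from \(r_h=k\)), which is strictly less than \(k+1\). That would give triangularity, with diagonal entries \((d_j+1)_k\).

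So the determinant is the product \(\prod_j\prod_k(d_j+1)_k\), multiplied by the determinant of the change of basis from \(\{Q_{\boldsymbol m}(z+\rho_j)^{-k-1}\}\) to monomials. The latter is a confluent Cauchy–Vandermonde-type determinant. Equivalently, it is the determinant of the map \((P_j)_j\mapsto\sum_j P_j\prod_{h\ne j}(z+\rho_h)^{m_h}\) with \(\deg P_j<m_j\), which is a resultant-type quantity. Its known value is \(\pm\prod_{j<h}(\rho_h-\rho_j)^{m_jm_h}\). I will derive this by induction on \(q\), using the resultant \(\mathrm{Res}\bigl((z+\rho_j)^{m_j},(z+\rho_h)^{m_h}\bigr)=(\rho_h-\rho_j)^{m_jm_h}\), or by citing the classical Hermite-interpolation determinant. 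The reversal \(k\mapsto m_j-1-k\) between the bases \((z+\rho_j)^{m_j-1-k}\) and the ordering in \(C_{\boldsymbol m}\) produces the sign \((-1)^{\binom{m_j}{2}}\).

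Both sides are polynomial in the \(\rho\)'s, so the formula extends by continuity to coincident \(\rho\)'s. The basis criterion then follows: the determinant vanishes exactly when \(\rho_j=\rho_h\) for some pair with \(m_jm_h>0\), since every \((d_j+1)_k\ne0\) because \(d_j>0\).

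The main obstacle I expect is bookkeeping the sign and the exact power of \(\rho_h-\rho_j\), including checking that the triangularity ordering is consistent with the lexicographic column order used to define \(C_{\boldsymbol m}\). A column permutation changes only the sign, and I will fix the sign by the specialization \(q=1\).
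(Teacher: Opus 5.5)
Your proposal follows the paper's proof essentially step for step: the Leibniz degree bound, the triangular change to the partial fractions $(z+\rho_j)^{-k-1}$ in the level-first order with diagonal entries $(d_j+1)_k$, the confluent Cauchy--Vandermonde determinant of the numerators $Q_{\boldsymbol m}/(z+\rho_j)^{k+1}$ with the reversal sign $(-1)^{\binom{m_j}{2}}$, and the continuity extension to coincident rates. One correction concerns the sign. The specialization $q=1$ only checks the reversal factor. It cannot decide between $\prod_{j<h}(\rho_h-\rho_j)^{m_jm_h}$ and $\prod_{j<h}(\rho_j-\rho_h)^{m_jm_h}$, so the orientation must come from your resultant induction, tracked carefully. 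Alternatively, follow the paper: evaluate the numerators and their normalized derivatives at each $-\rho_u$, which makes the $j$-th block anti-triangular with antidiagonal entries $\prod_{h\ne j}(\rho_h-\rho_j)^{m_h}$, and then divide by the confluent Vandermonde determinant $\prod_{j<h}(\rho_j-\rho_h)^{m_jm_h}$.
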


\begin{proof}
	The multinomial Leibniz rule gives
	\[
		R_{j,k}(z)
		=
		\frac{k!}{z+\rho_j}
		\sum_{|\boldsymbol r|=k}
		\prod_{h=1}^q
		\frac{(d_h+\delta_{h,j})_{r_h}}
		{r_h!(z+\rho_h)^{r_h}}.
\]
	For every indexing pair \((j,k)\), one has \(m_j\ge1\) and
	\(0\le k<m_j\). Near-diagonality gives \(m_j-1\le m_h\) for every
	\(h\). Hence the pole order is at most \(k+1\le m_j\) at
	\(-\rho_j\), and at most \(k\le m_j-1\le m_h\) at
	\(-\rho_h\), \(h\ne j\). Therefore
	\(Q_{\boldsymbol m}R_{j,k}\) is a polynomial of degree at most \(N-1\).

	To identify the determinant of the change of basis, temporarily order the
	pairs first by increasing \(k\) and then, within each level, by increasing
	\(j\); use the same order for the partial fractions
	\((z+\rho_j)^{-k-1}\). At level \(k\), reduction modulo partial fractions
	of lower pole order leaves
	\((d_j+1)_k(z+\rho_j)^{-k-1}\). All the remaining terms belong to earlier
	levels. Hence the change from the functions \(R_{j,k}\) to the
	partial-fraction basis is triangular in this level-first order, with
	diagonal entries \((d_j+1)_k\). Returning both ordered lists to the
	lexicographic order fixed above conjugates the change-of-basis
	matrix by the same permutation matrix, and therefore does not change its
	determinant. The numerators
	\(Q_{\boldsymbol m}(z)/(z+\rho_j)^\ell\), \(1\le\ell\le m_j\),
	have coefficient determinant
	\[
		(-1)^{\sum_j\binom{m_j}{2}}
		\prod_{j<h}(\rho_h-\rho_j)^{m_jm_h}.
\]
	For completeness, evaluate these numerators and their normalized
	derivatives \(\partial_z^v/v!\), \(0\le v<m_u\), at each
	\(-\rho_u\). Blocks with \(u\ne j\) vanish. In the \(j\)-th
	block, reversing the columns gives a triangular matrix with diagonal
	\(\prod_{h\ne j}(\rho_h-\rho_j)^{m_h}\). Its reversal contributes
	\((-1)^{\binom{m_j}{2}}\). Dividing the resulting block determinant
	by the normalized confluent Vandermonde determinant
	\(\prod_{j<h}(\rho_j-\rho_h)^{m_jm_h}\) gives the displayed
	coefficient determinant, with its stated sign.
	Multiplication of the two change-of-basis determinants gives
	\eqref{eq:LII-rational-basis-determinant}.
	The calculation was made for distinct rates. The coefficient matrix
	depends continuously on all positive rates, so the formula extends to
	coincident rates as well. Since every \((d_j+1)_k\) is positive,
	its product is nonzero exactly under the stated separation condition.
\end{proof}

The rational basis determines the component problem itself, not only the
square moment determinants. For a fixed near-diagonal row index of length
\(N\ge1\), denote the solution spaces of the homogeneous polynomial
\(A\)- and \(B\)-problems by \(\mathscr S_{\mathrm A}^{\mathrm{LII}}\)
and \(\mathscr S_{\mathrm B}^{\mathrm{LII}}\), with respective column
lengths \(N+1\) and \(N-1\).

\begin{corollary}[Polynomial components and normality for Laguerre of the second kind]
 \label{cor:LII-component-normality}
 Suppose that the rates with positive row indices are pairwise distinct,
 that the column nodes with positive indices are pairwise distinct, and
 that \eqref{eq:LII-integrability} holds. For a near-diagonal row index,
 \begin{equation}
 \dim\mathscr S_{\mathrm A}^{\mathrm{LII}}=1,
 \qquad \dim\mathscr S_{\mathrm B}^{\mathrm{LII}}=1.
 \label{eq:LII-solution-dimensions}
 \end{equation}
 The \(A\)-problem is strongly normal. The \(B\)-problem is weakly
 normal, and its nonzero vectors represent nonzero forms. In particular,
 the Rodrigues function has a unique decomposition
 \(\sum_jB_j(y)F_j(y)\), with \(\deg B_j<m_j\).
 If all the rates are distinct, the functions \(F_1,\ldots,F_q\) are
 linearly independent over the polynomial ring.
\end{corollary}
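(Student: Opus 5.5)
The whole argument lives on the Laplace side. Rewrite every moment condition there, use the rational basis of Proposition~\ref{prop:LII-rational-basis} to turn both problems into linear algebra on polynomials of degree at most \(N-1\), and finish with a Hermite interpolation count at the distinct column nodes \(\xi_i\).

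\emph{Setting up.} For \(0\le k<m_j\) one has
\[
\Lap[y^kF_j](z)=(-\partial_z)^k\Lap[F_j](z)=\gamma(z)R_{j,k}(z)=\gamma(z)\Phi_{j,k}(z)/Q_{\boldsymbol m}(z).
\]
A row combination \(\sum_jB_j(y)F_j(y)\) with coefficients \(b_{j,k}\) therefore has Laplace transform
\[
\gamma(z)\,\Phi(z)/Q_{\boldsymbol m}(z),\qquad \Phi\coloneq\sum_{j,k}b_{j,k}\Phi_{j,k}\in\mathbb P_{N-1}.
\]
By Proposition~\ref{prop:LII-rational-basis} and the separation hypothesis, the map \((b_{j,k})\mapsto\Phi\) is a linear isomorphism onto \(\mathbb P_{N-1}\).

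\emph{The \(B\)-problem.} Here \(|\boldsymbol n|=N-1\). The conditions are \(\int y^\ell\e^{-\xi_iy}\mathcal B\,\mathrm dy=0\) for \(0\le\ell<n_i\). Differentiating the transform, they say that the function \(\gamma\Phi/Q_{\boldsymbol m}\), which is analytic and nonvanishing apart from the factor \(\Phi\) at each \(\xi_i\), vanishes to order \(n_i\) at \(\xi_i\). The factor \(\gamma/Q_{\boldsymbol m}\) is analytic and nonzero near \(\xi_i\), because \(\xi_i+\rho_h>0\) by \eqref{eq:LII-integrability}. So the conditions are equivalent to \(\prod_i(z-\xi_i)^{n_i}\) dividing \(\Phi\). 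The \(\xi_i\) with \(n_i>0\) are distinct, so these are \(N-1\) independent Hermite conditions on \(\mathbb P_{N-1}\). The solution space is therefore one-dimensional, spanned by \(\Phi=\prod_i(\xi_i-z)^{n_i}\), which up to sign matches \eqref{eq:LII-B-Laplace} and the Rodrigues function.

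Since the transform of a nonzero vector is nonzero, the form it represents is nonzero. The same reasoning gives the unique decomposition of the Rodrigues function. Finally, if all rates are distinct, then any index \(\boldsymbol m\) with all entries equal to \(M\) is near diagonal. Injectivity of the coefficient map then shows that no nontrivial polynomial relation \(\sum_jB_jF_j=0\) with \(\deg B_j<M\) can hold. Letting \(M\) be arbitrary gives the polynomial independence of \(F_1,\dots,F_q\).

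\emph{The \(A\)-problem.} Here \(|\boldsymbol n|=N+1\) and \(\mathcal A=\sum_iA_i(y)\e^{-\xi_iy}\). Pairing with \(y^kF_j\) gives
\[
\int y^kF_j\,y^\ell\e^{-\xi_iy}\,\mathrm dy=(-\partial_z)^\ell\bigl[\gamma\Phi_{j,k}/Q_{\boldsymbol m}\bigr](\xi_i).
\]
The conditions therefore say that the linear functional
\[
L(\Phi)\coloneq\sum_i\sum_\ell a_{i,\ell}\,(-\partial_z)^\ell\bigl[\gamma\Phi/Q_{\boldsymbol m}\bigr](\xi_i)
\]
vanishes on the whole basis, that is, on all of \(\mathbb P_{N-1}\). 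Writing \(g\coloneq\gamma/Q_{\boldsymbol m}\), which is nonvanishing at each \(\xi_i\), Leibniz's rule converts \(L\) into a combination of Hermite data \(\Phi^{(v)}(\xi_i)\), \(v<n_i\), through a triangular invertible change of coefficients.

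The space of Hermite functionals at the \(N+1\) conditions \((\xi_i,v<n_i)\) has dimension \(N+1\). It maps onto \(\mathbb P_{N-1}^*\), by Hermite interpolation, which requires the distinct nodes. The kernel is therefore one-dimensional, giving \(\dim\mathscr S_{\mathrm A}^{\mathrm{LII}}=1\). This kernel is the residue functional \(\Phi\mapsto\operatorname{Res}\Phi/\prod_i(z-\xi_i)^{n_i}\), which matches \eqref{eq:LII-A-contour}.

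\emph{Strong normality: the main obstacle.} I must show that each \(A_i\) with \(n_i\ge1\) has exact degree \(n_i-1\). The top coefficient \(a_{i,n_i-1}\) corresponds, up to the nonzero factor \(g(\xi_i)\), to the coefficient of \(\Phi^{(n_i-1)}(\xi_i)\) in the kernel functional. That coefficient is the leading Laurent coefficient of \(1/\prod_k(z-\xi_k)^{n_k}\) at \(\xi_i\), namely \(\prod_{k\ne i}(\xi_i-\xi_k)^{-n_k}\ne0\). Equivalently, \eqref{eq:LII-A-components} gives leading term \((-1)^{|\boldsymbol n|}\mathcal H_i(-y)^{n_i-1}/(n_i-1)!\), with \(\mathcal H_i\ne0\) because \(c_{i,h}>0\) and the \(\xi\)'s are distinct. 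The delicate bookkeeping is checking that the triangular Leibniz change preserves this top coefficient; I would verify it directly from the residue formula.
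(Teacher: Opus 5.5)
Your proposal is correct and follows essentially the paper's argument: Laplace side, the rational basis of Proposition~\ref{prop:LII-rational-basis}, zero-counting at the column nodes in both balances, and a diagonal index \((M,\ldots,M)\) for polynomial independence. The one variation is in the \(A\)-part. You identify the kernel intrinsically as the residue functional, whereas the paper reads the leading coefficients off the contour components whose orthogonality was checked in Theorem~\ref{thm:Jacobi-LII-form-confluence}. Also, the step you flag as delicate is immediate: in the Leibniz change, the coefficient of \(\Phi^{(n_i-1)}(\xi_i)\) receives a contribution only from \(a_{i,n_i-1}\), namely \((-1)^{n_i-1}g(\xi_i)a_{i,n_i-1}\).
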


\begin{proof}
 Put \(g\coloneq\gamma/Q_{\boldsymbol m}\). For a polynomial row
 combination, its Laplace transform is
 \[
 \Lap\left[\sum_{j=1}^q\sum_{k=0}^{m_j-1}b_{j,k}y^kF_j\right](z)
 =g(z)\sum_{j,k}b_{j,k}\Phi_{j,k}(z).
 \]
 Since \(g\) is nonzero at each column node, Leibniz's rule transforms
 the moment conditions into vanishing of the corresponding derivatives
 of the polynomial on the right. In the \(B\)-balance their total
 multiplicity is \(N-1\). A polynomial of degree at most \(N-1\)
 with these zeros is a scalar multiple of
 \(P_{\boldsymbol n}(z)=\prod_i(\xi_i-z)^{n_i}\).
 Proposition~\ref{prop:LII-rational-basis} gives one and only one
 coefficient vector for each such polynomial. The normalization
 \(\Lap[\mathcal B^{\mathrm{LII}}]=-gP_{\boldsymbol n}\) therefore
 gives precisely the stated Rodrigues decomposition. Its transform is
 nonzero, and its first unused moment in any chosen column is nonzero,
 because that derivative removes the full multiplicity at that node.

 In the \(A\)-balance there are \(N+1\) column functionals acting
 on \(\mathbb P_{N-1}\). Their evaluation map is injective: a
 polynomial in its kernel would have at least \(N+1\) zeros counted
 with multiplicities. Proposition~\ref{prop:LII-rational-basis}
 identifies the \(N\) row combinations with all of \(\mathbb P_{N-1}\):
 its coefficient matrix \(C_{\boldsymbol m}\) is invertible
 under the near-diagonal and separation hypotheses used here.
 Thus the rectangular moment matrix has rank \(N\), and its nullspace
 of component vectors has dimension one. Theorem~\ref{thm:Jacobi-LII-form-confluence}
 shows that the contour components satisfy these moment conditions;
 its proof also uses near-diagonality to cancel all the row poles.
 The highest coefficient of the \(i\)-th such component is
 \[
 \frac{(-1)^{N+1+n_i-1}}{(n_i-1)!}
 \frac{\prod_h(\xi_i+\rho_h)^{d_h+m_h}}
 {\prod_{v\ne i}(\xi_i-\xi_v)^{n_v}},\qquad n_i>0.
 \]
 All its factors are nonzero, proving strong normality in this
 near-diagonal range. The same formal residue can be written for other
 row indices, but need not then satisfy the orthogonality conditions.

 Finally, any finite polynomial relation among distinct-rate rows fits
 within some diagonal index \((K,\ldots,K)\). Taking its Laplace
 transform gives a zero linear combination of that index's basis
 \(\Phi_{j,k}\), so all its coefficients vanish. This proves polynomial
 independence at arbitrary finite degrees. In particular,
 \eqref{eq:LII-first-order-system}, which also involves derivatives
 and \(F_0\), does not produce a Pearson-type polynomial relation among
 these rows. Polynomial independence and normality for every multi-index
 are different properties, as the example below shows.
\end{proof}

The determinant underlying the following evaluation is the classical
Cauchy--Vandermonde determinant with multiple nodes and multiple prescribed
poles; see \cite[Theorem~3, equation~(14)]{CarstensenMuehlbach1992NevilleAitken}
and \cite[Proposition~5, equations~(55)--(56)]{Muehlbach2000Interpolation}.
The arbitrary positive real parameters \(d_j\) enter through the analytic
factor \(\gamma\) and the triangular change of basis in
Proposition~\ref{prop:LII-rational-basis}. The resulting product evaluates
the mixed-type moment determinant for Laguerre of the second kind and supplies the determinants
needed to normalize the forms.

For \(|\boldsymbol n|=|\boldsymbol m|=N\), order the row indices
\((j,k)\), \(1\le j\le q\), \(0\le k<m_j\), and column indices
\((i,\ell)\), \(1\le i\le p\), \(0\le\ell<n_i\), lexicographically.
Define the square mixed moment matrix by
	\[
		\mathscr M_{\boldsymbol m,\boldsymbol n}^{\mathrm{LII}}
		\coloneq
		\left[
		\int_0^\infty y^{k+\ell}F_j(y)\e^{-\xi_i y}\,\mathrm dy
		\right]_{(j,k),(i,\ell)}.
\]
\begin{theorem}[Cauchy--Vandermonde factorization of the mixed-type moment
determinant]
	\label{thm:LII-moment-determinants}
	If \(|\boldsymbol n|=|\boldsymbol m|=N\) and
	\(\boldsymbol m\) is near the diagonal, then
	\begin{multline}
		\det\mathscr M_{\boldsymbol m,\boldsymbol n}^{\mathrm{LII}}
		={}
		(-1)^{\sum_j\binom{m_j}{2}+\sum_i\binom{n_i}{2}}
		\left[\prod_j\prod_{k=0}^{m_j-1}(d_j+1)_k\right]
		\left[\prod_{j<h}(\rho_h-\rho_j)^{m_jm_h}\right]
		\\*
		\times
		\left[\prod_i\prod_{\ell=0}^{n_i-1}\ell!\right]
		\left[\prod_{i<r}(\xi_r-\xi_i)^{n_in_r}\right]
		\prod_{i=1}^p\prod_{j=1}^q
		(\xi_i+\rho_j)^{-(d_j+m_j)n_i}.
		\label{eq:LII-square-moment-determinant}
	\end{multline}
	In particular, it is nonzero when the rates
	\(\rho_j\), \(m_j>0\), and the column nodes
	\(\xi_i\), \(n_i>0\), are pairwise distinct and
	\eqref{eq:LII-integrability} holds.
\end{theorem}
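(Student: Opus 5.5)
The plan is to write each moment as a derivative of one Laplace transform and then to factor the moment matrix into a rational change-of-basis part and a confluent evaluation part. By Proposition~\ref{prop:LII-Laplace-transforms}, the entry indexed by \((j,k),(i,\ell)\) is
\[
(-\partial_z)^{k+\ell}\bigl[\gamma(z)/(z+\rho_j)\bigr]\big|_{z=\xi_i}
=(-\partial_z)^{\ell}\bigl[\gamma(z)R_{j,k}(z)\bigr]\big|_{z=\xi_i}
=(-\partial_z)^{\ell}\bigl[g(z)\Phi_{j,k}(z)\bigr]\big|_{z=\xi_i},
\]
where \(g=\gamma/Q_{\boldsymbol m}\) as in Corollary~\ref{cor:LII-component-normality}. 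By Proposition~\ref{prop:LII-rational-basis}, each \(\Phi_{j,k}\) lies in \(\mathbb P_{N-1}\). Writing \(\Phi_{j,k}=\sum_{s<N}(C_{\boldsymbol m})_{s,(j,k)}z^s\), the moment matrix factors as
\[
\mathscr M^{\mathrm{LII}}_{\boldsymbol m,\boldsymbol n}=C_{\boldsymbol m}^{\top}\,\mathcal W,
\qquad
\mathcal W_{s,(i,\ell)}\coloneq(-\partial_z)^\ell\bigl[g(z)z^s\bigr]\big|_{z=\xi_i}.
\]
The first factor is already known from \eqref{eq:LII-rational-basis-determinant}. It supplies the factors \((d_j+1)_k\), \((\rho_h-\rho_j)^{m_jm_h}\) and \((-1)^{\sum_j\binom{m_j}{2}}\). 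The formula is stated for the full near-diagonal range, and coincident rates are covered by continuity, as in that proposition.

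For \(\det\mathcal W\), I would remove \(g\) by Leibniz's rule. In block \(i\), column \(\ell\) equals \((-1)^\ell\sum_{v\le\ell}\binom{\ell}{v}g^{(\ell-v)}(\xi_i)\,\partial^v z^s|_{\xi_i}\). This is a lower-triangular column operation on the confluent Vandermonde columns \(\partial^v z^s|_{\xi_i}\), with diagonal entries \((-1)^\ell g(\xi_i)\). Hence
\[
\det\mathcal W=\prod_i\bigl((-1)^{\binom{n_i}{2}}g(\xi_i)^{n_i}\bigr)\cdot\det\bigl[\partial_z^v z^s|_{\xi_i}\bigr].
\]
The confluent Vandermonde determinant equals \(\prod_i\prod_{\ell<n_i}\ell!\cdot\prod_{i<r}(\xi_r-\xi_i)^{n_in_r}\), using unnormalized derivatives. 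The factor \(g(\xi_i)^{n_i}=\prod_j(\xi_i+\rho_j)^{-(d_j+m_j)n_i}\) is well defined and positive by \eqref{eq:LII-integrability}, with the principal branch fixed earlier. Multiplying the two determinants gives \eqref{eq:LII-square-moment-determinant}.

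The nonvanishing claim then follows at once. Every factor \((d_j+1)_k\) and \(\ell!\) is positive, the powers of \(\xi_i+\rho_j\) are finite and positive, and the difference factors vanish only when a pair with positive indices coincides.

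The main obstacle will be bookkeeping of orderings and signs rather than analysis. The lexicographic row order must match the column order of \(C_{\boldsymbol m}\) used in Proposition~\ref{prop:LII-rational-basis}. The column order \((i,\ell)\) must match the ordering behind the stated confluent Vandermonde sign, and the \(\ell!\) factors must be kept because the derivatives are not normalized. I would check the sign convention on the case \(p=1\), \(n_1=N\) and on a \(2\times2\) example with \(q=p=2\).
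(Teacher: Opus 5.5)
Your proposal is correct and follows essentially the same route as the paper. The paper also writes each entry as \((-1)^\ell\partial_z^\ell[g\Phi_{j,k}](\xi_i)\) with \(g=\gamma/Q_{\boldsymbol m}\), and factors \((\mathscr M^{\mathrm{LII}}_{\boldsymbol m,\boldsymbol n})^\top=SJVC_{\boldsymbol m}\), where \(S\) is the sign matrix, \(J\) the block-triangular Leibniz matrix and \(V\) the unnormalized confluent Vandermonde matrix; your \(C_{\boldsymbol m}^\top\mathcal W\), with \(\mathcal W\) then reduced by Leibniz column operations, is exactly this factorization, and the determinants multiply to the stated product in the same way.
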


\begin{proof}
	Put \(g\coloneq\gamma/Q_{\boldsymbol m}\). Differentiating the
	Laplace transform shows that the entry in row \((j,k)\) and column
	\((i,\ell)\) is \((-1)^\ell\partial_z^\ell[g\Phi_{j,k}](\xi_i)\).
	Define the derivative evaluation matrix and the block triangular
	Leibniz matrix by
	\[
	 V_{(i,\ell),a}\coloneq\left.\partial_z^\ell z^a\right|_{z=\xi_i},
	 \qquad J_{(i,\ell),(i,v)}\coloneq\binom\ell v g^{(\ell-v)}(\xi_i)
	 \quad(0\le v\le\ell<n_i),
	\]
	where \(0\le a<N\), and all other entries of \(J\) are zero.
	Let \(S\) be diagonal with \(S_{(i,\ell),(i,\ell)}=(-1)^\ell\).
	The entrywise Leibniz formula is precisely
	\[
	 (\mathscr M_{\boldsymbol m,\boldsymbol n}^{\mathrm{LII}})^\top
	 =S J V C_{\boldsymbol m}.
	\]
	The diagonal entries of each \(i\)-th block of \(J\) are all
	\(g(\xi_i)=\prod_j(\xi_i+\rho_j)^{-d_j-m_j}\), so
	\(\det J=\prod_i g(\xi_i)^{n_i}\). The derivative rather than
	normalized-derivative convention gives
	\[
	 \det V=\prod_i\prod_{\ell=0}^{n_i-1}\ell!
	 \prod_{i<v}(\xi_v-\xi_i)^{n_in_v},\qquad
	 \det S=(-1)^{\sum_i\binom{n_i}{2}}.
	\]
	Multiplying these factors by
	\eqref{eq:LII-rational-basis-determinant} proves the formula, including
	its sign. Integrability makes every \(g(\xi_i)\) positive, and the
	remaining factors are nonzero exactly under the stated separation
	conditions.
\end{proof}

The row restriction is essential. The following example has positive
shapes, distinct rates, and admissible column nodes, but fails strong
normality outside the near-diagonal range. In the one-column case,
perfectness means that for every row index \(\boldsymbol m\in\mathbb N_0^q\)
there is a unique monic polynomial of degree \(|\boldsymbol m|\)
satisfying its \(|\boldsymbol m|\) moment conditions.

\begin{example}[Failure of perfectness outside the near-diagonal range]
 \label{ex:LII-not-perfect}
 Take \(q=2\), \(p=1\), \(\boldsymbol\rho=(1,3)\),
 \(\xi_1=1/2\), and \(\boldsymbol m=(3,1)\). Put
 \(\mu_{j,v}\coloneq\int_0^\infty y^vF_j(y)\e^{-y/2}\,\mathrm dy\)
 and \(g_*\coloneq\gamma(1/2)\). Direct differentiation gives
 \[
 \frac{\mu_{j,v}}{g_*}
 =\sum_{a=0}^v\binom va
 \frac{(d_1+\delta_{j,1})_a(d_2+\delta_{j,2})_{v-a}}
 {(3/2)^{a+\delta_{j,1}}(7/2)^{v-a+\delta_{j,2}}}.
 \]
 Let \(M\) be the \(4\times5\) matrix with entries
 \(\mu_{j,k+\ell}/g_*\), rows
 \((j,k)=(1,0),(1,1),(1,2),(2,0)\), and columns
 \(\ell=0,\ldots,4\). Its first four columns have determinant
 \[
 \det M_{[0,1,2,3]}
 =-\frac{96(d_1+1)^2(d_1+2)}{(3/2)^{12}(7/2)^4}
 \frac{2401d_1^2+7203d_1-81d_2^2-81d_2+4802}
 {2401(d_1+1)(d_1+2)}.
 \]
 This identity follows by substituting the finite moment sum above and
 collecting powers of \(d_1,d_2\). Choose
 \[
 d_1=\frac12,\qquad d_2=\frac{8\sqrt{141}}9-\frac12>0.
 \]
 Then \(324d_2(d_2+1)=36015\), so this determinant vanishes.
 With the same parameters the remaining minor
 \[
 \det M_{[0,1,2,4]}=-\frac{41943040}{992436543}\ne0
 \]
 proves that \(M\) still has rank four. In the \(A\)-balance
 \(n_1=5\), its kernel is therefore one-dimensional. The vanishing
 first minor forces the coefficient of \(y^4\) to be zero, whereas
 independence of columns \(0,1,2\) forces the coefficient of \(y^3\)
 to be nonzero. The unique polynomial has degree three, not four.
 Thus this Laguerre system of the second kind is not perfect and is not strongly normal
 at all multi-indices.

 The step that fails outside the hypotheses of
 Corollary~\ref{cor:LII-component-normality} can be seen before computing
 the moments. For \(\boldsymbol m=(3,1)\), one has
 \[
 \Phi_{1,2}(z)
 =(d_1+1)(d_1+2)(z+3)+2(d_1+1)d_2(z+1)
   +\frac{d_2(d_2+1)(z+1)^2}{z+3}.
 \]
 Its residue at \(z=-3\) is \(4d_2(d_2+1)\ne0\), so it is not a
 polynomial and Proposition~\ref{prop:LII-rational-basis} does not apply.
 More explicitly, inserting \(\boldsymbol m=(3,1)\) and \(n_1=5\)
 into the right-hand side of \eqref{eq:LII-A-components} defines a
 formal polynomial \(A_{\mathrm{cont}}\). The residue at \(-3\) gives
 \[
 \int_0^\infty y^2F_1(y)A_{\mathrm{cont}}(y)\e^{-y/2}\,\mathrm dy
 =\frac{4d_2(d_2+1)}{(-7/2)^5}
 =-\frac{128d_2(d_2+1)}{16807}\ne0.
 \]
 Thus its nonzero coefficient of \(y^4\) belongs to a polynomial
 that fails orthogonality, not to the unique orthogonal polynomial
 of degree three found above. The rectangular matrix retains rank four; only the
 square minor controlling the coefficient of \(y^4\) vanishes.
\end{example}

\begin{remark}[Relation with the separable mixed-type Laguerre system of the second kind]
	The separable mixed-type Laguerre matrix of the second kind explicitly exhibited in Medina
	Peralta's thesis has entries of the form
	\[
		y^{\alpha+\beta}\e^{-(c_j+d_i)y},
		\qquad \alpha,\beta>-1,
\]
	where the power is common to the complete matrix
	\cite[Section~2.3.2]{MedinaPeralta2014Thesis}. It is covered by the mixed-type
	AT normality results of Fidalgo Prieto, Medina Peralta, and
	M\'inguez~\cite{FidalgoMedinaMinguez2013}. The matrix
	\eqref{eq:LII-matrix-measure} is generally different: each row is an
	additive convolution of gamma kernels.
	Theorem~\ref{thm:LII-moment-determinants} gives normality in the
	near-diagonal range, while Example~\ref{ex:LII-not-perfect} distinguishes
	this conclusion from the perfectness of the separable family.
\end{remark}

The exact scaling also gives a quantitative version of the confluence.
Write
\[
 \widetilde w_{j,\tau}(y)\coloneq \tau^Dw_{j,\tau}^{\mathrm J}(1-y/\tau),\quad
 \widetilde A_{i,\tau}(y)\coloneq \tau^{-D}A_i^{\mathrm J}(1-y/\tau),\quad
 \widetilde B_{j,\tau}(y)\coloneq B_j^{\mathrm J}(1-y/\tau).
\]
The last two definitions use the normalized representatives of
Theorem~\ref{thm:mixed-Jacobi-like-forms} in their respective balances.

\begin{proposition}[First-order confluence to Laguerre of the second kind]
 \label{prop:LII-convergence-rate}
 Under \eqref{eq:Jacobi-LII-parameter-scaling}, with fixed near-diagonal
 row index and the separation hypotheses of
 Corollary~\ref{cor:LII-component-normality}, the finite normalized
 components exist for all sufficiently large \(\tau\), and
 \begin{equation}
 \widetilde A_{i,\tau}=A_i^{\mathrm{LII}}+\mathrm O(\tau^{-1}),\qquad
 \widetilde B_{j,\tau}=B_j^{\mathrm{LII}}+\mathrm O(\tau^{-1})
 \qquad(\tau\to+\infty)
 \label{eq:LII-component-rate}
 \end{equation}
 coefficientwise. The density convergence in
 \eqref{eq:Jacobi-LII-row-limit} has error \(\mathrm O_K(\tau^{-1})\)
 on each compact \(K\subset(0,\infty)\), and every fixed mixed
 moment in \eqref{eq:Jacobi-LII-mixed-moment-limit} has error
 \(\mathrm O(\tau^{-1})\), as \(\tau\to+\infty\).
 For each \(\varphi\in C_c^\infty(\mathbb R)\), the complete
 \(B\)-form limit also satisfies
 \[
 \left\langle\widetilde{\mathcal B}_\tau-
 \mathcal B^{\mathrm{LII}},\varphi\right\rangle
 =\mathrm O_\varphi(\tau^{-1})\qquad(\tau\to+\infty).
 \]
\end{proposition}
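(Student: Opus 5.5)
The plan is to upgrade each convergence argument already given in the proofs of Theorems~\ref{thm:Jacobi-LII-weight-limit} and~\ref{thm:Jacobi-LII-form-confluence} to a first-order estimate. Each of those arguments rests on finitely many asymptotic identities with relative errors \(\mathrm O(\tau^{-1})\). These identities are Stirling's formula for the gamma quotients at \(t=\tau z\) or \(s=\tau z+1\), the Taylor expansion of \((\tau\rho-1)\log(1-v/\tau)\), and the expansion of \((1-y/\tau)^{\tau z}\). The task is to track these errors through the same chain of estimates.

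\emph{Densities and moments.} In the induction proving \eqref{eq:right-endpoint-normalized-density-limit}, I will replace the \(\sup\) by an \(\mathrm O_K(\tau^{-1})\) bound. The base step \(r=1\) is the Taylor estimate \eqref{eq:right-endpoint-single-factor-uniform}, which already has this error. In the inductive step, the integrand factors \(\phi_{r,\tau}(ys)\), \((1-ys/\tau)^{-e}\) and \(b_{r+1,\tau}(\cdot)\) each differ from their limits by \(\mathrm O_K(\tau^{-1})\). The argument shift \(y(1-s)/(1-ys/\tau)-y(1-s)=\mathrm O_K(\tau^{-1})\) is composed with a Lipschitz exponential. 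Integration against the beta density \(s^{E-1}(1-s)^{e-1}\) preserves the rate, so \eqref{eq:Jacobi-LII-row-limit} holds with error \(\mathrm O_K(\tau^{-1})\). For the moments I will avoid density estimates near infinity and use the Laplace-side identity instead. The mixed moment \eqref{eq:Jacobi-LII-mixed-moment-limit} is, by \eqref{eq:single-beta-right-moments} and its product over factors, an explicit derivative in \(z\) at \(z=\xi_i\) of \(\tau^{D+1}\mathcal M[w_{j,\tau}^{\mathrm J}](\tau z+1)\) after the binomial expansion of \((1-x)^n\). The gamma quotient converges to \((z+\rho_j)^{-1}\prod_h(z+\rho_h)^{-d_h}\) with error \(\mathrm O(\tau^{-1})\), uniformly on a fixed complex neighbourhood of \(\xi_i\) inside \(\operatorname{Re}z>-\min\rho_h\). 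Cauchy estimates then transfer this rate to every fixed derivative. A cleaner route is to express \(\int y^n\widetilde w\) through \(\int(1-x)^n x^{\alpha_i}w\), which is a finite combination of Mellin values at integer shifts. Each such value has a Stirling expansion with \(\tau^{-1}\) remainder.

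\emph{Components.} For \(\widetilde A_{i,\tau}\), the proof of \eqref{eq:Jacobi-LII-A-component-limit} already gives coefficient errors \(\mathrm O(\tau^{-1})\) through the fixed Vandermonde inversion, so only the error bookkeeping needs to be stated. For \(B\) I will use normality. By Theorem~\ref{thm:LII-moment-determinants}, the limiting square minors are nonzero under the separation hypotheses of Corollary~\ref{cor:LII-component-normality}. Consider the Jacobi coefficient vector of \(\widetilde B_{j,\tau}\) in the basis \(y^k\widetilde w_{j,\tau}\), where \((1-y/\tau)^k\) is re-expanded triangularly with \(\tau^{-1}\)-close coefficients. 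It solves a linear system of the form (rectangular moment matrix plus one normalizing row) times (vector) equal to (fixed right side). The normalizing row is an adjacent moment, or equivalently the Laplace value at a node, and its limit is nonzero by the proof of Corollary~\ref{cor:LII-component-normality}. The matrix entries converge at rate \(\tau^{-1}\) by the moment step. The limiting matrix is invertible, so Cramer's rule gives existence for large \(\tau\) and coefficient errors \(\mathrm O(\tau^{-1})\).

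The main obstacle is matching normalizations. The Jacobi \(B\)-representative is fixed by \eqref{eq:B-Mellin}, not by a moment condition. I need the normalizing functional to be a specific transform value, say \(\tau^{D+1}\mathcal M[\mathcal B^{\mathrm J}](\tau z_0+1)\) at a point \(z_0\) not equal to any \(\xi_i\). This value converges to \(\Lap[\mathcal B^{\mathrm{LII}}](z_0)\ne0\) at rate \(\tau^{-1}\) by the displayed Stirling estimates. It is a linear functional of the coefficient vector whose coefficients are themselves Mellin values, hence also \(\tau^{-1}\)-convergent. Finally, the distributional rate follows from \eqref{eq:Jacobi-LII-reflected-B-Rodrigues}. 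The coefficients of \(\mathscr P_\tau^*\) differ from those of \(\mathscr P^*\) by \(\mathrm O(\tau^{-1})\) with fixed compact support. Combined with the \(\mathrm O(\tau^{-1})\) local uniform estimate for \(\widetilde S_\tau\) from the density step (valid up to \(y=0\) via the weighted version), this gives \(\langle\widetilde{\mathcal B}_\tau-\mathcal B^{\mathrm{LII}},\varphi\rangle=\mathrm O_\varphi(\tau^{-1})\).
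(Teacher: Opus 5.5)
Your overall structure matches the paper's. Both use Lipschitz bookkeeping in the induction for \(\phi_{r,\tau}\), the circle-plus-fixed-Vandermonde argument for the \(A\)-components, and a perturbed invertible square system for the \(B\)-components. Both obtain the distributional rate from formal adjoints of \(\mathscr P_\tau\) together with the weighted bound \(C\tau^{-1}y^{D+|\boldsymbol m|-1}\) near zero.

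You depart from the paper in two places.

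\emph{Mixed moments.} The paper stays on the density side. It writes each moment exactly as an integral over \((0,\tau)^q\) of \(Y_\tau(\boldsymbol u)^v\), with \(Y_\tau=\tau[1-\prod_h(1-u_h/\tau)]\), against the product of reflected beta densities. It then expands \(Y_\tau^v\) as a polynomial in the \(u_h\) and \(\tau^{-1}\) and evaluates each monomial by the explicit one-factor gamma quotient, so every error is visibly \(\mathrm O(\tau^{-1})\).

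\emph{\(B\)-normalization.} The paper adjoins an adjacent moment, while you adjoin the transform value at an auxiliary point \(z_0\). Your choice is legitimate. The limiting homogeneous kernel is spanned by \(\mathcal B^{\mathrm{LII}}\), and \(\Lap[\mathcal B^{\mathrm{LII}}](z_0)=-\prod_i(\xi_i-z_0)^{n_i}/\prod_h(z_0+\rho_h)^{d_h+m_h}\ne0\), provided you also take \(z_0>-\min_h\rho_h\). This removes finite differences from the right-hand side, though not from the new matrix row.

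The step that fails as written is your moment step. Expanding \((1-x)^n\) does not give a derivative of \(H_{j,\tau}(z)\coloneq\tau^{D+1}\M[w_{j,\tau}^{\mathrm J}](\tau z+1)\) at \(\xi_i\). It gives the scaled finite difference
\[
\int_0^\tau y^n\tau^D\Bigl(1-\frac y\tau\Bigr)^{\tau\xi_i}w_{j,\tau}^{\mathrm J}\Bigl(1-\frac y\tau\Bigr)\,\mathrm dy=(-1)^n\tau^n\Delta_{1/\tau}^nH_{j,\tau}(\xi_i).
\]
The \(n\)-th derivative would insert \((\tau\log(1-y/\tau))^n\) instead of \((-y)^n\), so Cauchy estimates alone do not finish. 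You must write the difference quotient as an average of \(H_{j,\tau}^{(n)}\) over points within \(n/\tau\) of \(\xi_i\), as in \eqref{eq:canonical-Hermite-finite-difference-integral}. Smoothness of the limit then absorbs that displacement at cost \(\mathrm O(\tau^{-1})\). This is exactly how the paper handles its normalizing moments.

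Your ``cleaner route'' does not work. The \(n+1\) Mellin values are each of size \(\tau^{-D-1}\) and are multiplied by \(\tau^{D+1+n}\). A relative \(\mathrm O(\tau^{-1})\) error in each one therefore only bounds the moment error by \(\mathrm O(\tau^{n-1})\), which does not even give convergence once \(n\ge1\). The cancellation has to be captured by the holomorphic estimate, not termwise. The same caveat applies to the entries \(\int_0^\tau y^k(1-y/\tau)^{\tau z_0}\widetilde w_{j,\tau}\,\mathrm dy\) of your \(z_0\)-row, which are finite differences rather than single Mellin values.

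Finally, the triangular re-expansion of \((1-y/\tau)^k\) is unnecessary; its coefficients \(\binom kl(-\tau)^{-l}\) are not close to the identity anyway. Take the \(y\)-coefficients of \(\widetilde B_{j,\tau}\) as unknowns and note that the affine change preserves the testing spans. Then use Theorem~\ref{thm:mixed-Jacobi-like-forms} to identify the unique solution of the square system with the Mellin-normalized representative; that identification is what supplies existence.
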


\begin{proof}
 In the induction proving
 \eqref{eq:right-endpoint-normalized-density-limit}, Taylor's estimate
 gives \(\|\phi_{1,\tau}-\phi_1\|_{[0,K]}\le C_K/\tau\).
 At each subsequent step, all the other factors are uniformly bounded,
 their differences are bounded by \(C_K/\tau\), and their common
 multiplier has integral \(B(E,e)<\infty\). Hence
 \[
 \|\phi_{r+1,\tau}-\phi_{r+1}\|_{[0,K]}
 \le C_K\|\phi_{r,\tau}-\phi_r\|_{[0,K]}+C_K/\tau.
 \]
 Induction proves the same first-order bound for every fixed \(r\).
 This proves the density assertion, including the weighted estimate at zero.

 For the moments, set \(e_h=d_h+\delta_{h,j}\) and
 \(Y_\tau(\boldsymbol u)\coloneq \tau[1-\prod_h(1-u_h/\tau)]\).
 The mixed moment is exactly
 \[
 \int_{(0,\tau)^q}Y_\tau(\boldsymbol u)^v
 \prod_{h=1}^q\frac{u_h^{e_h-1}}{\Gamma(e_h)}
 (1-u_h/\tau)^{\tau(\rho_h+\xi_i)-1}\,\mathrm d\boldsymbol u.
 \]
 For fixed integer \(v\), \(Y_\tau^v\) is a finite polynomial in the
 \(u_h\)'s and \(\tau^{-1}\), whose constant term in \(\tau^{-1}\) is
 \((\sum_hu_h)^v\). Every monomial integral factors into terms
 \[
 \int_0^\tau\frac{u^{e+a-1}}{\Gamma(e)}(1-u/\tau)^{\tau\sigma-1}\,\mathrm du
 =\tau^{e+a}(e)_a\frac{\Gamma(\tau\sigma)}{\Gamma(\tau\sigma+e+a)}
 =\frac{(e)_a}{\sigma^{e+a}}+\mathrm O(\tau^{-1})
 \quad(\tau\to+\infty),
 \]
 where \(\sigma=\rho_h+\xi_i>0\). There are only finitely many
 such terms, proving the mixed-moment estimate.

 The contour-circle and fixed Vandermonde argument in the proof of
 Theorem~\ref{thm:Jacobi-LII-form-confluence} already gives the asserted
 rate for each \(A\)-component. For \(B\), augment the column index
 by one unit in any column. The limiting square matrix is invertible
 by Theorem~\ref{thm:LII-moment-determinants}. The moment estimate
 just proved gives its finite counterpart as
 \(M_\tau=M+\mathrm O(\tau^{-1})\); thus \(M_\tau\) is invertible for large
 \(\tau\). Theorem~\ref{thm:mixed-Jacobi-like-forms} identifies its
 normalized solution with the finite Jacobi representative.

 To check the normalization as well as the homogeneous conditions, put
 \(H_\tau(z)\coloneq \tau^{D+1}\M[\mathcal B^{\mathrm J}](\tau z+1)\).
 The gamma-quotient calculation above gives
 \(H_\tau=\Lap[\mathcal B^{\mathrm{LII}}]+\mathrm O(\tau^{-1})\)
 locally uniformly in \(\operatorname{Re}z> -\min_h\rho_h\).
 Cauchy's formula gives the same bound for each fixed derivative.
 If \(\Delta_h f(z)\coloneq f(z+h)-f(z)\), the finite normalizing
 moments are
 \[
 \int_0^\tau y^v(1-y/\tau)^{\tau\xi_i}\widetilde{\mathcal B}_\tau(y)\,\mathrm dy
 =(-1)^v \tau^v\Delta_{1/\tau}^{v}H_\tau(\xi_i).
 \]
 The integral formula for finite differences expresses the right side
 as an average of \((-1)^vH_\tau^{(v)}\) at points within \(v/\tau\)
 of \(\xi_i\). Its error is therefore \(\mathrm O(\tau^{-1})\).
 Thus the full right-hand side vector of the normalized system is
 \(y_\tau=y+\mathrm O(\tau^{-1})\). If \(b_\tau,b\) are the component
 coefficient vectors, then
 \[
 b_\tau-b=M_\tau^{-1}\bigl[(y_\tau-y)-(M_\tau-M)b\bigr]
 =\mathrm O(\tau^{-1})\qquad(\tau\to+\infty),
 \]
 since the inverses \(M_\tau^{-1}\) remain bounded. This proves the
 \(B\)-component rate with the prescribed normalization.

 Finally, use \eqref{eq:Jacobi-LII-reflected-B-Rodrigues} and formal
 adjoints. On the compact support of \(\varphi\), the coefficients
 of \(\mathscr P_\tau^*-\mathscr P^*\) are \(\mathrm O(\tau^{-1})\).
 The density estimate with shapes \(d_h+m_h\) bounds
 \(\widetilde S_\tau-S_{\boldsymbol m}\) by \(C \tau^{-1}y^{D+|\boldsymbol m|-1}\)
 near zero, an integrable function there. The two terms obtained by
 subtracting \(\langle\widetilde S_\tau,\mathscr P_\tau^*\varphi\rangle\)
 and \(\langle S_{\boldsymbol m},\mathscr P^*\varphi\rangle\) are
 each \(\mathrm O_\varphi(\tau^{-1})\), proving the last assertion.
\end{proof}

\subsection{Terminating hypergeometric formulas for the \texorpdfstring{\(B\)}{B}-components}
\label{subsec:LII-B-components}
In the Jacobi formulas, the simple poles in each family approach the
same point \(-\rho_J\). Combining their contributions before taking the
limit first gives a sum over the orders of that pole. The following
representation performs this sum as well: each polynomial component is
a combination of at most \(q+1\) terminating Srivastava--Daoust functions.

\Needspace{8\baselineskip}
Throughout this construction, \(\boldsymbol m\) is near the diagonal,
\(|\boldsymbol m|=|\boldsymbol n|+1\), and the integrability condition
\eqref{eq:LII-integrability} holds. Initially assume \(q\ge2\), all
\(m_h\ge1\), and pairwise distinct rates. Abbreviate
\(B_j\coloneq B_{\boldsymbol n,\boldsymbol m}^{(j),\mathrm{LII}}\), and put
\begin{equation}
 c_{i,J}\coloneq\xi_i+\rho_J,\qquad
 x_h^{(J)}\coloneq-\frac1{\rho_h-\rho_J}\quad(h\ne J),\qquad
 \kappa_J\coloneq-\frac{\prod_{i=1}^pc_{i,J}^{n_i}}
                         {\prod_{h\ne J}(\rho_h-\rho_J)^{m_h}}.
 \label{eq:LII-B-SD-constants}
\end{equation}
Fix distinct row indices \(J,j\), let
\(I_{J,j}\coloneq\{h\in\{1,\ldots,q\}:h\ne J,j\}\), and set
\(r_0\coloneq p+q-1\), \(t_0\coloneq q-2\), and
\(R\coloneq p+2q-1=r_0+t_0+2\).
Order the \(R\) summation indices as
\((\boldsymbol\eta,\boldsymbol\beta,r,s)\): the first \(p\) entries of
\(\boldsymbol\eta\) correspond to the columns, its remaining \(q-1\)
entries to the rows \(h\ne J\), and \(\boldsymbol\beta\) to
\(h\in I_{J,j}\). Row indices within each group are in increasing order.
Define the coefficient vectors
\begin{equation}
 \boldsymbol e\coloneq(\one_{r_0},\one_{t_0},1,1),\qquad
 \boldsymbol u\coloneq(\boldsymbol0_{r_0},\one_{t_0},1,1),\qquad
 \boldsymbol v\coloneq(\boldsymbol0_{r_0},\one_{t_0},0,1),
 \label{eq:LII-B-SD-vectors}
\end{equation}
where \(\boldsymbol0_r\) is the zero vector of length \(r\), and a group
of length zero is omitted. For \(h\ne J\), set
\(\widetilde m_j\coloneq m_j\) and
\(\widetilde m_h\coloneq m_h+d_h\) when \(h\in I_{J,j}\).
The individual upper parameter vector is
\[
 \boldsymbol b^{J,j}\coloneq
 \bigl((-n_i)_{i=1}^p,(\widetilde m_h)_{h\ne J},
                         (-d_h)_{h\in I_{J,j}},1\bigr).
\]
It has \(R-1\) entries; the last summation index \(s\) has no individual
upper parameter. All individual lower parameter strings are empty.
In the same ordering, the arguments are
\begin{equation}
 \boldsymbol z^{J,j}(y)\coloneq
 \left(\left(-\frac1{c_{i,J}y}\right)_{i=1}^p,
       \left(-\frac{x_h^{(J)}}y\right)_{h\ne J},
       \left(-\frac{x_h^{(J)}}y\right)_{h\in I_{J,j}},
       -\frac{x_j^{(J)}}y,1\right).
 \label{eq:LII-B-SD-parameters}
\end{equation}
For \(a>-1\) and \(N\in\mathbb N_0\), define
\begin{equation}
 \mathcal V_N^{J,j}(a;y)\coloneq\frac{y^N}{N!}
 F_{2:0;\ldots;0}^{3:1;\ldots;1;0}
 \left[\begin{array}{c}
 (-N:\boldsymbol e),(a+d_j+1:\boldsymbol u),(a+1:\boldsymbol v):
 b_1^{J,j};\ldots;b_{R-1}^{J,j};-\\
 (a+2:\boldsymbol u),(a+d_j+1:\boldsymbol v):
 -;\ldots;-
 \end{array}\,\middle|\,\boldsymbol z^{J,j}(y)\right].
 \label{eq:LII-B-SD-polynomial}
\end{equation}
The notation is that of \eqref{eq:Srivastava-Daoust-definition}.
The coupled parameter \(-N\) restricts the sum to
\(|\boldsymbol\eta|+|\boldsymbol\beta|+r+s\le N\).
After multiplication by \(y^N\), every term contains the nonnegative
power \(y^{N-|\boldsymbol\eta|-|\boldsymbol\beta|-r}\).
Consequently \eqref{eq:LII-B-SD-polynomial} defines a polynomial also at
\(y=0\), without evaluating its reciprocal arguments there.
Its lower parameters are positive. Finally, set
\(\mathcal V_{-1}^{J,j}(a;y)\coloneq0\).

\begin{proposition}[Srivastava--Daoust formulas for the \(B\)-components]
 \label{prop:LII-explicit-B-components}
 Under the preceding assumptions, for every \(j\in\{1,\ldots,q\}\)
 and any \(h\ne j\),
 \begin{equation}
 B_j(y)=\kappa_j\mathcal V_{m_j-1}^{j,h}(d_j-1;y)
 -\frac{\kappa_jd_jx_h^{(j)}}{d_j+1}\mathcal V_{m_j-2}^{j,h}(d_j;y)
 -d_j\sum_{J\ne j}\frac{\kappa_Jx_j^{(J)}}{d_J+1}
                       \mathcal V_{m_J-2}^{J,j}(d_J;y).
 \label{eq:LII-B-components-hypergeometric}
 \end{equation}
 This expression is independent of \(h\), belongs to
 \(\mathbb P_{m_j-1}\), and represents each component by at most \(q+1\)
 terminating functions of \(p+2q-1\) arguments.
\end{proposition}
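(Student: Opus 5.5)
The plan is to work entirely on the Laplace side, where the component problem becomes a rational partial-fraction problem, and then read off the coefficients of each \(B_j\) by local expansions at the poles \(-\rho_J\).

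\textbf{Step 1: reduction to a rational identity.} By \eqref{eq:LII-F0-Laplace}, \(\Lap[y^kF_j](z)=(-\partial_z)^k[\gamma(z)/(z+\rho_j)]=\gamma(z)R_{j,k}(z)\). Corollary~\ref{cor:LII-component-normality} and the normalization \eqref{eq:LII-B-Laplace} then give
\[
 \sum_{j=1}^q\sum_{k=0}^{m_j-1}b_{j,k}R_{j,k}(z)
 =-\frac{\prod_i(\xi_i-z)^{n_i}}{Q_{\boldsymbol m}(z)},
\]
and the component vector solving it is unique. Hence it suffices to show that the polynomials on the right of \eqref{eq:LII-B-components-hypergeometric} satisfy this identity. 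Uniqueness then gives independence of \(h\) automatically. The membership in \(\mathbb P_{m_j-1}\) follows from the truncations \(-N:\boldsymbol e\) with \(N=m_j-1,m_j-2\), since the \(y\)-powers stated after \eqref{eq:LII-B-SD-polynomial} are nonnegative and at most \(N\).

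\textbf{Step 2: matching principal parts at each pole.} Both sides are rational with poles only at the \(-\rho_J\) and vanish at infinity, because \(|\boldsymbol m|=|\boldsymbol n|+1\). So it is enough to match principal parts at each \(-\rho_J\). Put \(u=z+\rho_J\).
\begin{itemize}
 \item The right side has the principal part of \(-\prod_i(c_{i,J}-u)^{n_i}\,u^{-m_J}\prod_{h\ne J}(\rho_h-\rho_J+u)^{-m_h}\). Its coefficients are sums of products of binomials in \(-n_i\) and \(m_h\), which accounts for the prefactor \(\kappa_J\) and the arguments \(1/c_{i,J}\) and \(x_h^{(J)}\).
 \item On the left, write \(B_j(-\partial_z)\) acting on \(\gamma/(z+\rho_j)\), divided by \(\gamma\). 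For \(j=J\), the factor \(u^{-d_J-1}\prod_{h\ne J}(\rho_h-\rho_J+u)^{-d_h}\) is differentiated. The Leibniz rule produces \((d_J+1)_\cdot\) against the \(-d_h\) binomials of the analytic factor, and dividing by \(\gamma\) gives back the \(\widetilde m_h=m_h+d_h\) and \((-d_h)\) entries.
 \item For \(j\ne J\), the factor \((z+\rho_j)^{-1}\) is analytic at \(-\rho_J\). It contributes only through mixed Leibniz terms, which are proportional to \(d_j x_j^{(J)}\). These are the origin of the cross sum over \(J\ne j\) and of the \(\mathcal V_{m_J-2}\) terms.
\end{itemize}

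\textbf{Step 3: triangular inversion.} At each pole I will solve the resulting triangular system for \(b_{J,k}\), from top order \(k=m_J-1\) downward. I will do this through a generating function in an auxiliary variable, extracting coefficients via \([u^m]\), as in the identities below \eqref{eq:finite-quadratic-SD-identification}. Concretely, the Laplace pairing \(u^{-a-1-k}\leftrightarrow y^{a+k}/\Gamma(a+1+k)\) converts the local inversion into a coefficient extraction. Its \(\Gamma\)-ratios produce exactly the coupled Pochhammer blocks \((a+d_j+1:\boldsymbol u)/(a+2:\boldsymbol u)\) and \((a+1:\boldsymbol v)/(a+d_j+1:\boldsymbol v)\), with \(a=d_J-1\) or \(a=d_J\). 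The index \(s\), with argument \(1\), resums a geometric factor coming from \((a+1)/(a+1+s)\)-type ratios. The second term in \eqref{eq:LII-B-components-hypergeometric}, carrying the chosen auxiliary \(h\), arises when one singled-out row \(h\) is split off in performing this resummation.

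\textbf{Main obstacle.} I expect the main difficulty to be Step 3: recognising the nested finite sums as precisely the Srivastava--Daoust function \eqref{eq:LII-B-SD-polynomial} with the stated coefficient vectors, and verifying the correction term. I would first check the case \(q=2\), \(p=1\) by direct comparison of coefficients, and then argue inductively on the number of rows \(h\in I_{J,j}\).

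Finally, the count of at most \(q+1\) terms is read off \eqref{eq:LII-B-components-hypergeometric}: two terms for \(J=j\) and one for each of the \(q-1\) indices \(J\ne j\). The number of arguments is \(R=p+2q-1\).
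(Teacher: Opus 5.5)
Your Step~1 is sound. Since \(\Lap[y^kF_j]=\gamma R_{j,k}\) and the \(\Phi_{j,k}\) form a basis by Proposition~\ref{prop:LII-rational-basis}, it does suffice to check that the displayed polynomials satisfy \(\sum_{j,k}b_{j,k}R_{j,k}=-P_{\boldsymbol n}/Q_{\boldsymbol m}\). Independence of \(h\) then follows, and the paper also closes this way. For the degree bound you additionally need near diagonality, \(m_J-2\le m_j-1\), for the cross terms \(\mathcal V^{J,j}_{m_J-2}\).

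The gap is in Steps~2--3, which carry the whole content of the proposition. Your description of the local system is wrong. For \(j\ne J\), the principal part of \(R_{j,k}\) at \(-\rho_J\) comes from derivatives falling on \((z+\rho_J)^{-d_J}\). It is therefore proportional to \(d_J\), not to \(d_jx_j^{(J)}\); for instance, the principal part of \(R_{j,1}\) there is \(-d_Jx_j^{(J)}/(z+\rho_J)\). Consequently the principal-part equations at \(-\rho_J\) involve coefficients \(b_{j,k}\), \(k\ge1\), of every row. This happens already at the top order \(K=m_J-1\) whenever some \(m_j=m_J+1\). The system is triangular only in the level-first order across all poles simultaneously, as in the proof of Proposition~\ref{prop:LII-rational-basis}, not pole by pole in the \(b_{J,k}\). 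Solving it top-down mixes all poles at every level and produces nested sums that your plan has no mechanism to collapse.

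The factor \(d_jx_j^{(J)}\) in \eqref{eq:LII-B-components-hypergeometric} describes the inverse map, namely how the pole \(-\rho_J\) feeds the component \(B_j\). That inverse map is exactly what has to be constructed. Likewise, the pairing \(u^{-a-1-k}\leftrightarrow y^{a+k}/\Gamma(a+1+k)\) does not by itself turn the local problem into coefficient extraction. The components multiply the convolutions \(F_j\), whose transforms carry all the factors \((z+\rho_h)^{-d_h}\).

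The missing idea is a decoupling by linearity, together with an explicit reduction of each elementary principal part. The paper writes \(-P_{\boldsymbol n}/Q_{\boldsymbol m}=\sum_{J,K}\Pi_{J,K}(z+\rho_J)^{-K-1}\). For each term it constructs polynomials \(U_{j;J,K}(y)\) from the first-order identity \eqref{eq:LII-B-scalar-reduction-identity}. There the terms \(\partial_z(\gamma S_{J,K})+y\gamma S_{J,K}\) cancel after inverse Laplace transformation in an auxiliary variable evaluated at \(y\). This reduces everything to a scalar recurrence for the Taylor coefficients \(T_{J,K;r}\), and gives \(B_j=\sum_{J,K}\Pi_{J,K}U_{j;J,K}\).

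Identifying the resulting sums as \(\mathcal V_N^{J,j}\) then needs three further ingredients:
\begin{enumerate}
\item the two-row three-term recurrence, solved by associated Charlier polynomials (their terminating \({}_3F_2(1)\) is the source of the index \(s\) with argument \(1\));
\item the reduction \eqref{eq:LII-B-additional-rows} of the extra rows to two-row polynomials with shape \(d_J+|\boldsymbol\beta|\);
\item the Pochhammer and Vandermonde identities that absorb the sum over \(K\) and create \(\widetilde m_h=m_h+d_h\).
\end{enumerate}
The two \(h\)-dependent terms come from a specific identity expressing \(U_{J;J,K}(a)\) through \(U_{h;J,K+1}(a-1)\) and \(U_{h;J,K}(a)\). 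That identity is where the parameter \(d_j-1\) and the index \(m_j-1\) originate; they do not arise from splitting off a row during a resummation.

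None of these steps appears in your proposal. Your plan to check \(q=2\), \(p=1\) and then induct on the rows has no inductive step. As it stands, the central identification is asserted rather than proved.
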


\begin{proof}
 The proof first reconstructs polynomial components from the poles of
 the rational transform, and then evaluates the resulting finite sums.
 Put \(P_{\boldsymbol n}(z)\coloneq\prod_i(\xi_i-z)^{n_i}\) and define
 \(\Pi_{J,K}\) by
 \begin{equation}
 -\frac{P_{\boldsymbol n}(z)}{Q_{\boldsymbol m}(z)}
 =\sum_{J=1}^q\sum_{K=0}^{m_J-1}\frac{\Pi_{J,K}}{(z+\rho_J)^{K+1}},
 \qquad
 \Pi_{J,K}=\kappa_J[w^{m_J-K-1}]
       \prod_i(1-w/c_{i,J})^{n_i}\prod_{h\ne J}(1-x_h^{(J)}w)^{-m_h}.
 \label{eq:LII-B-partial-fraction-families}
 \end{equation}
 The coefficient formula follows by expanding at \(z=-\rho_J\).

 \emph{Polynomial reconstruction.}
 Fix \(J\) and \(K\ge1\). Define
 \(h_J(w;y)\coloneq\e^{yw}\prod_{h\ne J}(1-x_h^{(J)}w)^{-d_h}\)
 near \(w=0\), and, for \(0\le r\le K\), set
 \[
 T_{J,K;r}(y)\coloneq[w^r]\left(h_J(w;y)^{-1}
       \sum_{v=0}^K\frac{[u^v]h_J(u;y)}{v-d_J-K}w^v\right).
 \]
 All denominators are nonzero. Differentiating this coefficient identity
 gives \(T_{J,K;0}=-1/(d_J+K)\) and, for \(1\le r\le K\),
 \[
 (d_J+K-r)T_{J,K;r}
 =yT_{J,K;r-1}
  +\sum_{h\ne J}d_h\sum_{v=0}^{r-1}(x_h^{(J)})^{r-v}T_{J,K;v}.
 \]
 Define \(U_{j;J,0}\coloneq\delta_{j,J}\), and, for \(K\ge1\),
 \[
 U_{j;J,K}\coloneq d_j\sum_{r=0}^{K-1}(x_j^{(J)})^{K-r}T_{J,K;r}
 \quad(j\ne J),\qquad
 U_{J;J,K}\coloneq-d_JT_{J,K;K}
              =-yT_{J,K;K-1}-\sum_{j\ne J}U_{j;J,K}.
 \]
 To verify the reconstruction, put
 \(S_{J,K}(z;y)\coloneq\sum_{r=0}^{K-1}T_{J,K;r}(y)(z+\rho_J)^{r-K}\).
 The coefficient recurrence cancels all poles of order at least two in
 \[
 \frac1{(z+\rho_J)^{K+1}}-\partial_zS_{J,K}
       -\left(y-\sum_h\frac{d_h}{z+\rho_h}\right)S_{J,K}.
 \]
 Its residues at \(-\rho_j\), \(j\ne J\), are \(U_{j;J,K}\).
 The coefficient of \(z^{-1}\) at infinity gives \(U_{J;J,K}\);
 there is no polynomial part. Multiplying by
 \(\gamma(z)=\prod_h(z+\rho_h)^{-d_h}\) therefore yields
 \begin{equation}
 \frac{\gamma(z)}{(z+\rho_J)^{K+1}}
 =\sum_j U_{j;J,K}(y)\frac{\gamma(z)}{z+\rho_j}
   +\partial_z\bigl(\gamma(z)S_{J,K}(z;y)\bigr)
   +y\gamma(z)S_{J,K}(z;y).
 \label{eq:LII-B-scalar-reduction-identity}
 \end{equation}
 Invert the Laplace transform with respect to a variable \(v>0\),
 treating \(y\) as a parameter, and then set \(v=y\). The last two
 terms cancel, since differentiation in \(z\) corresponds to
 multiplication by \(-v\). Using
 \eqref{eq:LII-B-Laplace} and
 \eqref{eq:LII-B-partial-fraction-families} proves
 \begin{equation}
 B_j(y)=\sum_{J=1}^q\sum_{K=0}^{m_J-1}\Pi_{J,K}U_{j;J,K}(y).
 \label{eq:LII-B-pole-reconstruction}
 \end{equation}
 The remaining steps evaluate the Taylor coefficients and all sums
 in \eqref{eq:LII-B-pole-reconstruction}.

 \emph{The two-row calculation.}
 For two rows, write \(a\coloneq d_J\), \(b\coloneq d_j\), and
 \(x\coloneq x_j^{(J)}\), and
 denote the component with \(j\ne J\) by \(u_K(a,b,x;y)\).
 The preceding coefficient identities give
 \[
 (a+K+1)u_{K+1}=\{y+x(a+b+K)\}u_K-xyu_{K-1}\quad(K\ge1),
 \qquad u_0=0,\quad u_1=-\frac{bx}{a+1}.
 \]
 On setting \(z\coloneq y/x\), the solution takes the form
 \[
 u_K=-\frac{bx^K}{(a+1)_K}P_{K-1}(z),\qquad
 P_N(z)=(z+a+b+N)P_{N-1}(z)-z(a+N)P_{N-2}(z),
 \]
 with the recurrence valid for \(N\ge2\), \(P_0=1\), and \(P_1=z+a+b+1\).
 Thus \(P_N(z)=z^N\mathscr C_N(-b;z,a+1)\), where
 \(\mathscr C_N(X;\tau,\gamma)\) are the associated Charlier polynomials
 normalized by \(\mathscr C_{-1}=0\), \(\mathscr C_0=1\), and
 \(\tau\mathscr C_{N+1}=(N+\gamma+\tau-X)\mathscr C_N-(N+\gamma)\mathscr C_{N-1}\).
 Expanding their terminating \({}_3F_2(1)\) representation
 \cite[Section~3.1]{Ahbli2021Associated} gives
 \begin{equation}
 u_K(a,b,x;y)=-\frac{bx}{(a+1)(K-1)!}\mathcal K_{K-1}(a,b;x,y),
 \qquad K\ge1,
 \label{eq:LII-B-two-row-evaluation}
 \end{equation}
 where the finite polynomial is
 \[
 \mathcal K_N(a,b;x,y)\coloneq
 \sum_{r+s\le N}
 \frac{(-N)_{r+s}(a+b+1)_{r+s}(a+1)_s}
      {(a+2)_{r+s}(a+b+1)_s}
 \frac{(-x)^ry^{N-r}}{s!}.
 \]
 Equivalently, for \(y\ne0\),
 \begin{equation}
 \mathcal K_N(a,b;x,y)=y^N F_{1:0;1}^{2:1;1}
 \left[\begin{array}{c}-N,a+b+1:1;a+1\\a+2:-;a+b+1\end{array}
 \,\middle|-\frac{x}{y},1\right].
 \label{eq:LII-B-two-row-KdF}
 \end{equation}
 To see the identification directly, expand the terminating
 \({}_3F_2\) and use
 \((a+b+1)_r(a+b+1+r)_s=(a+b+1)_{r+s}\) and
 \((-N)_r(-N+r)_s=(-N)_{r+s}\).
 The individual parameter \(1\) cancels \(r!\).
 All identities are polynomial identities in \(y\);
 the displayed denominators also permit \(a>-1\), which will be
 needed for the diagonal component.

 \emph{Adding the other rows.}
 For fixed \(J,j\), expand the factors of \(h_J\) with
 \(h\in I_{J,j}\) and their reciprocals. If
 \(\boldsymbol\alpha,\boldsymbol\beta\in\mathbb N_0^{q-2}\), direct
 multiplication of the two finite Taylor expansions gives
 \begin{equation}
 U_{j;J,K}(y)=
 \sum_{|\boldsymbol\alpha|+|\boldsymbol\beta|\le K-1}
 \left[\prod_{h\in I_{J,j}}
 \frac{(d_h)_{\alpha_h}(-d_h)_{\beta_h}
       (x_h^{(J)})^{\alpha_h+\beta_h}}{\alpha_h!\beta_h!}\right]
 u_{K-|\boldsymbol\alpha|-|\boldsymbol\beta|}
       (d_J+|\boldsymbol\beta|,d_j,x_j^{(J)};y).
 \label{eq:LII-B-additional-rows}
 \end{equation}
 Indeed, a coefficient of order \(|\boldsymbol\alpha|\) in \(h_J\)
 changes the denominator \(v-d_J-K\) to
 \(v_0-(d_J+K-|\boldsymbol\alpha|)\).
 After extracting a coefficient of order \(|\boldsymbol\beta|\)
 from \(h_J^{-1}\), this is precisely the denominator for the
 two-row polynomial on the right.

 \emph{Combining the finite sums.}
 First consider \(j\ne J\), so the term with \(K=0\) vanishes. Insert
 \eqref{eq:LII-B-two-row-evaluation} into
 \eqref{eq:LII-B-additional-rows}, and expand the coefficient
 \(\Pi_{J,K}\) in \eqref{eq:LII-B-partial-fraction-families}.
 If \(L\coloneq m_J-K-1\), the identity
 \[
 \frac{(-N+L)_v}{(N-L)!}
 =\frac{(-1)^L(-N)_{L+v}}{N!},
 \qquad 0\le L\le N,\quad 0\le v\le N-L,
 \]
 combines the sum over \(K\) with all remaining indices, with
 \(N=m_J-2\). For each \(h\in I_{J,j}\), the index from
 \(\Pi_{J,K}\) and \(\alpha_h\) have the same argument and occur
 elsewhere only through their sum. Vandermonde's coefficient identity
 \[
 \sum_{u+v=t}\frac{(m_h)_u(d_h)_v}{u!\,v!}
 =\frac{(m_h+d_h)_t}{t!}
 \]
 combines them into one index. This explains the parameter
 \(\widetilde m_h=m_h+d_h\) and reduces the number of indices to
 \(R=p+2q-1\).

 In the ordering of \eqref{eq:LII-B-SD-vectors}, the coupled
 Pochhammer factors that remain are
 \[
 \frac{(-N)_{|\boldsymbol\eta|+|\boldsymbol\beta|+r+s}
       (a+d_j+1)_{|\boldsymbol\beta|+r+s}(a+1)_{|\boldsymbol\beta|+s}}
      {(a+2)_{|\boldsymbol\beta|+r+s}(a+d_j+1)_{|\boldsymbol\beta|+s}},
 \qquad a=d_J.
 \]
 The individual factors and arguments are exactly
 \eqref{eq:LII-B-SD-parameters}. Hence
 \begin{equation}
 \sum_{K=0}^{m_J-1}\Pi_{J,K}U_{j;J,K}(y)
 =-\frac{\kappa_Jd_jx_j^{(J)}}{d_J+1}
                  \mathcal V_{m_J-2}^{J,j}(d_J;y),\qquad j\ne J.
 \label{eq:LII-B-SD-off-diagonal}
 \end{equation}
 When \(m_J=1\), both sides vanish.

 For the diagonal contribution, fix \(h\ne J\). In the coefficient
 construction regard \(d_J=a\) as a variable, leaving the other shape
 parameters fixed. The quantities \(U_{h;J,K}(a;y)\) and
 \(U_{h;J,K+1}(a-1;y)\) use the same denominator \(v-a-K\).
 Subtracting their finite sums gives
 \[
 U_{h;J,K+1}(a-1;y)-x_h^{(J)}U_{h;J,K}(a;y)
       =d_hx_h^{(J)}T_{J,K;K}(y).
 \]
 Since \(U_{J;J,K}=-aT_{J,K;K}\), it follows that
 \[
 U_{J;J,K}(a;y)
 =-\frac{a}{d_hx_h^{(J)}}U_{h;J,K+1}(a-1;y)
       +\frac{a}{d_h}U_{h;J,K}(a;y).
 \]
 The same identity holds at \(K=0\), using \(u_0=0\).
 Applying the preceding summation calculation to these two terms gives
 the first two terms of \eqref{eq:LII-B-components-hypergeometric}.
 Together with \eqref{eq:LII-B-SD-off-diagonal}, this proves the formula
 and its independence of \(h\).

 Finally, \(\deg\mathcal V_N^{J,j}\le N\) follows from its finite
 expansion. The first term has degree at most \(m_j-1\), the second
 at most \(m_j-2\), and every term with \(J\ne j\) has degree at most
 \(m_J-2\le m_j-1\), by near diagonality. The reconstruction has
 the prescribed Laplace transform, so these are the normalized
 polynomial components of the limiting \(B\)-form.
\end{proof}

If a near-diagonal \(\boldsymbol m\) has a zero entry, all its entries
are \(0\) or \(1\). Only simple poles occur, and the same reconstruction
immediately gives
\begin{equation}
 B_j(y)=
 \begin{cases}
 -\displaystyle\frac{\prod_i(\xi_i+\rho_j)^{n_i}}
                   {\prod_{\substack{h\ne j\\m_h=1}}(\rho_h-\rho_j)},
       &m_j=1,\\[6pt]
 0,&m_j=0.
 \end{cases}
 \label{eq:LII-B-zero-row-indices}
\end{equation}
Only the rates with \(m_h=1\) need be distinct in this case.

\begin{remark}[Confluence and the number of hypergeometric terms]
 The sum over the coalescing Jacobi poles in each family becomes
 \(\sum_K\Pi_{J,K}U_{j;J,K}\) in
 \eqref{eq:LII-B-pole-reconstruction}.
 Formula \eqref{eq:LII-B-components-hypergeometric} includes this
 sum in the parameters of a single Srivastava--Daoust function for
 \(J\ne j\), and two such functions for \(J=j\).
 Thus the number of hypergeometric terms is bounded by \(q+1\),
 independently of the degrees. For two rows, the intermediate
 polynomials are the bivariate Kamp\'e de F\'eriet polynomials
 \eqref{eq:LII-B-two-row-KdF}; after summation over the pole orders,
 the full components use Srivastava--Daoust functions of \(p+3\)
 arguments. In every case the components themselves are univariate
 polynomials in \(y\).
\end{remark}

For the one-row reduction, take \(q=1\) and write
\(d\coloneq d_1\), \(\rho\coloneq\rho_1\),
\(c_i\coloneq\rho+\xi_i\), and \(N\coloneq|\boldsymbol n|\).
Let \(L_{\boldsymbol n}^{(i)}(y;\boldsymbol c,d)\) denote the classical
type-I components of multiple Laguerre of the second kind with the normalization of
\cite[Equation~(20)]{BranquinhoDiazFoulquieManas2025Classical}, and let
\(L_{\boldsymbol n}(y;\boldsymbol c,d)\) be the monic type-II polynomial
in equation~(19) of that reference.

\begin{corollary}[Reduction to classical multiple Laguerre of the second kind]
	\label{cor:q1-LII-reduction}
	Under the distinctness and integrability hypotheses of
	Theorem~\ref{thm:Jacobi-LII-form-confluence}, with \(q=1\),
	\begin{equation}
		\label{eq:q1-classical-LII-weights}
		F_1(y)\e^{-\xi_i y}
		=
		\frac{y^d\e^{-c_i y}}{\Gamma(d+1)}.
	\end{equation}
	Up to the common factor \(\Gamma(d+1)^{-1}\), these are precisely the
	classical multiple Laguerre weights of the second kind
	\cite[Section~3.3]{VanAsscheCoussement2001}
	\cite[Equations~(19)--(21)]{BranquinhoDiazFoulquieManas2025Classical}.
	In the \(A\)-balance \(m_1=N-1\), the
	components satisfy, for \(n_i\ge1\),
	\begin{equation}
		\label{eq:q1-LII-A-classical-reduction}
		A_{\boldsymbol n,(N-1)}^{(i),\mathrm{LII}}(y)
		=
		-
		\frac{\Gamma(d+N)}{\prod_{k=1}^pc_k^{n_k}}
		L_{\boldsymbol n}^{(i)}(y;\boldsymbol c,d).
	\end{equation}
	In the \(B\)-balance \(m_1=N+1\),
	\begin{equation}
		\label{eq:q1-LII-B-Phi2}
		B_{\boldsymbol n,(N+1)}^{(1),\mathrm{LII}}(y)
		=
		(-1)^{N+1}
		\Phi_2^{(p)}
		\left(
		-n_1,\ldots,-n_p;
		d+1;
		c_1y,\ldots,c_py
		\right)
		=
		-
		\frac{\prod_{i=1}^pc_i^{n_i}}{(d+1)_N}
		L_{\boldsymbol n}(y;\boldsymbol c,d).
	\end{equation}
\end{corollary}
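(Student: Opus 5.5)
We specialize the general results of this section to \(q=1\) and compare term by term with the classical formulas; no new analytic input is needed.

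\emph{The weights.} For \(q=1\), \eqref{eq:LII-shifted-rows} reduces to the single kernel \(F_1=g_{\rho,d+1}\), that is, \(F_1(y)=y^d\e^{-\rho y}/\Gamma(d+1)\). Multiplying by \(\e^{-\xi_iy}\) gives \eqref{eq:q1-classical-LII-weights} with \(c_i=\rho+\xi_i\). For \(q=1\) every row index is near the diagonal, and the rate-separation hypothesis is vacuous. Hence Theorem~\ref{thm:Jacobi-LII-form-confluence} and Corollary~\ref{cor:LII-component-normality} apply in both balances, and the limiting forms are the unique normalized ones.

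\emph{The \(A\)-components.} Take \(m_1=N-1\). In \eqref{eq:LII-A-components} the numerator is \((z+\rho)^{d+N-1}\). We substitute \(w=z+\rho\), so the nodes become \(c_i\) and the exponential becomes \(\e^{-(w-c_i)y}\). Then \(A^{(i),\mathrm{LII}}\) equals \((-1)^N/(n_i-1)!\) times the \((n_i-1)\)-st \(w\)-derivative at \(c_i\) of
\[
w^{d+N-1}\e^{-(w-c_i)y}\prod_{k\ne i}(w-c_k)^{-n_k}.
\]
Equivalently, this is the residue at \(c_i\) of \(w^{d+N-1}\e^{-wy}/\prod_k(w-c_k)^{n_k}\), multiplied by \(\e^{c_iy}\). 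We then take the classical type-I formula of \cite[Equation~(20)]{BranquinhoDiazFoulquieManas2025Classical}. It is either a contour integral of the same integrand or an explicit finite sum, whose normalization is fixed by \(\int_0^\infty y^{N-1}\sum_i L^{(i)}y^d\e^{-c_iy}\,\mathrm dy=1\). We fix the constant in one of two ways. The first is to compare leading coefficients, using the explicit leading coefficient from the proof of Corollary~\ref{cor:LII-component-normality}. The second is to compute the single normalizing moment of our form. Both forms satisfy the same \(N-1\) orthogonality conditions, and the solution space is one-dimensional by Corollary~\ref{cor:LII-component-normality}. It therefore suffices to match this one constant. We expect the normalizing moment to be obtained by pairing \(y^{N-1}y^d\) with \(\e^{-wy}\), which produces \(\Gamma(d+N)w^{-d-N}\). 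This cancels \(w^{d+N-1}\) up to a factor \(w^{-1}\), and the remaining residue sum of \(w^{-1}/\prod_k(w-c_k)^{n_k}\) equals \(-\prod_kc_k^{-n_k}\). This is the source of the constant \(-\Gamma(d+N)/\prod c_k^{n_k}\).

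\emph{The \(B\)-component.} Take \(m_1=N+1\). The Laplace transform \eqref{eq:LII-B-Laplace} is
\[
-\prod_i(\xi_i-z)^{n_i}(z+\rho)^{-d-N-1}=-\prod_i(c_i-w)^{n_i}w^{-d-N-1}.
\]
We expand the numerator as \(\sum_{\boldsymbol\ell}\prod_i\binom{n_i}{\ell_i}c_i^{n_i-\ell_i}(-w)^{\ell_i}\) and invert term by term: the transform \(w^{-d-N-1+|\boldsymbol\ell|}\) inverts to \(y^{d+N-|\boldsymbol\ell|}\e^{-\rho y}/\Gamma(d+N+1-|\boldsymbol\ell|)\). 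Dividing by \(F_1\) gives
\[
B_1(y)=-\sum_{\boldsymbol\ell}\prod_i\binom{n_i}{\ell_i}c_i^{n_i-\ell_i}(-1)^{|\boldsymbol\ell|}\frac{\Gamma(d+1)\,y^{N-|\boldsymbol\ell|}}{\Gamma(d+N+1-|\boldsymbol\ell|)}.
\]
We reindex with \(k_i=n_i-\ell_i\) and use \(\binom{n_i}{k_i}=(-1)^{k_i}(-n_i)_{k_i}/k_i!\) together with \(\Gamma(d+1)/\Gamma(d+1+|\boldsymbol k|)=1/(d+1)_{|\boldsymbol k|}\). The sign is then \((-1)^{N}\), which combined with the overall minus gives \((-1)^{N+1}\Phi_2^{(p)}(-\boldsymbol n;d+1;c_1y,\ldots,c_py)\). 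Alternatively, this is the case \(q=1\) of \eqref{eq:LII-B-complete-Phi2}. For the second equality, we identify this terminating \(\Phi_2\) with the explicit sum or Rodrigues formula for the monic \(L_{\boldsymbol n}\) in \cite[Equation~(19)]{BranquinhoDiazFoulquieManas2025Classical}. Both polynomials satisfy the same type-II orthogonality and have degree \(N\), so by uniqueness it suffices to compare leading coefficients. The top term \(|\boldsymbol k|=N\) gives \((-1)^{N+1}\prod_ic_i^{n_i}y^N/(d+1)_N\), which yields the stated constant.

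\emph{Main obstacle.} The main obstacle is bookkeeping: matching the normalizations and sign conventions of the cited classical formulas. We plan to handle this entirely through uniqueness, so that each identity reduces to the comparison of a single scalar.
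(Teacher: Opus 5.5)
Your proposal is correct and follows essentially the paper's route. You specialize to \(q=1\), fix the \(A\)-constant through the single type-I normalizing moment using the one-dimensionality of the solution space, and obtain the \(B\)-component from the Laplace transform \eqref{eq:LII-B-Laplace} before matching the monic normalization of \(L_{\boldsymbol n}\). The only difference is direction: you invert the transform term by term, whereas the paper verifies the same transform identity in the forward direction.

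Two intermediate signs are off by \((-1)^N\), although your final constants are right. First, the residues of \(w^{-1}\prod_k(w-c_k)^{-n_k}\) at the \(c_k\) sum to \(-(-1)^N\prod_kc_k^{-n_k}\), not \(-\prod_kc_k^{-n_k}\). The prefactor \((-1)^N\) then turns this into the stated \(-\Gamma(d+N)/\prod_kc_k^{n_k}\). Second, the leading coefficient of \((-1)^{N+1}\Phi_2^{(p)}(-\boldsymbol n;d+1;c_1y,\ldots,c_py)\) is \(-\prod_ic_i^{n_i}/(d+1)_N\), because \((-n_i)_{n_i}/n_i!=(-1)^{n_i}\). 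This is also visible directly as the \(\boldsymbol\ell=\boldsymbol 0\) term of your sum before reindexing.
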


\begin{proof}
	Formula \eqref{eq:q1-LII-A-classical-reduction} follows by specializing
	\eqref{eq:LII-A-multiple-KdF} to \(q=1\) and comparing the normalizing type-I
	moment.  For the \(B\)-form, \eqref{eq:LII-B-Laplace} and
	\eqref{eq:q1-classical-LII-weights} give
	\[
		\Lap\left[
		F_1(y)
		\Phi_2^{(p)}
		\left(-\boldsymbol n;d+1;c_1y,\ldots,c_py\right)
		\right](z)
		=
		\frac{\prod_i(z-\xi_i)^{n_i}}
		{(z+\rho)^{d+N+1}}.
\]
	Multiplication by \((-1)^{N+1}\) proves the first equality in
	\eqref{eq:q1-LII-B-Phi2}; the second is the normalization in
	\cite[Equation~(19)]{BranquinhoDiazFoulquieManas2025Classical}.
\end{proof}

\begin{remark}
	When \(q=1\), the shifted row \(F_1\) remains meaningful for \(d>-1\),
	which is the usual scalar Laguerre parameter range. When \(q>1\), the
	assumptions \(d_h>0\) ensure that the base convolution \(F_0\) is a
	function. If two rates \(\rho_j\) coincide, the corresponding rows
	coincide; if two \(\xi_i\)'s coincide, the corresponding columns coincide.
\end{remark}

\section{Hermite systems and their three confluence routes}
\label{subsec:Laguerre-to-Hermite-confluence}

The Hermite system studied in this section has \(q\) row weights: one
Gaussian--gamma convolution \(F\) and \(q-1\) functions
\(K_{\rho_h,1}F\). Put \(r\coloneq q-1\). In the notation of
Section~\ref{sec:source-families}, its first-kind Laguerre source is the
case \(s=1\), so its last row is the base weight \(w_0^{\mathrm L}\).
This choice will remain fixed throughout the section; no further
derivative row is included in the system.

The construction first gives the limit from Laguerre of the first kind,
including the polynomial components, their terminating series, and
normality at every admissible near-diagonal index under the stated
separation conditions. The one-row case recovers the multiple Hermite
systems of \cite[Section~5]{BranquinhoDiazFoulquieManas2025Classical}.
The same matrix and normalized polynomial vectors are then obtained
directly from Laguerre of the second kind and from Jacobi.

The more general first-kind limit has a different outcome. If its
\(s\) Euler rows are retained, the last rows tend to
\(F,F',\ldots,F^{(s-1)}\). For \(s>1\), only finitely many admissible
indices admit a nonzero normalized \(B\)-form. Appendix~\ref{app:Hermite-derivative-limits}
proves this restriction, the exact loss of rank, and divergence of
finite-parameter components. These results describe the boundary of the
construction, separately from the family treated here.

\subsection{Rescaled variables and convergence of the weights}

For \(\rho>0\) and \(d>0\), set
\begin{equation}
	\label{eq:Hermite-gamma-convolution-operator}
	\kappa_{\rho,d}(u)
	\coloneq
	\frac{\rho^d}{\Gamma(d)}u^{d-1}\e^{-\rho u}
	\boldsymbol 1_{(0,\infty)}(u),
	\qquad
	(K_{\rho,d}f)(t)
	\coloneq
	\int_0^\infty\kappa_{\rho,d}(u)f(t+u)\,\mathrm du,
	\qquad
	K_{\rho,0}\coloneq I.
\end{equation}
The kernel \(\kappa_{\rho,d}\) has integral one. For \(d,e>0\),
\[
	\int_0^u v^{d-1}(u-v)^{e-1}\,\mathrm dv
	=
	u^{d+e-1}\frac{\Gamma(d)\Gamma(e)}{\Gamma(d+e)},
\]
and therefore
\[
	K_{\rho,d}K_{\rho,e}=K_{\rho,d+e},
	\qquad
	K_{\rho,1}=\rho J_\rho,
	\qquad
	(J_\rho f)(t)=\int_0^\infty\e^{-\rho u}f(t+u)\,\mathrm du.
\]
Thus \(K_{\rho,d}\) is a normalized one-sided additive convolution; the
semigroup identity remains valid when \(d=0\) or \(e=0\), because
\(K_{\rho,0}=I\).
More precisely, if \(\check\kappa_{\rho,d}(u)\coloneq
\kappa_{\rho,d}(-u)\), then
\(K_{\rho,d}f=\check\kappa_{\rho,d}*_+f\), with convolution on
\(\mathbb R\). The reflection accounts for the argument \(t+u\)
in the defining integral.

Throughout this subsection, \(\tau\to+\infty\), and the parameter families
are assumed to satisfy, for all sufficiently large \(\tau\),
\[
	a_\nu(\tau)>-1,
	\qquad 1\le\nu\le r+1,
	\qquad
	b_h(\tau)>a_h(\tau),
	\qquad 1\le h\le r.
\]
For the remaining gamma factor, assume that
\[
 a_q(\tau)+1=\tau+\sqrt{2\tau}\,\widehat a_*+\mathrm O(1),
 \qquad \tau\to+\infty.
\]
Set
\[
	\varepsilon_\tau\coloneq\sqrt{\frac{2}{\tau}},
	\qquad
	c_\tau\coloneq \tau,
	\qquad
	x_\tau(t)\coloneq c_\tau(1+\varepsilon_\tau t).
\]
Thus \(t=(x-c_\tau)/(c_\tau\varepsilon_\tau)\): the change of variable subtracts
\(c_\tau\) and divides by \(c_\tau\varepsilon_\tau\). The new variable ranges over
\[
	I_\tau
	\coloneq
	x_\tau^{-1}\bigl((0,\infty)\bigr)
	=
	(-\varepsilon_\tau^{-1},\infty).
\]
For \(1\le h\le r\), assume that
\[
	\varepsilon_\tau(a_h(\tau)+1)\xrightarrow[\tau\to+\infty]{}\rho_h>0,
	\qquad
	b_h(\tau)-a_h(\tau)\xrightarrow[\tau\to+\infty]{} d_h\ge0.
\]
Set
\[
	P_{h,\tau}\coloneq a_h(\tau)+1,
	\qquad
	D_{h,\tau}\coloneq b_h(\tau)-a_h(\tau),
	\qquad
	u_{h,\tau}\coloneq\varepsilon_\tau P_{h,\tau}.
\]
Denote the corresponding weights by \(w_{0,\tau}\), \(w_{h,\tau}\), and
\(v_{1,\tau}=w_{0,\tau}\), and define
\[
	Z_\tau
	\coloneq
	\mathcal M[w_{0,\tau}](1)
	=
	\frac{\prod_{\nu=1}^{r+1}\Gamma(a_\nu(\tau)+1)}
	{\prod_{h=1}^r\Gamma(b_h(\tau)+1)},
	\qquad
	Z_{h,\tau}\coloneq\frac{Z_\tau}{b_h(\tau)+1}.
\]
For \(a\in\mathbb R\), define the Gaussian weight
\[
	\phi_a(t)
	\coloneq
	\pi^{-1/2}\e^{-(t-a)^2}.
\]
The limiting base weight is
\[
	F
	\coloneq
	K_{\rho_1,d_1}\cdots K_{\rho_r,d_r}
	\phi_{\widehat a_*}.
\]

The bilateral Laplace transform integrates over the whole real line:
\[
 L_F(z)\coloneq\int_{-\infty}^{+\infty}\e^{zt}F(t)\,\mathrm dt,
\]
whenever the integral converges absolutely. Throughout this section the
kernel is \(\e^{zt}\), in contrast to the kernel \(\e^{-zy}\) used for
the Laplace transform on \((0,\infty)\) in the construction of Laguerre of the second kind.
With this convention, differentiation of \(F\) gives \(-zL_F(z)\),
whereas multiplication of \(F(t)\) by \(t\) gives \(L_F'(z)\).

\begin{lemma}[Laplace transform of the limiting density]
	The bilateral Laplace transform of \(F\) is
	\begin{equation}
		\label{eq:canonical-Hermite-LF}
		L_F(z)
		=
		\exp\!\left(\widehat a_*z+\frac{z^2}{4}\right)
		\prod_{h=1}^r
		\left(\frac{\rho_h}{\rho_h+z}\right)^{d_h}.
	\end{equation}
	For \(r>0\), the transform is holomorphic in
	\(\operatorname{Re}z>-\min_h\rho_h\), with the powers defined by the
	branch that is positive on the real axis to the right of every
	\(-\rho_h\). For \(r=0\), it is entire.
\end{lemma}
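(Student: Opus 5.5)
The plan is to compute \(L_F\) factor by factor. Since \(F\) is obtained from \(\phi_{\widehat a_*}\) by the successive one-sided convolutions \(K_{\rho_h,d_h}\), the bilateral Laplace transform should turn each operator into multiplication by a scalar function of \(z\). First, for the Gaussian, completing the square gives
\[
L_{\phi_a}(z)=\pi^{-1/2}\int_{\mathbb R}\e^{zt-(t-a)^2}\,\mathrm dt=\exp\!\left(az+\tfrac{z^2}{4}\right)
\]
for every complex \(z\), and the integral converges absolutely. This settles the case \(r=0\), where the transform is entire.

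Next I would prove a multiplier identity for one factor. Let \(f\ge0\) be measurable, or let \(\e^{\operatorname{Re}z\,t}|f(t)|\) be integrable. For \(\operatorname{Re}z>-\rho\) and \(d>0\), Tonelli and then Fubini give
\[
L_{K_{\rho,d}f}(z)=\int_0^\infty\kappa_{\rho,d}(u)\int_{\mathbb R}\e^{zt}f(t+u)\,\mathrm dt\,\mathrm du
=\int_0^\infty\kappa_{\rho,d}(u)\e^{-zu}\,\mathrm du\;L_f(z).
\]
The inner integral follows from the substitution \(t\mapsto t-u\), which produces the factor \(\e^{-zu}\). The gamma integral then equals
\[
\frac{\rho^d}{\Gamma(d)}\int_0^\infty u^{d-1}\e^{-(\rho+z)u}\,\mathrm du=\left(\frac{\rho}{\rho+z}\right)^{d}.
\]
This holds first for real \(z>-\rho\) and extends by analytic continuation to \(\operatorname{Re}z>-\rho\), using the principal branch, which is positive on the real axis. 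When \(d=0\) the factor is \(1\), since \(K_{\rho,0}=I\). This case covers \(d_h\ge0\) as permitted by the hypotheses.

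Finally I would iterate, applying the identity to \(f=K_{\rho_{h+1},d_{h+1}}\cdots\phi_{\widehat a_*}\) for \(h=r,r-1,\ldots,1\). The main point requiring care is absolute convergence for complex \(z\). I would handle it by bounding \(|\e^{zt}|=\e^{xt}\) with \(x=\operatorname{Re}z\). All functions involved are nonnegative, so Tonelli applied at the real point \(x\) shows that \(\int\e^{xt}F(t)\,\mathrm dt\) equals the product formula at \(x\). That product is finite precisely when \(x>-\min_h\rho_h\). This justifies the Fubini interchanges at complex \(z\) and gives \eqref{eq:canonical-Hermite-LF}.

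Holomorphy in the half-plane then follows from two facts. The product of the entire Gaussian factor with the functions \((\rho_h/(\rho_h+z))^{d_h}\) is holomorphic there, because the principal-branch cuts \((-\infty,-\rho_h]\) lie outside the half-plane. Alternatively, holomorphy follows from locally uniform convergence of the defining integral, via differentiation under the integral sign. The branch assertion holds because each factor is positive for real \(z>-\rho_h\).
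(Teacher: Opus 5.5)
Your proposal is correct and follows the paper's proof: you complete the square for the Gaussian factor, derive the one-factor multiplier $(\rho/(\rho+z))^d$ by Fubini when $\operatorname{Re}(\rho+z)>0$, and iterate over the $r$ factors, with your Tonelli computation at $x=\operatorname{Re}z$ making explicit the paper's absolute-convergence remark. One minor point: only the implication ``finite when $x>-\min_h\rho_h$'' is needed, and if some $d_h=0$ the word ``precisely'' overstates it.
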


\begin{proof}
	Completing the square gives
	\[
		\int_{\mathbb R}\e^{zt}\phi_{\widehat a_*}(t)\,\mathrm dt
		=
		\exp\!\left(\widehat a_*z+\frac{z^2}{4}\right).
\]
	If
	\(\e^{\operatorname{Re}z\,t}f(t)\in L^1(\mathbb R)\) and
	\(\operatorname{Re}(\rho+z)>0\), both iterated integrals are absolutely
	convergent. Fubini's theorem and the definition of \(K_{\rho,d}\) then give
	\[
		\int_{\mathbb R}\e^{zt}(K_{\rho,d}f)(t)\,\mathrm dt
		=
		\left(\frac{\rho}{\rho+z}\right)^d
		\int_{\mathbb R}\e^{zt}f(t)\,\mathrm dt.
\]
	Applying this identity in the common half-plane of absolute convergence to every
	factor in the definition of \(F\) proves
	\eqref{eq:canonical-Hermite-LF} and its stated domain of holomorphy.
\end{proof}

The normalized Mellin transform used in the next estimate is
	\[
		Q_{0,\tau}(z)
		\coloneq
		\frac{
			c_\tau^{-z/\varepsilon_\tau}
			\mathcal M[w_{0,\tau}](1+z/\varepsilon_\tau)
		}{Z_\tau}.
\]
For all sufficiently large \(\tau\), powers involving the moving parameters
are defined on \(\operatorname{Re}z>-u_{h,\tau}\) by
	\[
		\left(\frac{u_{h,\tau}}{u_{h,\tau}+z}\right)^{D_{h,\tau}}
		\coloneq
		\exp\!\left(
			D_{h,\tau}\bigl[\log u_{h,\tau}-\log(u_{h,\tau}+z)\bigr]
		\right),
\]
	where \(\log(u_{h,\tau}+z)\) is real for real
	\(z>-u_{h,\tau}\).  Set
	\begin{equation}
		\label{eq:canonical-Hermite-Omega-rate}
		\Omega_\tau
		\coloneq
		\varepsilon_\tau+
		\sum_{h=1}^r
		\left(
			|u_{h,\tau}-\rho_h|+|D_{h,\tau}-d_h|
		\right).
	\end{equation}

\begin{lemma}[Uniform asymptotics of the normalized Mellin transform]
	\label{lem:canonical-Hermite-uniform-Gamma-quotient}
	If \(K\) is a compact subset of
	\(\operatorname{Re}z>-\min_h\rho_h\) when \(r>0\), or a compact subset
	of \(\mathbb C\) when \(r=0\), then, uniformly for \(z\in K\),
	\begin{equation}
		\label{eq:canonical-Hermite-moving-parameter-rate}
		Q_{0,\tau}(z)
		=
		\exp\!\left(\widehat a_*z+\frac{z^2}{4}\right)
		\prod_{h=1}^r
		\left(\frac{u_{h,\tau}}{u_{h,\tau}+z}\right)^{D_{h,\tau}}
		\bigl(1+\mathrm O_K(\varepsilon_\tau)\bigr)\qquad (\tau\to+\infty,\ \varepsilon_\tau\downarrow0).
\end{equation}
	For every such \(K\), there is \(C_K>0\) such that
	\begin{equation}
		\label{eq:canonical-Hermite-Q0-uniform-rate}
		\sup_{z\in K}|Q_{0,\tau}(z)-L_F(z)|
		\le C_K\Omega_\tau
	\end{equation}
	for all sufficiently large \(\tau\).  If \(m\in\mathbb N_0\) and \(K'\)
	is a compact subset of the same domain, then
	\[
		\sup_{z\in K'}
		\left|
			\partial_z^mQ_{0,\tau}(z)-\partial_z^mL_F(z)
		\right|
		\le C_{K',m}\Omega_\tau.
\]
\end{lemma}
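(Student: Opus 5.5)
The plan is to write \(Q_{0,\tau}\) explicitly as a product of gamma quotients and treat each factor with Stirling's formula. By \eqref{eq:w0-Laguerre-Mellin} with \(s=1\),
\[
Q_{0,\tau}(z)=\tau^{-z/\varepsilon_\tau}
\frac{\Gamma(P_{q,\tau}+z/\varepsilon_\tau)}{\Gamma(P_{q,\tau})}
\prod_{h=1}^r\frac{\Gamma(P_{h,\tau}+z/\varepsilon_\tau)\,\Gamma(P_{h,\tau}+D_{h,\tau})}
{\Gamma(P_{h,\tau})\,\Gamma(P_{h,\tau}+D_{h,\tau}+z/\varepsilon_\tau)},
\qquad P_{q,\tau}\coloneq a_q(\tau)+1.
\]
The steps are: the Gaussian factor, then each beta factor, then the rate estimate, then derivatives.

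\emph{Gaussian factor.} Write \(w=z/\varepsilon_\tau=z\sqrt{\tau/2}\) and \(P_{q,\tau}=\tau+\sqrt{2\tau}\,\widehat a_*+\mathrm O(1)\). Use Stirling's expansion of \(\log\Gamma(P+w)-\log\Gamma(P)\) with \(P\to\infty\) and \(w=\mathrm O(\sqrt P)\) uniformly for \(z\in K\). This gives
\[
w\log P+\frac{w^2-w}{2P}-\frac{w^3}{6P^2}+\mathrm O\!\left(\frac{|w|^4}{P^3}+\frac1P\right).
\]
With \(w^2/P=z^2/2+\mathrm O(\varepsilon_\tau)\) and \(w^3/P^2=\mathrm O(\varepsilon_\tau)\), it remains to expand \(w\log(P/\tau)\). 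We have \(\log(P/\tau)=\sqrt{2/\tau}\,\widehat a_*-\widehat a_*^2/\tau+\mathrm O(\tau^{-3/2})\), so
\[
w\log(P/\tau)=\widehat a_*z-\tfrac12\widehat a_*^2z\,\varepsilon_\tau+\ldots,
\]
and the correction is \(\mathrm O(\varepsilon_\tau)\). The \(\mathrm O(1)\) term in \(P_{q,\tau}\) contributes only \(\mathrm O(w/\tau)=\mathrm O(\varepsilon_\tau)\). Collecting terms, \(w\log P-w\log\tau+w^2/(2P)\) yields \(\widehat a_*z+z^2/4+\mathrm O_K(\varepsilon_\tau)\). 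All of these remainders are uniform on \(K\), because \(\operatorname{Re}(P+w)\ge P/2\) eventually, so Stirling's formula applies uniformly in a sector.

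\emph{Beta factors.} Here \(P=P_{h,\tau}\sim\rho_h/\varepsilon_\tau\) and \(w=z/\varepsilon_\tau\) are of the same order. The point \(P+w=\varepsilon_\tau^{-1}(u_{h,\tau}+z)\) has real part bounded below by \(c\,\varepsilon_\tau^{-1}\) on \(K\), because \(K\) lies in \(\operatorname{Re}z>-\min\rho_h\) and \(u_{h,\tau}\to\rho_h\). Use the uniform ratio asymptotic \(\Gamma(X+D)/\Gamma(X)=X^D(1+\mathrm O_{D}(1/|X|))\) for \(|X|\to\infty\) in \(\operatorname{Re}X>0\), uniformly for \(D\) in a compact set. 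Applied at \(X=P\) and \(X=P+w\), the beta factor becomes
\[
(P/(P+w))^{D_{h,\tau}}(1+\mathrm O(\varepsilon_\tau))=\bigl(u_{h,\tau}/(u_{h,\tau}+z)\bigr)^{D_{h,\tau}}(1+\mathrm O(\varepsilon_\tau)).
\]
The branch is the principal one and agrees with the definition in the statement. Multiplying the factors proves \eqref{eq:canonical-Hermite-moving-parameter-rate}.

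\emph{Rate and derivatives.} For \eqref{eq:canonical-Hermite-Q0-uniform-rate}, compare the main term with \(L_F\) from \eqref{eq:canonical-Hermite-LF}. The map \((u,D)\mapsto (u/(u+z))^D\) is smooth near \((\rho_h,d_h)\), uniformly for \(z\in K\), so by the mean value theorem the difference is at most \(C_K\bigl(|u_{h,\tau}-\rho_h|+|D_{h,\tau}-d_h|\bigr)\). Adding the \(\mathrm O(\varepsilon_\tau)\) from the first step gives the bound \(C_K\Omega_\tau\). For the derivative estimate, enlarge \(K'\) to a compact neighbourhood \(K\) still inside the domain. Both \(Q_{0,\tau}\) and \(L_F\) are holomorphic there, so Cauchy's estimates convert the sup bound on \(K\) into bounds for \(\partial_z^m\) on \(K'\).

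The main obstacle is the uniformity of the Stirling remainders in the first step. There \(w\) grows like \(\sqrt P\) and is complex, so the cubic and quartic terms must be tracked carefully to confirm that they are \(\mathrm O(\varepsilon_\tau)\). I would handle this with the Binet remainder bound \(|\mu(X)|\le C/|X|\), valid in \(|\arg X|\le\pi/2-\delta\), together with an explicit Taylor expansion of \((P+w-\tfrac12)\log(1+w/P)\).
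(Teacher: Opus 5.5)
Your proposal is correct and follows the paper's proof: the same exact gamma-quotient factorization of \(Q_{0,\tau}\), a Stirling-type expansion of the single large gamma factor and of each beta factor, the mean-value comparison of \(\bigl(u_{h,\tau}/(u_{h,\tau}+z)\bigr)^{D_{h,\tau}}\) with \(\bigl(\rho_h/(\rho_h+z)\bigr)^{d_h}\), and Cauchy's estimates for the derivatives. The differences are only in the asymptotic tools. You use the closed Stirling formula with a Binet remainder and the ratio asymptotic \(\Gamma(X+D)/\Gamma(X)=X^D(1+\mathrm O(1/|X|))\). The paper instead Taylor-expands \(\log\Gamma\) using \(\psi,\psi',\psi'',\psi'''\) with an integral remainder, and writes each beta factor as \(\int_0^D[\psi(P+v)-\psi(P+y+v)]\,\mathrm dv\); this also yields the explicit first-order corrections, which the lemma does not need.
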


\begin{proof}
	Put
	\[
		\alpha_\tau\coloneq a_q(\tau)+1
		=
		\tau+\sqrt{2\tau}\,\widehat a_*+\delta_\tau,
		\qquad
		\delta_\tau=\mathrm O(1),\qquad \tau\to+\infty.
\]
	The Mellin formula \eqref{eq:w0-Laguerre-Mellin} gives the exact
	factorization
	\begin{equation}
		Q_{0,\tau}(z)
		=
		\frac{
			\Gamma(\alpha_\tau+z/\varepsilon_\tau)
		}{
			\Gamma(\alpha_\tau)\tau^{z/\varepsilon_\tau}
		}
		\prod_{h=1}^r
		\frac{
			\Gamma(P_{h,\tau}+z/\varepsilon_\tau)
			\Gamma(P_{h,\tau}+D_{h,\tau})
		}{
			\Gamma(P_{h,\tau})
			\Gamma(P_{h,\tau}+D_{h,\tau}+z/\varepsilon_\tau)
		}.
		\label{eq:canonical-Hermite-Q0-exact-factorization}
	\end{equation}

	The gamma factor and the beta product in
	\eqref{eq:canonical-Hermite-Q0-exact-factorization} are estimated
	separately. Let
	\(\psi(x)\coloneq(\log\Gamma(x))'\). As \(|x|\to\infty\), uniformly
	on each fixed sector \(|\arg x|\le\pi-\delta\),
	\begin{equation}
		\label{eq:canonical-Hermite-psi-expansions}
		\psi(x)=\log x-\tfrac1{2x}+\mathrm O(x^{-2}),\;
		\psi'(x)=\tfrac1x+\tfrac1{2x^2}+\mathrm O(x^{-3}),\;
		\psi''(x)=-x^{-2}+\mathrm O(x^{-3}),\;
		\psi'''(x)=\mathrm O(x^{-3}).
	\end{equation}
	Suppose that \(c\tau\le X\le C\tau\) for fixed constants \(c,C>0\) and
	all sufficiently large \(\tau\), and that
	\(|y|\le C_K\sqrt \tau\), with \(C_K\) independent of \(\tau\).
	Taylor's formula with integral remainder gives
	\begin{equation}
		\label{eq:canonical-Hermite-log-Gamma-Taylor}
			\log\Gamma(X+y)-\log\Gamma(X)
			={}
			y\psi(X)+\frac{y^2}{2}\psi'(X)
			+\frac{y^3}{6}\psi''(X)
			+
			\frac16\int_0^y(y-v)^3\psi'''(X+v)\,\mathrm dv.
	\end{equation}
	For \(y=z/\varepsilon_\tau\), the integration segment remains within the
	sector above, and its length is at most \(C_K\sqrt \tau\). Since
	\(|\psi'''(X+v)|\le C'_K \tau^{-3}\) there for large \(\tau\), the integral
	remainder is bounded by \(C''_K|y|^4 \tau^{-3}\le C'''_K \tau^{-1}\).  Substitution of
	\eqref{eq:canonical-Hermite-psi-expansions} gives
	\[
		\log\Gamma(X+y)-\log\Gamma(X)
		=
		y\log X+\frac{y^2-y}{2X}
		+\frac{3y^2-2y^3}{12X^2}
		+\mathrm O_K(\tau^{-1})\qquad (\tau\to+\infty).
\]
 Taking \(X=\alpha_\tau\), expanding
 \(\log(\alpha_\tau/\tau)\), \(1/\alpha_\tau\), and
 \(1/\alpha_\tau^2\), and using \(\varepsilon_\tau=\sqrt{2/\tau}\), gives
 \begin{equation}
 \log\frac{\Gamma(\alpha_\tau+z/\varepsilon_\tau)}
               {\Gamma(\alpha_\tau)\tau^{z/\varepsilon_\tau}}
 =\widehat a_*z+\frac{z^2}{4}
   +\varepsilon_\tau R_{G,\tau}(z)
   +\mathrm O_K(\varepsilon_\tau^2),\qquad \tau\to+\infty,
 \label{eq:canonical-Hermite-Gaussian-Gamma-expansion}
 \end{equation}
 where
 \[
 R_{G,\tau}(z)\coloneq
 \frac z2\left(\delta_\tau-\widehat a_*^{\,2}-\frac12\right)
 -\frac{\widehat a_*z^2}{4}-\frac{z^3}{24}.
 \]
 Since \(\delta_\tau\) is bounded, these polynomials are uniformly
 bounded on each fixed compact set.

	For a factor in the beta product of
	\eqref{eq:canonical-Hermite-Q0-exact-factorization}, write
	\(P=P_{h,\tau}\), \(D=D_{h,\tau}\), and \(y=z/\varepsilon_\tau\).  The
	fundamental theorem of calculus gives the exact identity
	\[
		\log
		\frac{\Gamma(P+y)\Gamma(P+D)}
		{\Gamma(P)\Gamma(P+D+y)}
		=
		\int_0^D
		\bigl[\psi(P+v)-\psi(P+y+v)\bigr]\,\mathrm dv.
\]
	The variable \(D\) remains in a fixed compact subset of
	\([0,\infty)\).  Applying the first line of
	\eqref{eq:canonical-Hermite-psi-expansions} and expanding only in the
	bounded variable \(v\) gives
	\[
		D\log\frac{P}{P+y}
		+\frac{D(D-1)}2
		\left(\frac1P-\frac1{P+y}\right)
		+\mathrm O_K(P^{-2}),\qquad P\to+\infty.
\]
	Since \(P=u_{h,\tau}/\varepsilon_\tau\), exponentiation yields
	\begin{multline}
		\frac{
			\Gamma(P_{h,\tau}+z/\varepsilon_\tau)
			\Gamma(P_{h,\tau}+D_{h,\tau})
		}{
			\Gamma(P_{h,\tau})
			\Gamma(P_{h,\tau}+D_{h,\tau}+z/\varepsilon_\tau)
		}
		\\*
		=
		\left(\frac{u_{h,\tau}}{u_{h,\tau}+z}\right)^{D_{h,\tau}}
		\left(
			1+
			\frac{\varepsilon_\tau D_{h,\tau}(D_{h,\tau}-1)}2
			\left(
				\frac1{u_{h,\tau}}-\frac1{u_{h,\tau}+z}
			\right)
			+\mathrm O_K(\varepsilon_\tau^2)
		\right)\qquad (\tau\to+\infty).
		\label{eq:canonical-Hermite-beta-Gamma-expansion}
\end{multline}

	Exponentiating
	\eqref{eq:canonical-Hermite-Gaussian-Gamma-expansion} and multiplying the
	result by \eqref{eq:canonical-Hermite-beta-Gamma-expansion} proves
	\eqref{eq:canonical-Hermite-moving-parameter-rate}.  On \(K\), the map
	\[
		(u,D,z)\longmapsto
		\exp\!\left(D[\log u-\log(u+z)]\right)
\]
	has bounded first derivatives in \(u\) and \(D\) near
	\((\rho_h,d_h)\).  The mean-value theorem bounds the difference between
	the moving and limiting factors by
	\[
		C_K\bigl(|u_{h,\tau}-\rho_h|+|D_{h,\tau}-d_h|\bigr),
\]
	which proves \eqref{eq:canonical-Hermite-Q0-uniform-rate}.  Cauchy's
	integral formula on a slightly larger compact set proves the derivative
	estimates.
\end{proof}

\begin{theorem}[Hermite limit of the Laguerre row weights of the first kind]
	\label{thm:simultaneous-Laguerre-Hermite-confluence}
	The normalized row weights satisfy
	\begin{align}
		\frac{c_\tau\varepsilon_\tau}{Z_{h,\tau}}
		w_{h,\tau}(x_\tau(t))
		&\xrightarrow[\tau\to+\infty]{}
		K_{\rho_h,1}F(t),
		&&1\le h\le r,
		\label{eq:canonical-Hermite-shifted-beta-limit}
		\\
		\frac{c_\tau\varepsilon_\tau}{Z_\tau}
		w_{0,\tau}(x_\tau(t))
		&\xrightarrow[\tau\to+\infty]{}
		F(t).
		&&
		\label{eq:canonical-Hermite-base-limit}
	\end{align}
 Both limits hold weakly as measures and in every fixed polynomial moment.
\end{theorem}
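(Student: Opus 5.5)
The plan is to work entirely through Mellin transforms and reduce both limits to Lemma~\ref{lem:canonical-Hermite-uniform-Gamma-quotient}. First treat the base row. Under \(x=x_\tau(t)=c_\tau(1+\varepsilon_\tau t)\), the rescaled density
\[
f_{0,\tau}(t)\coloneq \frac{c_\tau\varepsilon_\tau}{Z_\tau}\,w_{0,\tau}(x_\tau(t)),
\]
extended by zero outside \(I_\tau\), is a probability density: its mass is \(\mathcal M[w_{0,\tau}](1)/Z_\tau=1\). The defining formula of \(Q_{0,\tau}\) then reads
\[
Q_{0,\tau}(z)=\int_{I_\tau}(1+\varepsilon_\tau t)^{z/\varepsilon_\tau}f_{0,\tau}(t)\,\mathrm dt .
\]
At the points \(z=k\varepsilon_\tau\), \(k\in\mathbb N_0\), the kernel is exactly \((1+\varepsilon_\tau t)^k\). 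Since \(t=\varepsilon_\tau^{-1}((1+\varepsilon_\tau t)-1)\), the binomial theorem gives the exact identity
\[
\int_{I_\tau}t^nf_{0,\tau}(t)\,\mathrm dt=\varepsilon_\tau^{-n}\sum_{k=0}^n\binom nk(-1)^{n-k}Q_{0,\tau}(k\varepsilon_\tau)=\varepsilon_\tau^{-n}\Delta_{\varepsilon_\tau}^nQ_{0,\tau}(0).
\]
These moments are finite, because \(k\varepsilon_\tau\) lies in the fundamental strip.

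Next I would pass to the limit in this finite difference. The origin lies in the domain of Lemma~\ref{lem:canonical-Hermite-uniform-Gamma-quotient}, since \(0>-\min_h\rho_h\), so I fix a small closed disc \(K'\) around \(0\). The integral (Peano) form of the \(n\)-th forward difference writes \(\varepsilon_\tau^{-n}\Delta^n_{\varepsilon_\tau}Q_{0,\tau}(0)\) as an average of \(Q_{0,\tau}^{(n)}\) over \([0,n\varepsilon_\tau]\subset K'\). The derivative estimates of the lemma give uniform convergence \(Q^{(n)}_{0,\tau}\to L_F^{(n)}\) on \(K'\), with rate \(\Omega_\tau\to0\), and \(L_F^{(n)}\) is continuous. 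Hence the \(n\)-th moment converges to \(L_F^{(n)}(0)\). This equals \(\int t^nF(t)\,\mathrm dt\), by differentiating the bilateral Laplace transform \eqref{eq:canonical-Hermite-LF} under the integral sign. Absolute convergence near \(z=0\) justifies the differentiation, since \(F\ge0\) and \(L_F\) is finite for real \(z\) in a neighbourhood of \(0\). This proves \eqref{eq:canonical-Hermite-base-limit} in every fixed polynomial moment.

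For the shifted rows I would use \(\mathcal M[w_{h,\tau}](s)=\mathcal M[w_{0,\tau}](s)/(s+b_h(\tau))\) and the normalization \(Z_{h,\tau}=Z_\tau/(b_h(\tau)+1)\). The analogous normalized transform is then
\[
Q_{h,\tau}(z)=Q_{0,\tau}(z)\,\frac{\tilde u_{h,\tau}}{\tilde u_{h,\tau}+z},\qquad \tilde u_{h,\tau}\coloneq\varepsilon_\tau(b_h(\tau)+1)=u_{h,\tau}+\varepsilon_\tau D_{h,\tau}\to\rho_h .
\]
This rational factor converges to \(\rho_h/(\rho_h+z)\) together with all its derivatives, uniformly on \(K'\). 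By the Fubini computation in the proof of \eqref{eq:canonical-Hermite-LF}, the limit \(L_F(z)\rho_h/(\rho_h+z)\) is the bilateral transform of \(K_{\rho_h,1}F\). The same finite-difference argument therefore gives the moments in \eqref{eq:canonical-Hermite-shifted-beta-limit}. The mass is again one at \(z=0\).

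Finally, weak convergence follows by the method of moments. All the measures are positive probability measures. The limits \(F\) and \(K_{\rho_h,1}F\) have bilateral Laplace transforms that are finite on a real neighbourhood of \(0\). Hence they have exponential moments, and so they are determined by their moments; the rescaled measures supported on \(I_\tau\) then converge weakly. I expect the main obstacle to be purely bookkeeping: the identification of the transform kernel \((1+\varepsilon_\tau t)^{z/\varepsilon_\tau}\) with \(\mathrm e^{zt}\) is only approximate, so one must avoid using it directly. The exact finite-difference identity at the lattice points \(k\varepsilon_\tau\), combined with the uniform derivative bounds of Lemma~\ref{lem:canonical-Hermite-uniform-Gamma-quotient}, is what bypasses this difficulty.
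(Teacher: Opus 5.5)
Your proposal is correct and follows the paper's proof essentially step by step. The shared steps are the normalized transform \(Q_{0,\tau}\); the exact finite-difference identity at \(z=0\), with its integral form and the derivative estimates of Lemma~\ref{lem:canonical-Hermite-uniform-Gamma-quotient}; the rational factor \(\varepsilon_\tau(b_h(\tau)+1)/(\varepsilon_\tau(b_h(\tau)+1)+z)\) for the shifted rows; and moment determinacy from the finiteness of the limiting Laplace transform near \(0\). The only difference is that you invoke the method of moments as a standard theorem, whereas the paper writes it out: tightness from bounded second moments, passage of each fixed moment to subsequential limits via bounded even moments, and uniqueness of the subsequential limit.
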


\begin{proof}
	Set
	\[
		\mathscr H
		\coloneq
		\begin{cases}
			\mathbb C,&r=0,\\
			\{z\in\mathbb C:\operatorname{Re}z>-\min_h\rho_h\},&r>0,
		\end{cases}
\]
	and extend every centered row by zero outside \(I_\tau\). For the base row,
	put
	\[
		f_{0,\tau}(t)
		\coloneq
		\frac{c_\tau\varepsilon_\tau}{Z_\tau}w_{0,\tau}(x_\tau(t))
\]
	and define its normalized transform by
	\[
		Q_{0,\tau}(z)
		\coloneq
		\int_{I_\tau}
		(1+\varepsilon_\tau t)^{z/\varepsilon_\tau}f_{0,\tau}(t)\,\mathrm dt
		=
		\frac{
			c_\tau^{-z/\varepsilon_\tau}
			\mathcal M[w_{0,\tau}](1+z/\varepsilon_\tau)
		}{Z_\tau}.
\]
	Lemma~\ref{lem:canonical-Hermite-uniform-Gamma-quotient} proves, locally
	uniformly in \(\mathscr H\), that
	\begin{equation}
		\label{eq:canonical-Hermite-Q0-qualitative-limit}
		Q_{0,\tau}(z)
		\xrightarrow[\tau\to+\infty]{}
		L_F(z).
	\end{equation}

	The transforms of the displayed rows are computed from the Mellin
	identities. Define
	\[
		f_{h,\tau}(t)
		\coloneq
		\frac{c_\tau\varepsilon_\tau}{Z_{h,\tau}}w_{h,\tau}(x_\tau(t)),
		\qquad 1\le h\le r,
\]
 and put \(f_{q,\tau}\coloneq f_{0,\tau}\). The Mellin identity
 \(\mathcal M[w_{h,\tau}](z)=\mathcal M[w_{0,\tau}](z)/(z+b_h(\tau))\)
 gives
 \begin{equation}
 Q_{h,\tau}(z)\coloneq
 \int_{I_\tau}(1+\varepsilon_\tau t)^{z/\varepsilon_\tau}
 f_{h,\tau}(t)\,\mathrm dt
 =\frac{\varepsilon_\tau(b_h(\tau)+1)}
        {\varepsilon_\tau(b_h(\tau)+1)+z}Q_{0,\tau}(z),
 \qquad Q_{q,\tau}=Q_{0,\tau}.
 \label{eq:canonical-Hermite-shifted-row-transform}
 \end{equation}
	Because
	\(\varepsilon_\tau(b_h(\tau)+1)\xrightarrow[\tau\to+\infty]{}\rho_h\), these transforms converge
	locally uniformly in \(\mathscr H\) to
	\[
		\frac{\rho_h}{\rho_h+z}L_F(z)
		\quad\textnormal{and}\quad
		L_F(z),
\]
	which are the transforms of \(K_{\rho_h,1}F\) and
	\(F\), respectively.

	The transform limits imply convergence of ordinary moments through the
	following general identity. Let \(f_\tau\) be any locally integrable centered
	form, possibly signed and not normalized, and put
	\[
		Q_\tau(z)=\int_{I_\tau}
		(1+\varepsilon_\tau t)^{z/\varepsilon_\tau}f_\tau(t)\,\mathrm dt.
\]
	Whenever the integrals defining \(Q_\tau(z+\nu\varepsilon_\tau)\),
	\(0\le\nu\le m\), and the integral on the left below converge absolutely,
	the binomial theorem gives
	\begin{equation}
		\label{eq:canonical-Hermite-finite-difference-moments}
			\int_{I_\tau}
			t^m(1+\varepsilon_\tau t)^{z/\varepsilon_\tau}
			f_\tau(t)\,\mathrm dt
			=
			\varepsilon_\tau^{-m}
			\sum_{\nu=0}^m
			(-1)^{m-\nu}\binom m\nu
			Q_\tau(z+\nu\varepsilon_\tau).
	\end{equation}
	To see this directly, insert the integral defining \(Q_\tau\) into the
	sum; the factor that remains inside the integral is
	\[
		\varepsilon_\tau^{-m}
		\bigl((1+\varepsilon_\tau t)-1\bigr)^m=t^m.
\]
	For \(\varepsilon>0\), define the forward difference by
	\(\Delta_\varepsilon Q(z)\coloneq Q(z+\varepsilon)-Q(z)\).
	Its \(m\)-fold iterate is
	\[
	\Delta_\varepsilon^m Q(z)=\sum_{\nu=0}^m
	(-1)^{m-\nu}\binom m\nu Q(z+\nu\varepsilon),
	\qquad \Delta_\varepsilon^0Q\coloneq Q.
	\]
	The finite difference on the right also has the exact integral form
	\begin{equation}
		\label{eq:canonical-Hermite-finite-difference-integral}
		\varepsilon^{-m}\Delta_\varepsilon^mQ(z)
		=
		\int_{[0,1]^m}
		Q^{(m)}
		\left(z+\varepsilon(u_1+\cdots+u_m)\right)
		\,\mathrm d\boldsymbol u.
	\end{equation}
	It follows by applying the fundamental theorem of calculus \(m\) times.
	Here \(\mathrm d\boldsymbol u\coloneq\mathrm du_1\cdots\mathrm du_m\);
	for \(m=0\), the right-hand side is understood as \(Q(z)\).
	Local uniform holomorphic convergence, followed by Cauchy's formula on a
	slightly larger compact subset of \(\mathscr H\), gives local uniform
	convergence of every derivative.  Taking \(z=0\) in
	\eqref{eq:canonical-Hermite-finite-difference-moments} and
	\eqref{eq:canonical-Hermite-finite-difference-integral} proves convergence
	of all fixed polynomial moments.

	The base row \(f_{0,\tau}\) and the shifted rows \(f_{h,\tau}\),
	\(1\le h\le r\), are nonnegative and satisfy
	\(\int_{\mathbb R}f_{j,\tau}(t)\,\mathrm dt=1\) for \(0\le j\le r\).
	Convergence of their
	second moments gives a constant \(C\), independent of \(\tau\), such that
	\[
		\int_{|t|>R}f_{j,\tau}(t)\,\mathrm dt\le\frac{C}{R^2},
		\qquad j\in\{0,1,\ldots,r\}.
\]
	Thus every sequence contains a weakly convergent subsequence. Fix a moment
	of order \(m\), and choose an even integer \(2M>m\). The elementary bound
	\[
		\int_{|t|>R}|t|^mf_{j,\tau}(t)\,\mathrm dt
		\le
		R^{m-2M}\int_{\mathbb R}t^{2M}f_{j,\tau}(t)\,\mathrm dt
\]
	shows that the \(m\)-th moment passes to every subsequential limit, because
	the last integrals converge and are therefore uniformly bounded. These
	moments are the derivatives at zero of the limiting transform already
	computed. That transform is finite on an open real interval containing
	zero, and hence its Taylor series there determines the Laplace transform
	and the positive measure uniquely. Every subsequence therefore has the
	same limit. Consequently
	\[
		f_{0,\tau}(t)\,\mathrm dt
		\xrightarrow[\tau\to+\infty]{\mathrm w}
		F(t)\,\mathrm dt,
		\qquad
		f_{h,\tau}(t)\,\mathrm dt
		\xrightarrow[\tau\to+\infty]{\mathrm w}
		K_{\rho_h,1}F(t)\,\mathrm dt,
\]
	where the superscript \(\mathrm w\) denotes weak convergence of the
	absolutely continuous measures. This proves both
	\eqref{eq:canonical-Hermite-shifted-beta-limit} and
	\eqref{eq:canonical-Hermite-base-limit}.
\end{proof}

For the matrix limit, let \(\widehat U_{j,\tau}\) denote the normalized
finite-\(\tau\) row on the left of
\eqref{eq:canonical-Hermite-shifted-beta-limit} or
\eqref{eq:canonical-Hermite-base-limit}. Denote the corresponding
limiting row by \(U_j^{\mathrm H}\): it is \(K_{\rho_j,1}F\) for
\(1\le j\le r\) and \(F\) for \(j=q\).

\begin{corollary}[Hermite limit of the mixed-type Laguerre matrix of the first kind]
	\label{cor:simultaneous-Laguerre-Hermite-matrix-confluence}
	Suppose additionally that
	\[
		\varepsilon_\tau\beta_i(\tau)\xrightarrow[\tau\to+\infty]{}\xi_i,
		\qquad
		\xi_i+\rho_h>0
		\quad
		(1\le i\le p,\ 1\le h\le r).
\]
	Then, locally uniformly for \(t\) in compact subsets of \(\mathbb R\),
	\[
		c_\tau^{-\beta_i(\tau)}x_\tau(t)^{\beta_i(\tau)}
		=
		(1+\varepsilon_\tau t)^{\beta_i(\tau)}
		\xrightarrow[\tau\to+\infty]{}\e^{\xi_i t}.
\]
	After the row normalizations in
	\eqref{eq:canonical-Hermite-shifted-beta-limit}--
	\eqref{eq:canonical-Hermite-base-limit} and multiplication of the
	\(i\)-th column by \(c_\tau^{-\beta_i(\tau)}\), the matrix of measures converges
	in every fixed mixed-type moment to the matrix generated by
	\begin{equation}
		\label{eq:canonical-Hermite-limiting-vectors}
		\left(
		K_{\rho_1,1}F,\ldots,K_{\rho_r,1}F,
		F
		\right)
		\quad\textnormal{and}\quad
		\left(\e^{\xi_1t},\ldots,\e^{\xi_pt}\right).
	\end{equation}
	For every fixed
	\(m\in\mathbb N_0\), \(1\le i\le p\), and \(1\le j\le r+1\),
	\begin{equation}
		\label{eq:canonical-Hermite-mixed-moment-convergence}
		\int_{-\varepsilon_\tau^{-1}}^\infty
		t^m(1+\varepsilon_\tau t)^{\beta_i(\tau)}
		\widehat U_{j,\tau}(t)\,\mathrm dt
		\xrightarrow[\tau\to+\infty]{}
		\int_{\mathbb R}
		t^m\e^{\xi_it}U_j^{\mathrm H}(t)\,\mathrm dt.
	\end{equation}
\end{corollary}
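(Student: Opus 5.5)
The plan is to reduce the whole statement to the normalized-transform machinery already set up in the proof of Theorem~\ref{thm:simultaneous-Laguerre-Hermite-confluence}. The column weight is absorbed into the transform variable. The pointwise claim comes first. On a compact set of \(t\) one has \(\varepsilon_\tau t\to0\), and
\[
\beta_i(\tau)\log(1+\varepsilon_\tau t)=\varepsilon_\tau\beta_i(\tau)\,t+\mathrm O\bigl(\beta_i(\tau)\varepsilon_\tau^2\bigr).
\]
Here \(\varepsilon_\tau\beta_i(\tau)\to\xi_i\) and \(\beta_i(\tau)\varepsilon_\tau^2=\mathrm O(\varepsilon_\tau)\). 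This gives locally uniform convergence to \(\e^{\xi_i t}\). The identity \(c_\tau^{-\beta_i}x_\tau(t)^{\beta_i}=(1+\varepsilon_\tau t)^{\beta_i}\) is immediate from \(x_\tau=c_\tau(1+\varepsilon_\tau t)\).

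For the moments, write \(\beta_i(\tau)=\zeta_{i,\tau}/\varepsilon_\tau\) with \(\zeta_{i,\tau}\coloneq\varepsilon_\tau\beta_i(\tau)\to\xi_i\). The left side of \eqref{eq:canonical-Hermite-mixed-moment-convergence} is then exactly
\[
\int_{I_\tau}t^m(1+\varepsilon_\tau t)^{\zeta_{i,\tau}/\varepsilon_\tau}f_{j,\tau}(t)\,\mathrm dt,
\]
where \(f_{j,\tau}=\widehat U_{j,\tau}\) is the normalized row from the preceding proof. By the finite-difference identity \eqref{eq:canonical-Hermite-finite-difference-moments} with \(z=\zeta_{i,\tau}\), this equals \(\varepsilon_\tau^{-m}\Delta^m_{\varepsilon_\tau}Q_{j,\tau}(\zeta_{i,\tau})\). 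By \eqref{eq:canonical-Hermite-finite-difference-integral}, that is an average of \(Q_{j,\tau}^{(m)}\) over points within \(m\varepsilon_\tau\) of \(\zeta_{i,\tau}\). Absolute convergence of the integrals involved is automatic: for large \(\tau\) these are Mellin moments of \(w_{j,\tau}\) at real arguments \(1+\beta_i(\tau)+\nu\), which lie in the fundamental strip by the standing condition \(\beta_i+a_j>-1\).

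The next step is to pass to the limit. The hypothesis \(\xi_i+\rho_h>0\) places \(\xi_i\) in \(\mathscr H\). Choose a small closed disc \(K\) around \(\xi_i\) inside \(\mathscr H\). For large \(\tau\), all evaluation points \(\zeta_{i,\tau}+\varepsilon_\tau(u_1+\cdots+u_m)\) lie in \(K\). Lemma~\ref{lem:canonical-Hermite-uniform-Gamma-quotient} and \eqref{eq:canonical-Hermite-shifted-row-transform} give uniform convergence on \(K\) of \(Q_{j,\tau}^{(m)}\) to \(\partial_z^m\) of \(L_F\), respectively of \(\rho_h L_F/(\rho_h+z)\); the factor \(\varepsilon_\tau(b_h+1)\to\rho_h\) is handled by Cauchy's formula on a slightly larger disc. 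Equicontinuity of the limit then gives convergence of the average to \(\partial_z^m L_{U_j^{\mathrm H}}(\xi_i)\). Finally, this limit must be identified with \(\int t^m\e^{\xi_i t}U_j^{\mathrm H}\,\mathrm dt\). That follows by differentiating the bilateral transform under the integral, which is legitimate because it converges absolutely at real points of \(\mathscr H\). The weak convergence of the matrix measure in every fixed mixed moment is exactly this family of statements.

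The only delicate point is the uniformity needed when \(z\) itself moves with \(\tau\), namely \(z=\zeta_{i,\tau}\to\xi_i\). It is resolved by the compact-set uniform derivative bounds in Lemma~\ref{lem:canonical-Hermite-uniform-Gamma-quotient}, together with the fact that \(\xi_i\) lies in the open half-plane \(\mathscr H\). No separate tail or uniform-integrability argument is needed, because the moment is computed exactly from the transform.
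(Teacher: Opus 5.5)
Your proposal is correct and follows essentially the same route as the paper. The column limit comes from the expansion of \(\beta_i(\tau)\log(1+\varepsilon_\tau t)\). The moments come from the finite-difference identity \eqref{eq:canonical-Hermite-finite-difference-moments} at \(z=\varepsilon_\tau\beta_i(\tau)\), its integral form \eqref{eq:canonical-Hermite-finite-difference-integral}, and locally uniform convergence of the derivatives of the normalized transforms on a compact neighbourhood of \(\xi_i\) inside the holomorphy domain; your explicit check of absolute convergence at the shifted arguments is a small detail the paper leaves implicit.
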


\begin{proof}
	For \(t\) in a fixed compact set,
	\[
		\beta_i(\tau)\log(1+\varepsilon_\tau t)
		=
		\varepsilon_\tau\beta_i(\tau)t+
		\mathrm O\!\left(\varepsilon_\tau^2|\beta_i(\tau)|\right)
		\xrightarrow[\tau\to+\infty]{}\xi_it,
\]
	which proves the column limit. For the mixed-type moments, take
	\(z=z_{i,\tau}\coloneq\varepsilon_\tau\beta_i(\tau)\) in
	\eqref{eq:canonical-Hermite-finite-difference-moments}. The conditions
	\(z_{i,\tau}\xrightarrow[\tau\to+\infty]{}\xi_i\) and \(\xi_i+\rho_h>0\) place all the required points
	inside one compact subset of the holomorphy domain for sufficiently large
	\(\tau\). The integral formula for finite differences in the proof of
	Theorem~\ref{thm:simultaneous-Laguerre-Hermite-confluence} then gives,
	with \(Q_j\) denoting the bilateral Laplace transform of
	\(U_j^{\mathrm H}\),
	\[
		\int_{-\varepsilon_\tau^{-1}}^\infty t^m(1+\varepsilon_\tau t)^{\beta_i(\tau)}
		\widehat U_{j,\tau}(t)\,\mathrm dt
		\xrightarrow[\tau\to+\infty]{}
		\left.\partial_z^mQ_j(z)\right|_{z=\xi_i}
		=
		\int_{\mathbb R}t^m\e^{\xi_it}U_j^{\mathrm H}(t)\,\mathrm dt,
\]
	for every normalized row \(j\). This proves
	\eqref{eq:canonical-Hermite-mixed-moment-convergence}.
\end{proof}

\subsection{Confluence of the normalized forms and their components}

For the fixed-index results below, retain the column scaling of
Corollary~\ref{cor:simultaneous-Laguerre-Hermite-matrix-confluence}, assume
that \(\xi_1,\ldots,\xi_p\) are pairwise distinct, and fix a
Laguerre-admissible index
\[
	\boldsymbol\lambda
	=(\boldsymbol n,\eta)
	\in\mathbb N_0^r\times\mathbb N_0,
	\qquad
	N=|\boldsymbol n|+\eta.
\]
Define the fixed-index error by
\begin{equation}
	\label{eq:canonical-Hermite-Theta-rate}
	\chi_\tau
	\coloneq
	\max_{1\le i\le p}
	|\varepsilon_\tau\beta_i(\tau)-\xi_i|,
	\qquad
	\Theta_\tau\coloneq\Omega_\tau+\chi_\tau.
\end{equation}

The contour and Mellin representatives used below are defined directly by
\eqref{eq:Laguerre-A-contour} and \eqref{eq:Laguerre-B-Mellin}; their
confluence does not require an AT hypothesis. If, at a fixed finite \(\tau\),
the determinant and node-separation conditions in
Proposition~\ref{prop:AT-normality-Laguerre-admissible} hold, that proposition
identifies these representatives with the unique normalized mixed-type forms.
Under the limiting rate-separation conditions, the \(A\)-form is unique
with this normalization. The \(B\)-form has a unique polynomial
decomposition of the prescribed degrees. These assertions are proved in
Corollary~\ref{cor:Hermite-main-normality}.
With the multi-indices fixed, the shorter symbols
\(\mathcal A_\tau^{\mathrm L}\) and \(\mathcal B_\tau^{\mathrm L}\) denote
these representatives at parameters \(\boldsymbol a(\tau)\),
\(\boldsymbol b(\tau)\), and \(\boldsymbol\beta(\tau)\).

\begin{lemma}[Orthogonality of the direct contour representative]
 \label{lem:canonical-Hermite-direct-A-orthogonality}
 Let \(|\boldsymbol m|=N+1\). For all sufficiently large \(\tau\),
 the representative \eqref{eq:Laguerre-A-contour} satisfies
 \[
 \int_0^\infty x^kw_{h,\tau}(x)\mathcal A_\tau^{\mathrm L}(x)\,\mathrm dx=0
 \quad(0\le k<n_h),\qquad
 \int_0^\infty x^kw_{0,\tau}(x)\mathcal A_\tau^{\mathrm L}(x)\,\mathrm dx=0
 \quad(0\le k<\eta).
 \]
 These identities hold without assuming the finite-parameter
 normality conditions in Proposition~\ref{prop:AT-normality-Laguerre-admissible}.
\end{lemma}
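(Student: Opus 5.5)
The plan is to compute each moment by inserting the contour integral \eqref{eq:Laguerre-A-contour} and exchanging the order of integration. This turns each moment into a contour integral of a Mellin transform. Near-diagonality and Laguerre admissibility should then make the integrand a rational function whose only poles are the enclosed nodes and which has no residue at infinity. Here \(s=1\), so \(q=r+1\), the last row is \(v_{1,\tau}=w_{0,\tau}\), \(\eta\) is a single integer, and \(|\boldsymbol m|=N+1\) with \(N=|\boldsymbol n|+\eta\).

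\emph{Interchange.} First choose \(\Sigma\) as a union of small circles around the nodes \(\beta_i(\tau)+\ell\), \(0\le\ell<m_i\), avoiding every other pole of the integrand. The integrability condition \(\beta_i+a_\nu>-1\) gives \(\operatorname{Re}t+a_\nu(\tau)+1>0\) at each node, hence on sufficiently small circles. On \(\Sigma\) the function \(x^{t+k}w_{j,\tau}(x)\) is then absolutely integrable on \((0,\infty)\), uniformly in \(t\in\Sigma\). Fubini therefore gives
\[
\int_0^\infty x^kw_{j,\tau}\mathcal A_\tau^{\mathrm L}\,\mathrm dx
=\frac1{2\pi\mathrm i}\int_\Sigma
\frac{\Gamma(t\one_r+\boldsymbol b+\boldsymbol n+\one_r)}{\Gamma(t\one_q+\boldsymbol a+\one_q)}
\frac{\mathcal M[w_{j,\tau}](t+k+1)}{D_{\boldsymbol\beta,\boldsymbol m}(t)}\,\mathrm dt .
\]
The proof would state this step only briefly.

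\emph{Gamma cancellation.} Next insert \(\mathcal M[w_{0,\tau}](s)=\Gamma(s\one_q+\boldsymbol a)/\Gamma(s\one_r+\boldsymbol b)\) and \(\mathcal M[w_{h,\tau}](s)=\mathcal M[w_{0,\tau}](s)/(s+b_h)\) at \(s=t+k+1\). The quotient of the \(a\)-gammas is the polynomial \((t\one_q+\boldsymbol a+\one_q)_{k\one_q}\), of degree \(qk\).

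For the \(b\)-gammas, the quotient \(\Gamma(t+b_u+n_u+1)/\Gamma(t+b_u+k+1)\) equals \((t+b_u+k+1)_{n_u-k}\) provided \(k\le n_u\) for every \(u\le r\). This requirement is exactly where admissibility enters. For the row \(w_h\) with \(k<n_h\), it is \eqref{eq:Laguerre-admissibility-nn}. For the row \(w_0\) with \(k<\eta\), it is \eqref{eq:Laguerre-admissibility-etan}.

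In the case of \(w_h\), \(k<n_h\) also means that the factor \(t+k+1+b_h\) occurs in \((t+b_h+k+1)_{n_h-k}\), so the extra denominator cancels. In both cases the integrand becomes a polynomial \(\Pi_{j,k}(t)\) divided by \(D_{\boldsymbol\beta,\boldsymbol m}(t)\), and \(D_{\boldsymbol\beta,\boldsymbol m}\) has degree \(N+1\).

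\emph{Degree count.} For \(w_h\), the degree of \(\Pi_{h,k}\) is
\[
qk+|\boldsymbol n|-rk-1=|\boldsymbol n|+k-1\le|\boldsymbol n|+n_h-2\le N-1 .
\]
The last inequality uses \(n_h-1\le\eta\), which is near-diagonality of the full index. For \(w_0\), the degree is \(|\boldsymbol n|+k\le|\boldsymbol n|+\eta-1=N-1\).

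Hence each rational integrand decays like \(\mathrm O(|t|^{-2})\) and has no residue at infinity. Its only poles are the zeros of \(D_{\boldsymbol\beta,\boldsymbol m}\), and these are exactly the nodes enclosed by \(\Sigma\). The residue theorem therefore makes every contour integral vanish. No determinant or distinctness hypothesis beyond the choice of \(\Sigma\) is used, as the statement requires.

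\emph{Main obstacle.} The step needing the most care is the interchange together with the contour choice at finite \(\tau\). Two points must be checked. First, the gamma poles of \(\Gamma(t\one_r+\boldsymbol b+\boldsymbol n+\one_r)\) must stay outside \(\Sigma\); this is part of the definition of \eqref{eq:Laguerre-A-contour}. Second, the circles must stay in the strip where the Mellin transforms converge absolutely, and I would verify this from \(\beta_i(\tau)+a_\nu(\tau)>-1\) for large \(\tau\).

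If nodes coincide, the cancellation argument is unchanged: the integrand is still rational, with poles only at the enclosed points, so the same residue-at-infinity argument applies.
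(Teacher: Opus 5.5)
Your proposal is correct and matches the paper's proof step by step: Fubini on a contour inside the common Mellin strip, insertion of the Mellin transforms of \(w_{h,\tau}\) and \(w_{0,\tau}\), and gamma cancellation via the admissibility inequalities. This yields polynomial numerators of degrees \(|\boldsymbol n|+k-1\) and \(|\boldsymbol n|+k\), both at most \(N-1\), over \(D_{\boldsymbol\beta,\boldsymbol m}\) of degree \(N+1\), and the integrals vanish by the residue theorem. The only difference is cosmetic: you use small circles around each node, whereas the paper uses the rescaled column circles around the \(\xi_i\) and checks the strip condition through \(\xi_i+\rho_h>0\). Since the integral depends only on the enclosed poles, the two choices are equivalent.
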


\begin{proof}
 The column circles lie inside the common Mellin strip for all large
 \(\tau\), by \(\xi_i+\rho_h>0\). Absolute convergence there permits
 Fubini's theorem. Write
 \(D_\tau(\upsilon)=D_{\boldsymbol\beta(\tau),\boldsymbol m}(\upsilon)\).
 For \(0\le k<n_h\), inserting the Mellin transform of \(w_{h,\tau}\)
 gives the contour integral of \(P_{h,k,\tau}/D_\tau\), where
 \[
 P_{h,k,\tau}(\upsilon)\coloneq
 \prod_{\nu=1}^{q}(\upsilon+a_\nu(\tau)+1)_k
 (\upsilon+b_h(\tau)+k+2)_{n_h-k-1}
 \prod_{u\ne h}(\upsilon+b_u(\tau)+k+1)_{n_u-k}.
 \]
 Near diagonality gives \(n_u\ge n_h-1\ge k\) for \(u\ne h\)
 and \(\eta\ge k\). Every displayed Pochhammer index is therefore
 nonnegative, and the numerator has degree
 \(|\boldsymbol n|+k-1\le N-1\).

 For the base row and \(0\le k<\eta\), the corresponding numerator is
 \[
 P_{q,k,\tau}(\upsilon)\coloneq
 \prod_{\nu=1}^{q}(\upsilon+a_\nu(\tau)+1)_k
 \prod_{u=1}^r(\upsilon+b_u(\tau)+k+1)_{n_u-k}.
 \]
 Here \(n_u\ge\eta-1\ge k\), and the degree is
 \(|\boldsymbol n|+k\le N-1\). In both cases the denominator has
 degree \(N+1\), and the contour encloses all its zeros. The rational
 function is \(\mathrm O(\upsilon^{-2})\) as
 \(|\upsilon|\to\infty\), so its contour integral vanishes by the
 residue theorem.
\end{proof}

The limiting polynomial components will be expressed in the monic Hermite
basis. These polynomials are defined by the generating function
\begin{equation}
\label{eq:monic-Hermite-generating-function}
	\e^{uz-z^2/4}
	=
	\sum_{\ell=0}^\infty H_\ell(u)\frac{z^\ell}{\ell!}.
\end{equation}

Separating even and odd powers in this generating function gives the
terminating hypergeometric expressions
\begin{equation}
 H_{2m}(u)=(-1)^m(1/2)_m\pFq{1}{1}{-m}{1/2}{u^2},
 \qquad
 H_{2m+1}(u)=(-1)^m(3/2)_m\,u\pFq{1}{1}{-m}{3/2}{u^2}.
 \label{eq:Hermite-terminating-1F1}
\end{equation}
By \eqref{eq:Hermite-terminating-1F1}, every finite Hermite sum below is
also a finite sum of terminating \({}_1F_1\) polynomials.

For the limiting contour formula, choose a positively oriented contour
\(\Sigma_\xi\) enclosing \(\xi_1,\ldots,\xi_p\) and, when \(r>0\), lying
in \(\operatorname{Re}z>-\min_{1\le h\le r}\rho_h\). The powers are
taken on the branch that is positive on the real interval containing
the \(\xi_i\)'s.

\begin{theorem}[Confluence of the canonical \(A\)-representative]
	\label{thm:canonical-Hermite-explicit-forms}
	Let \(\boldsymbol m\in\mathbb N_0^p\) satisfy
	\(|\boldsymbol m|=N+1\). Then the contour-normalized form
	\(\mathcal A_\tau^{\mathrm L}\) in \eqref{eq:Laguerre-A-contour} satisfies
	\begin{equation}
		\label{eq:canonical-Hermite-A-scaled-limit}
		Z_\tau\varepsilon_\tau^{-\eta}
		\mathcal A_\tau^{\mathrm L}(x_\tau(t))
		\xrightarrow[\tau\to+\infty]{}
		\mathcal A^{\mathrm H}_{\boldsymbol\lambda,\boldsymbol m}(t)
	\end{equation}
	locally uniformly in \(t\), where
	\begin{equation}
		\label{eq:canonical-Hermite-A-limit-contour}
		\mathcal A^{\mathrm H}_{\boldsymbol\lambda,\boldsymbol m}(t)
		=
		\frac{1}{2\pi\mathrm i}
		\oint_{\Sigma_\xi}
		\frac{
			\e^{zt}\prod_{h=1}^r(\rho_h+z)^{n_h}
		}{
			L_F(z)\prod_{i=1}^p(\xi_i-z)^{m_i}
		}\,\mathrm dz.
	\end{equation}
\end{theorem}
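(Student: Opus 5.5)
The plan is to substitute \(t\mapsto z/\varepsilon_\tau\) in the contour integral \eqref{eq:Laguerre-A-contour}, evaluate it at \(x=x_\tau(t)\), and pass to the limit under the integral sign on a fixed contour. The asymptotics will come from Lemma~\ref{lem:canonical-Hermite-uniform-Gamma-quotient}, applied in the reciprocal form. Concretely, write the integration variable in \eqref{eq:Laguerre-A-contour} as \(\upsilon=z/\varepsilon_\tau\), so that \(\mathrm d\upsilon=\varepsilon_\tau^{-1}\mathrm dz\). Then
\[
x_\tau(t)^{\upsilon}=c_\tau^{z/\varepsilon_\tau}(1+\varepsilon_\tau t)^{z/\varepsilon_\tau}.
\]
The poles \(\beta_i(\tau)+k\), \(0\le k<m_i\), become \(\varepsilon_\tau\beta_i(\tau)+\varepsilon_\tau k\). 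These tend to \(\xi_i\), so for large \(\tau\) they all lie inside small disjoint circles around the distinct \(\xi_i\). The union of these circles serves as \(\Sigma_\xi\), chosen inside \(\operatorname{Re}z>-\min_h\rho_h\). Since the integrand has no other poles there, the scaled contour is admissible by the residue theorem.

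The integrand factorizes into three pieces, each of which I will treat separately.

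\emph{The gamma quotient.} Reindex \(\Gamma(\upsilon+\boldsymbol b+\boldsymbol n+\one)/\Gamma(\upsilon+\boldsymbol a+\one)\) as
\[
\frac{\Gamma(\upsilon\one_r+\boldsymbol b+\one_r)}{\Gamma(\upsilon\one_q+\boldsymbol a+\one_q)}\,(\upsilon\one_r+\boldsymbol b+\one_r)_{\boldsymbol n}.
\]
The first factor equals \(\bigl(\mathcal M[w_{0,\tau}](1+\upsilon)\bigr)^{-1}=\bigl(Z_\tau c_\tau^{z/\varepsilon_\tau}Q_{0,\tau}(z)\bigr)^{-1}\). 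The factor \(c_\tau^{z/\varepsilon_\tau}\) cancels the one coming from \(x_\tau(t)^\upsilon\), and \(Z_\tau\) cancels the prefactor in \eqref{eq:canonical-Hermite-A-scaled-limit}.

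\emph{The Pochhammer products.} The product \((\upsilon\one_r+\boldsymbol b+\one_r)_{\boldsymbol n}\) equals \(\varepsilon_\tau^{-|\boldsymbol n|}\prod_h(z+\varepsilon_\tau(b_h+1))^{n_h}(1+\mathrm O(\varepsilon_\tau))\), and this tends to \(\varepsilon_\tau^{-|\boldsymbol n|}\prod_h(\rho_h+z)^{n_h}\) because \(\varepsilon_\tau(b_h+1)\to\rho_h\). Likewise the column factor gives
\[
\frac{1}{D_{\boldsymbol\beta(\tau),\boldsymbol m}(\upsilon)}=\varepsilon_\tau^{|\boldsymbol m|}\prod_{i,k}\bigl(\varepsilon_\tau\beta_i(\tau)+\varepsilon_\tau k-z\bigr)^{-1}.
\]
On the circles this is \(\varepsilon_\tau^{N+1}\prod_i(\xi_i-z)^{-m_i}(1+o(1))\) uniformly, since the circles stay a fixed distance from all the poles.

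\emph{The exponential.} \((1+\varepsilon_\tau t)^{z/\varepsilon_\tau}\to\e^{zt}\) uniformly for \(z\) on the contour and \(t\) in a compact set, by expanding the logarithm.

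Collecting the powers of \(\varepsilon_\tau\), the factors contribute \(\varepsilon_\tau^{-1}\cdot\varepsilon_\tau^{-|\boldsymbol n|}\cdot\varepsilon_\tau^{N+1}=\varepsilon_\tau^{\eta}\). This is exactly compensated by \(\varepsilon_\tau^{-\eta}\) in \eqref{eq:canonical-Hermite-A-scaled-limit}.

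The main obstacle I expect is the uniform control of \(1/Q_{0,\tau}\) on the contour. Lemma~\ref{lem:canonical-Hermite-uniform-Gamma-quotient} gives \(Q_{0,\tau}\to L_F\) uniformly on compact subsets of the half-plane. The formula \eqref{eq:canonical-Hermite-LF} shows that \(L_F\) has no zeros there, since it is an exponential times nonvanishing powers. Hence \(|L_F|\) is bounded below on the compact contour, and therefore so is \(|Q_{0,\tau}|\) for large \(\tau\). It follows that \(1/Q_{0,\tau}\to1/L_F\) uniformly.

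With every factor converging uniformly on the fixed compact contour, locally uniformly in \(t\), dominated passage under the finite contour integral then gives \eqref{eq:canonical-Hermite-A-limit-contour}. As a final step I will check the sign: \(\prod(\beta_i+k-\upsilon)\) already carries the orientation that produces \(\prod(\xi_i-z)^{m_i}\), so no extra sign appears.
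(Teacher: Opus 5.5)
Your proposal is correct and follows the paper's proof essentially step for step. The steps coincide: the substitution \(\upsilon=z/\varepsilon_\tau\), rewriting the gamma quotient as \(\prod_h(\upsilon+b_h+1)_{n_h}/\mathcal M[w_{0,\tau}](1+\upsilon)\) to isolate \(Q_{0,\tau}\), cancelling \(c_\tau^{z/\varepsilon_\tau}\) and \(Z_\tau\), the power count yielding \(\varepsilon_\tau^{\eta}\), and uniform convergence on fixed disjoint circles via Lemma~\ref{lem:canonical-Hermite-uniform-Gamma-quotient} together with the nonvanishing of \(L_F\). The only difference is presentational: the paper records the intermediate step as an exact scaled contour identity with the fixed-degree polynomials \(P_{h,\tau}^{[n_h]}\) and \(D_{i,\tau}^{[m_i]}\), which is exactly what your Pochhammer rewriting produces before any \(1+\mathrm O(\varepsilon_\tau)\) approximation.
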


\begin{proof}
	Put \(z_{i,\tau}=\varepsilon_\tau\beta_i(\tau)\), and define the fixed-degree
	polynomials
	\[
		P_{h,\tau}^{[n_h]}(z)
		\coloneq
		\prod_{\nu=0}^{n_h-1}
		\bigl[z+\varepsilon_\tau(b_h(\tau)+1+\nu)\bigr],
		\qquad
		D_{i,\tau}^{[m_i]}(z)
		\coloneq
		\prod_{\nu=0}^{m_i-1}
		\bigl[z_{i,\tau}-z+\varepsilon_\tau\nu\bigr].
\]
	Empty products are one.  Choose disjoint positively oriented circles
	around the distinct points \(\xi_i\), contained in the holomorphy domain
	of \(L_F\), and let their union be \(\Sigma_\xi\).  For all sufficiently
	large \(\tau\), the circle around \(\xi_i\) contains precisely the scaled
	poles \(z_{i,\tau}+\varepsilon_\tau\nu\), \(0\le\nu<m_i\).

	The Mellin formula for \(w_{0,\tau}\) rewrites the quotient of gamma
	functions in \eqref{eq:Laguerre-A-contour} as
	\[
		\frac{
			\prod_{h=1}^r
			(\upsilon+b_h(\tau)+1)_{n_h}
		}{
			\mathcal M[w_{0,\tau}](1+\upsilon)
		}.
\]
	After the exact substitution \(\upsilon=z/\varepsilon_\tau\), cancellation of
	the powers of \(c_\tau\), and use of
	\(|\boldsymbol m|=|\boldsymbol n|+\eta+1\), one obtains
	\begin{equation}
		\label{eq:canonical-Hermite-A-exact-scaled-contour}
			Z_\tau\varepsilon_\tau^{-\eta}
			\mathcal A_\tau^{\mathrm L}(x_\tau(t))
			=
			\frac1{2\pi\mathrm i}
			\oint_{\Sigma_\xi}
			\frac{
				(1+\varepsilon_\tau t)^{z/\varepsilon_\tau}
				\prod_{h=1}^rP_{h,\tau}^{[n_h]}(z)
			}{
				Q_{0,\tau}(z)
				\prod_{i=1}^pD_{i,\tau}^{[m_i]}(z)
			}\,\mathrm dz.
	\end{equation}
	This identity displays every normalization; no asymptotic factor remains
	outside the integral.

	Uniformly on \(\Sigma_\xi\),
	\[
		P_{h,\tau}^{[n_h]}(z)\xrightarrow[\tau\to+\infty]{}(z+\rho_h)^{n_h},
		\qquad
		D_{i,\tau}^{[m_i]}(z)\xrightarrow[\tau\to+\infty]{}(\xi_i-z)^{m_i},
\]
	and
	\[
		(1+\varepsilon_\tau t)^{z/\varepsilon_\tau}
		\xrightarrow[\tau\to+\infty]{}\e^{zt}
\]
	locally uniformly for \(t\in\mathbb C\).
	Lemma~\ref{lem:canonical-Hermite-uniform-Gamma-quotient} gives
	\(Q_{0,\tau}\xrightarrow[\tau\to+\infty]{}L_F\) uniformly on the circles, and \(L_F\) does not vanish
	there.  Thus the integrand in
	\eqref{eq:canonical-Hermite-A-exact-scaled-contour} converges uniformly,
	which proves
	\eqref{eq:canonical-Hermite-A-scaled-limit} and
	\eqref{eq:canonical-Hermite-A-limit-contour}.
\end{proof}

For the residue expansion of \eqref{eq:canonical-Hermite-A-limit-contour},
introduce, for every \(i\) with \(m_i\ge1\),
	\[
		M_i\coloneq m_i-1,
		\qquad
		G_i(z)
		\coloneq
		\frac{
			\prod_{h=1}^r
			(\rho_h+z)^{n_h+d_h}
		}{
			\prod_{\substack{1\le k\le p\\k\ne i}}
			(\xi_k-z)^{m_k}
		}.
\]
\begin{corollary}[Explicit limiting \(A\)-components]
	\label{cor:canonical-Hermite-explicit-A-components}
	Under the hypotheses of
	Theorem~\ref{thm:canonical-Hermite-explicit-forms}, the residue expansion
	\[
		\mathcal A^{\mathrm H}_{\boldsymbol\lambda,\boldsymbol m}(t)
		=
		\sum_{i=1}^p
		A_{\boldsymbol\lambda,\boldsymbol m}^{(i),\mathrm H}(t)
		\e^{\xi_it}
	\]
	has polynomial components of degree exactly \(m_i-1\) for \(m_i\ge1\),
	given by
	\begin{equation}
		\label{eq:canonical-Hermite-A-components}
			A_{\boldsymbol\lambda,\boldsymbol m}^{(i),\mathrm H}(t)
			=
			\frac{
				(-1)^{m_i}
				\e^{-\widehat a_*\xi_i-\xi_i^2/4}
			}{
				M_i!\prod_{h=1}^r\rho_h^{d_h}
			}
			\sum_{\ell=0}^{M_i}
			\binom{M_i}{\ell}
			H_\ell\!\left(t-\widehat a_*-\frac{\xi_i}{2}\right)
			G_i^{(M_i-\ell)}(\xi_i).
	\end{equation}
	If \(m_i=0\), then
	\(A_{\boldsymbol\lambda,\boldsymbol m}^{(i),\mathrm H}\equiv0\).
	For the \(i\)-th finite-\(\tau\) component,
	\begin{equation}
		\label{eq:canonical-Hermite-A-component-convergence}
		Z_\tau\varepsilon_\tau^{-\eta}
		c_\tau^{\beta_i(\tau)}A_\tau^{(i),\mathrm L}(x_\tau(t))
		\xrightarrow[\tau\to+\infty]{}
		A_{\boldsymbol\lambda,\boldsymbol m}^{(i),\mathrm H}(t)
	\end{equation}
	coefficientwise. The limiting form satisfies
	\begin{align*}
		\int_{\mathbb R}
		t^kK_{\rho_h,1}F(t)
		\mathcal A^{\mathrm H}_{\boldsymbol\lambda,\boldsymbol m}(t)
		\,\mathrm dt
		&=0,
		&&0\le k<n_h,
		\quad 1\le h\le r,
		\\
		\int_{\mathbb R}
		t^kF(t)
		\mathcal A^{\mathrm H}_{\boldsymbol\lambda,\boldsymbol m}(t)
		\,\mathrm dt
		&=0,
		&&0\le k<\eta.
	\end{align*}
	These components are unique with the stated normalization under
	the rate-separation conditions of
	Corollary~\ref{cor:Hermite-main-normality}.
\end{corollary}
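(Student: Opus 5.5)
The plan has four parts: compute the residues of the limiting contour integral, obtain the component limits by localizing the finite-\(\tau\) contour at each node, prove orthogonality by a residue-at-infinity argument, and deduce uniqueness from the normality corollary.

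\emph{Residue expansion.} I would start from the limiting contour formula \eqref{eq:canonical-Hermite-A-limit-contour} and replace \(1/L_F\) by its explicit form from \eqref{eq:canonical-Hermite-LF}:
\[
1/L_F(z)=\e^{-\widehat a_*z-z^2/4}\prod_h\rho_h^{-d_h}(\rho_h+z)^{d_h}.
\]
The integrand then becomes
\[
\e^{zt-\widehat a_*z-z^2/4}\,G_i(z)\,(\xi_i-z)^{-m_i}\prod_h\rho_h^{-d_h}.
\]
On the circle around \(\xi_i\), its only singularity is a pole of order \(m_i\) at \(\xi_i\), since \(G_i\) is holomorphic there. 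Writing \((\xi_i-z)^{-m_i}=(-1)^{m_i}(z-\xi_i)^{-m_i}\), the residue is \((-1)^{m_i}/M_i!\) times the \(M_i\)-th derivative at \(\xi_i\) of \(\e^{z(t-\widehat a_*)-z^2/4}G_i(z)\).

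To evaluate this derivative, put \(z=\xi_i+w\). Then
\[
\e^{z(t-\widehat a_*)-z^2/4}=\e^{\xi_i t}\,\e^{-\widehat a_*\xi_i-\xi_i^2/4}\,\e^{w(t-\widehat a_*-\xi_i/2)-w^2/4}.
\]
The generating function \eqref{eq:monic-Hermite-generating-function} says that the \(\ell\)-th \(w\)-derivative at \(0\) of the last factor is \(H_\ell(t-\widehat a_*-\xi_i/2)\). Leibniz's rule then gives \eqref{eq:canonical-Hermite-A-components}, with \(\e^{\xi_i t}\) factored out.

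The term \(\ell=M_i\) is the only one of degree \(M_i\), because \(H_\ell\) is monic of degree \(\ell\). Its coefficient is \(G_i(\xi_i)\) times a nonzero constant. Now \(G_i(\xi_i)\neq0\) because the \(\xi_k\) are distinct and \(\xi_i+\rho_h>0\). Hence the degree is exactly \(m_i-1\). If \(m_i=0\), there is no pole at \(\xi_i\), so the component vanishes.

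\emph{Component convergence.} I would localize the exact scaled identity \eqref{eq:canonical-Hermite-A-exact-scaled-contour} on each circle separately. For large \(\tau\), the circle around \(\xi_i\) encloses exactly the poles \(z_{i,\tau}+\varepsilon_\tau\nu\), \(0\le\nu<m_i\). By residues, the integral over that circle equals the \(i\)-th finite-\(\tau\) term
\[
Z_\tau\varepsilon_\tau^{-\eta}A_\tau^{(i),\mathrm L}(x_\tau(t))\,x_\tau(t)^{\beta_i(\tau)},
\]
where \(x_\tau(t)^{\beta_i(\tau)}=c_\tau^{\beta_i(\tau)}(1+\varepsilon_\tau t)^{\beta_i(\tau)}\).

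The integrand converges uniformly on the circle, as in the proof of Theorem~\ref{thm:canonical-Hermite-explicit-forms}. So this term converges locally uniformly in \(t\) to \(A^{(i),\mathrm H}(t)\e^{\xi_i t}\). Corollary~\ref{cor:simultaneous-Laguerre-Hermite-matrix-confluence} gives \((1+\varepsilon_\tau t)^{\beta_i(\tau)}\to\e^{\xi_i t}\), and this factor is bounded away from zero on compacts, so I can divide by it.

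This yields locally uniform convergence of polynomials of fixed degree at most \(m_i-1\). To pass to coefficients, I would evaluate at \(m_i\) fixed distinct points and invert the fixed Vandermonde matrix, exactly as in the proof of Theorem~\ref{thm:Jacobi-LII-form-confluence}.

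\emph{Orthogonality.} This is the main step. I see two routes and would use the direct one.

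The first route passes to the limit in Lemma~\ref{lem:canonical-Hermite-direct-A-orthogonality} using the mixed-moment convergence of Corollary~\ref{cor:simultaneous-Laguerre-Hermite-matrix-confluence}. Besides that corollary, it needs the coefficientwise component convergence and uniform control of moments of weights multiplied by \(\e^{\xi_i t}\)-type columns.

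The direct route works in the limit. Insert the contour integral, which lies in the strip of absolute convergence, and apply Fubini. The identity
\[
\int t^k\e^{zt}F(t)\,\mathrm dt=\partial_z^kL_F(z)
\]
turns each moment into a contour integral of
\[
\frac{\partial_z^k\bigl[L_F(z)\,\rho_h/(\rho_h+z)\bigr]}{L_F(z)}\cdot\frac{\prod_u(\rho_u+z)^{n_u}}{\prod_i(\xi_i-z)^{m_i}}.
\]
By Leibniz, the first factor is a polynomial in \(z\) and in \((\rho_u+z)^{-1}\). It has pole order at most \(k+1\le n_h\) at \(-\rho_h\) and at most \(k\le n_h-1\le n_u\) at \(-\rho_u\), \(u\neq h\); the last inequality uses near-diagonality. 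The Gaussian factor contributes powers of \(z\) up to degree \(k\).

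So the full integrand is rational, with numerator degree at most \(|\boldsymbol n|+k-1\le N-1\) over a denominator of degree \(N+1\). It is therefore \(\mathrm O(z^{-2})\) at infinity, has no poles outside \(\Sigma_\xi\), and integrates to zero. The base row is handled the same way with \(k<\eta\le n_u+1\), where the numerator degree is \(|\boldsymbol n|+k\le N-1\).

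The delicate point is this degree count, including the Gaussian contribution \((\widehat a_*+z/2)^k\). I would check it against Lemma~\ref{lem:canonical-Hermite-direct-A-orthogonality}, where the same bounds appear.

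\emph{Uniqueness.} The final uniqueness claim is deferred to Corollary~\ref{cor:Hermite-main-normality}.
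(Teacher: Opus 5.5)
Your proposal is correct. The residue computation, the exact-degree argument via the monic $H_{M_i}$ and $G_i(\xi_i)\neq0$, the localization of \eqref{eq:canonical-Hermite-A-exact-scaled-contour} on each circle followed by division by $(1+\varepsilon_\tau t)^{\beta_i(\tau)}$ and the Vandermonde step, and the deferral of uniqueness to Corollary~\ref{cor:Hermite-main-normality} are all exactly what the paper does.

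The one genuine difference is the orthogonality step. The paper takes your first route, and it is cheaper than you suggest. After the affine change $x=x_\tau(t)$, which maps $\mathbb P_k(x)$ onto $\mathbb P_k(t)$, Lemma~\ref{lem:canonical-Hermite-direct-A-orthogonality} becomes $0=\sum_{i,a}c_{i,a,\tau}\int_{I_\tau}t^{k+a}(1+\varepsilon_\tau t)^{\beta_i(\tau)}\widehat U_{j,\tau}(t)\,\mathrm dt$. The coefficients $c_{i,a,\tau}$ converge by the step just proved. Each integral converges by Corollary~\ref{cor:simultaneous-Laguerre-Hermite-matrix-confluence}, so no extra uniform control is required.

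Your direct route checks the conditions on the limiting contour formula itself, by Fubini and a vanishing residue at infinity. Your pole-order and degree counts are right. They are the bounds the paper later records for $\Psi_{j,k}$ in Lemma~\ref{lem:canonical-Hermite-explicit-connection-polynomials}. The same argument appears in the proof of Theorem~\ref{thm:Jacobi-LII-form-confluence} and in Appendix~\ref{app:Hermite-derivative-limits}.

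The paper's route is shorter given what is already established, and it shows that orthogonality is inherited from the finite problems. Yours is independent of the finite-$\tau$ approximation, so it proves that \eqref{eq:canonical-Hermite-A-limit-contour} is orthogonal for any admissible parameters. It also shows exactly where admissibility enters: $n_h-1\le n_u$ and $\eta-1\le n_u$ cancel the rate poles, and $n_h-1\le\eta$ keeps the numerator degree at most $N-1$.
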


\begin{proof}
	Using \eqref{eq:canonical-Hermite-LF}, the limiting integrand becomes
	\[
		\frac{1}{\prod_h\rho_h^{d_h}}
		\frac{
			\e^{z(t-\widehat a_*)-z^2/4}
			\prod_h(\rho_h+z)^{n_h+d_h}
		}{
			\prod_i(\xi_i-z)^{m_i}
		}.
\]
	If \(m_i=0\), both the finite and limiting \(i\)-th components vanish.
	Fix \(i\) with \(m_i\ge1\). At \(z=\xi_i\),
	\((\xi_i-z)^{-m_i}=(-1)^{m_i}(z-\xi_i)^{-m_i}\), and
	\[
		\partial_z^\ell
		\e^{z(t-\widehat a_*)-z^2/4}
		=
		H_\ell\!\left(t-\widehat a_*-\frac z2\right)
		\e^{z(t-\widehat a_*)-z^2/4}.
\]
	The formula for a pole of order \(m_i\), followed by the ordinary
	Leibniz rule, is exactly
	\eqref{eq:canonical-Hermite-A-components}. Since \(H_{M_i}\) is monic,
	\begin{equation}
	 [t^{M_i}]A_{\boldsymbol\lambda,\boldsymbol m}^{(i),\mathrm H}
	 =\frac{(-1)^{m_i}\e^{-\widehat a_*\xi_i-\xi_i^2/4}G_i(\xi_i)}
	 {M_i!\prod_h\rho_h^{d_h}}\ne0.
	 \label{eq:Hermite-A-leading-coefficient}
	\end{equation}
	The inequality follows from the distinct column nodes and
	\(\xi_i+\rho_h>0\), and proves the exact degree assertion. If
	\(A_\tau^{(i),\mathrm L}\) denotes the \(i\)-th finite-\(\tau\) component,
	let
	\[
		\widehat A_{i,\tau}(t)
		\coloneq
		Z_\tau\varepsilon_\tau^{-\eta}
		c_\tau^{\beta_i(\tau)}A_\tau^{(i),\mathrm L}(x_\tau(t)).
\]
	The contribution of the \(i\)-th circle in
	\eqref{eq:canonical-Hermite-A-exact-scaled-contour} is exactly
	\[
		\widehat A_{i,\tau}(t)
		(1+\varepsilon_\tau t)^{\beta_i(\tau)}.
\]
	Uniform convergence of the integrand on that circle makes this product
	converge locally uniformly to
	\(A_{\boldsymbol\lambda,\boldsymbol m}^{(i),\mathrm H}(t)
	\e^{\xi_it}\). Since the second factor converges locally uniformly to
	the nowhere-zero function \(\e^{\xi_it}\), it follows that
	\[
		\widehat A_{i,\tau}(t)
		\xrightarrow[\tau\to+\infty]{}
		A_{\boldsymbol\lambda,\boldsymbol m}^{(i),\mathrm H}(t)
\]
	locally uniformly. These polynomials have the fixed degree bound
	\(m_i-1\); evaluation at \(m_i\) fixed distinct points gives an
	invertible Vandermonde system, and therefore the convergence is
	coefficientwise. This proves
	\eqref{eq:canonical-Hermite-A-component-convergence}.

	For the limiting orthogonality, write
	\[
		\widehat A_{i,\tau}(t)
		=
		\sum_{a=0}^{m_i-1}c_{i,a,\tau}t^a.
\]
	The affine change \(x=x_\tau(t)\) maps \(\mathbb P_k(x)\) onto
	\(\mathbb P_k(t)\). Thus, by
	Lemma~\ref{lem:canonical-Hermite-direct-A-orthogonality}, for each limiting
	row index \(j\) and every admissible \(k\), the finite-\(\tau\)
	orthogonality is exactly
	\[
		0
		=
		\sum_{i=1}^p\sum_{a=0}^{m_i-1}
		c_{i,a,\tau}
		\int_{I_\tau}
		t^{k+a}(1+\varepsilon_\tau t)^{\beta_i(\tau)}
		\widehat U_{j,\tau}(t)\,\mathrm dt.
\]
	The coefficients \(c_{i,a,\tau}\) converge by the preceding paragraph,
	and every integral converges by
	Corollary~\ref{cor:simultaneous-Laguerre-Hermite-matrix-confluence}.
	Passing to the limit gives precisely the two orthogonality relations in
	the statement.
\end{proof}

The next theorem defines the limiting function by its Rodrigues formula
and proves its moment conditions. The notation
\(\mathcal B^{\mathrm H}_{\boldsymbol\lambda,\boldsymbol m}\) is used for
this function throughout. It is a mixed-type \(B\)-form when it admits a
decomposition with the prescribed polynomial degrees; that question is
settled in the following subsection.

\begin{theorem}[Confluence of the Rodrigues function]
	\label{thm:canonical-Hermite-B-form-confluence}
	Let \(\boldsymbol m\in\mathbb N_0^p\) satisfy
	\(|\boldsymbol m|=N-1\). The complete finite-\(\tau\) representative
	\(\mathcal B_\tau^{\mathrm L}\), normalized by
	\eqref{eq:Laguerre-B-Mellin}, satisfies
	\begin{equation}
		\label{eq:canonical-Hermite-B-scaled-limit}
		\frac{c_\tau\varepsilon_\tau^{\eta}}{Z_\tau}
		\mathcal B_\tau^{\mathrm L}(x_\tau(t))
		\xrightarrow[\tau\to+\infty]{}
		\mathcal B^{\mathrm H}_{\boldsymbol\lambda,\boldsymbol m}(t)
	\end{equation}
	in the sense of distributions. For every \(k\in\mathbb N_0\) and
	\(1\le i\le p\),
	\begin{equation}
		\label{eq:canonical-Hermite-B-moment-convergence}
			\int_{-\varepsilon_\tau^{-1}}^\infty
			t^k(1+\varepsilon_\tau t)^{\beta_i(\tau)}
			\frac{c_\tau\varepsilon_\tau^{\eta}}{Z_\tau}
			\mathcal B_\tau^{\mathrm L}(x_\tau(t))\,\mathrm dt
			\xrightarrow[\tau\to+\infty]{}
			\int_{\mathbb R}t^k\e^{\xi_it}
			\mathcal B^{\mathrm H}_{\boldsymbol\lambda,\boldsymbol m}(t)
			\,\mathrm dt.
	\end{equation}
	For
	\(\operatorname{Re}z>-\min_{1\le h\le r}\rho_h\) when \(r>0\), and for all
	\(z\in\mathbb C\) when \(r=0\), its bilateral Laplace transform is
	\begin{equation}
		\label{eq:canonical-Hermite-B-transform}
		\int_{\mathbb R}\e^{zt}
		\mathcal B^{\mathrm H}_{\boldsymbol\lambda,\boldsymbol m}(t)\,\mathrm dt
		=
		L_F(z)
		\frac{
			\prod_{i=1}^p(\xi_i-z)^{m_i}
		}{
			\prod_{h=1}^r(\rho_h+z)^{n_h}
		}.
	\end{equation}
	The limiting function satisfies the Rodrigues representation
	\begin{equation}
		\label{eq:canonical-Hermite-B-Rodrigues}
		\mathcal B^{\mathrm H}_{\boldsymbol\lambda,\boldsymbol m}
		=
		\left(
			\prod_{i=1}^p(\xi_i+\partial_t)^{m_i}
		\right)
		\left(
			\prod_{h=1}^r\rho_h^{-n_h}K_{\rho_h,n_h}
		\right)F.
	\end{equation}
	It also satisfies the limiting mixed-type orthogonality conditions
	\begin{equation}
		\label{eq:canonical-Hermite-B-orthogonality}
		\int_{\mathbb R}
		t^k\e^{\xi_it}
		\mathcal B^{\mathrm H}_{\boldsymbol\lambda,\boldsymbol m}(t)
		\,\mathrm dt
		=0,
		\qquad
		0\le k<m_i,
		\quad 1\le i\le p.
	\end{equation}
\end{theorem}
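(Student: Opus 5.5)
The plan mirrors the proof of Theorem~\ref{thm:Jacobi-LII-form-confluence}. I would first turn the Mellin formula \eqref{eq:Laguerre-B-Mellin} into a finite Rodrigues identity at fixed \(\tau\), then rescale it, and finally pass to the limit through adjoints. Introduce the auxiliary weight
\[
S_\tau\coloneq w_0^{\mathrm L}(\,\cdot\,;\boldsymbol a(\tau),\boldsymbol b(\tau)+\boldsymbol n),
\]
whose Mellin transform is \(\Gamma(z\one_q+\boldsymbol a)/\Gamma(z\one_r+\boldsymbol b+\boldsymbol n)\). Since \(\M[\thetaop f](s)=s\M[f](s)\), \eqref{eq:Laguerre-B-Mellin} reads
\[
\mathcal B_\tau^{\mathrm L}=\prod_i\prod_{\ell<m_i}(\beta_i+1+\ell-\thetaop)S_\tau .
\]
Under \(x=x_\tau(t)\) one has \(\thetaop=-(\varepsilon_\tau^{-1}+t)\partial_t\). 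Dividing each factor by \(\varepsilon_\tau^{-1}\) therefore gives the operator
\[
\mathscr P_\tau=\prod_i\prod_{\ell<m_i}\bigl(\varepsilon_\tau\beta_i(\tau)+\varepsilon_\tau(1+\ell)+(1+\varepsilon_\tau t)\partial_t\bigr),
\]
whose coefficients converge locally uniformly, together with their derivatives, to those of \(\prod_i(\xi_i+\partial_t)^{m_i}\). Collecting the powers \(\varepsilon_\tau^{-|\boldsymbol m|}\) and the Mellin normalizations should then give exactly
\[
\frac{c_\tau\varepsilon_\tau^\eta}{Z_\tau}\mathcal B_\tau^{\mathrm L}(x_\tau(t))=\mathscr P_\tau\widetilde S_\tau(t),
\qquad
\widetilde S_\tau(t)\coloneq \kappa_\tau\,c_\tau\varepsilon_\tau S_\tau(x_\tau(t)),
\]
with an explicit constant \(\kappa_\tau\) built from \(Z_\tau\), \(\varepsilon_\tau^{|\boldsymbol n|}\) and the Pochhammer ratios \(\prod_h(b_h+1)_{n_h}\). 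Checking that this bookkeeping closes, using \(|\boldsymbol m|=N-1=|\boldsymbol n|+\eta-1\), is the first routine step.

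Next I would identify the limit of \(\widetilde S_\tau\). Its normalized transform is
\[
Q_{0,\tau}(z)\prod_h\prod_{\nu<n_h}\frac{\varepsilon_\tau(b_h+1+\nu)}{z+\varepsilon_\tau(b_h+1+\nu)},
\]
which converges locally uniformly on \(\mathscr H\) to \(L_F(z)\prod_h\rho_h^{n_h}(\rho_h+z)^{-n_h}\) by Lemma~\ref{lem:canonical-Hermite-uniform-Gamma-quotient}. By \eqref{eq:canonical-Hermite-LF} and the semigroup law \(K_{\rho,d}K_{\rho,e}=K_{\rho,d+e}\), this limit is the bilateral transform of \(\prod_hK_{\rho_h,n_h}F\). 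The measures \(\widetilde S_\tau\,\mathrm dt\) are positive, and the tightness and moment-determinacy argument of Theorem~\ref{thm:simultaneous-Laguerre-Hermite-confluence}, with \eqref{eq:canonical-Hermite-finite-difference-moments}--\eqref{eq:canonical-Hermite-finite-difference-integral}, gives weak convergence and convergence of all moments, including moments against \((1+\varepsilon_\tau t)^{\beta_i(\tau)}t^k\).

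For the distributional limit, write \(\langle\mathscr P_\tau\widetilde S_\tau,\varphi\rangle=\langle\widetilde S_\tau,\mathscr P_\tau^*\varphi\rangle\). This requires that the boundary terms vanish at \(t=-\varepsilon_\tau^{-1}\), that is at \(x=0\). The function \(S_\tau\) is \(\mathrm O(x^{\min a_\nu})\) there up to logarithms, and \(\varphi\) has compact support anyway. Since \(\mathscr P_\tau^*\varphi\to\mathscr P^*\varphi\) uniformly with common compact support, weak convergence gives \eqref{eq:canonical-Hermite-B-scaled-limit} with
\[
\mathcal B^{\mathrm H}=\prod_i(\xi_i+\partial_t)^{m_i}\prod_h\rho_h^{-n_h}K_{\rho_h,n_h}F,
\]
after absorbing \(\rho_h^{n_h}\) into \(\kappa_\tau\). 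This is \eqref{eq:canonical-Hermite-B-Rodrigues}. Its transform follows because \(\partial_t\) corresponds to multiplication by \(-z\), which yields \eqref{eq:canonical-Hermite-B-transform}. The orthogonality \eqref{eq:canonical-Hermite-B-orthogonality} is then immediate: \(t^k\e^{\xi_it}\)-moments are \(\partial_z^k\) of the transform at \(\xi_i\), and the factor \((\xi_i-z)^{m_i}\) vanishes to order \(m_i\) there.

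The main obstacle is \eqref{eq:canonical-Hermite-B-moment-convergence}, where the test functions \(t^k(1+\varepsilon_\tau t)^{\beta_i(\tau)}\) are neither compactly supported nor \(\tau\)-independent. I would bypass adjoints and work directly with transforms. The left side equals \(\varepsilon_\tau^{-k}\Delta_{\varepsilon_\tau}^k\) of the normalized transform of the rescaled \(\mathcal B_\tau^{\mathrm L}\), evaluated at \(z_{i,\tau}=\varepsilon_\tau\beta_i(\tau)\), by \eqref{eq:canonical-Hermite-finite-difference-moments}. That transform is explicitly
\[
Q_{0,\tau}(z)\prod_iD_{i,\tau}^{[m_i]}(z)\big/\prod_hP_{h,\tau}^{[n_h]}(z),
\]
with the polynomials defined as in the proof of Theorem~\ref{thm:canonical-Hermite-explicit-forms}. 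It converges locally uniformly on \(\mathscr H\) to \eqref{eq:canonical-Hermite-B-transform}. Cauchy estimates and \eqref{eq:canonical-Hermite-finite-difference-integral} then give convergence to the \(k\)-th derivative at \(\xi_i\). It remains to check absolute convergence of the Mellin integrals at the shifted points. Those points lie within a fixed compact subset of \(\mathscr H\) because \(\xi_i+\rho_h>0\), which places them in the common strip for large \(\tau\).
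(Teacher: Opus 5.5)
Your proposal is correct and follows essentially the same route as the paper. You use the auxiliary shifted-parameter base row \(S_\tau\) with the exact scaled Rodrigues identity, pass to the distributional limit through formal adjoints against weak convergence of \(\widetilde S_\tau\), and obtain the moment limits by applying the finite-difference identity to the explicit transform \(Q_{0,\tau}\prod_iD_{i,\tau}^{[m_i]}/\prod_hP_{h,\tau}^{[n_h]}\) at \(z=\varepsilon_\tau\beta_i(\tau)\). The differences are minor: you derive the finite Rodrigues identity from \eqref{eq:Laguerre-B-Mellin} via \(\M[\thetaop f](s)=s\M[f](s)\) rather than citing it, and you rerun the tightness argument for \(\widetilde S_\tau\), whereas the paper simply applies Theorem~\ref{thm:simultaneous-Laguerre-Hermite-confluence} with \(d_h\) replaced by \(d_h+n_h\).
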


\begin{proof}
	Set
	\[
		P_{h,\tau}^{[n_h]}(z)
		\coloneq
		\prod_{\nu=0}^{n_h-1}
		\bigl[z+\varepsilon_\tau(b_h(\tau)+1+\nu)\bigr].
\]
	Put \(n_\Sigma=|\boldsymbol n|\) and define
	\[
	S_\tau(t)
	\coloneq
		\frac{c_\tau\varepsilon_\tau^{1-n_\Sigma}}{Z_\tau}
		w_0\!\left(
			x_\tau(t);
			\boldsymbol a(\tau),
			\boldsymbol b(\tau)+\boldsymbol n
		\right).
\]
	Set
	\[
		Z_\tau^{(\boldsymbol n)}
		\coloneq
		\frac{Z_\tau}{\prod_{h=1}^r(b_h(\tau)+1)_{n_h}}.
\]
	Then the following identity is exact:
	\[
		S_\tau(t)
		=
		\frac{\varepsilon_\tau^{-n_\Sigma}}
		{\prod_{h=1}^r(b_h(\tau)+1)_{n_h}}
		\frac{c_\tau\varepsilon_\tau}{Z_\tau^{(\boldsymbol n)}}
		w_0\!\left(
			x_\tau(t);\boldsymbol a(\tau),\boldsymbol b(\tau)+\boldsymbol n
		\right).
\]
	The factor containing \(w_0\) is the normalized base row in
	Theorem~\ref{thm:simultaneous-Laguerre-Hermite-confluence}, with
	\(d_h\) replaced by \(d_h+n_h\). Hence it converges weakly, with every
	fixed polynomial moment, to
	\[
		\left[\prod_{h=1}^rK_{\rho_h,d_h+n_h}\right]
		\phi_{\widehat a_*}.
\]
	Moreover,
	\[
		\frac{\varepsilon_\tau^{-n_\Sigma}}
		{\prod_{h=1}^r(b_h(\tau)+1)_{n_h}}
		\xrightarrow[\tau\to+\infty]{}
		\prod_{h=1}^r\rho_h^{-n_h}.
\]
The semigroup identity for the operators \(K_{\rho,d}\) therefore gives,
for every \(\varphi\in C_b(\mathbb R)\),
\[
\lim_{\tau\to\infty}
\int_{I_\tau}\varphi(t)S_\tau(t)\,\mathrm dt
=
\int_{\mathbb R}\varphi(t)S(t)\,\mathrm dt,
\]
where
\[
S(t)
\coloneq
\left[
\prod_{h=1}^r
\rho_h^{-n_h}K_{\rho_h,n_h}
\right]F(t).
\]
Moreover, for every \(k\in\mathbb N_0\),
\[
\lim_{\tau\to\infty}
\int_{I_\tau}t^kS_\tau(t)\,\mathrm dt
=
\int_{\mathbb R}t^kS(t)\,\mathrm dt.
\]

	Its centered transform is exactly
	\begin{equation}
		\label{eq:canonical-Hermite-SA-transform}
		Q_{S,\tau}(z)
		\coloneq
		\int_{I_\tau}
		(1+\varepsilon_\tau t)^{z/\varepsilon_\tau}S_\tau(t)\,\mathrm dt
		=
		\frac{Q_{0,\tau}(z)}
		{\prod_{h=1}^r
		 P_{h,\tau}^{[n_h]}(z)}.
	\end{equation}
	Indeed, replacing \(b_h(\tau)\) by \(b_h(\tau)+n_h\) divides the Mellin
	transform by
	\((b_h(\tau)+1+z/\varepsilon_\tau)_{n_h}\), while the prefactor in \(S_\tau\)
	supplies \(\varepsilon_\tau^{-n_\Sigma}\).  Hence
	\[
		Q_{S,\tau}(z)
		\xrightarrow[\tau\to+\infty]{}
		\frac{L_F(z)}
		{\prod_{h=1}^r(\rho_h+z)^{n_h}}
\]
	locally uniformly in the common holomorphy domain. Fubini's theorem and
	the transform rule for \(K_{\rho,d}\) identify the limit with the
	bilateral Laplace transform of the displayed function \(S\).

	The Rodrigues formula for Laguerre of the first kind
	\cite[Theorem~5.6]{Manas2026HypergeometricMixed}, after the change of
	variables \(x=x_\tau(t)\), becomes
	\begin{equation}
		\label{eq:canonical-Hermite-B-finite-A-Rodrigues}
			\widehat{\mathcal B}_\tau(t)
			\coloneq
			\frac{c_\tau\varepsilon_\tau^{\eta}}{Z_\tau}
			\mathcal B_\tau^{\mathrm L}(x_\tau(t))=
			\prod_{i=1}^p\prod_{j=0}^{m_i-1}
			\Bigl[
				(1+\varepsilon_\tau t)\partial_t
				+\varepsilon_\tau(\beta_i(\tau)+1+j)
			\Bigr]S_\tau(t).
	\end{equation}
	Replacing \(\boldsymbol b(\tau)\) by
	\(\boldsymbol b(\tau)+\boldsymbol n\) does not change the lower exponents
	\(a_\nu(\tau)\). Hence the estimate as \(x\downarrow0\) established in the proof of
	Theorem~\ref{thm:simultaneous-Laguerre-Hermite-confluence} applies to
	\(S_\tau\) and to every Euler derivative occurring here. It shows that the
	product of each such derivative with \(1+\varepsilon_\tau t\) tends to zero
	as \(t\downarrow-\varepsilon_\tau^{-1}\). Repeated integration by parts
	therefore gives the displayed Rodrigues identity after extension by zero,
	without an additional distribution supported at that point.
	For \(\varphi\in C_c^\infty(\mathbb R)\), the formal adjoint of each
	factor in brackets converges uniformly on the support of \(\varphi\),
	together with all derivatives, to \(-\partial_t+\xi_i\), the adjoint of
	\(\partial_t+\xi_i\).  Applying the finitely many adjoints in
	\eqref{eq:canonical-Hermite-B-finite-A-Rodrigues} and using
	\(S_\tau\xrightarrow[\tau\to+\infty]{}S\) proves
	\[
		\widehat{\mathcal B}_\tau
		\xrightarrow[\tau\to+\infty]{}
		\prod_{i=1}^p(\xi_i+\partial_t)^{m_i}S
\]
	in distributions.  This is
	\eqref{eq:canonical-Hermite-B-scaled-limit} and
	\eqref{eq:canonical-Hermite-B-Rodrigues}.

	For the moment statement, evaluate the Mellin transform in
	\eqref{eq:Laguerre-B-Mellin} at \(1+z/\varepsilon_\tau\) and use
	the definitions of \(S_\tau\) and
	\(\widehat{\mathcal B}_\tau\).  This gives
	\begin{equation}
		\label{eq:canonical-Hermite-finite-B-transform}
		Q_{B,\tau}(z)
		\coloneq
		\int_{I_\tau}
		(1+\varepsilon_\tau t)^{z/\varepsilon_\tau}
		\widehat{\mathcal B}_\tau(t)\,\mathrm dt
		=
		Q_{S,\tau}(z)
		\prod_{i=1}^p\prod_{j=0}^{m_i-1}
		\bigl[\varepsilon_\tau(\beta_i(\tau)+j)-z\bigr].
	\end{equation}
	The last display converges locally uniformly,
	together with every fixed derivative, to
	\[
		L_F(z)
		\frac{\prod_{i=1}^p(\xi_i-z)^{m_i}}
		{\prod_{h=1}^r(\rho_h+z)^{n_h}}.
\]
	This proves \eqref{eq:canonical-Hermite-B-transform}. The exact Mellin
	formula gives absolute convergence at the finitely many shifted arguments
	used below. Thus the general identity
	\eqref{eq:canonical-Hermite-finite-difference-moments} applies to the
	possibly signed, nonnormalized form \(Q_{B,\tau}\). Applying it
	at \(z=\varepsilon_\tau\beta_i(\tau)\) proves convergence of every
	polynomial moment with the indicated exponential factor. The limiting transform has a
	zero of order \(m_i\) at \(z=\xi_i\); its derivatives of orders
	\(0,\ldots,m_i-1\) therefore vanish. These derivatives are exactly the integrals in
	\eqref{eq:canonical-Hermite-B-orthogonality}, which proves the last
	assertion.
\end{proof}

\subsection{Polynomial components and normality at admissible indices}
\label{subsec:Hermite-components-normality}
\label{subsec:Hermite-normal-components}

Each \(A\)-component has both a finite Hermite expansion and a single
terminating Srivastava--Daoust representation. For \(B\), multiplication
of the row transforms by a common
factor reduces the component problem to a polynomial identity. Its
coefficient determinant is nonzero for every admissible index under
rate separation. Its product evaluation gives normality, followed by the
explicit series and quantitative convergence of the components.
Throughout, existence of the Rodrigues decomposition, uniqueness of its
components, and attainment of their maximal degrees are distinguished.

For \(m_i\ge1\), set \(u_i(t)\coloneq t-\widehat a_*-\xi_i/2\).
In the following vectors, list first the \(r\) rows and then the columns
\(k\ne i\), each in increasing order:
\[
 \boldsymbol b_i\coloneq((-n_h-d_h)_{h=1}^r,(m_k)_{k\ne i}),\qquad
 \boldsymbol x_i\coloneq
 \left(\left(-\frac1{\rho_h+\xi_i}\right)_{h=1}^r,
       \left(\frac1{\xi_k-\xi_i}\right)_{k\ne i}\right).
\]
\begin{proposition}[One terminating series for each Hermite \(A\)-component]
 \label{prop:Hermite-A-SD}
 Under the hypotheses of
 Corollary~\ref{cor:canonical-Hermite-explicit-A-components}, for \(m_i\ge1\),
 \begin{equation}
 A_{\boldsymbol\lambda,\boldsymbol m}^{(i),\mathrm H}(t)
 =\frac{(-1)^{m_i}\e^{-\widehat a_*\xi_i-\xi_i^2/4}G_i(\xi_i)}
             {\prod_{h=1}^r\rho_h^{d_h}}
   \mathfrak E_{m_i-1}(\boldsymbol b_i,\boldsymbol x_i;u_i(t),-1/4).
 \label{eq:Hermite-A-single-SD}
 \end{equation}
 Thus each component is one terminating Srivastava--Daoust function
 of \(r+p\) arguments, multiplied by the displayed constant.
\end{proposition}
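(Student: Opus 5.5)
The plan is to start from the residue formula rather than the Hermite sum. By the proof of Corollary~\ref{cor:canonical-Hermite-explicit-A-components}, the \(i\)-th component is the residue at \(z=\xi_i\) of
\[
\frac{(-1)^{m_i}}{\prod_h\rho_h^{d_h}}
\frac{\e^{z(t-\widehat a_*)-z^2/4}\,G_i(z)}{(z-\xi_i)^{m_i}},
\]
after removing the factor \(\e^{\xi_it}\). Putting \(z=\xi_i+w\) turns this into a coefficient extraction:
\[
A^{(i),\mathrm H}_{\boldsymbol\lambda,\boldsymbol m}(t)
=\frac{(-1)^{m_i}\e^{-\widehat a_*\xi_i-\xi_i^2/4}}{\prod_h\rho_h^{d_h}}
[w^{M_i}]\,\e^{u_i(t)w-w^2/4}\,G_i(\xi_i+w).
\]
Here I use \(z(t-\widehat a_*)-z^2/4=\xi_i(t-\widehat a_*)-\xi_i^2/4+w\,u_i(t)-w^2/4\), with \(u_i(t)=t-\widehat a_*-\xi_i/2\).

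The next step factors \(G_i(\xi_i+w)=G_i(\xi_i)\prod_h(1-x_hw)^{-b_h}\) with the parameters \(\boldsymbol b_i,\boldsymbol x_i\):
\begin{itemize}
\item the row factor is \((\rho_h+\xi_i+w)^{n_h+d_h}=(\rho_h+\xi_i)^{n_h+d_h}\bigl(1+w/(\rho_h+\xi_i)\bigr)^{n_h+d_h}\), which gives \(b=-n_h-d_h\) and \(x=-1/(\rho_h+\xi_i)\);
\item the column factor is \((\xi_k-\xi_i-w)^{-m_k}=(\xi_k-\xi_i)^{-m_k}\bigl(1-w/(\xi_k-\xi_i)\bigr)^{-m_k}\), which gives \(b=m_k\) and \(x=1/(\xi_k-\xi_i)\).
\end{itemize}
The principal branch chosen for the powers on the real interval containing the \(\xi_i\)'s agrees with these binomial expansions near \(w=0\), because \(\rho_h+\xi_i>0\).

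The coefficient identity stated after \eqref{eq:finite-quadratic-SD-identification}, with \(v=u_i(t)\) and \(w=-1/4\), then gives \([w^{M_i}]\) of the product as \(\mathfrak E_{M_i}(\boldsymbol b_i,\boldsymbol x_i;u_i(t),-1/4)\). This yields \eqref{eq:Hermite-A-single-SD}. The Srivastava--Daoust statement follows from \eqref{eq:finite-quadratic-SD-identification} with \(R=r+p-1\) binomial arguments plus the quadratic one, which is \(r+p\) arguments in total. At the finitely many \(t\) where \(u_i(t)=0\), the finite polynomial \eqref{eq:finite-quadratic-KdF} supplies the value.

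The only real care is bookkeeping: the signs in \(\boldsymbol x_i\), the branch of the noninteger powers \((\rho_h+\xi_i+w)^{d_h}\), and the ordering of the argument vector. The ordering does not matter, since the definition is symmetric in the pairs \((b_h,x_h)\). As a consistency check, the \(w^{M_i}\) coefficient reproduces the leading coefficient \eqref{eq:Hermite-A-leading-coefficient}.
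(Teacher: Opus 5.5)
Your proposal is correct and follows the paper's proof essentially step for step. Like the paper, you substitute \(z=\xi_i+w\) in the residue, split \(G_i(\xi_i+w)/G_i(\xi_i)\) into the binomial factors encoded by \(\boldsymbol b_i,\boldsymbol x_i\), isolate \(\e^{u_i(t)w-w^2/4}\), and read off the coefficient of \(w^{m_i-1}\) via \eqref{eq:finite-quadratic-KdF} and \eqref{eq:finite-quadratic-SD-identification}; reusing \(w\) both as the expansion variable and for the argument \(-1/4\) of \(\mathfrak E\) is only a harmless notational clash.
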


\begin{proof}
 Put \(z=\xi_i+w\) in the residue giving the \(i\)-th component.
 The nonexponential factor becomes
 \[
 \frac{G_i(\xi_i+w)}{G_i(\xi_i)}
 =\prod_{h=1}^r\left(1+\frac{w}{\rho_h+\xi_i}\right)^{n_h+d_h}
  \prod_{k\ne i}\left(1-\frac{w}{\xi_k-\xi_i}\right)^{-m_k}.
 \]
 After taking out the constant and \(\e^{\xi_i t}\), the exponential
 is \(\e^{u_i(t)w-w^2/4}\). The residue is therefore the coefficient
 of \(w^{m_i-1}\) in this product, which gives
 \eqref{eq:Hermite-A-single-SD} by \eqref{eq:finite-quadratic-KdF}.
 Its identification in
 \eqref{eq:finite-quadratic-SD-identification} proves the assertion.
 Expanding the exponential first instead recovers the finite Hermite
 sum \eqref{eq:canonical-Hermite-A-components}.
\end{proof}

For the row weights \(U_j^{\mathrm H}\) in
\eqref{eq:canonical-Hermite-limiting-vectors}, denote their bilateral
Laplace transforms by
\[
	\Lambda_j(z)\coloneq\int_{\mathbb R}\e^{zt}U_j^{\mathrm H}(t)\,\mathrm dt,
	\qquad 1\le j\le r+1.
\]
Their mixed moments, for \(k,\ell\in\mathbb N_0\), are denoted by
\[
	\mathfrak M_{(j,k),(i,\ell)}\coloneq
	\int_{\mathbb R}t^{k+\ell}\e^{\xi_i t}U_j^{\mathrm H}(t)\,\mathrm dt.
\]

\begin{lemma}[Transforms and moments of the limiting rows]
	The row transforms satisfy
	\begin{equation}
		\label{eq:canonical-Hermite-row-transforms}
		\Lambda_h(z)
		=
		\frac{\rho_h}{\rho_h+z}L_F(z),
		\quad 1\le h\le r,
		\qquad
		\Lambda_q(z)=L_F(z).
	\end{equation}
	The mixed-type moments are
	\begin{equation}
		\label{eq:canonical-Hermite-limiting-moments}
		\mathfrak M_{(j,k),(i,\ell)}
		=
		\left.
		\partial_z^{k+\ell}\Lambda_j(z)
		\right|_{z=\xi_i}.
	\end{equation}
\end{lemma}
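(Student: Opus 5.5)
The lemma has two parts: the row transforms \eqref{eq:canonical-Hermite-row-transforms} and the moment formula \eqref{eq:canonical-Hermite-limiting-moments}. The plan is to obtain the first from the transform rule for \(K_{\rho,d}\) already used for \eqref{eq:canonical-Hermite-LF}, and the second by differentiating under the integral sign.

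For the transforms, recall from \eqref{eq:canonical-Hermite-limiting-vectors} that \(U_q^{\mathrm H}=F\), so \(\Lambda_q=L_F\) by definition. For \(1\le h\le r\) we have \(U_h^{\mathrm H}=K_{\rho_h,1}F\). The proof of \eqref{eq:canonical-Hermite-LF} shows the following: if \(\e^{\operatorname{Re}z\,t}f\in L^1(\mathbb R)\) and \(\operatorname{Re}(\rho+z)>0\), then the bilateral transform of \(K_{\rho,d}f\) is \((\rho/(\rho+z))^dL_f(z)\). We apply this with \(f=F\), \(d=1\) and \(\rho=\rho_h\). The hypotheses hold throughout \(\operatorname{Re}z>-\min_h\rho_h\). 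Absolute convergence of \(L_F\) there follows from the same Fubini argument, since \(F\) is a composition of nonnegative kernels applied to a Gaussian. This gives \(\Lambda_h=\rho_hL_F/(\rho_h+z)\), and it also matches the limit identified in the proof of Theorem~\ref{thm:simultaneous-Laguerre-Hermite-confluence}.

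For the moments, note that each \(U_j^{\mathrm H}\) is nonnegative. Its transform converges absolutely on an open strip containing every \(\xi_i\), because \(\xi_i+\rho_h>0\). On a compact neighbourhood of \(\xi_i\) inside that strip, \(|t|^{n}\e^{\operatorname{Re}z\,t}\) is dominated by a constant times \(\e^{(\xi_i-\delta)t}+\e^{(\xi_i+\delta)t}\), which is integrable against \(U_j^{\mathrm H}\). Dominated convergence therefore allows differentiation under the integral any number of times. With the kernel \(\e^{zt}\), each derivative produces a factor \(t\), so
\[
\partial_z^{k+\ell}\Lambda_j(z)\big|_{z=\xi_i}=\int_{\mathbb R}t^{k+\ell}\e^{\xi_it}U_j^{\mathrm H}(t)\,\mathrm dt=\mathfrak M_{(j,k),(i,\ell)}.
\]
For \(r=0\), everything is entire and the domination holds for every \(\delta\).

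The only delicate point is the integrability of the domination near the left edge of the strip. It is handled by choosing \(\delta\) smaller than \(\min_h(\xi_i+\rho_h)\). On the right, the Gaussian factor in \(F\) gives decay faster than any exponential, which is visible from \(L_F\) being finite for every real \(z\) to the right of the strip edge.
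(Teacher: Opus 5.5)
Your proposal is correct and takes essentially the same route as the paper: the Fubini transform rule for \(K_{\rho_h,1}\) gives \(\Lambda_h\), the identity \(\Lambda_q=L_F\) holds by definition because \(U_q^{\mathrm H}=F\), and the moments come from differentiating under the integral sign at \(z=\xi_i\). Your explicit domination with \(\delta<\min_h(\xi_i+\rho_h)\) supplies the justification for that differentiation, which the paper leaves implicit.
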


\begin{proof}
	Fubini's theorem and the definition of \(K_{\rho,1}\) give
	\[
		\int_{\mathbb R}\e^{zt}K_{\rho,1}F(t)\,\mathrm dt
		=
		\frac{\rho}{\rho+z}L_F(z).
\]
 The final identity is the definition of \(L_F\), since \(U_q^{\mathrm H}=F\).
	Finally, differentiation \(k+\ell\) times with respect to \(z\)
	multiplies the integrand by \(t^{k+\ell}\), which proves
	\eqref{eq:canonical-Hermite-limiting-moments}.
\end{proof}

To study the \(B\)-components, retain the hypotheses of
Theorem~\ref{thm:canonical-Hermite-B-form-confluence}. The coefficient
matrix and its relation to the moments are defined as follows. Put
	\[
		\lambda_j
		\coloneq
		\begin{cases}
			n_j,&1\le j\le r,\\
			\eta,&j=q,
		\end{cases}
		\qquad
		\mathcal I_{\boldsymbol\lambda}
		\coloneq
		\left\{(j,k):1\le j\le r+1,\ 0\le k<\lambda_j\right\}.
\]
	Then \(\lvert\mathcal I_{\boldsymbol\lambda}\rvert=N\). Define
	\begin{equation}
		\label{eq:canonical-Hermite-DP-polynomials}
		D_{\boldsymbol n}(z)
		\coloneq
		\prod_{h=1}^r(\rho_h+z)^{n_h},
		\qquad
		P_{\boldsymbol m}(z)
		\coloneq
		\prod_{i=1}^p(\xi_i-z)^{m_i}.
	\end{equation}
	For \((j,k)\in\mathcal I_{\boldsymbol\lambda}\), define
	\begin{equation}
		\label{eq:canonical-Hermite-connection-polynomials}
		\Phi_{j,k}(z)
		\coloneq
		\frac{D_{\boldsymbol n}(z)}{L_F(z)}
		H_k(\partial_z)\Lambda_j(z).
	\end{equation}
	Order \(\mathcal I_{\boldsymbol\lambda}\) lexicographically. Define the
	\(N\times N\) coefficient matrix \(C\) of the polynomials \(\Phi_{j,k}\)
	and the coefficient vector \(\mathbf p\) of \(P_{\boldsymbol m}\) by
	\begin{equation}
		\label{eq:canonical-Hermite-B-connection-matrix}
		C_{\nu,(j,k)}
		\coloneq
		[z^\nu]\Phi_{j,k}(z),
		\qquad
		p_\nu
		\coloneq
		[z^\nu]P_{\boldsymbol m}(z),
		\qquad
		0\le\nu\le N-1.
	\end{equation}
	Set \(\Delta_{\mathrm H}\coloneq\det C\). Expanding \(P_{\boldsymbol m}\) gives
	\begin{equation}
		\label{eq:canonical-Hermite-B-target-column}
		p_\nu
		=
		(-1)^\nu
		\sum_{\substack{
			0\le\alpha_i\le m_i\\
			\alpha_1+\cdots+\alpha_p=\nu
		}}
		\prod_{i=1}^p
		\binom{m_i}{\alpha_i}\xi_i^{m_i-\alpha_i}.
	\end{equation}
	For a fixed \(i_0\in\{1,\ldots,p\}\), put
	\[
		q_i\coloneq m_i+\delta_{i,i_0},
		\qquad
		\mathcal J_{\boldsymbol m,i_0}
		\coloneq
		\{(i,\ell):1\le i\le p,\ 0\le\ell<q_i\}.
\]
	Order \(\mathcal J_{\boldsymbol m,i_0}\) lexicographically, with both
	indices increasing, and use the same column order as in \(C\). Define the adjacent
	moment matrix by
	\[
		\mathcal N_{i_0}
		\coloneq
		\left[
			\mathfrak M_{(j,k),(i,\ell)}
		\right]_{
			(i,\ell)\in\mathcal J_{\boldsymbol m,i_0},\,
			(j,k)\in\mathcal I_{\boldsymbol\lambda}}.
\]
\begin{proposition}[Connection determinant for the limiting \(B\)-problem]
	\label{prop:canonical-Hermite-B-components}
	The functions \(\Phi_{j,k}\) are polynomials of degree at most \(N-1\).
	They form a basis of \(\mathbb P_{N-1}\) if and only if
	\(\Delta_{\mathrm H}\ne0\). For every \(i_0\in\{1,\ldots,p\}\),
	\begin{equation}
		\label{eq:canonical-Hermite-B-connection-determinant}
			\det\mathcal N_{i_0}
			=
			\left[
				\prod_{i=1}^p
				\left(
					\frac{L_F(\xi_i)}{D_{\boldsymbol n}(\xi_i)}
				\right)^{q_i}
			\right]
			\left[
				\prod_{i=1}^p\prod_{\ell=0}^{q_i-1}\ell!
			\right]
			\left[
				\prod_{1\le i<j\le p}
				(\xi_j-\xi_i)^{q_iq_j}
			\right]
			\Delta_{\mathrm H}.
	\end{equation}
	Consequently,
	\(\det\mathcal N_{i_0}\ne0\) if and only if
	\(\Delta_{\mathrm H}\ne0\).
	When these determinants are nonzero, the limiting \(B\)-problem is
	weakly normal and any one of its adjacent moments fixes a unique
	component vector.
\end{proposition}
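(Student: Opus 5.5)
Three things need proof: polynomiality of \(\Phi_{j,k}\) with \(\deg\le N-1\), the factorization \(\det\mathcal N_{i_0}=(\cdots)\Delta_{\mathrm H}\), and weak normality. The plan mirrors the Laguerre second-kind argument in Proposition~\ref{prop:LII-rational-basis} and Theorem~\ref{thm:LII-moment-determinants}.

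\emph{Polynomiality.} From \eqref{eq:canonical-Hermite-LF}, \(L_F'/L_F=\widehat a_*+z/2-\sum_h d_h/(\rho_h+z)\). Each \(\partial_z\Lambda_j\) is therefore \(L_F\) times a rational function. Inductively, \(\partial_z^k\Lambda_j/L_F\) is a polynomial in \(z\) plus terms with poles of order at most \(k+\delta_{j\le r}\) at the \(-\rho_h\).

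The polynomial growth is removed by the operator \(H_k(\partial_z)\). The generating function \eqref{eq:monic-Hermite-generating-function} gives \(\sum_k H_k(\partial_z)w^k/k!=\e^{w\partial_z-w^2/4}\). Acting on \(\Lambda_j\), this is the shift \(z\mapsto z+w\) times \(\e^{-w^2/4}\). Hence
\[
\sum_k\frac{w^k}{k!}\frac{H_k(\partial_z)\Lambda_j(z)}{L_F(z)}
=\e^{\widehat a_*w+zw/2}\,\frac{\Lambda_j(z+w)}{L_F(z+w)}\prod_h\Bigl(\frac{\rho_h+z+w}{\rho_h+z}\Bigr)^{-d_h}\!\cdot(\ldots).
\]
Concretely, \(\Phi_{j,k}/D_{\boldsymbol n}\) is the \(k\)-th \(w\)-coefficient of \(\e^{w(\widehat a_*+z/2)}\cdot(\text{rational in }z+w)\cdot\prod_h(1+w/(\rho_h+z))^{-d_h}\). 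I expect the exponential part to cancel into a purely algebraic factor once the Gaussian shift is accounted for; checking this exactly is the first computation. The resulting coefficient is rational in \(z\) with pole order at most \(k\) at each \(-\rho_h\), plus one extra at \(-\rho_j\) for the shifted row. It decays at infinity as in the Laguerre second-kind case, so there is no polynomial part.

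Near-diagonality of \(\boldsymbol\lambda\) (Laguerre admissibility) then gives \(k<\lambda_j\le n_h+1\) for every \(h\), and \(k+1\le n_j\) for the shifted rows. Multiplication by \(D_{\boldsymbol n}\) cancels all poles. The degree count \(|\boldsymbol n|+\max(k-\text{pole orders})\le N-1\) follows as in Lemma~\ref{lem:canonical-Hermite-direct-A-orthogonality}. Once all \(N\) of the \(\Phi_{j,k}\) lie in \(\mathbb P_{N-1}\), the basis criterion \(\Delta_{\mathrm H}\ne0\) is tautological.

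\emph{Determinant identity.} By \eqref{eq:canonical-Hermite-limiting-moments}, the entry \(\mathfrak M_{(j,k),(i,\ell)}\) equals \(\partial_z^{k+\ell}\Lambda_j(\xi_i)\). The row index \(k\) must be traded for \(H_k(\partial_z)\), which is monic of degree \(k\) in \(\partial_z\). Within each row block, column operations are unitriangular, and replacing \(\partial_z^{k}\) by \(H_k(\partial_z)\) changes no determinant. Thus \(\det\mathcal N_{i_0}=\det[\partial_z^\ell(g\Phi_{j,k})(\xi_i)]\) with \(g=L_F/D_{\boldsymbol n}\).

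Next factor exactly as in Theorem~\ref{thm:LII-moment-determinants}: \(\mathcal N_{i_0}=J V C\). Here \(J\) is the block-triangular Leibniz matrix with diagonal \(g(\xi_i)\), giving \(\det J=\prod_i g(\xi_i)^{q_i}\). The matrix \(V\) is the confluent Vandermonde with derivatives, and \(\det V=\prod\ell!\prod_{i<j}(\xi_j-\xi_i)^{q_iq_j}\). The sign convention must match the ordering; here there is no \((-1)^\ell\), since the kernel is \(\e^{zt}\). Multiplying these gives \eqref{eq:canonical-Hermite-B-connection-determinant}. Since \(L_F(\xi_i)\ne0\), \(D_{\boldsymbol n}(\xi_i)\ne0\), and the \(\xi_i\) are distinct, the equivalence with \(\Delta_{\mathrm H}\ne0\) follows.

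\emph{Weak normality.} The homogeneous \(B\)-conditions form the submatrix of \(\mathcal N_{i_0}\) obtained by dropping the row \((i_0,m_{i_0})\). A nonsingular \(\mathcal N_{i_0}\) forces this \((N-1)\times N\) system to have rank \(N-1\), so the kernel is one-dimensional. Prescribing the adjacent moment gives a unique solution by Cramer's rule.

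The main obstacle is the first step: verifying the exact pole-order bound after the Hermite operator. The risk is that the Gaussian factor leaves no positive-degree polynomial part, and that the \(d_h\)-dependent terms produce poles of order exactly \(k\) rather than \(k+1\). I would check this with the explicit shift identity above before anything else.
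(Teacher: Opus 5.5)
Your passage to the Hermite basis (a unitriangular change within each fixed row weight) matches the paper's proof. So do the factorization \(\widetilde{\mathcal N}_{i_0}=JVC\) with its two determinants (no sign, since the kernel is \(\e^{zt}\)) and the weak-normality argument by deleting the adjacent row.

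The gap is in the polynomiality step, which you left unverified, and the outcome you predict there is false. In your own shift identity the factor \(\e^{w(\widehat a_*+z/2)}\) does not cancel. Its \(w^a\)-coefficient is \((\widehat a_*+z/2)^a/a!\), a polynomial of degree \(a\) in \(z\), so \(\Phi_{j,k}/D_{\boldsymbol n}\) does not decay at infinity.

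For the base row, \(H_k(\partial_z)\e^{\widehat a_*z+z^2/4}=2^{-k}(z+2\widehat a_*)^k\e^{\widehat a_*z+z^2/4}\). Hence \(\Phi_{q,k}/D_{\boldsymbol n}=2^{-k}z^k+\mathrm O(z^{k-1})\) and \(\deg\Phi_{q,k}=|\boldsymbol n|+k\) exactly. For a convolution row, the term \(\rho_h(\widehat a_*+z/2)^k/(\rho_h+z)\) gives growth of order \(z^{k-1}\).

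If there were no polynomial part, all \(N\) polynomials would lie in \(\mathbb P_{|\boldsymbol n|-1}\), of dimension \(|\boldsymbol n|<N\) whenever \(\eta\ge1\). Then \(\Delta_{\mathrm H}\) would vanish identically, contradicting Proposition~\ref{prop:canonical-Hermite-vector-edge-determinant}.

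Your pole-cancellation inequalities are correct: \(k\le n_u\) for \(u\ne j\), and \(k+1\le n_h\) in the row's own convolution factor. The degree bound, however, must also use the row index \(\eta\), which your count never invokes.
\begin{itemize}
\item For the base row, \(k\le\eta-1\) gives \(\deg\Phi_{q,k}\le|\boldsymbol n|+\eta-1=N-1\).
\item For a convolution row, \(k\le n_h-1\) together with near-diagonality \(n_h\le\eta+1\) gives \(\deg\Phi_{h,k}\le|\boldsymbol n|+n_h-2\le N-1\).
\end{itemize}
The paper reads these degrees off the explicit multinomial Leibniz formulas of Lemma~\ref{lem:canonical-Hermite-explicit-connection-polynomials}. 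There the Gaussian contributes \(\mathcal H^{(+)}_{\alpha_0}(\widehat a_*+z/2)\), of degree \(\alpha_0\). Your generating-function route gives the same structure once this polynomial part is retained.
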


\begin{proof}
	Expanding each factor \((\xi_i-z)^{m_i}\) and collecting the coefficient
	of \(z^\nu\) proves
	\eqref{eq:canonical-Hermite-B-target-column}.
	Put
	\[
		g(z)\coloneq\frac{L_F(z)}{D_{\boldsymbol n}(z)}.
\]
	The inequalities \(\xi_i+\rho_h>0\) imply \(g(\xi_i)\ne0\).
	Replacing the monomial basis in each polynomial component by the monic
	Hermite basis does not change the determinant of
	\(\mathcal N_{i_0}\), because the change-of-basis matrix is triangular
	with diagonal entries one. In the Hermite basis the entries are
	\[
		\widetilde{\mathcal N}_{(i,\ell),(j,k)}
		=
		\left.
		\partial_z^\ell\bigl(g(z)\Phi_{j,k}(z)\bigr)
		\right|_{z=\xi_i},
		\qquad
		0\le\ell<q_i.
\]
	Let \(J=\operatorname{diag}(J_1,\ldots,J_p)\), where
	\[
		(J_i)_{\ell,a}
		\coloneq
		\binom{\ell}{a}g^{(\ell-a)}(\xi_i),
		\qquad
		0\le a\le\ell<q_i,
\]
	with zero entries for \(a>\ell\), and define
	\[
		V_{(i,a),\nu}
		\coloneq
		\left.\partial_z^a z^\nu\right|_{z=\xi_i}
		=\begin{cases}
		\dfrac{\nu!}{(\nu-a)!}\xi_i^{\nu-a},&\nu\ge a,\\
		0,&\nu<a,
		\end{cases}
		\qquad
		0\le\nu\le N-1.
\]
	Leibniz's rule gives the exact factorization
	\[
		\widetilde{\mathcal N}_{i_0}=JVC.
\]
	Each \(J_i\) is lower triangular, so
	\[
		\det J
		=
		\prod_{i=1}^pg(\xi_i)^{q_i}.
\]
	With the lexicographic row order fixed above, the confluent
	Vandermonde determinant is
	\[
		\det V
		=
		\left(
			\prod_{i=1}^p\prod_{\ell=0}^{q_i-1}\ell!
		\right)
		\prod_{1\le i<j\le p}
		(\xi_j-\xi_i)^{q_iq_j}.
\]
	Multiplication of these determinants proves
	\eqref{eq:canonical-Hermite-B-connection-determinant}.
	The polynomial character and degree bound asserted in the statement follow
	from
	Lemma~\ref{lem:canonical-Hermite-explicit-connection-polynomials} below.
	Finally, deleting the row of \(\mathcal N_{i_0}\) corresponding to the
	additional moment leaves the \(N-1\) homogeneous conditions. Since
	\(\mathcal N_{i_0}\) is invertible, those rows are independent and their
	kernel is one-dimensional. Its nonzero vectors have a nonzero additional
	moment, which proves the weak normality and normalization assertions.
\end{proof}

The connection polynomials can be evaluated using the auxiliary
polynomials \(\mathcal H_v^{(+)}\) and functions \(\Psi_{j,k}\), defined,
for \((j,k)\in\mathcal I_{\boldsymbol\lambda}\), by
	\[
		\e^{xu+u^2/4}
		=
		\sum_{v=0}^\infty
		\mathcal H_v^{(+)}(x)\frac{u^v}{v!},
		\qquad
		\Psi_{j,k}(z)
		\coloneq
		\frac{D_{\boldsymbol n}(z)}{L_F(z)}
		\partial_z^k\Lambda_j(z).
\]
\begin{lemma}[Explicit connection polynomials]
	\label{lem:canonical-Hermite-explicit-connection-polynomials}
	The polynomials \(\mathcal H_v^{(+)}\) are related to the monic scalar Hermite polynomials in
	\eqref{eq:monic-Hermite-generating-function} by
	\[
		\mathcal H_v^{(+)}(x)=(-\mathrm i)^vH_v(\mathrm i x).
\]
	The scalar Hermite polynomials and the connection polynomials satisfy,
	respectively,
	\[
		H_k(x)
		=
		k!\sum_{v=0}^{\lfloor k/2\rfloor}
		\frac{(-1)^v x^{k-2v}}{4^v v!(k-2v)!},
\]
	and
	\begin{equation}
		\label{eq:canonical-Hermite-Phi-from-Psi}
		\Phi_{j,k}(z)
		=
		k!\sum_{v=0}^{\lfloor k/2\rfloor}
		\frac{(-1)^v}{4^v v!(k-2v)!}
		\Psi_{j,k-2v}(z).
	\end{equation}
	For \(1\le h\le r\) and \(0\le k<n_h\),
	\begin{equation}
		\label{eq:canonical-Hermite-Psi-beta-explicit}
			\Psi_{h,k}(z)
			=
			\rho_h
			\sum_{\substack{
				\alpha_0,\ldots,\alpha_r\ge0\\
				\alpha_0+\cdots+\alpha_r=k
			}}
			\binom{k}{\alpha_0,\ldots,\alpha_r}
			\mathcal H_{\alpha_0}^{(+)}\!\left(\widehat a_*+\frac z2\right)
			\prod_{u=1}^r
			(-1)^{\alpha_u}
			(d_u+\delta_{u,h})_{\alpha_u}
			(\rho_u+z)^{n_u-\alpha_u-\delta_{u,h}}.
	\end{equation}
 For \(0\le k<\eta\), the last row gives
 \begin{equation}
 \Psi_{q,k}(z)=
 \sum_{\substack{\alpha_0,\ldots,\alpha_r\ge0\\\alpha_0+\cdots+\alpha_r=k}}
 \binom{k}{\alpha_0,\ldots,\alpha_r}
 \mathcal H_{\alpha_0}^{(+)}\!\left(\widehat a_*+\frac z2\right)
 \prod_{u=1}^r(-1)^{\alpha_u}(d_u)_{\alpha_u}
                 (\rho_u+z)^{n_u-\alpha_u}.
 \label{eq:canonical-Hermite-Psi-base-explicit}
 \end{equation}
	These finite identities give every entry of \(C\) directly.
\end{lemma}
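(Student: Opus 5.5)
The plan is to compute every item from the explicit transforms \eqref{eq:canonical-Hermite-LF} and \eqref{eq:canonical-Hermite-row-transforms}. All the claimed identities will follow from generating functions once the operator $H_k(\partial_z)$ is rewritten in terms of ordinary derivatives.

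\emph{The identity for $\mathcal H_v^{(+)}$.} Replace $u$ by $-\mathrm i u$ and $x$ by $\mathrm i x$ in \eqref{eq:monic-Hermite-generating-function}. This gives $\e^{xu+u^2/4}=\sum_v H_v(\mathrm i x)(-\mathrm i u)^v/v!$, and comparison of coefficients yields $\mathcal H_v^{(+)}(x)=(-\mathrm i)^vH_v(\mathrm ix)$.

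\emph{The explicit formula for $H_k$ and \eqref{eq:canonical-Hermite-Phi-from-Psi}.} Expanding $\e^{uz}\e^{-z^2/4}$ as a Cauchy product and extracting the coefficient of $z^k/k!$ gives the explicit sum for $H_k(x)$. Substituting $x=\partial_z$ into this finite sum, and noting that $D_{\boldsymbol n}/L_F$ is a multiplier independent of the summation, gives \eqref{eq:canonical-Hermite-Phi-from-Psi} directly from the definitions of $\Phi_{j,k}$ and $\Psi_{j,k}$.

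\emph{The formulas for $\Psi$.} Write
\[
\Lambda_h(z)=\rho_h\prod_u\rho_u^{d_u}\,\e^{\widehat a_*z+z^2/4}\prod_u(\rho_u+z)^{-d_u-\delta_{u,h}}.
\]
This is a product of $r+1$ factors, so I would apply the multinomial Leibniz rule to $\partial_z^k$. For the exponential factor, the generating function gives $\partial_z^{\alpha}\e^{\widehat a_*z+z^2/4}=\mathcal H^{(+)}_{\alpha}(\widehat a_*+z/2)\,\e^{\widehat a_*z+z^2/4}$; to see this, expand $\e^{\widehat a_*(z+u)+(z+u)^2/4}=\e^{\widehat a_*z+z^2/4}\e^{(\widehat a_*+z/2)u+u^2/4}$ in $u$. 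For each power factor,
\[
\partial_z^{\alpha}(\rho_u+z)^{-c}=(-1)^{\alpha}(c)_{\alpha}(\rho_u+z)^{-c-\alpha}.
\]
Dividing by $L_F$ cancels the exponential and the constants $\rho_u^{d_u}$, leaving the factor $\rho_h$ and the product $\prod_u(\rho_u+z)^{-d_u}$. Multiplying by $D_{\boldsymbol n}$ then turns each power into $(\rho_u+z)^{n_u-\alpha_u-\delta_{u,h}}$, which is \eqref{eq:canonical-Hermite-Psi-beta-explicit}. The base row is the same computation without the $\rho_h$ and $\delta$ terms, giving \eqref{eq:canonical-Hermite-Psi-base-explicit}.

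\emph{The step needing care: polynomiality and degree.} To show the $\Phi_{j,k}$ are polynomials of degree at most $N-1$, as used in Proposition~\ref{prop:canonical-Hermite-B-components}, one must check that all exponents are nonnegative, and this is where Laguerre admissibility enters.
\begin{itemize}
\item For $j=h$ and $k<n_h$: by \eqref{eq:Laguerre-admissibility-nn}, $\alpha_u\le k\le n_h-1\le n_u$. The exponent at $u=h$ is $n_h-\alpha_h-1\ge0$, and for $u\neq h$ it is $n_u-\alpha_u\ge0$.
\item For $j=q$ and $k<\eta$: by \eqref{eq:Laguerre-admissibility-etan}, $\alpha_u\le k\le\eta-1\le n_u$.
\end{itemize}
Each summand is then a polynomial. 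Its degree is $\alpha_0+\sum_u(n_u-\alpha_u)-\delta$, since $\mathcal H_{\alpha_0}^{(+)}$ has degree $\alpha_0$ in $z$. This equals $|\boldsymbol n|+\alpha_0-\sum_u\alpha_u-\delta\le|\boldsymbol n|+k-\delta$. The bound is $|\boldsymbol n|+k-1\le N-1$ for a beta row, using $k<n_h\le\eta+1$ (near diagonality), and $|\boldsymbol n|+k\le N-1$ for the base row. The same bounds pass to $\Phi_{j,k}$ through \eqref{eq:canonical-Hermite-Phi-from-Psi}, since that sum only involves $\Psi_{j,k'}$ with $k'\le k$.
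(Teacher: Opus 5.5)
Your proposal is correct and follows the paper's own argument. You expand the generating functions, substitute \(x=\partial_z\), apply the multinomial Leibniz rule to the factored forms of \(\Lambda_h\) and \(L_F\), and use \eqref{eq:Laguerre-admissibility-nn}--\eqref{eq:Laguerre-admissibility-etan} both for nonnegative exponents and for the bound \(\deg\Phi_{j,k}\le N-1\). You also supply two details the paper leaves implicit, namely the check of \(\mathcal H_v^{(+)}(x)=(-\mathrm i)^vH_v(\mathrm i x)\) and the explicit degree count; note only that your variable names in that substitution are swapped relative to \eqref{eq:monic-Hermite-generating-function}, and that dividing by \(L_F\) also cancels \(\prod_u(\rho_u+z)^{-d_u}\) rather than leaving it.
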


\begin{proof}
	Expanding \eqref{eq:monic-Hermite-generating-function} proves the scalar
	identity in the statement. Substitution of \(x=\partial_z\) proves
	\eqref{eq:canonical-Hermite-Phi-from-Psi}.

	For \(1\le h\le r\), write
	\[
		\Lambda_h(z)
		=
		\rho_h\left(\prod_{u=1}^r\rho_u^{d_u}\right)
		\e^{\widehat a_*z+z^2/4}
		\prod_{u=1}^r
		(\rho_u+z)^{-d_u-\delta_{u,h}}.
\]
	The identities
	\[
		\partial_z^a\e^{\widehat a_*z+z^2/4}
		=
		\mathcal H_a^{(+)}\!\left(\widehat a_*+\frac z2\right)
		\e^{\widehat a_*z+z^2/4}
\]
	and
	\[
		\partial_z^a(\rho_u+z)^{-d}
		=
		(-1)^a(d)_a(\rho_u+z)^{-d-a}
\]
	turn the multinomial Leibniz rule into
	\eqref{eq:canonical-Hermite-Psi-beta-explicit} after multiplication by
	\(D_{\boldsymbol n}/L_F\).

 For \(\Lambda_q=L_F\), the same multinomial Leibniz rule,
 without the extra simple-pole factor, proves
 \eqref{eq:canonical-Hermite-Psi-base-explicit}.
	The inequalities
	\eqref{eq:Laguerre-admissibility-nn} and
	\eqref{eq:Laguerre-admissibility-etan} make every displayed exponent
	nonnegative.  A term in
	\eqref{eq:canonical-Hermite-Psi-beta-explicit} has degree at most
	\(|\boldsymbol n|-1+k\), and a term in
	\eqref{eq:canonical-Hermite-Psi-base-explicit} has degree at most
	\(|\boldsymbol n|+k\).  The remaining admissibility inequalities
	therefore give \(\deg\Phi_{j,k}\le N-1\).
\end{proof}

\begin{proposition}[Exact Hermite connection determinant]
	\label{prop:canonical-Hermite-vector-edge-determinant}
 For the row index \(\boldsymbol\lambda=(n_1,\ldots,n_r,\eta)\)
 in Proposition~\ref{prop:canonical-Hermite-B-components}, let \(p\ge1\).
	Order \(\mathcal I_{\boldsymbol\lambda}\) lexicographically, first by
	the row index and then by the polynomial degree. If the rates
	\(\{\rho_h:n_h>0\}\) are pairwise distinct, then the connection
	determinant has the closed product evaluation
	\begin{equation}
		\label{eq:canonical-Hermite-vector-edge-determinant}
		\Delta_{\mathrm H}
		=
		2^{-\binom{\eta}{2}}
		\prod_{h=1}^r
		\left(
			\rho_h^{n_h}
			\prod_{k=0}^{n_h-1}(d_h+1)_k
		\right)
		\prod_{1\le j<h\le r}
		(\rho_h-\rho_j)^{n_jn_h}.
	\end{equation}
	In particular, \(\Delta_{\mathrm H}\ne0\) for every
	Laguerre-admissible vector index under the stated hypotheses. Consequently,
	for every \(B\)-balance
	\(|\boldsymbol\lambda|=|\boldsymbol m|+1\), every adjacent moment matrix
	\(\mathcal N_{i_0}\) in
	\eqref{eq:canonical-Hermite-B-connection-determinant} is nonsingular.
	Thus the limiting \(B\)-problem is weakly normal, and its Rodrigues
	function has a unique decomposition with the prescribed component degrees.
\end{proposition}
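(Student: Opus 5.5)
The plan is to compute $\Delta_{\mathrm H}$ by factoring the change of basis $C$ into two pieces. The first expands each $\Phi_{j,k}/D_{\boldsymbol n}$ in a partial-fraction basis and should be triangular. The second converts that basis to monomials and is a Cauchy–Vandermonde-type coefficient determinant already evaluated in Proposition~\ref{prop:LII-rational-basis}. This mirrors the proof of that proposition.

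\emph{Step 1: a generating form for $\Phi_{j,k}$.} From \eqref{eq:monic-Hermite-generating-function}, $\sum_k H_k(\partial_z)u^k/k!=\e^{u\partial_z-u^2/4}$, and $\e^{u\partial_z}$ acts as the shift $z\mapsto z+u$. Applying this to $\Lambda_j$, written through \eqref{eq:canonical-Hermite-LF} and \eqref{eq:canonical-Hermite-row-transforms}, the Gaussian factors combine. One obtains
\[
\frac{\Phi_{j,k}(z)}{D_{\boldsymbol n}(z)}
=k!\,[u^k]\,\e^{u(\widehat a_*+z/2)}
\prod_{h=1}^r\Bigl(1+\frac{u}{\rho_h+z}\Bigr)^{-d_h}\,r_j(z+u),
\]
where $r_h(z)=\rho_h/(\rho_h+z)$ for $1\le h\le r$ and $r_q\equiv1$. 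This is equivalent to \eqref{eq:canonical-Hermite-Psi-beta-explicit}–\eqref{eq:canonical-Hermite-Psi-base-explicit}, but it shows the structure more directly. The target space $\mathbb P_{N-1}/D_{\boldsymbol n}$ has the basis $\{(\rho_h+z)^{-\ell}:1\le\ell\le n_h\}\cup\{z^k:0\le k<\eta\}$. This uses the distinctness of $\{\rho_h:n_h>0\}$ and the degree bound of Lemma~\ref{lem:canonical-Hermite-explicit-connection-polynomials}.

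\emph{Step 2: level-triangularity.} Assign level $\ell-1$ to $(\rho_h+z)^{-\ell}$ and level $k$ to $z^k$. Expanding the display above in $u$, every term of $[u^k]$ carries total order $k$ split among the exponential, the $d$-factors and $r_j$. I expect the following two facts.
\begin{enumerate}[label=(\roman*)]
\item $\Phi_{h,k}/D_{\boldsymbol n}$ has pole order at most $k$ at each $-\rho_u$ with $u\ne h$, and at most $k+1$ at $-\rho_h$. Its polynomial part has degree at most $k-1$, since $r_h(z+u)$ decays in $z$ and so absorbs at least one power. The coefficient of $(\rho_h+z)^{-k-1}$ is $(-1)^k\rho_h(d_h+1)_k$; it comes from the all-derivative term of $\rho_h(\rho_h+z)^{-1-d_h}$.
\item $\Phi_{q,k}/D_{\boldsymbol n}$ has pole orders at most $k$ and is a polynomial of degree $k$ plus lower terms. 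Its leading coefficient is $2^{-k}$, coming from $(\widehat a_*+z/2)^k$.
\end{enumerate}
Hence, ordering both sides level-first and, within a level, by row, the change-of-basis matrix is triangular. Its diagonal product is
\[
\prod_h\rho_h^{n_h}\prod_{k<n_h}(-1)^k(d_h+1)_k\;\prod_{k<\eta}2^{-k},
\]
and $\prod_{k<\eta}2^{-k}=2^{-\binom{\eta}{2}}$. Reordering back to lexicographic order conjugates both lists by the same permutation, so the determinant does not change.

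\emph{Step 3: the monomial conversion and the signs.} Multiplying by $D_{\boldsymbol n}$, the numerators $D_{\boldsymbol n}/(\rho_h+z)^{\ell}$ and $D_{\boldsymbol n}z^k$ have a coefficient matrix in $1,\dots,z^{N-1}$. Its determinant is computed as in Proposition~\ref{prop:LII-rational-basis}: evaluate normalized derivatives at the nodes $-\rho_u$, divide by the confluent Vandermonde determinant, and note that the monomial block is unitriangular modulo the pole blocks. This gives $(-1)^{\sum_h\binom{n_h}{2}}\prod_{j<h}(\rho_h-\rho_j)^{n_jn_h}$. Since $\prod_{k<n_h}(-1)^k=(-1)^{\binom{n_h}{2}}$, these signs cancel those from Step 2, and multiplying the two determinants yields \eqref{eq:canonical-Hermite-vector-edge-determinant}.

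The stated consequences then follow. Every factor is nonzero because $\rho_h>0$, $d_h\ge0$ and the rates are distinct. Proposition~\ref{prop:canonical-Hermite-B-components} then gives nonsingularity of every $\mathcal N_{i_0}$, weak normality, and uniqueness of the Rodrigues decomposition.

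The main obstacle is Step 2. One must check carefully that the polynomial part of $\Phi_{h,k}/D_{\boldsymbol n}$ has degree strictly below $k$, and that $\Phi_{q,k}$ creates no pole of order $k+1$. Both need admissibility (\eqref{eq:Laguerre-admissibility-nn}–\eqref{eq:Laguerre-admissibility-etan}) so that all these basis elements actually lie in the target basis. The remaining delicate point is the sign bookkeeping of Step 3.
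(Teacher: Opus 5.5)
Your proposal is correct and follows essentially the same route as the paper. Factor $C$ through the numerators $D_{\boldsymbol n}/(\rho_h+z)^{k+1}$ and $z^kD_{\boldsymbol n}$, whose coefficient determinant $(-1)^{\sum_h\binom{n_h}{2}}\prod_{j<h}(\rho_h-\rho_j)^{n_jn_h}$ comes from evaluations at the points $-\rho_h$ together with a unitriangular block; then use the level-triangular change of basis with diagonal entries $(-1)^k\rho_h(d_h+1)_k$ and $2^{-k}$, so that the signs cancel. Your shift generating function $\e^{u\partial_z-u^2/4}$ is only a compact repackaging of the paper's Leibniz expansion and of its identity $H_k(\partial_z)\e^{\widehat a_*z+z^2/4}=2^{-k}(z+2\widehat a_*)^k\e^{\widehat a_*z+z^2/4}$.
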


\begin{proof}
	Put
	\[
		L\coloneq|\boldsymbol n|,
		\qquad R\coloneq\eta,
		\qquad D(z)\coloneq\prod_{h=1}^r(\rho_h+z)^{n_h}.
\]
	The determinant calculation first uses the following polynomials:
	\[
		E_{h,k}(z)
		\coloneq
		\frac{D(z)}{(\rho_h+z)^{k+1}},
		\qquad
		1\le h\le r,\quad 0\le k<n_h,
\]
	and
	\[
		E_{r+1,k}(z)
		\coloneq z^kD(z),
		\quad 0\le k<R.
\]
	The columns are ordered by increasing \(h\) and then \(k\), followed by
	\(E_{r+1,0},\ldots,E_{r+1,R-1}\). Their coefficient determinant is
	\begin{equation}
		\label{eq:canonical-Hermite-vector-edge-cleared-determinant}
		\det\bigl([z^\nu]E_{j,k}(z)\bigr)
		=
		(-1)^{\sum_{h=1}^r\binom{n_h}{2}}
		\prod_{1\le j<h\le r}
		(\rho_h-\rho_j)^{n_jn_h}.
	\end{equation}
	Here and below, factors with zero exponent are omitted. To prove the
	identity, observe that the first \(L\) columns have degree below \(L\),
	whereas \(E_{r+1,k}\) is monic of degree \(L+k\). The coefficient
	matrix is therefore block upper triangular, with a final triangular block
	of determinant one. It remains to compute the coefficient determinant
	\(E_0\) of the first \(L\) columns in \(1,z,\ldots,z^{L-1}\).

	For each \(h\) with \(n_h>0\), apply the normalized evaluations
	\(P\mapsto P^{(a)}(-\rho_h)/a!\), \(0\le a<n_h\). Their matrix
	\(V_0\) on the monomials has determinant
	\[
		\det V_0=\prod_{j<h}(\rho_j-\rho_h)^{n_jn_h}.
\]
	The matrix \(V_0E_0\) is block diagonal: a polynomial \(E_{u,k}\),
	\(u\ne h\), is divisible by \((z+\rho_h)^{n_h}\). Within the \(h\)-th
	block write
	\[
		E_{h,k}(z)=(z+\rho_h)^{n_h-k-1}B_h(z),
		\qquad B_h(z)\coloneq\prod_{u\ne h}(z+\rho_u)^{n_u}.
\]
	Its entry in row \(a\) vanishes when \(a<n_h-k-1\), and the
	antidiagonal entries all equal
	\(B_h(-\rho_h)=\prod_{u\ne h}(\rho_u-\rho_h)^{n_u}\). Thus
	\[
		\det(V_0E_0)=(-1)^{\sum_h\binom{n_h}{2}}
		\prod_h B_h(-\rho_h)^{n_h}.
\]
	Dividing by \(\det V_0\) and grouping the two factors from each pair
	\(j<h\) proves \eqref{eq:canonical-Hermite-vector-edge-cleared-determinant}.
	The same argument works for any nonnegative number \(R\) of final
	columns; if \(L=0\), it is simply the determinant of a monic triangular
	polynomial family.

	It remains to express the connection polynomials in this basis. Consider
	\(\Phi_{h,k}/D=L_F^{-1}H_k(\partial_z)\Lambda_h\). The Leibniz
	expansion is rational, with a pole of order at most \(k+1\) at
	\(-\rho_h\) and order at most \(k\) at the other rates. The coefficient
	of \((z+\rho_h)^{-k-1}\) is
	\((-1)^k\rho_h(d_h+1)_k\): it comes from applying all \(k\) derivatives
	to that factor. Its polynomial part has degree at most \(k-1\).
	Partial fractions consequently give
	\[
		\Phi_{h,k}
		-
		(-1)^k\rho_h(d_h+1)_kE_{h,k}
		\in\operatorname{span}\{E_{u,a}:a<k\},
		\qquad 1\le h\le r.
\]
	Only existing columns are included in this span, with \(u=r+1\)
	allowed. They suffice: if \(k<n_h\), admissibility gives
	\(n_u\ge n_h-1\ge k\) for \(u\ne h\), and \(\eta\ge k\).
	Hence every pole of lower order and every polynomial term just described
	has its corresponding column.

	For the final family, the generating function
	\eqref{eq:monic-Hermite-generating-function} gives the exact identity
	\[
		H_k(\partial_z)
		\exp\!\left(\widehat a_*z+\frac{z^2}{4}\right)
		=
		2^{-k}(z+2\widehat a_*)^k
		\exp\!\left(\widehat a_*z+\frac{z^2}{4}\right).
\]
	In \(L_F^{-1}H_k(\partial_z)L_F\), the polynomial part consequently
	has degree \(k\) and leading coefficient \(2^{-k}\). The remaining
	poles have order at most \(k\). Since \(k<\eta\) implies
	\(n_h\ge\eta-1\ge k\), partial fractions give
	\[
		\Phi_{r+1,k}-2^{-k}E_{r+1,k}
		\in\operatorname{span}\{E_{u,a}:a<k\}.
\]
	After a simultaneous reordering of the two polynomial families by
	increasing \(k\), these relations define a triangular change of basis.
	Its diagonal determinant is
	\[
		(-1)^{\sum_{h=1}^r\binom{n_h}{2}}
		2^{-\binom{\eta}{2}}
		\prod_{h=1}^r
			\left[\rho_h^{n_h}\prod_{k=0}^{n_h-1}(d_h+1)_k\right].
\]
	Multiplication by
	\eqref{eq:canonical-Hermite-vector-edge-cleared-determinant} cancels the
	two identical signs and proves
	\eqref{eq:canonical-Hermite-vector-edge-determinant}. Every factor in
	the resulting product is nonzero under the hypotheses. Weak normality
	of the \(B\)-problem for arbitrary \(p\) now follows from
	Proposition~\ref{prop:canonical-Hermite-B-components} and
	\eqref{eq:canonical-Hermite-B-connection-determinant}.
\end{proof}

The determinant evaluation also settles the \(A\)-problem. Normality
holds at every admissible near-diagonal row index, and these indices
form an infinite set.

\begin{corollary}[Normality of the Hermite system]
 \label{cor:Hermite-main-normality}
 Suppose that the column nodes are pairwise distinct,
 \(\xi_i+\rho_h>0\), and the rates with \(n_h>0\) are pairwise distinct.
 At every Laguerre-admissible row index
 \(\boldsymbol\lambda=(\boldsymbol n,\eta)\), the \(A\)-problem in
 the balance \(|\boldsymbol m|=N+1\) is strongly normal, and the
 \(B\)-problem in the balance \(|\boldsymbol m|=N-1\), \(N\ge1\),
 is weakly normal. Their normalized component vectors are the ones
 constructed above and below. Strong normality of \(B\) is characterized
 by \eqref{eq:Hermite-B-strong-normality-criterion}.
\end{corollary}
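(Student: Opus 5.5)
The plan is to treat the two balances separately and to use the same device as in Corollary~\ref{cor:LII-component-normality}: multiply the row transforms by a common nonvanishing factor so that the moment conditions become conditions on the connection polynomials $\Phi_{j,k}$.

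\emph{The $B$-problem.} Here $|\boldsymbol m|=N-1$ and $N\ge1$. Weak normality follows directly from Proposition~\ref{prop:canonical-Hermite-vector-edge-determinant}. Under rate separation, $\Delta_{\mathrm H}\ne0$ by \eqref{eq:canonical-Hermite-vector-edge-determinant}, so every adjacent matrix $\mathcal N_{i_0}$ is invertible. Proposition~\ref{prop:canonical-Hermite-B-components} then gives a one-dimensional kernel, each of whose nonzero vectors has a nonzero adjacent moment. It remains to identify the normalized vector with the Rodrigues function. By Lemma~\ref{lem:canonical-Hermite-explicit-connection-polynomials}, the polynomials $\Phi_{j,k}$ form a basis of $\mathbb P_{N-1}$. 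Hence $P_{\boldsymbol m}$ has a unique expansion $\sum c_{j,k}\Phi_{j,k}$. Since $H_k(\partial_z)\Lambda_j$ is the transform of $H_k(t)U_j^{\mathrm H}$, the form $\sum c_{j,k}H_kU_j^{\mathrm H}$ has transform $gP_{\boldsymbol m}$, with $g=L_F/D_{\boldsymbol n}$. By \eqref{eq:canonical-Hermite-B-transform} this is exactly the transform of $\mathcal B^{\mathrm H}$. Injectivity of the bilateral Laplace transform on the common strip then gives the decomposition with the prescribed degrees.

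\emph{The $A$-problem.} Here $|\boldsymbol m|=N+1$, and the first step is to show that the component-vector solution space is one-dimensional. Consider a combination $\sum_iA_i(t)\e^{\xi_it}$ with $\deg A_i<m_i$. The homogeneous conditions say that it annihilates the moments against $t^kU_j^{\mathrm H}$ for $(j,k)\in\mathcal I_{\boldsymbol\lambda}$. Changing the row polynomial basis to $H_k$ is triangular, so these conditions are equivalent to annihilating the $N$ functionals
\[
\mathcal A\mapsto\sum_i\sum_\ell a_{i,\ell}\,\partial_z^\ell\bigl(g\Phi_{j,k}\bigr)(\xi_i).
\]
Since the $\Phi_{j,k}$ span $\mathbb P_{N-1}$, the conditions are equivalent to annihilation of $g\cdot\mathbb P_{N-1}$ under the column-evaluation map. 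That map sends $\mathbb C^{N+1}$ to the dual of $\mathbb P_{N-1}$ through $J$ and the confluent Vandermonde matrix $V$. The transpose of $V$ is injective: a polynomial of degree at most $N-1$ vanishing to total order $N+1$ at the nodes is zero. Hence $V$ is onto the $N$-dimensional dual space, and $J$ is invertible because $g(\xi_i)\ne0$. The rectangular $N\times(N+1)$ moment matrix $\widetilde{\mathcal N}=JVC$ therefore has rank $N$, so its kernel is one-dimensional.

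Corollary~\ref{cor:canonical-Hermite-explicit-A-components} shows that the contour components lie in this kernel and are nonzero. Its leading coefficient \eqref{eq:Hermite-A-leading-coefficient} is nonzero for every $m_i\ge1$, so every component has maximal degree. This gives strong normality and the stated identification of the normalized vector.

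\emph{Main obstacle and final clause.} The delicate point is the factorization $\widetilde{\mathcal N}=JVC$ in the rectangular $A$-balance, including the bookkeeping that the column count is $N+1$ rather than $N$. I would reuse the Leibniz computation from the proof of Proposition~\ref{prop:canonical-Hermite-B-components} verbatim, with $q_i$ replaced by $m_i$. The final clause of the statement, on strong normality of $B$, refers to \eqref{eq:Hermite-B-strong-normality-criterion}. As in Proposition~\ref{prop:AT-normality-Laguerre-admissible}, this amounts to nonvanishing of the minors of $C$ obtained by deleting the column of each highest-degree row. I would state it by Cramer's rule applied to the unique expansion of $P_{\boldsymbol m}$.
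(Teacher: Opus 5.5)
Your proposal is correct and follows the paper's own proof essentially step for step. \(\Delta_{\mathrm H}\ne0\) from Proposition~\ref{prop:canonical-Hermite-vector-edge-determinant} gives \(\operatorname{rank}C=N\). The Leibniz factorization \(JVC\), with evaluations injective on \(\mathbb P_{N-1}\), gives a one-dimensional \(A\)-space spanned by the contour vector, whose leading coefficients \eqref{eq:Hermite-A-leading-coefficient} are nonzero. The nonsingular adjacent matrices give weak normality of \(B\), and your Laplace-injectivity argument makes explicit the identification that the paper compresses into ``the Rodrigues transform fixes this normalization.''

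One small correction concerns the criterion \eqref{eq:Hermite-B-strong-normality-criterion}. It does come from Cramer's rule applied to \(P_{\boldsymbol m}=\sum b_{j,k}\Phi_{j,k}\), as you say, with the top Hermite coefficient of component \(j\) equal to \(\det C[(j,\lambda_j-1)\leftarrow\mathbf p]/\Delta_{\mathrm H}\). So column \((j,\lambda_j-1)\) of \(C\) is replaced by \(\mathbf p\), not deleted; deleting it would not leave a square minor of \(C\).
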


\begin{proof}
 If \(N=0\), the \(A\)-balance has exactly one positive column
 index, equal to one, and its assertion is immediate. Assume \(N\ge1\).
 Proposition~\ref{prop:canonical-Hermite-vector-edge-determinant} gives
 \(\operatorname{rank}C=N\). In the \(A\)-balance the moment matrix,
 transposed if necessary, factors as \(JVC\), by the Leibniz calculation
 in Proposition~\ref{prop:canonical-Hermite-B-components}. The matrix
 \(J\) is invertible. The \(N+1\) evaluations represented by \(V\)
 are injective on \(\mathbb P_{N-1}\): a polynomial in their kernel
 would be divisible by \(\prod_i(z-\xi_i)^{m_i}\), whose degree is
 \(N+1\). The moment matrix therefore has rank \(N\), so the space
 of \(A\)-vectors has dimension one. Distinct real exponentials with
 polynomial coefficients are linearly independent, and the coefficients
 in \eqref{eq:Hermite-A-leading-coefficient} are nonzero. This proves
 both nonvanishing of the form and attainment of every prescribed degree.

 For \(B\), the adjacent moment determinant
 \eqref{eq:canonical-Hermite-B-connection-determinant} is nonzero.
 Deleting its additional moment leaves independent homogeneous
 conditions with a one-dimensional kernel; the additional moment
 is nonzero on each nonzero vector in that kernel. The Rodrigues
 transform fixes this normalization. Finally, expansion in the monic
 Hermite basis and Cramer's rule give the criterion cited in the statement.
\end{proof}

The polynomial components can now be given by terminating series, with
no sum over pole orders. Two Srivastava--Daoust specializations are used:
one for the finite poles, and one for the polynomial part at infinity.
The number of functions in each component depends only on the number of
rows, not on the degrees of the polynomials.

Put \(L\coloneq|\boldsymbol n|\), \(R\coloneq\eta\),
\(M\coloneq|\boldsymbol m|=L+R-1\), and
\(\mathcal I\coloneq\{h:n_h>0\}\). The partial-fraction coefficients
are specified by
\begin{equation}
 \frac{P_{\boldsymbol m}(z)}{D_{\boldsymbol n}(z)}
 =\sum_{l=0}^{R-1}\pi_lz^l+
   \sum_{J\in\mathcal I}\sum_{K=0}^{n_J-1}
       \frac{\zeta_{J,K}}{(z+\rho_J)^{K+1}}.
 \label{eq:Hermite-B-explicit-partial-fractions}
\end{equation}
Only \(\zeta_{J,0}\) occurs separately in the final component formulas.
For use in the proof and in Appendix~\ref{app:Hermite-derivative-limits}, define
\begin{equation}
 \pi_l\coloneq(-1)^M
 \mathfrak C_{R-1-l}
 \bigl((-\boldsymbol m,\boldsymbol n),(\boldsymbol\xi,-\boldsymbol\rho);0\bigr),
 \qquad 0\le l<R,
 \label{eq:Hermite-B-polynomial-part-KdF}
\end{equation}
and set \(\pi_l=0\) outside this range. For \(J\in\mathcal I\), put
\[
 \kappa_J\coloneq
 \frac{\prod_i(\xi_i+\rho_J)^{m_i}}
      {\prod_{h\in\mathcal I\setminus\{J\}}(\rho_h-\rho_J)^{n_h}},\qquad
 c_J(t)\coloneq\widehat a_*-\rho_J/2-t,\qquad
 x_h^{(J)}\coloneq-\frac1{\rho_h-\rho_J}\quad(\rho_h\ne\rho_J).
\]
Then, with row indices in increasing order,
\begin{equation}
 \zeta_{J,0}=\kappa_J\mathfrak C_{n_J-1}
 \left(((-m_i)_i,(n_h)_{h\in\mathcal I\setminus\{J\}}),
       ((\xi_i+\rho_J)^{-1}_i,(x_h^{(J)})_{h\in\mathcal I\setminus\{J\}});0\right).
 \label{eq:Hermite-B-simple-pole-coefficient}
\end{equation}
Set \(\zeta_{J,0}=0\) when \(n_J=0\).

The first specialization is needed only when \(n_J\ge2\). Admissibility
then gives \(n_h\ge1\) for every \(h\), so the distinct-rate condition
applies to all the differences just introduced. Order its
\(d_Z\coloneq p+2r+2\) indices as
\((\boldsymbol\alpha,\boldsymbol\beta,l,j,\mu,\nu)\), where
\(\boldsymbol\alpha\) has \(p+r-1\) entries (columns, then rows
\(h\ne J\)), and \(\boldsymbol\beta\) has \(r-1\) entries (rows
\(h\ne J\)). Define
\[
 \boldsymbol e_Z\coloneq(\one_{p+2r},2,2),\quad
 \boldsymbol v_Z\coloneq
 (\boldsymbol0_{p+r-1},\one_{r-1},1,1,0,2),\quad
 \boldsymbol a_Z\coloneq((-m_i)_i,(n_h+d_h)_{h\ne J},(-d_h)_{h\ne J},1).
\]
The vector \(\boldsymbol a_Z\) contains the \(d_Z-3\) individual upper
parameters; the final three indices have none. For \(c_J(t)\ne0\), put
\[
 \boldsymbol z_J(\upsilon;t)\coloneq
 \left(\left(-\frac1{(\xi_i+\rho_J)c_J(t)}\right)_i,
       \left(-\frac{x_h^{(J)}}{c_J(t)}\right)_{h\ne J},
       \left(-\frac{x_h^{(J)}}{c_J(t)}\right)_{h\ne J},
       -\frac\upsilon{c_J(t)},1,\frac1{4c_J(t)^2},-\frac1{4c_J(t)^2}\right).
\]
For \(b>0\) and \(N\in\mathbb N_0\), define
\begin{equation}
 \mathcal Z_{J,N}(b,\upsilon;t)\coloneq\frac{c_J(t)^N}{N!}
 F_{1:0;\ldots;0}^{2:1;\ldots;1;0;0;0}
 \left[\begin{array}{c}
 (-N:\boldsymbol e_Z),(b:\boldsymbol v_Z):a_{Z,1};\ldots;a_{Z,d_Z-3};-;-;-\\
 (b+1:\boldsymbol v_Z):-;\ldots;-
 \end{array}\,\middle|\,\boldsymbol z_J(\upsilon;t)\right].
 \label{eq:Hermite-B-pole-SD}
\end{equation}
Writing \(E_Z\coloneq|\boldsymbol\alpha|+|\boldsymbol\beta|+l+j+2\mu+2\nu\),
the sum is restricted to \(E_Z\le N\). After multiplication by the
prefactor, the power of \(c_J(t)\) in each term is
\(N-E_Z+j\ge0\). At \(c_J(t)=0\), precisely the terms with
\(E_Z=N\) and \(j=0\) remain. Thus the complete finite expression
defines a polynomial there without evaluating any infinite argument.
For \(N<0\), set \(\mathcal Z_{J,N}=0\), without evaluating its
parameters or arguments. Finally, define
\begin{equation}
 \mathcal T_J(t)\coloneq\frac{\kappa_J}{d_J+1}
       \mathcal Z_{J,n_J-2}(d_J+1,0;t),\qquad
 \mathcal W_J(t)\coloneq\frac{\kappa_J}{d_J+2}
       \mathcal Z_{J,n_J-3}(d_J+2,0;t),
 \label{eq:Hermite-B-pole-TW}
\end{equation}
and, for \(h\ne J\),
\begin{equation}
 \mathcal U_{h,J}(t)\coloneq
 -\frac{\kappa_Jd_hx_h^{(J)}}{d_J+1}
        \mathcal Z_{J,n_J-2}(d_J+1,x_h^{(J)};t).
 \label{eq:Hermite-B-pole-U}
\end{equation}
All three quantities are zero when \(n_J<2\). Empty row groups in these
definitions are omitted, so the same formulas apply for \(r=1\).

For the polynomial part, put
\(d_*\coloneq\sum_h d_h\) and \(c(t)\coloneq\widehat a_*-t\).
Order \(d_Y\coloneq p+2r+3\) indices as
\((\boldsymbol\alpha,\boldsymbol\beta,l,k,v)\), with
\(\boldsymbol\alpha\) of length \(p+r\) (columns, then rows) and
\(\boldsymbol\beta\) of length \(r\). Set
\[
 \boldsymbol e_Y\coloneq(\one_{p+2r+1},2,2),\qquad
 \boldsymbol f_Y\coloneq(\one_{p+2r+1},1,0),\qquad
 \boldsymbol a_Y\coloneq((-m_i)_i,(n_h+d_h)_h,(-d_h)_h,1),
\]
\[
 \boldsymbol u_Y\coloneq(\one_{p+r},\boldsymbol0_r,0,1,0),\qquad
 \boldsymbol v_Y\coloneq(\one_{p+r},\boldsymbol0_r,0,0,0).
\]
Here \(\boldsymbol a_Y\) lists the \(d_Y-2\) individual upper
parameters. The arguments, for \(c(t)\ne0\), are
\[
 \boldsymbol y(\upsilon;t)\coloneq
 \left(\left(-\frac{\xi_i}{2c(t)}\right)_i,
       \left(\frac{\rho_h}{2c(t)}\right)_h,
       \left(\frac{\rho_h}{2c(t)}\right)_h,
       -\frac\upsilon{2c(t)},-\frac1{2c(t)^2},\frac1{4c(t)^2}\right).
\]
For \(N\in\mathbb N_0\), define
\begin{equation}
 \mathcal Y_N(b,\upsilon;t)\coloneq(-2c(t))^N
 F_{2:0;\ldots;0}^{2:1;\ldots;1;0;0}
 \left[\begin{array}{c}
 (-N:\boldsymbol e_Y),(b:\boldsymbol u_Y):a_{Y,1};\ldots;a_{Y,d_Y-2};-;-\\
 (-N:\boldsymbol f_Y),(b:\boldsymbol v_Y):-;\ldots;-
 \end{array}\,\middle|\,\boldsymbol y(\upsilon;t)\right].
 \label{eq:Hermite-B-infinity-SD}
\end{equation}
This denotes the finite sum with
\(E\coloneq|\boldsymbol\alpha|+|\boldsymbol\beta|+l+2k+2v\le N\).
Its lower factor \((-N)_{|\boldsymbol\alpha|+|\boldsymbol\beta|+l+k}\)
is nonzero throughout that range, since its index is \(E-k-2v\le N\).
The common parameter \(b\) is
cancelled term by term according to
\[
 \frac{(b)_{|\boldsymbol\alpha|+k}}{(b)_{|\boldsymbol\alpha|}}
 =(b+|\boldsymbol\alpha|)_k.
\]
Consequently the specialization is polynomial in \(b\) for every
\(b\in\mathbb C\). Its powers of \(c(t)\) are \(N-E\), so its value
at \(c(t)=0\) is the sum of the terms with \(E=N\), after the
prefactor and arguments have been combined. All other terms vanish.
Set
\(\mathcal Y_N=0\) for \(N<0\).

The contributions from infinity are
\begin{equation}
 \mathcal P_h(t)\coloneq\frac{2(-1)^Md_h}{\rho_h}
        \mathcal Y_{R-2}(d_*-R+1,-\rho_h;t),\qquad
 \mathcal Q_1(t)\coloneq(-1)^M\mathcal Y_{R-1}(d_*-R+1,0;t).
 \label{eq:Hermite-B-polynomial-contributions-SD}
\end{equation}
For the strong-normality criterion,
\(C[(j,k)\leftarrow\mathbf p]\) denotes \(C\) with column \((j,k)\)
replaced by \(\mathbf p\).

\begin{corollary}[Terminating series for the Hermite \(B\)-components]
 \label{cor:canonical-Hermite-explicit-B-components}
 Under the hypotheses of
 Proposition~\ref{prop:canonical-Hermite-B-components}, assume
 \(\Delta_{\mathrm H}\ne0\).
 The individual components are
 \begin{equation}
 B^{(h),\mathrm H}_{\boldsymbol\lambda,\boldsymbol m}(t)
 =\mathcal P_h(t)+\frac1{\rho_h}
  \left[\zeta_{h,0}+c_h(t)\mathcal T_h(t)+\frac{\mathcal W_h(t)}2
         +\sum_{J\ne h}\{\mathcal U_{h,J}(t)-\mathcal U_{J,h}(t)\}\right],
 \qquad 1\le h\le r,
 \label{eq:canonical-Hermite-B-beta-components}
 \end{equation}
 \begin{equation}
 D^{(1),\mathrm H}_{\boldsymbol\lambda,\boldsymbol m}(t)
 =\mathcal Q_1(t)+\frac12\sum_{J\in\mathcal I}\mathcal T_J(t).
 \label{eq:canonical-Hermite-B-base-component}
 \end{equation}
 They have the prescribed degree bounds, with zero components for zero
 indices, and give the Rodrigues decomposition
 \begin{equation}
 \mathcal B^{\mathrm H}_{\boldsymbol\lambda,\boldsymbol m}(t)
 =\sum_{h=1}^rB^{(h),\mathrm H}_{\boldsymbol\lambda,\boldsymbol m}(t)
                      K_{\rho_h,1}F(t)
  +D^{(1),\mathrm H}_{\boldsymbol\lambda,\boldsymbol m}(t)F(t).
 \label{eq:canonical-Hermite-B-component-decomposition}
 \end{equation}
 For \(r\ge1\), each \(B^{(h),\mathrm H}\) contains at
 most \(2r+2\) terminating functions, and \(D^{(1),\mathrm H}\) at
 most \(r+1\), with at most \(p+2r+3\) arguments. The problem is weakly
 normal. Strong normality holds if and only if
 \begin{equation}
 \det C[(j,\lambda_j-1)\leftarrow\mathbf p]\ne0
 \qquad\text{for every }j\text{ with }\lambda_j>0.
 \label{eq:Hermite-B-strong-normality-criterion}
 \end{equation}
\end{corollary}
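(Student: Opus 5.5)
We propose to follow the same reduction used for Laguerre of the second kind in Proposition~\ref{prop:LII-explicit-B-components}. The target identity is a relation between bilateral Laplace transforms. By \eqref{eq:canonical-Hermite-B-transform} and \eqref{eq:canonical-Hermite-row-transforms}, the decomposition \eqref{eq:canonical-Hermite-B-component-decomposition} holds if and only if
\[
 \sum_{h=1}^r\rho_hB^{(h)}(\partial_z)\Bigl[\tfrac{L_F(z)}{\rho_h+z}\Bigr]
 +D^{(1)}(\partial_z)L_F(z)=L_F(z)\frac{P_{\boldsymbol m}(z)}{D_{\boldsymbol n}(z)} .
\]
Here multiplication by \(t\) corresponds to \(\partial_z\). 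We therefore expand the right side by the partial fractions \eqref{eq:Hermite-B-explicit-partial-fractions}. For each pole term \(L_F/(z+\rho_J)^{K+1}\) and each polynomial term \(z^lL_F\), we construct a separate polynomial reconstruction and then sum over \(K\) and \(l\).

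\emph{Pole terms.} For the pole terms we imitate \eqref{eq:LII-B-scalar-reduction-identity}, replacing \(\e^{yw}\) by the Hermite factor. We have \(L_F'/L_F=\widehat a_*+z/2-\sum_hd_h/(\rho_h+z)\), and the operator \(t-\partial_z\) annihilates \(\e^{zt}\) in the inverse transform. These two facts give a recurrence that lowers the pole order at \(-\rho_J\). The lowered term produces residues at the other rates \(-\rho_h\), which are the \(\mathcal U\)-contributions. It also produces a polynomial remainder. Because of the quadratic part \(z/2\), this remainder now has a linear term, and we expect it to feed the base row through \(\tfrac12\mathcal T_J\) and the diagonal correction \(c_h(t)\mathcal T_h+\mathcal W_h/2\).

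\emph{Evaluating the pole sums.} Expanding around \(z=-\rho_J\) in the local variable \(w\) yields coefficient extractions from
\[
 \e^{c_J(t)w-w^2/4}\prod_{h\ne J}(1-x_h^{(J)}w)^{\mp d_h}.
\]
This is the source of the entries \(2\) and the arguments \(\pm1/(4c_J^2)\) in \(\boldsymbol z_J\). We then combine the sum over \(K\) with the coefficients of \(\zeta_{J,K}\). This uses the identity \((-N+L)_v/(N-L)!=(-1)^L(-N)_{L+v}/N!\) together with the Vandermonde merging \((n_h)_u(d_h)_v\to(n_h+d_h)_{u+v}\), exactly as in the Laguerre second-kind proof. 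The outcome should be single Srivastava--Daoust functions \(\mathcal Z_{J,N}\), so that \(\mathcal T_J,\mathcal W_J,\mathcal U_{h,J}\) arise as in \eqref{eq:Hermite-B-pole-TW}--\eqref{eq:Hermite-B-pole-U}.

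\emph{Polynomial part.} The polynomial part \(\sum_l\pi_lz^lL_F\) is treated by writing \(z=2(\partial_z-\widehat a_*)+2\sum_hd_h/(\rho_h+z)\) acting on \(L_F\), and iterating. Each iteration pushes powers of \(z\) into polynomials of \(t\), up to simple-pole remainders at the rates. The resulting finite sums, with \(\pi_l\) from \eqref{eq:Hermite-B-polynomial-part-KdF}, assemble into \(\mathcal Y_{R-1}\) for the base row and \(\mathcal Y_{R-2}\) for the \(h\)-th row; the latter are the \(\mathcal P_h\), with the \(\upsilon=-\rho_h\) argument recording the simple pole.

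We expect the main obstacle to be this combinatorial assembly into one Srivastava--Daoust function each, especially the quadratic exponential. The first thing to try is to verify the generating-function identity at the level of \([w^N]\) coefficients before matching parameters.

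\emph{Remaining assertions.} Degree bounds follow from the termination conditions \(E_Z\le N\) and \(E\le N\) together with admissibility. The decomposition is then unique by Proposition~\ref{prop:canonical-Hermite-vector-edge-determinant}, since \(\Delta_{\mathrm H}\ne0\). The counts \(2r+2\) and \(r+1\) are read off directly from the formulas. Weak normality is Proposition~\ref{prop:canonical-Hermite-B-components}. For strong normality, Cramer's rule applied to \(C\mathbf b=\mathbf p\) gives the top coefficient of component \(j\), in the monic Hermite basis, as \(\det C[(j,\lambda_j-1)\leftarrow\mathbf p]/\Delta_{\mathrm H}\). This coefficient is also the monomial leading coefficient, which yields \eqref{eq:Hermite-B-strong-normality-criterion}.
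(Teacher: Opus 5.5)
Your plan is correct and is essentially the paper's own proof: partial fractions of $P_{\boldsymbol m}/D_{\boldsymbol n}$; reduction of each pole term and of the polynomial part modulo $\partial_z(L_FS)-tL_FS$; coefficient extraction at $z=-\rho_J$, summed over the pole orders into the functions $\mathcal Z_{J,N}$, and at infinity into $\mathcal Y_{R-1},\mathcal Y_{R-2}$ (the paper packages your iteration at infinity as one formal solution of $Q_\nu'+(z/2+c)Q_\nu=z^\nu$); then uniqueness from $\Delta_{\mathrm H}\ne0$ and Cramer's rule on $C\mathbf b=\mathbf p$ for \eqref{eq:Hermite-B-strong-normality-criterion}. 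Two small corrections for when you carry it out: the local factor coming from $L_F$ is $\e^{c_J(t)w+w^2/4}\prod_{h\ne J}(1-x_h^{(J)}w)^{-d_h}$ and its reciprocal flips all three signs, which is where the pair $\pm1/(4c_J(t)^2)$ comes from; and the extra remainder created by $z/2$ when a pole is lowered is constant in $z$ (polynomial in $t$), not linear, which is exactly why it lands in the base row as $\tfrac12\mathcal T_J$.
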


\begin{proof}
 Put \(\gamma(z)\coloneq L_F(z)\). The logarithmic derivative is
 \[
 \frac{\gamma'(z)}{\gamma(z)}-t
 =\widehat a_*-t+\frac z2-\sum_{h=1}^r\frac{d_h}{z+\rho_h}.
 \]
 Inverse bilateral Laplace transformation sends
 \(\partial_z(\gamma S)-t\gamma S\) to zero after its argument is set
 equal to \(t\). Scalar rational reductions modulo this expression
 therefore give polynomial coefficients of the row functions.

 \emph{Finite poles.}
 For a pole with \(n_J\ge2\), fix \(J\) and put
 \[
 R_J(w)\coloneq
 \prod_i\left(1-\frac w{\xi_i+\rho_J}\right)^{m_i}
 \prod_{h\ne J}(1-x_h^{(J)}w)^{-n_h},\qquad
 h_J(w;t)\coloneq\e^{c_J(t)w+w^2/4}
                    \prod_{h\ne J}(1-x_h^{(J)}w)^{-d_h}.
 \]
 Expansion at \(z=-\rho_J\) gives
 \(\zeta_{J,K}=\kappa_J[w^{n_J-K-1}]R_J(w)\), including
 \eqref{eq:Hermite-B-simple-pole-coefficient}. Write
 \(h_J(w;t)=\sum_{v\ge0}H_vw^v\) and
 \(h_J(w;t)^{-1}=\sum_{v\ge0}\widetilde H_vw^v\).
 For \(K\ge1\), define, locally within the proof,
 \[
 \Theta_{K,a}\coloneq\sum_{v=0}^a
       \frac{H_v\widetilde H_{a-v}}{v-d_J-K}\quad(0\le a<K),\qquad
 S_{J,K}(z;t)\coloneq\sum_{a=0}^{K-1}\Theta_{K,a}(z+\rho_J)^{a-K},
 \]
 and take \(\Theta_{K,-1}=0\). Every denominator is strictly negative,
 including when \(d_J=0\). Direct differentiation cancels the poles of
 order at least two in
 \((z+\rho_J)^{-K-1}-S_{J,K}'-(\gamma'/\gamma-t)S_{J,K}\).
 Its residue at \(-\rho_h\), for \(h\ne J\), is
 \(d_h\sum_{a=0}^{K-1}(x_h^{(J)})^{K-a}\Theta_{K,a}\).
 Its constant term is \(-\Theta_{K,K-1}/2\), and its residue at
 \(-\rho_J\) is
 \[
 -c_J(t)\Theta_{K,K-1}-\tfrac12\Theta_{K,K-2}
 -\sum_{h\ne J}d_h\sum_{a=0}^{K-1}(x_h^{(J)})^{K-a}\Theta_{K,a}.
 \]

 To sum these expressions over \(K\), expand the following integral
 near \(w=0\):
 \begin{equation}
 \mathcal Z_{J,N}(b,\upsilon;t)
 =b[w^N]\left(R_J(w)h_J(w;t)
     \int_0^1\frac{v^{b-1}}{h_J(vw;t)(1-\upsilon vw)}\,\mathrm dv\right).
 \label{eq:Hermite-B-pole-SD-coefficient}
 \end{equation}
 This identity follows directly from the binomial and exponential
 series. The integral contributes
 \(b/(b+|\boldsymbol\beta|+l+j+2\nu)\), which is the coupled
 Pochhammer ratio in \eqref{eq:Hermite-B-pole-SD}. The remaining
 coefficient of \(\e^{c_J(t)w}\) supplies the parameter \(-N\),
 the prefactor \(c_J(t)^N/N!\), and the stated arguments.
 Thus \eqref{eq:Hermite-B-pole-SD-coefficient} is exactly the
 Srivastava--Daoust specialization, not an additional sum.

 In \(\Theta_{K,K-1}\), replacing \(v\) by \(K-1-v\) gives the
 denominator \(-(d_J+1+v)\). Multiplication by \(\zeta_{J,K}\) and
 coefficient multiplication in \eqref{eq:Hermite-B-pole-SD-coefficient}
 show that its sum is \(-\mathcal T_J\). The same argument with
 \(K-2\) gives \(-\mathcal W_J\). In an off-diagonal residue, the
 additional power of \(x_h^{(J)}\) gives the geometric factor
 \((1-x_h^{(J)}vw)^{-1}\); its summed value is
 \(\mathcal U_{h,J}\). Consequently, reducing all the terms at the
 pole \(-\rho_J\) leaves the residue
 \[
 \zeta_{J,0}+c_J(t)\mathcal T_J(t)+\frac12\mathcal W_J(t)
       -\sum_{h\ne J}\mathcal U_{h,J}(t)
 \]
 at that pole, the residue \(\mathcal U_{h,J}\) at each other pole
 \(-\rho_h\), and the constant \(\mathcal T_J/2\). Summing over
 \(J\) gives the difference
 \(\mathcal U_{h,J}-\mathcal U_{J,h}\) in row \(h\).
 It also separates the two factors \(1/2\): \(\mathcal W_h/2\)
 belongs to the residue at \(-\rho_h\), whereas
 \(\tfrac12\sum_J\mathcal T_J\) is the constant term and therefore
 multiplies the row \(F\). This proves every finite-pole term in the
 two component formulas, including the factors \(1/\rho_h\)
 required by the bilateral transform identity
 \(\int_{\mathbb R}\e^{zv}(K_{\rho_h,1}F)(v)\,\mathrm dv
 =\rho_h\gamma(z)/(z+\rho_h)\).

 \emph{The polynomial part.}
 The scalar coefficient polynomials needed at infinity can be written
 as one double sum:
 \begin{equation}
 a_n(\nu,c)\coloneq
 (-1)^n\sum_{2k+2v\le n}
 \frac{2^{n-k-2v}(-\nu)_k(n-k)!}
      {k!v!(n-2k-2v)!}\,c^{n-2k-2v}.
 \label{eq:Hermite-B-infinity-coefficients}
 \end{equation}
 Substitution, with the indices shifted in the terms of degree
 \(n-2\), gives
 \(a_0=1\), \(a_1=-2c\), and
 \[
 a_n(\nu,c)=-2c\,a_{n-1}(\nu,c)
                -2(\nu+1-n)a_{n-2}(\nu,c)\qquad(n\ge2).
 \]
 Consequently the formal series
 \(Q_\nu(z;c)\coloneq2z^{\nu-1}\sum_{n\ge0}a_n(\nu,c)z^{-n}\)
 satisfies \(Q_\nu'+(z/2+c)Q_\nu=z^\nu\). Only finitely many
 coefficients of this formal series are used.

 Set \(\mathcal E(z)\coloneq\prod_h(1+\rho_h/z)^{-d_h}\), and define
 \(\chi_u\) by
 \[
 \sum_{u\ge0}\chi_uw^u
 =\prod_i(1-\xi_iw)^{m_i}\prod_h(1+\rho_hw)^{-n_h-d_h}.
 \]
 A formal solution of
 \(S'+(\gamma'/\gamma-t)S=P_{\boldsymbol m}/D_{\boldsymbol n}\)
 at infinity is
 \[
 S(z;t)=(-1)^M z^{d_*}\mathcal E(z)^{-1}
       \sum_{u\ge0}\chi_uQ_{R-1-d_*-u}(z;c(t)).
 \]
 The prefactors cancel all noninteger powers of \(z\).
 The finite-pole part of the right-hand side contributes only powers
 \(z^{-2},z^{-3},\ldots\) to \(S\). Hence its polynomial part
 \(S_+(z;t)\) and its coefficient of \(z^{-1}\) depend only on
 \(\sum_l\pi_lz^l\). Expand \(\mathcal E^{-1}\) by the binomial theorem and
 substitute \eqref{eq:Hermite-B-infinity-coefficients}. The coefficient
 calculation gives
 \[
 S_+(\upsilon;t)=2(-1)^M\mathcal Y_{R-2}(d_*-R+1,\upsilon;t),\qquad
 \tfrac12[z^{-1}]S(z;t)=(-1)^M\mathcal Y_{R-1}(d_*-R+1,0;t).
 \]
 For clarity, the factorial ratio in this calculation is
 \[
 \frac{(N-|\boldsymbol\alpha|-|\boldsymbol\beta|-l-k)!}{(N-E)!}
 =(-1)^k\frac{(-N)_E}
              {(-N)_{|\boldsymbol\alpha|+|\boldsymbol\beta|+l+k}}.
 \]
 The other coupled factor is
 \((d_*-R+1+|\boldsymbol\alpha|)_k\). These are precisely the
 two coupled ratios of \eqref{eq:Hermite-B-infinity-SD}; the binomial
 factors give its individual parameters. Removing the negative powers
 of \(S\) leaves a constant and simple-pole terms. The latter have
 coefficients \(d_hS_+(-\rho_h;t)\). Inverse transformation therefore
 gives \(\mathcal P_h\) and \(\mathcal Q_1\) in
 \eqref{eq:Hermite-B-polynomial-contributions-SD}.

 \emph{Degrees and normality.}
 The degrees of \(\mathcal Z_{J,N}\) and \(\mathcal Y_N\) are at most
 \(N\). Near diagonality gives
 \(n_J-2\le n_h-1\), \(R-2\le n_h-1\), and \(n_J-2\le R-1\),
 so the formulas have all prescribed degree bounds. Zero indices give
 zero components.

 Expanding the components in the monic Hermite basis gives coefficients
 \(b_{j,k}\) satisfying the polynomial identity
 \begin{equation}
 P_{\boldsymbol m}(z)
 =\sum_{(j,k)\in\mathcal I_{\boldsymbol\lambda}}
                b_{j,k}\Phi_{j,k}(z).
 \label{eq:canonical-Hermite-B-polynomial-identity}
 \end{equation}
 Its coefficient matrix is \(C\), so \(\Delta_{\mathrm H}\ne0\)
 proves uniqueness. The adjacent moment is
 \begin{equation}
 \int_{\mathbb R}t^{m_{i_0}}\e^{\xi_{i_0}t}
      \mathcal B^{\mathrm H}_{\boldsymbol\lambda,\boldsymbol m}(t)\,\mathrm dt
 =\frac{L_F(\xi_{i_0})}{D_{\boldsymbol n}(\xi_{i_0})}
   (-1)^{m_{i_0}}m_{i_0}!\prod_{i\ne i_0}(\xi_i-\xi_{i_0})^{m_i}\ne0,
 \label{eq:canonical-Hermite-B-adjacent-moment}
 \end{equation}
 obtained by differentiating \(L_FP_{\boldsymbol m}/D_{\boldsymbol n}\)
 at \(\xi_{i_0}\). This proves weak normality. The top Hermite
 coefficient in component \(j\) is
 \(\det C[(j,\lambda_j-1)\leftarrow\mathbf p]/\Delta_{\mathrm H}\),
 proving the strong-normality criterion.
\end{proof}

For \(r=0\), there are no finite poles, and the single polynomial
\(\mathcal Q_1\) is the classical multiple Hermite polynomial in the
normalization of this section.

For fixed multi-indices and fixed compact sets, the estimates below have
constants independent of \(\tau\), with the uniform and coefficientwise
conventions of Section~\ref{subsec:hypergeometric-notation}.
For the estimates concerning the
\(B\)-form, with \(|\boldsymbol m|=N-1\), let \(Q_{B,\tau}\) be the
transform in \eqref{eq:canonical-Hermite-finite-B-transform}, and put
\[
	Q_B(z)\coloneq
	L_F(z)\frac{\prod_{i=1}^p(\xi_i-z)^{m_i}}
	{\prod_{h=1}^r(\rho_h+z)^{n_h}}.
\]

\begin{proposition}[Quantitative fixed-index confluence]
	\label{prop:canonical-Hermite-quantitative-confluence}
	As \({\tau\to+\infty}\), the fixed-index error satisfies
	\(\Theta_\tau=\mathrm o(1)\) and the following estimates hold.
	\begin{enumerate}[label=\textnormal{(\arabic*)},beginpenalty=10000]
	\item For
	\(1\le i\le p\), \(1\le j\le r+1\), and \(v\in\mathbb N_0\),
	\[
		\int_{-\varepsilon_\tau^{-1}}^\infty
		t^v(1+\varepsilon_\tau t)^{\beta_i(\tau)}
		\widehat U_{j,\tau}(t)\,\mathrm dt
		=
		\int_{\mathbb R}
		t^v\e^{\xi_it}U_j^{\mathrm H}(t)\,\mathrm dt
		+\mathrm O(\Theta_\tau)\qquad (\tau\to+\infty).
\]

	\item For the \(A\)-balance \(|\boldsymbol m|=N+1\) and every compact set
	\(T\subset\mathbb C\),
	\[
		\sup_{t\in T}
		\left|
			Z_\tau\varepsilon_\tau^{-\eta}
			\mathcal A_\tau^{\mathrm L}(x_\tau(t))
			-
			\mathcal A^{\mathrm H}_{\boldsymbol\lambda,\boldsymbol m}(t)
		\right|
		=
		\mathrm O_T(\Theta_\tau)\qquad (\tau\to+\infty),
\]
	and, coefficientwise,
	\[
		Z_\tau\varepsilon_\tau^{-\eta}
		c_\tau^{\beta_i(\tau)}A_\tau^{(i),\mathrm L}(x_\tau(t))
		-
		A_{\boldsymbol\lambda,\boldsymbol m}^{(i),\mathrm H}(t)
		=\mathrm O(\Theta_\tau)\qquad (\tau\to+\infty).
\]

	\item For the \(B\)-balance \(|\boldsymbol m|=N-1\), every compact
	subset \(K\) of the common holomorphy domain, and every
	fixed \(v\in\mathbb N_0\),
	\[
		\sup_{z\in K}
		\left|
			\partial_z^vQ_{B,\tau}(z)-\partial_z^vQ_B(z)
		\right|
		=
		\mathrm O_{K,v}(\Theta_\tau)\qquad (\tau\to+\infty).
\]
	The difference between the two sides of
	\eqref{eq:canonical-Hermite-B-moment-convergence} is consequently
	\(\mathrm O(\Theta_\tau)\ {(\tau\to+\infty)}\).

	\item If \(\Delta_{\mathrm H}\ne0\), then the finite-\(\tau\) \(B\)-problem is
	weakly normal for every sufficiently large \(\tau\). Its polynomial components
	satisfy, coefficientwise in the monomial basis in \(t\),
	\begin{align}
		\frac{\varepsilon_\tau^{\eta-1}}{b_h(\tau)+1}
		B_\tau^{(h),\mathrm L}(x_\tau(t))
		&=
		B^{(h),\mathrm H}_{\boldsymbol\lambda,\boldsymbol m}(t)
		+\mathrm O(\Theta_\tau)\qquad (\tau\to+\infty),
		\label{eq:canonical-Hermite-B-beta-component-limit}
\\
		\varepsilon_\tau^{\eta-1}
		D_\tau^{(1),\mathrm L}(x_\tau(t))
		&=
		D^{(1),\mathrm H}_{\boldsymbol\lambda,\boldsymbol m}(t)
		+\mathrm O(\Theta_\tau)\qquad (\tau\to+\infty).
		\label{eq:canonical-Hermite-B-base-component-limit}
\end{align}
	\end{enumerate}
	If, in addition,
	\[
		u_{h,\tau}=\rho_h+\mathrm O(\varepsilon_\tau),
		\qquad
		D_{h,\tau}=d_h+\mathrm O(\varepsilon_\tau),
		\qquad
		\varepsilon_\tau\beta_i(\tau)=\xi_i+\mathrm O(\varepsilon_\tau)\qquad (\tau\to+\infty),
\]
	for all \(h\) and \(i\), then every error above is
	\(\mathrm O(\varepsilon_\tau)=\mathrm O(\tau^{-1/2})\ {(\tau\to+\infty)}\).
\end{proposition}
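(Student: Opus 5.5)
The plan is to make every earlier qualitative argument quantitative, starting from the uniform transform estimate of Lemma~\ref{lem:canonical-Hermite-uniform-Gamma-quotient}. First, \(\Theta_\tau=\mathrm o(1)\) follows from the standing hypotheses \(u_{h,\tau}\to\rho_h\), \(D_{h,\tau}\to d_h\), \(\varepsilon_\tau\to0\) and \(\varepsilon_\tau\beta_i(\tau)\to\xi_i\).

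\emph{Item (1).} Use the exact identities \eqref{eq:canonical-Hermite-finite-difference-moments} and \eqref{eq:canonical-Hermite-finite-difference-integral} at \(z=z_{i,\tau}=\varepsilon_\tau\beta_i(\tau)\). The moment equals an average of \(\partial_z^vQ_{j,\tau}\) over points within \(v\varepsilon_\tau\) of \(z_{i,\tau}\), and these points lie in a fixed compact set \(K\) around \(\xi_i\) inside \(\mathscr H\). By \eqref{eq:canonical-Hermite-shifted-row-transform}, \(Q_{h,\tau}\) is \(Q_{0,\tau}\) times a rational factor. The pole \(\varepsilon_\tau(b_h(\tau)+1)=u_{h,\tau}+\varepsilon_\tau D_{h,\tau}\) of that factor differs from \(\rho_h\) by \(\mathrm O(\Omega_\tau)\). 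Lemma~\ref{lem:canonical-Hermite-uniform-Gamma-quotient} then gives \(\sup_K|\partial^v_zQ_{j,\tau}-\partial_z^v\Lambda_j|=\mathrm O(\Omega_\tau)\). Finally, replacing the evaluation points by \(\xi_i\) costs \(\mathrm O(\chi_\tau+\varepsilon_\tau)\) by the mean value theorem applied to the bounded derivative \(\Lambda_j^{(v+1)}\) on \(K\).

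\emph{Item (2).} Work with the exact scaled contour \eqref{eq:canonical-Hermite-A-exact-scaled-contour} on fixed circles. The pieces of the integrand satisfy three estimates.
\begin{itemize}
\item \(P^{[n_h]}_{h,\tau}-(z+\rho_h)^{n_h}\) and \(D^{[m_i]}_{i,\tau}-(\xi_i-z)^{m_i}\) are \(\mathrm O(\Theta_\tau)\) uniformly on the circles.
\item \(1/Q_{0,\tau}-1/L_F=\mathrm O(\Omega_\tau)\), because \(L_F\) is bounded away from zero there.
\item \((1+\varepsilon_\tau t)^{z/\varepsilon_\tau}-\e^{zt}=\mathrm O_T(\varepsilon_\tau)\) for \(t\in T\).
\end{itemize}
Multiplying these estimates gives the sup bound. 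The circle-by-circle version, followed by division by \((1+\varepsilon_\tau t)^{\beta_i(\tau)}\) with error \(\mathrm O(\Theta_\tau)\), gives the same rate for the component evaluations. The fixed Vandermonde inversion from the proof of Corollary~\ref{cor:canonical-Hermite-explicit-A-components} then converts this into a coefficientwise bound.

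\emph{Item (3).} This follows in the same way from \eqref{eq:canonical-Hermite-finite-B-transform} and \eqref{eq:canonical-Hermite-SA-transform}. Here \(Q_{B,\tau}\) is \(Q_{0,\tau}\) times a rational function whose fixed-degree factors have coefficients within \(\mathrm O(\Theta_\tau)\) of the limiting ones. The derivative bounds follow from Cauchy's formula, and the moment statement follows from the finite-difference identity exactly as in (1).

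\emph{Item (4)} is the main step, and I would copy the argument of Proposition~\ref{prop:LII-convergence-rate}. Fix \(i_0\) and form the finite adjacent moment matrix \(\mathcal N_{i_0,\tau}\) for the normalized rows \(\widehat U_{j,\tau}\) and the columns \((1+\varepsilon_\tau t)^{\beta_i(\tau)}\), writing the components in the variable \(t\). By (1), \(\mathcal N_{i_0,\tau}=\mathcal N_{i_0}+\mathrm O(\Theta_\tau)\). Proposition~\ref{prop:canonical-Hermite-B-components} shows \(\mathcal N_{i_0}\) is invertible when \(\Delta_{\mathrm H}\ne0\), so \(\mathcal N_{i_0,\tau}\) is invertible with bounded inverse for large \(\tau\); this gives weak normality of the finite problem.

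The right-hand side consists of the zero homogeneous conditions and the adjacent moment of \(\widehat{\mathcal B}_\tau\). By (3) it is within \(\mathrm O(\Theta_\tau)\) of the limiting value \eqref{eq:canonical-Hermite-B-adjacent-moment}. The identity \(b_\tau-b=\mathcal N_\tau^{-1}[(y_\tau-y)-(\mathcal N_\tau-\mathcal N)b]\) then gives \(\mathrm O(\Theta_\tau)\) for the coefficient vectors. The obstacle is bookkeeping: I must verify that the solution of this normalized finite system is exactly the displayed rescaled Laguerre component, including the scalar factors \(\varepsilon_\tau^{\eta-1}/(b_h(\tau)+1)\) and \(\varepsilon_\tau^{\eta-1}\). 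I would check this by tracking the prefactor \(c_\tau\varepsilon_\tau^\eta/Z_\tau\) in \eqref{eq:canonical-Hermite-B-scaled-limit} against the row normalizations \(c_\tau\varepsilon_\tau/Z_{h,\tau}\) and \(c_\tau\varepsilon_\tau/Z_\tau\), and by using \(Z_{h,\tau}=Z_\tau/(b_h(\tau)+1)\). Uniqueness at finite \(\tau\) then identifies the Mellin-normalized \(\mathcal B_\tau^{\mathrm L}\) with this solution, since it satisfies the same conditions.

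Under the additional rate hypotheses, \(\Omega_\tau\) and \(\chi_\tau\) are both \(\mathrm O(\varepsilon_\tau)\), so every error is \(\mathrm O(\varepsilon_\tau)=\mathrm O(\tau^{-1/2})\).
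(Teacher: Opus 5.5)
Your items (1)--(3), and the perturbation estimate for \(b_\tau-b\) in (4), follow the paper's proof essentially step by step. The gap is the identification at the end of (4).

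You assert that uniqueness at finite \(\tau\) identifies the Mellin-normalized \(\mathcal B_\tau^{\mathrm L}\) with the solution of the normalized system ``since it satisfies the same conditions.'' But \(\mathcal B_\tau^{\mathrm L}\) is defined only through \eqref{eq:Laguerre-B-Mellin}. Suppose \(\widehat{\mathcal B}_\tau\) has the same \(N\) moments (the homogeneous ones and the adjacent one) as the combination \(\sum b_{\tau;j,k}H_k\widehat U_{j,\tau}\). That does not make the two functions equal. Uniqueness of the component vector becomes usable only once you know that \(\widehat{\mathcal B}_\tau\) lies in the span of the \(H_k\widehat U_{j,\tau}\), i.e.\ that the components \(B_\tau^{(h),\mathrm L}\), \(D_\tau^{(1),\mathrm L}\) exist at all.

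You cannot import this from Proposition~\ref{prop:AT-normality-Laguerre-admissible} or Theorem~\ref{thm:mixed-Laguerre-forms}. They require \(c_{\boldsymbol\lambda}(\tau)\ne0\) and \(\beta_i(\tau)-\beta_j(\tau)\notin\mathbb Z\), and neither is a hypothesis here. The second may fail along the sequence, since \(\beta_i(\tau)\approx\xi_i/\varepsilon_\tau\). The section explicitly states that this confluence uses no finite-parameter AT hypothesis.

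The missing step is a finite-\(\tau\) interpolation argument, which is how the paper closes it.
\begin{enumerate}
\item By \eqref{eq:canonical-Hermite-finite-difference-moments}, the \(N\) rows of your matrix are invertible triangular combinations of centered transforms at the \(N\) points \(\varepsilon_\tau\beta_i(\tau)+\nu\varepsilon_\tau\), \(0\le\nu<q_i\). These points are distinct for large \(\tau\) because the \(\xi_i\) are.
\item By \eqref{eq:B-component-polynomial-system} and near-diagonality, the centered transform of every admissible row combination equals the common factor \(Q_{0,\tau}(z)/\prod_hP_{h,\tau}^{[n_h]}(z)\) times a polynomial of degree at most \(N-1\). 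This factor does not vanish at those points.
\item By \eqref{eq:canonical-Hermite-finite-B-transform}, the transform of \(\widehat{\mathcal B}_\tau\) has the same form, with polynomial \(\prod_{i,j}[\varepsilon_\tau(\beta_i(\tau)+j)-z]\).
\item Equality of the \(N\) functionals forces the two polynomials to agree at \(N\) distinct points, hence to coincide. The transforms, and then the functions, therefore coincide.
\end{enumerate}
This proves that \(\widehat{\mathcal B}_\tau\) has a decomposition with the prescribed degrees and identifies its coefficients with \(b_\tau\). After that, your bookkeeping with \(Z_{h,\tau}=Z_\tau/(b_h(\tau)+1)\) correctly produces the prefactors \(\varepsilon_\tau^{\eta-1}/(b_h(\tau)+1)\) and \(\varepsilon_\tau^{\eta-1}\).
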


\begin{proof}
	\textnormal{(1)} Lemma~\ref{lem:canonical-Hermite-uniform-Gamma-quotient} gives
	\(Q_{0,\tau}-L_F=\mathrm O(\Omega_\tau)\ {(\tau\to+\infty)}\), locally uniformly with every fixed
	derivative.  Put
	\[
		r_{h,\tau}(z)
		\coloneq
		\frac{\varepsilon_\tau(b_h(\tau)+1)}
		{\varepsilon_\tau(b_h(\tau)+1)+z},
		\qquad
		r_h(z)\coloneq\frac{\rho_h}{\rho_h+z}.
\]
	On every compact subset of the common holomorphy domain,
	\(r_{h,\tau}-r_h=\mathrm O(\Omega_\tau)\ {(\tau\to+\infty)}\), and therefore
	\[
		Q_{h,\tau}-\Lambda_h
		=
		r_{h,\tau}(Q_{0,\tau}-L_F)+(r_{h,\tau}-r_h)L_F
		=
		\mathrm O(\Omega_\tau)\qquad (\tau\to+\infty).
\]
 For the last row, \(Q_{q,\tau}-\Lambda_q=Q_{0,\tau}-L_F\), so it
 obeys the same bound.
	Cauchy's formula gives these estimates for every fixed derivative.
	Evaluation at
	\(z_{i,\tau}=\varepsilon_\tau\beta_i(\tau)=\xi_i+\mathrm O(\chi_\tau)\ {(\tau\to+\infty)}\), followed by
	the exact moment identity
	\eqref{eq:canonical-Hermite-finite-difference-moments} and the integral
	formula
	\eqref{eq:canonical-Hermite-finite-difference-integral}, proves the mixed-type
	moment estimate.

	\textnormal{(2)} In the exact contour identity
	\eqref{eq:canonical-Hermite-A-exact-scaled-contour}, all factors converge
	with error \(\mathrm O(\Theta_\tau)\ {(\tau\to+\infty)}\), uniformly on the fixed circles and
	for \(t\in T\). The limiting denominator has a positive minimum in
	modulus on these circles. Its finite-\(\tau\) counterpart is therefore
	bounded away from zero for \(\tau\ge \tau_T\). Subtraction of the two
	integrands gives a bound \(C_T\Theta_\tau\); integration over the fixed
	contour multiplies this bound by its length divided by \(2\pi\).
	This proves the estimate for the complete \(A\)-form.
	The contribution of the circle surrounding the poles
	\(\varepsilon_\tau\beta_i(\tau)+\nu\varepsilon_\tau\), \(0\le\nu<m_i\), is the
	product of the scaled polynomial component and its moving column factor.
	This product differs from its limit by \(\mathrm O_T(\Theta_\tau)\ {(\tau\to+\infty)}\). The moving
	column factor is uniformly nonzero on every fixed compact set and differs
	from \(\e^{\xi_it}\) by \(\mathrm O_T(\Theta_\tau)\ {(\tau\to+\infty)}\); division therefore gives
	an \(\mathrm O_T(\Theta_\tau)\ {(\tau\to+\infty)}\) estimate for the polynomial component.
	Evaluation at \(m_i\) fixed
	distinct points, followed by inversion of the resulting fixed Vandermonde
	matrix, proves the coefficient estimates.

	\textnormal{(3)} For fixed \(n_h\) and \(m_i\), the factors in
	\eqref{eq:canonical-Hermite-finite-B-transform} satisfy, uniformly on
	\(K\),
	\[
		P_{h,\tau}^{[n_h]}(z)
		=
		(\rho_h+z)^{n_h}+\mathrm O_K(\Omega_\tau),
		\qquad
		\prod_{j=0}^{m_i-1}
		[\varepsilon_\tau(\beta_i(\tau)+j)-z]
		=
		(\xi_i-z)^{m_i}+\mathrm O_K(\Theta_\tau)\qquad (\tau\to+\infty).
\]
	Together with the estimate for \(Q_{0,\tau}\), the exact formula
	\eqref{eq:canonical-Hermite-finite-B-transform} now gives
	\(Q_{B,\tau}-Q_B=\mathrm O_K(\Theta_\tau)\ {(\tau\to+\infty)}\). Cauchy's formula on a slightly
	larger compact set gives the derivative estimates, and the finite
	difference identity gives the estimate for the moments with exponential factors.

	\textnormal{(4)} Assume \(\Delta_{\mathrm H}\ne0\), and fix
	\(i_0\in\{1,\ldots,p\}\). Put
	\[
		\widehat{\mathcal B}_\tau(t)
		\coloneq
		\frac{c_\tau\varepsilon_\tau^{\eta}}{Z_\tau}
		\mathcal B_\tau^{\mathrm L}(x_\tau(t)).
\]
	For the lexicographically ordered set
	\(\mathcal J_{\boldsymbol m,i_0}\) in
	Proposition~\ref{prop:canonical-Hermite-B-components}, define
	\[
		(\widetilde{\mathcal N}_{i_0,\tau})_{(i,\ell),(j,k)}
		\coloneq
		\int_{I_\tau}t^\ell(1+\varepsilon_\tau t)^{\beta_i(\tau)}
		H_k(t)\widehat U_{j,\tau}(t)\,\mathrm dt
\]
	and
	\[
		(y_\tau)_{(i,\ell)}
		\coloneq
		\int_{I_\tau}t^\ell(1+\varepsilon_\tau t)^{\beta_i(\tau)}
		\widehat{\mathcal B}_\tau(t)\,\mathrm dt.
\]
	Let
	\[
		y_{(i,\ell)}
		\coloneq
		\int_{\mathbb R}t^\ell\e^{\xi_it}
		\mathcal B^{\mathrm H}_{\boldsymbol\lambda,\boldsymbol m}(t)
		\,\mathrm dt.
\]
	The mixed-type moment estimate already proved gives
	\[
		\widetilde{\mathcal N}_{i_0,\tau}
		=
		\widetilde{\mathcal N}_{i_0}
		+\mathrm O(\Theta_\tau)\qquad (\tau\to+\infty),
\]
	entrywise, while the moment estimate for \(Q_{B,\tau}\) gives
	\(y_\tau=y+\mathrm O(\Theta_\tau)\ {(\tau\to+\infty)}\). The matrix
	\(\widetilde{\mathcal N}_{i_0}\) is the Hermite-basis version of
	\(\mathcal N_{i_0}\); hence it is invertible by
	\eqref{eq:canonical-Hermite-B-connection-determinant}. Since
	\[
		\det\widetilde{\mathcal N}_{i_0,\tau}
		\xrightarrow[\tau\to+\infty]{}
		\det\widetilde{\mathcal N}_{i_0}\ne0,
\]
	the finite matrices are invertible for all sufficiently large \(\tau\).
	This proves weak normality of the finite row-normalized \(B\)-problem in
	the Hermite polynomial basis. To make the identification with the complete
	representative explicit, apply
	\eqref{eq:canonical-Hermite-finite-difference-moments} to the rows of
	\(\widetilde{\mathcal N}_{i_0,\tau}\). It expresses those \(N\) functionals
	as invertible triangular combinations of centered transforms at the points
	\[
		\varepsilon_\tau\beta_i(\tau)+\nu\varepsilon_\tau,
		\qquad 0\le\nu<q_i.
\]
	These points are distinct for all sufficiently large \(\tau\), because the
	limits \(\xi_i\) are pairwise distinct. The Mellin calculation in
	\eqref{eq:B-component-polynomial-system} shows that the centered transform
	of every admissible row combination is a nonzero common factor times a
	polynomial of degree at most \(N-1\). Equation
	\eqref{eq:canonical-Hermite-finite-B-transform} gives this polynomial
	for \(\widehat{\mathcal B}_\tau\). Interpolation at the displayed \(N\) points
	therefore proves that the unique solution of the finite moment problem is
	exactly \(\widehat{\mathcal B}_\tau\).

	Denote its scaled polynomial components by
	\[
		\widetilde B_{h,\tau}(t)
		\coloneq
		\frac{\varepsilon_\tau^{\eta-1}}{b_h(\tau)+1}
		B_\tau^{(h),\mathrm L}(x_\tau(t)),
		\qquad
		\widetilde D_{1,\tau}(t)
		\coloneq
		\varepsilon_\tau^{\eta-1}
		D_\tau^{(1),\mathrm L}(x_\tau(t)),
\]
	and write
	\[
		\widetilde B_{h,\tau}(t)
		=
		\sum_{k=0}^{n_h-1}(b_\tau)_{h,k}H_k(t),
		\qquad
		\widetilde D_{1,\tau}(t)
		=
		\sum_{k=0}^{\eta-1}(b_\tau)_{q,k}H_k(t).
\]
	Then
	\[
		\widehat{\mathcal B}_\tau(t)
		=
		\sum_{(j,k)\in\mathcal I_{\boldsymbol\lambda}}
		(b_\tau)_{j,k}H_k(t)\widehat U_{j,\tau}(t)
\]
	and hence
	\(\widetilde{\mathcal N}_{i_0,\tau}b_\tau=y_\tau\) exactly. Let \(b\) be the
	Hermite coefficient vector in
	\eqref{eq:canonical-Hermite-B-beta-components}--
	\eqref{eq:canonical-Hermite-B-base-component}; then
	\(\widetilde{\mathcal N}_{i_0}b=y\). Write
	\(M_\tau=\widetilde{\mathcal N}_{i_0,\tau}\) and
	\(M=\widetilde{\mathcal N}_{i_0}\). In the estimates below, vector
	norms are Euclidean and matrix norms are the induced operator norms.
	Since \(M\) is invertible and
	\(\|M_\tau-M\|\le C\Theta_\tau\), for sufficiently large \(\tau\) one has
	\(\|M^{-1}(M_\tau-M)\|\le1/2\), hence
	\(\|M_\tau^{-1}\|\le2\|M^{-1}\|\).
	The elementary identity
	\[
		M_\tau^{-1}-M^{-1}
		=
		-M_\tau^{-1}(M_\tau-M)M^{-1}
\]
	gives \(\|M_\tau^{-1}-M^{-1}\|\le2C\|M^{-1}\|^2\Theta_\tau\).
	Likewise, \(b_\tau-b=M_\tau^{-1}[(y_\tau-y)-(M_\tau-M)b]\), so that
	\(\|b_\tau-b\|\le C'\Theta_\tau\) for all sufficiently large \(\tau\).
	This proves
	\eqref{eq:canonical-Hermite-B-beta-component-limit} and
	\eqref{eq:canonical-Hermite-B-base-component-limit}.

	The scaling limits and
	\eqref{eq:canonical-Hermite-Omega-rate} give \(\Theta_\tau=\mathrm o(1)\ {(\tau\to+\infty)}\). Under the
	additional first-order estimates in the statement,
	\(\Theta_\tau=\mathrm O(\varepsilon_\tau)=\mathrm O(\tau^{-1/2})\ {(\tau\to+\infty)}\).
\end{proof}

\subsection{Classical multiple Hermite reductions}
\label{subsec:Hermite-classical-reductions}

When only one row remains, the system reduces to the classical multiple
Hermite weights, for which all multi-indices are strongly normal. The
comparison with coinciding Gaussian centres is given separately in
Appendix~\ref{app:Hermite-derivative-limits}.

For the scalar-row reduction below, specialize
Corollary~\ref{cor:simultaneous-Laguerre-Hermite-matrix-confluence} to
\(r=0\), hence \(q=1\), and suppose that its limiting exponents
\(\xi_1,\ldots,\xi_p\) are pairwise distinct. Set
\[
	a\coloneq\widehat a_*,
	\qquad
	c_i\coloneq\xi_i+2a.
\]
Then \(F=\phi_a\), and the entries of the limiting matrix are
\begin{equation}
	\label{eq:q1-Hermite-limiting-weights}
	\phi_a(t)\e^{\xi_it}
	=
	\pi^{-1/2}\e^{-a^2}\e^{-t^2+c_it},
	\qquad 1\le i\le p.
\end{equation}
Thus, up to a common positive constant, the limiting weights are the
classical multiple Hermite weights
\[
	w_i^{\mathrm H}(t)\coloneq\e^{-t^2+c_it},
	\qquad 1\le i\le p.
\]
For functions \(f_1,\ldots,f_n\) with \(n-1\) continuous derivatives,
their Wronskian is
\[
W(f_1,\ldots,f_n)(t)\coloneq
\det[f_j^{(k-1)}(t)]_{k,j=1}^n.
\]

\begin{lemma}[Strong normality of the multiple Hermite system]
	\label{lem:q1-multiple-Hermite-normality}
	For every
	\(\boldsymbol\nu=(\nu_1,\ldots,\nu_p)\in\mathbb N_0^p\) with
	\(\lvert\boldsymbol\nu\rvert\ge1\), the family
	\[
		\left\{
			t^kw_i^{\mathrm H}(t):
			1\le i\le p,\ 0\le k<\nu_i
		\right\}
\]
	is an extended complete Chebyshev system on \(\mathbb R\). Consequently,
	the corresponding type-I and type-II multiple Hermite problems are
	strongly normal. The same holds for every multi-index obtained by
	increasing or decreasing a component, provided all components remain
	nonnegative and their sum is positive.
\end{lemma}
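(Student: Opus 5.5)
The plan is to reduce the Chebyshev property to a zero count for exponential polynomials. This count is classical and is proved by induction on \(p\) using Rolle's theorem. The Gaussian factor \(\e^{-t^2}\) never vanishes, so a combination
\[
f(t)=\sum_{i=1}^p Q_i(t)\e^{-t^2+c_it},\qquad \deg Q_i<\nu_i,
\]
has the same zeros, with the same multiplicities, as \(g(t)=\sum_i Q_i(t)\e^{c_it}\). The nodes \(c_i=\xi_i+2a\) are pairwise distinct because the \(\xi_i\) are. Hence it suffices to show that a nonzero \(g\) has at most \(|\boldsymbol\nu|-1\) real zeros counted with multiplicity. The standard argument runs as follows. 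Discard the indices with \(Q_i\equiv0\), and multiply by \(\e^{-c_pt}\), which preserves zeros and multiplicities. Then differentiate \(\nu_p\) times: this annihilates \(Q_p\), and for each \(i<p\) it replaces \(Q_i\e^{(c_i-c_p)t}\) by \(\widetilde Q_i\e^{(c_i-c_p)t}\) with \(\deg\widetilde Q_i=\deg Q_i\), because \(c_i\ne c_p\). By Rolle's theorem, counting multiplicities, each differentiation loses at most one zero. Induction on the number of nonzero blocks then gives at most \(\sum_{i<p}\nu_i-1+\nu_p=|\boldsymbol\nu|-1\) zeros. The base case is a single block, a nonzero polynomial of degree below \(\nu_i\) times a nonvanishing exponential, which has at most \(\nu_i-1\) zeros.

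This bound holds for every \(\boldsymbol\nu\), and hence for every initial subfamily in any ordering. Every such subfamily is again of the same form for a smaller multi-index. So the family is an extended complete Chebyshev system on \(\mathbb R\). The stability assertion for increasing or decreasing one component is immediate for the same reason: the resulting index is again an arbitrary index in \(\mathbb N_0^p\) with positive sum.

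The normality consequences then follow by linear algebra. For type II, the monic polynomial \(P\) of degree \(|\boldsymbol\nu|\) satisfying \(\int t^kP(t)w_i^{\mathrm H}(t)\,\mathrm dt=0\) for \(0\le k<\nu_i\) exists uniquely if and only if the square moment matrix is nonsingular. That matrix is also the matrix of the type-I problem. If it were singular, there would be a nonzero type-I form \(f\) orthogonal to \(1,t,\ldots,t^{|\boldsymbol\nu|-1}\). Since \(f\) has at most \(|\boldsymbol\nu|-1\) zeros, it has at most \(|\boldsymbol\nu|-1\) sign changes. Choosing a polynomial of degree at most \(|\boldsymbol\nu|-1\) with simple zeros exactly at the sign changes of \(f\) makes \(\int f\,\cdot\,(\text{that polynomial})\) nonzero, a contradiction. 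Here the Gaussian decay guarantees that all moments exist.

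The main obstacle is strong normality, which also requires exact degrees. For type I, I will argue that if the component of index \(i\) had degree below \(\nu_i-1\), the form would be orthogonal to polynomials of degree at most \(|\boldsymbol\nu|-1\). That is, it would be a nonzero type-I form for the smaller index \(\boldsymbol\nu-\boldsymbol e_i\) that satisfies one extra orthogonality condition. The sign-change argument applied to \(\boldsymbol\nu-\boldsymbol e_i\), with its \(|\boldsymbol\nu|-2\) zero bound, forbids this. For type II, the argument is dual. Every \(\boldsymbol\nu-\boldsymbol e_i\) is normal, so the type-II polynomial for \(\boldsymbol\nu\) cannot satisfy the orthogonality of a neighbouring index \(\boldsymbol\nu+\boldsymbol e_i\) with a degree drop. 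Equivalently, the square matrices with \(\nu_i\) raised by one and total degree \(|\boldsymbol\nu|\) are nonsingular, by the same AT argument. This bookkeeping across neighbouring indices is the step I expect to require the most care; the zero-counting itself is routine.
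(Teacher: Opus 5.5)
Your proof is correct. Its Chebyshev part is essentially the paper's second argument, the direct zero count. You remove a whole block with \(\nu_p\) derivatives and induct on the number of blocks, whereas the paper multiplies by \(\e^{-c_1t}\) and differentiates once, lowering \(|\boldsymbol\nu|\) by one. The paper also gives the confluent-Vandermonde Wronskian, positive in block order.

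The genuine difference lies in passing from zeros to normality. Put \(n=|\boldsymbol\nu|\) and let \(G_{\boldsymbol\nu}\) be the square moment matrix. The paper writes \(\det G_{\boldsymbol\nu}=\frac1{n!}\int_{\mathbb R^n}\det[t_j^a]\det[f_b(t_j)]\prod_j\e^{-t_j^2}\,\mathrm dt_1\cdots\mathrm dt_n\) (an Andreief-type identity). It uses the zero count to show that \(\det[f_b(t_j)]\) has constant sign on \(t_1<\cdots<t_n\), and the Wronskian limit to show that this sign is positive. Type-I degree attainment is then read off from Cramer's rule as \(\pm\det G_{\boldsymbol\nu-\boldsymbol e_i}/\det G_{\boldsymbol\nu}\).

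You instead use the classical sign-change argument: a kernel vector of \(G_{\boldsymbol\nu}\) would give a nonzero form with at most \(n-1\) sign changes that is orthogonal to \(\mathbb P_{n-1}\). This is more elementary and makes the Wronskian unnecessary. It gives only \(\det G_{\boldsymbol\nu}\ne0\), not positivity, but the lemma needs nothing more. Your type-I degree argument expresses the same fact as the paper's Cramer ratio, namely nonsingularity of \(G_{\boldsymbol\nu-\boldsymbol e_i}\).

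A few details need correcting.
\begin{itemize}
\item The initial-subfamily claim is false ``in any ordering''. The powers must increase within each block, since \(\{t\,w_1^{\mathrm H}\}\) alone vanishes at \(0\). The block order suffices.
\item A type-I form whose \(i\)-th component drops degree is orthogonal to \(\mathbb P_{n-2}\), not \(\mathbb P_{n-1}\); the latter would contradict the normalization at order \(n-1\). Orthogonality to \(\mathbb P_{n-2}\) is \(|\boldsymbol\nu-\boldsymbol e_i|\) conditions, which is exactly what your bound for \(\boldsymbol\nu-\boldsymbol e_i\) excludes.
\item For type II, once \(\det G_{\boldsymbol\nu}\ne0\), the monic normalization already fixes the degree, so no further step is needed. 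The neighbour condition you sketch, \(\int t^{\nu_i}P\,w_i^{\mathrm H}\ne0\), is equivalent to nonsingularity of \(G_{\boldsymbol\nu+\boldsymbol e_i}\), not of \(G_{\boldsymbol\nu-\boldsymbol e_i}\) as your sentence suggests. It holds anyway, because your argument covers every index.
\end{itemize}
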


\begin{proof}
	After ordering the \(c_i\)'s increasingly, the Wronskian of the
	exponential polynomials
	\[
		t^k\e^{c_it},
		\qquad
		1\le i\le p,\quad 0\le k<\nu_i,
\]
	is
	\[
		\e^{(\sum_i\nu_ic_i)t}
		\left(\prod_{i=1}^p\prod_{k=0}^{\nu_i-1}k!\right)
		\prod_{1\le i<j\le p}(c_j-c_i)^{\nu_i\nu_j},
\]
	and is therefore strictly positive in this block order. It follows by
	confluence from the Vandermonde determinant in the exponents; equivalently,
	the columns are successive derivatives with respect to \(c_i\) of the
	exponential column. The same formula with the last multiplicity truncated
	applies to every initial subfamily.

	For completeness, the zero-counting property can be proved directly.
	A nonzero exponential polynomial \(f=\sum_i p_i\e^{c_it}\), with
	\(\deg p_i<\nu_i\), has at most \(|\boldsymbol\nu|-1\) real zeros,
	counted with multiplicity. Induct on \(|\boldsymbol\nu|\), omitting zero
	multiplicities. Multiplication by \(\e^{-c_1t}\) preserves its zeros.
	Differentiation then decreases the first polynomial degree by one and
	replaces each other polynomial by \(p_i'+(c_i-c_1)p_i\), without
	increasing its degree. The resulting space has dimension
	\(|\boldsymbol\nu|-1\). If the derivative is nonzero, induction and
	Rolle's theorem give the asserted bound. If it is zero, the original
	function is a nonzero multiple of \(\e^{c_1t}\), and has no zeros.
	The same argument applies to each initial subfamily, proving the
	extended complete Chebyshev property. Multiplication by
	\(\e^{-t^2}>0\) preserves the zeros and their multiplicities.

	To pass from zeros to moment determinants, put
	\(n\coloneq|\boldsymbol\nu|\) and list the exponential polynomials in
	the chosen order as \(f_1,\ldots,f_n\). Define
	\[
		G_{\boldsymbol\nu}\coloneq
		\left[\int_{\mathbb R}t^a f_b(t)\e^{-t^2}\,\mathrm dt\right]_
		{0\le a<n,\,1\le b\le n}.
\]
	Expansion of the determinants and Fubini's theorem give
	\[
		\det G_{\boldsymbol\nu}
		=\frac1{n!}\int_{\mathbb R^n}
		\det[t_j^a]_{a,j}\det[f_b(t_j)]_{j,b}
		\prod_{j=1}^n\e^{-t_j^2}\,\mathrm dt_1\cdots\mathrm dt_n.
\]
	All terms are absolutely integrable because a Gaussian dominates each
	polynomial times an exponential. On \(t_1<\cdots<t_n\), the monomial
	determinant is positive. The other determinant never vanishes, since a
	vanishing determinant would yield a nonzero exponential polynomial with
	\(n\) distinct zeros. Its sign is constant on this connected region;
	letting the points approach one another and using the positive Wronskian
	shows that the sign is positive. The product is invariant under
	permutations of the points. Hence \(\det G_{\boldsymbol\nu}>0\).

	This proves existence and uniqueness of the monic type-II polynomial of
	degree \(n\), and of the type-I vector normalized by its moment of order
	\(n-1\). To check its component degrees, the coefficient of
	\(t^{\nu_i-1}\) in the \(i\)-th type-I component is, by Cramer's rule,
	up to sign,
	\[
		\frac{\det G_{\boldsymbol\nu-\boldsymbol e_i}}
		{\det G_{\boldsymbol\nu}},\qquad \nu_i>0,
\]
	where the determinant for the zero multi-index is one. Both determinants
	are nonzero by the preceding argument. Every prescribed type-I degree is
	therefore attained, and the type-II degree is fixed by its monic
	normalization. Applying the same reasoning to any adjacent multi-index
	proves all the stated strong-normality assertions.
\end{proof}

Fix \(\boldsymbol\nu\in\mathbb N_0^p\), put
\(n\coloneq|\boldsymbol\nu|\), and assume \(n\ge1\). Let
\(H_{\boldsymbol\nu}(t;\boldsymbol c)\) denote the unique monic polynomial of degree
\(n\) such that
\[
	\int_{\mathbb R}
	t^kH_{\boldsymbol\nu}(t;\boldsymbol c)
	w_i^{\mathrm H}(t)\,\mathrm dt=0,
	\qquad
	0\le k<\nu_i,
\]
and by \(H_{\boldsymbol\nu}^{(i)}(t;\boldsymbol c)\) the type-I components,
normalized by
\[
	\int_{\mathbb R}t^k
	\sum_{i=1}^p
	H_{\boldsymbol\nu}^{(i)}(t;\boldsymbol c)
	w_i^{\mathrm H}(t)\,\mathrm dt
	=
	\delta_{k,n-1},
	\qquad
	0\le k\le n-1.
\]
	Here
	\(\deg H_{\boldsymbol\nu}^{(i)}\le\nu_i-1\), with
	\(H_{\boldsymbol\nu}^{(i)}\equiv0\) when \(\nu_i=0\). These are the
	normalizations used in
	\cite[Equation~(24) and Theorem~5.1, equation~(28)]
	{BranquinhoDiazFoulquieManas2025Classical}. For \(p=1\), the same
	normalization gives the scalar Hermite specialization.

In the monic scalar normalization fixed by
\eqref{eq:monic-Hermite-generating-function}, which agrees with
\cite[Equation~(25)]{BranquinhoDiazFoulquieManas2025Classical},
distribute the exterior factor \(\prod_i c_i^{\nu_i}\) in equation~(24) of
that paper inside its finite sum. The resulting polynomial identity is
\begin{equation}
	\label{eq:q1-Hermite-type-II-explicit}
	H_{\boldsymbol\nu}(t;\boldsymbol c)
	=
	\left(-\frac12\right)^n
	\sum_{\ell_1=0}^{\nu_1}\cdots
	\sum_{\ell_p=0}^{\nu_p}
	2^{|\boldsymbol\ell|}
	\prod_{i=1}^p
	\frac{(-\nu_i)_{\ell_i}c_i^{\nu_i-\ell_i}}{\ell_i!}
	H_{|\boldsymbol\ell|}(t).
\end{equation}

Set
\[
	\boldsymbol\lambda_{\mathrm A}\coloneq(n-1),
	\qquad
	\boldsymbol\lambda_{\mathrm B}\coloneq(n+1).
\]

\begin{corollary}[Exact reduction to multiple Hermite type I and II]
	\label{cor:q1-canonical-Hermite-reduction}
	For
	\((\boldsymbol m,\boldsymbol\lambda)
	=(\boldsymbol\nu,\boldsymbol\lambda_{\mathrm A})\), the components satisfy
	\begin{equation}
		\label{eq:q1-A-reduction-to-SAPM-type-I}
		A_{\boldsymbol\lambda_{\mathrm A},\boldsymbol\nu}^{(i),\mathrm H}(t)
		=
		\frac{
			(-1)^n\sqrt\pi\,\e^{a^2}
		}{
			2^{n-1}
		}
		H_{\boldsymbol\nu}^{(i)}(t;\boldsymbol c).
	\end{equation}
	If \(\nu_i=0\), both sides are zero. For
	\((\boldsymbol m,\boldsymbol\lambda)
	=(\boldsymbol\nu,\boldsymbol\lambda_{\mathrm B})\), the complete form satisfies
	\begin{equation}
		\label{eq:q1-B-reduction-to-SAPM-type-II}
		\mathcal B_{\boldsymbol\lambda_{\mathrm B},\boldsymbol\nu}^{\mathrm H}(t)
		=
		(-2)^n
		H_{\boldsymbol\nu}(t;\boldsymbol c)\phi_a(t).
	\end{equation}
\end{corollary}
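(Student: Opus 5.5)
The plan is to use the uniqueness in Lemma~\ref{lem:q1-multiple-Hermite-normality} for both balances. In each case I show that the limiting Hermite object solves the same moment problem as its classical counterpart, and then fix the proportionality constant with one explicit normalizing quantity. Throughout, \(r=0\) and \(q=1\), so \(F=\phi_a\) and \(L_F(z)=\exp(az+z^2/4)\). The key identity is \(\phi_a(t)\e^{\xi_it}=\pi^{-1/2}\e^{-a^2}w_i^{\mathrm H}(t)\), from \eqref{eq:q1-Hermite-limiting-weights}, so every moment condition for the Hermite system is a moment condition against the classical weights.

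\emph{Type I.} Take \(\boldsymbol\lambda_{\mathrm A}=(n-1)\), so \(\eta=n-1\) and \(|\boldsymbol m|=|\boldsymbol\nu|=n=N+1\).

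First, Corollary~\ref{cor:canonical-Hermite-explicit-A-components} gives \(\int t^kF\mathcal A^{\mathrm H}\,\mathrm dt=0\) for \(0\le k<n-1\). By the identity above, the vector \((A^{(i),\mathrm H})_i\) satisfies the classical type-I homogeneous conditions. Lemma~\ref{lem:q1-multiple-Hermite-normality} makes that solution space one-dimensional, so \(A^{(i),\mathrm H}=\kappa H^{(i)}_{\boldsymbol\nu}\) with a single constant \(\kappa\) independent of \(i\).

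Second, I compute the moment of order \(n-1\) from the contour formula \eqref{eq:canonical-Hermite-A-limit-contour}. Fubini's theorem is justified on the fixed circles, and it gives
\[
\int_{\mathbb R}t^{n-1}F(t)\mathcal A^{\mathrm H}(t)\,\mathrm dt
=\frac1{2\pi\mathrm i}\oint_{\Sigma_\xi}\frac{L_F^{(n-1)}(z)}{L_F(z)P_{\boldsymbol m}(z)}\,\mathrm dz .
\]
By the generating function for \(\mathcal H^{(+)}_v\), the quotient \(L_F^{(n-1)}/L_F=\mathcal H^{(+)}_{n-1}(a+z/2)\) is a polynomial of degree \(n-1\) with leading coefficient \(2^{1-n}\). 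The denominator \(P_{\boldsymbol m}(z)\) has leading term \((-1)^nz^n\). The contour encloses every finite pole, so the residue at infinity gives the value \((-1)^n2^{1-n}\).

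Third, comparing with the normalization \(\int t^{n-1}\sum_iH^{(i)}_{\boldsymbol\nu}w_i^{\mathrm H}\,\mathrm dt=1\) gives \(\kappa\,\pi^{-1/2}\e^{-a^2}=(-1)^n2^{1-n}\). This is exactly \eqref{eq:q1-A-reduction-to-SAPM-type-I}. When \(\nu_i=0\), both sides vanish by definition.

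\emph{Type II.} Take \(\boldsymbol\lambda_{\mathrm B}=(n+1)\), so the single row is \(F\) and \(\boldsymbol m=\boldsymbol\nu\).

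First, the Rodrigues formula \eqref{eq:canonical-Hermite-B-Rodrigues} has no \(K\)-factors here and reads \(\mathcal B^{\mathrm H}=\prod_i(\xi_i+\partial_t)^{\nu_i}\phi_a\). Since \((\xi+\partial_t)(P\phi_a)=\bigl(P'+(\xi-2(t-a))P\bigr)\phi_a\), induction shows \(\mathcal B^{\mathrm H}=D\phi_a\). Here \(D\) is a polynomial of degree exactly \(n\) with leading coefficient \((-2)^n\), since each of the \(n\) factors contributes \(-2t\).

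Second, the orthogonality \eqref{eq:canonical-Hermite-B-orthogonality} gives \(\int t^kD(t)\phi_a(t)\e^{\xi_it}\,\mathrm dt=0\) for \(k<\nu_i\), that is, orthogonality of \(D\) against each \(w_i^{\mathrm H}\). The uniqueness of the monic type-II polynomial in Lemma~\ref{lem:q1-multiple-Hermite-normality} then forces \(D=(-2)^nH_{\boldsymbol\nu}(\,\cdot\,;\boldsymbol c)\), which is \eqref{eq:q1-B-reduction-to-SAPM-type-II}.

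The only step needing care is the residue-at-infinity evaluation in the type-I normalization. It requires the leading coefficient of \(\mathcal H^{(+)}_{n-1}(a+z/2)\), namely \(2^{1-n}\), and the sign \((-1)^n\) coming from \(P_{\boldsymbol m}\). I would double-check both by comparing with the case \(p=1\), \(n=1\).
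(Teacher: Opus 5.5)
Your proof is correct and follows essentially the same route as the paper. Both arguments use the uniqueness in Lemma~\ref{lem:q1-multiple-Hermite-normality}, evaluate the order-$(n-1)$ moment as $(-1)^n2^{1-n}$ by a residue at infinity (since $L_F^{(k)}/L_F$ has leading coefficient $2^{-k}$), and write the Rodrigues function as $\mathcal B^{\mathrm H}=D\phi_a$ with leading coefficient $(-2)^n$. The only difference is minor: the paper gets the vanishing type-I moments for $k<n-1$ from that same residue computation and the type-II orthogonality by direct integration by parts, whereas you cite Corollary~\ref{cor:canonical-Hermite-explicit-A-components} and \eqref{eq:canonical-Hermite-B-orthogonality}, which is equally valid.
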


\begin{proof}
	For the type-I form, set
	\[
		L_a(z)=\e^{az+z^2/4},
		\qquad
		P_{\boldsymbol\nu}(z)
		=\prod_{i=1}^p(\xi_i-z)^{\nu_i}.
\]
	Equation~\eqref{eq:canonical-Hermite-A-limit-contour} becomes
	\[
		\mathcal A^{\mathrm H}(t)
		=
		\frac1{2\pi\mathrm i}
		\oint_{\Sigma_\xi}
		\frac{\e^{zt}}{L_a(z)P_{\boldsymbol\nu}(z)}\,\mathrm dz.
\]
	Fubini's theorem and differentiation of the Gaussian transform give,
	for \(0\le k\le n-1\),
	\[
		\int_{\mathbb R}t^k\phi_a(t)\mathcal A^{\mathrm H}(t)\,\mathrm dt
		=
		\frac1{2\pi\mathrm i}
		\oint_{\Sigma_\xi}
		\frac{L_a^{(k)}(z)}
		{L_a(z)P_{\boldsymbol\nu}(z)}\,\mathrm dz.
\]
	The quotient \(L_a^{(k)}/L_a\) is a polynomial of degree \(k\) with
	leading coefficient \(2^{-k}\). The residue at infinity shows that the
	last integral is zero for \(k<n-1\) and equals
	\[
		\frac{(-1)^n}{2^{n-1}}
\]
	for \(k=n-1\). Moreover,
	\[
		\phi_a(t)\e^{\xi_it}
		=
		\pi^{-1/2}\e^{-a^2}\e^{-t^2+c_it}.
\]
	The type-I normalization gives moment one at order \(n-1\). Uniqueness
	from Lemma~\ref{lem:q1-multiple-Hermite-normality} and comparison of those
	moments give
	\eqref{eq:q1-A-reduction-to-SAPM-type-I}.

	For the type-II form, Rodrigues' formula gives
	\[
		\mathcal B^{\mathrm H}(t)
		=
		\prod_{i=1}^p
		(\xi_i+\partial_t)^{\nu_i}\phi_a(t).
\]
	Write \(\mathcal B^{\mathrm H}=D\phi_a\). Conjugation by \(\phi_a\)
	shows that \(D\) has degree \(n\) and leading coefficient \((-2)^n\).
	For \(0\le k<\nu_i\), repeated integration by parts gives
	\[
		\int_{\mathbb R}
		t^k\e^{\xi_it}\mathcal B^{\mathrm H}(t)\,\mathrm dt=0,
\]
	because the formal adjoint of \(\xi_i+\partial_t\) sends
	\(t^k\e^{\xi_it}\) to
	\(-kt^{k-1}\e^{\xi_it}\). Lemma~\ref{lem:q1-multiple-Hermite-normality}
	therefore identifies \((-2)^{-n}D\) with the monic type-II multiple
	Hermite polynomial, which proves
	\eqref{eq:q1-B-reduction-to-SAPM-type-II}.
\end{proof}

\begin{remark}
	On the one-row parameter path used in
	\cite{BranquinhoDiazFoulquieManas2025Classical},
	\eqref{eq:canonical-Hermite-A-component-convergence}, with the
	normalization in \eqref{eq:q1-A-reduction-to-SAPM-type-I}, recovers
	equation~(30). Likewise,
	\eqref{eq:canonical-Hermite-B-scaled-limit}, together with the scalar row
	limit and \eqref{eq:q1-B-reduction-to-SAPM-type-II}, recovers
	equation~(27).
\end{remark}

\subsection{Direct confluence from Laguerre of the second kind}
\label{subsec:LII-Hermite-direct}

The Hermite family defined above can also be obtained directly from the
additive-convolution rows of Laguerre of the second kind. One gamma shape
tends to infinity, while the remaining shapes stay fixed and their rates
decrease on the square-root scale. Centering, reflection, and rescaling
then give the same Gaussian--gamma convolution and exponential
convolutions as in \eqref{eq:canonical-Hermite-limiting-vectors}.
Only constant row and column normalizations are needed; the polynomial
degree bounds are therefore preserved.

Fix \(r\ge0\), put \(q=r+1\), and take \(d_h,\rho_h>0\) for
\(1\le h\le r\), together with \(\widehat a_*\in\mathbb R\).
Let the real column nodes \(\xi_i\) satisfy
\(\xi_i+\rho_h>0\) for every \(i,h\). For \(\tau>0\), define
\begin{equation}
 h_\tau\coloneq\sqrt{2\tau},\qquad
 \boldsymbol d_\tau\coloneq(d_1,\ldots,d_r,\tau),\qquad
 \boldsymbol\varrho_\tau\coloneq
 (\rho_1/h_\tau,\ldots,\rho_r/h_\tau,1).
 \label{eq:LII-Hermite-direct-parameters}
\end{equation}
Let \(F_{j;\tau}^{\mathrm{II}}\) denote the row
\eqref{eq:LII-shifted-rows} with these shapes and rates. With the kernels
of \eqref{eq:Hermite-gamma-convolution-operator}, define the functions
of integral one
\[
 f_{0,\tau}\coloneq
 \kappa_{\varrho_{1,\tau},d_{1,\tau}}*_+\cdots*_+
 \kappa_{\varrho_{q,\tau},d_{q,\tau}},
\]
\[
 f_{j,\tau}\coloneq
 \kappa_{\varrho_{1,\tau},d_{1,\tau}+\delta_{1,j}}*_+\cdots*_+
 \kappa_{\varrho_{q,\tau},d_{q,\tau}+\delta_{q,j}},
 \qquad 1\le j\le q.
\]
Thus \(f_{j,\tau}=\sigma_{j,\tau}F_{j;\tau}^{\mathrm{II}}\), where
\[
 \sigma_{j,\tau}\coloneq
 \varrho_{j,\tau}\prod_{h=1}^q\varrho_{h,\tau}^{d_{h,\tau}}.
\]
The change of variable and the transformed rows are
\begin{equation}
 y_\tau(t)\coloneq\tau+h_\tau(\widehat a_*-t),\qquad
 V_{j,\tau}(t)\coloneq h_\tau f_{j,\tau}(y_\tau(t)),
 \qquad 0\le j\le q.
 \label{eq:LII-Hermite-direct-rows}
\end{equation}
Each \(V_{j,\tau}\) is zero outside
\(I_\tau^{\mathrm{II}}\coloneq(-\infty,\widehat a_*+\tau/h_\tau)\).
Choose the original columns to be \(\e^{-\xi_i y/h_\tau}\) and put
\(\chi_{i,\tau}\coloneq\e^{\xi_i(\tau/h_\tau+\widehat a_*)}\).
Since
\[
 \chi_{i,\tau}\e^{-\xi_i y_\tau(t)/h_\tau}=\e^{\xi_i t},
\]
the row and column normalizations \(\sigma_{j,\tau}\) and
\(\chi_{i,\tau}\), followed by the change of variable, give the matrix
\[
 [V_{j,\tau}(t)\e^{\xi_i t}]_{j,i}\,\mathrm dt.
\]
Its entries are integrable for all sufficiently large \(\tau\).
For the limiting rows, retain the notation
\[
 F=K_{\rho_1,d_1}\cdots K_{\rho_r,d_r}\phi_{\widehat a_*},\qquad
 U_h^{\mathrm H}=K_{\rho_h,1}F\ (1\le h\le r),\qquad
 U_q^{\mathrm H}=F.
\]

\Needspace{8\baselineskip}
\begin{theorem}[Direct Hermite limit of the second-kind matrix]
 \label{thm:LII-Hermite-direct-matrix}
 For the parameters and rows defined in
 \eqref{eq:LII-Hermite-direct-parameters}--\eqref{eq:LII-Hermite-direct-rows},
 the following statements hold.
 \begin{enumerate}[label=\textnormal{(\arabic*)}]
 \item For each \(j\), the rows converge locally uniformly on
 \(\mathbb R\), and for each \(i,j\),
 \begin{equation}
 V_{j,\tau}(t)\xrightarrow[\tau\to+\infty]{}U_j^{\mathrm H}(t),\qquad
 \int_{\mathbb R}\e^{\xi_i t}|V_{j,\tau}(t)-U_j^{\mathrm H}(t)|\,\mathrm dt
 \xrightarrow[\tau\to+\infty]{}0.
 \label{eq:LII-Hermite-direct-weight-limit}
 \end{equation}
 \item For every fixed \(v\in\mathbb N_0\), all mixed moments satisfy
 \begin{equation}
 \int_{\mathbb R}t^v\e^{\xi_i t}
       (V_{j,\tau}(t)-U_j^{\mathrm H}(t))\,\mathrm dt
 =\mathrm O(\tau^{-1/2}),\qquad \tau\to+\infty.
 \label{eq:LII-Hermite-direct-moment-limit}
 \end{equation}
 \end{enumerate}
\end{theorem}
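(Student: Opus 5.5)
The plan is to work with bilateral Laplace transforms, exactly as in the proof of Theorem~\ref{thm:simultaneous-Laguerre-Hermite-confluence}, and to exploit the explicit product structure of the rows. Each \(f_{j,\tau}\) is an additive convolution of \(r\) kernels with fixed shapes \(d_h\) (or \(d_h+\delta_{h,j}\)) and rates \(\rho_h/h_\tau\), together with one kernel \(\kappa_{1,\tau}\) or \(\kappa_{1,\tau+1}\), that is, a \(\Gamma(\tau)\) or \(\Gamma(\tau+1)\) law with unit rate.

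Under \(y=\tau+h_\tau(\widehat a_*-t)\), the kernels transform as follows.
\begin{itemize}
\item The large gamma factor, centred at its mean \(\tau\) and scaled by its standard deviation over \(\sqrt2\), becomes asymptotically \(\phi_{\widehat a_*}\).
\item Each small-rate kernel \(\kappa_{\rho_h/h_\tau,d}\), rescaled by \(h_\tau\), becomes exactly \(\kappa_{\rho_h,d}\).
\item The reflection \(t\mapsto-t\) turns the forward convolution into \(K_{\rho_h,d}\), with argument \(t+u\).
\end{itemize}

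\emph{Step 1: exact transform.} For \(\operatorname{Re}z>-\min_h\rho_h\) and \(\operatorname{Re}z>-h_\tau\), I will compute
\[
\int_{\mathbb R}\e^{zt}V_{j,\tau}(t)\,\mathrm dt
=\e^{z(\widehat a_*+\tau/h_\tau)}\Bigl(1+\tfrac{z}{h_\tau}\Bigr)^{-\tau-\delta_{j,q}}
\prod_{h=1}^r\Bigl(\frac{\rho_h}{\rho_h+z}\Bigr)^{d_h+\delta_{h,j}}.
\]
Here \(j=0\) has no shift, and the normalization \(\sigma_{j,\tau}\) is precisely what makes every kernel have integral one. Expanding the logarithm gives
\[
\tau z/h_\tau-\tau\log(1+z/h_\tau)=z^2/4+\mathrm O_K(\tau^{-1/2}),
\]
and the extra factor \((1+z/h_\tau)^{-1}\) is \(1+\mathrm O(\tau^{-1/2})\). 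Hence the transform equals \(\Lambda_j(z)\bigl(1+\mathrm O_K(\tau^{-1/2})\bigr)\) uniformly on compact subsets of the common half-plane, where \(\Lambda_j\) is given by \eqref{eq:canonical-Hermite-row-transforms}. Cauchy's formula extends this \(\mathrm O(\tau^{-1/2})\) bound to every fixed derivative. Evaluating the \(v\)-th derivative at \(z=\xi_i\), which lies in the domain because \(\xi_i+\rho_h>0\), gives part~(2) directly, since \(\partial_z^v\) of the transform is the moment \(\int t^v\e^{zt}V_{j,\tau}\,\mathrm dt\).

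\emph{Step 2: local uniform convergence.} Write \(V_{j,\tau}\) as the reflected rescaled gamma density \(G_\tau(t)\coloneq h_\tau\,g_{1,\tau}(y_\tau(t))\) (shape \(\tau\) or \(\tau+1\)), acted on by the operators \(K_{\rho_h,d_h+\delta_{h,j}}\).
\begin{itemize}
\item By Stirling's formula and the Taylor expansion of the log-density, \(G_\tau\to\phi_{\widehat a_*}\) locally uniformly.
\item Also \(\sup_\tau\sup_t G_\tau(t)<\infty\), since the mode value of a gamma density of shape \(\tau\) is of order \(\tau^{-1/2}\), compensated by \(h_\tau\).
\end{itemize}
Each \(K\)-operator integrates against the fixed integrable kernel \(\kappa_{\rho_h,d}\), which is singular at \(0\) when \(d<1\) but still integrable. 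Dominated convergence, using the uniform bound on \(G_\tau\), therefore gives local uniform convergence after each of the finitely many convolutions. This proves the first limit in \eqref{eq:LII-Hermite-direct-weight-limit}.

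\emph{Step 3: weighted \(L^1\) convergence.} The functions \(\e^{\xi_it}V_{j,\tau}\) and \(\e^{\xi_it}U_j^{\mathrm H}\) are nonnegative. The first converges pointwise to the second by Step 2, and its total integral, which is the transform at \(z=\xi_i\), converges to that of the second by Step 1. Scheffé's lemma then yields the second statement in \eqref{eq:LII-Hermite-direct-weight-limit}.

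I expect the main obstacle to be Step 2, specifically the uniform boundedness of \(G_\tau\) and the domination near the singular kernels. Should the pointwise route prove awkward, a fallback exists for the \(L^1\) claim: it would follow from weak convergence, which comes from the transform limit as in Theorem~\ref{thm:simultaneous-Laguerre-Hermite-confluence}. Local uniform convergence itself, however, genuinely needs the explicit density estimate.
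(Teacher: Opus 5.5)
Your proposal is correct and follows essentially the same route as the paper's proof. The paper computes the exact bilateral transform, uses the logarithmic expansion and Cauchy's estimates at \(z=\xi_i\) for part (2), and proves local uniform convergence by Stirling's formula plus a uniform sup bound pushed through the normalized operators \(K_{\rho_h,d}\). It then obtains the weighted \(L^1\) limit from the \(v=0\) moment and Scheffé's lemma, which is the paper's ``minimum argument''. The only difference is that you omit the paper's intermediate unweighted \(L^1\) convergence of the gamma factor, which is not needed for the stated conclusions.
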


\begin{proof}
 First compute the bilateral transform of the base row. Substituting
 \(y=y_\tau(t)\), reversing the integration limits, and using the
 transforms of the gamma factors gives
 \begin{equation}
 G_\tau(z)\coloneq\int_{\mathbb R}\e^{zt}V_{0,\tau}(t)\,\mathrm dt
 =\e^{\widehat a_*z+\tau z/h_\tau}(1+z/h_\tau)^{-\tau}
   \prod_{h=1}^r\left(\frac{\rho_h}{\rho_h+z}\right)^{d_h}.
 \label{eq:LII-Hermite-direct-base-transform}
 \end{equation}
 This identity holds when \(\operatorname{Re}z>-h_\tau\) and
 \(\operatorname{Re}(z+\rho_h)>0\) for every \(h\le r\).
 Increasing one shape by one multiplies its gamma transform by one
 additional linear factor. Consequently,
 \begin{equation}
 \int_{\mathbb R}\e^{zt}V_{h,\tau}(t)\,\mathrm dt
 =\frac{\rho_h}{\rho_h+z}G_\tau(z)\ (h\le r),\qquad
 \int_{\mathbb R}\e^{zt}V_{q,\tau}(t)\,\mathrm dt
 =\frac{G_\tau(z)}{1+z/h_\tau}.
 \label{eq:LII-Hermite-direct-row-transforms}
 \end{equation}
 On every fixed compact subset of
 \(\{z:\operatorname{Re}(z+\rho_h)>0\text{ for all }h\le r\}\),
 interpreted as \(\mathbb C\) when \(r=0\), the logarithmic expansion is
 \[
 \frac{\tau z}{h_\tau}-\tau\log(1+z/h_\tau)
 =\frac{z^2}{4}-\frac{z^3}{6h_\tau}
   +\mathrm O(h_\tau^{-2}),\qquad \tau\to+\infty.
 \]
 It follows from \eqref{eq:LII-Hermite-direct-base-transform} that
 \(G_\tau=L_F+\mathrm O(\tau^{-1/2})\) as \(\tau\to+\infty\),
 locally uniformly on this domain. Cauchy's formula gives the same estimate for every
 fixed derivative. Applying \(\partial_z^v\) to
 \eqref{eq:LII-Hermite-direct-row-transforms} at \(z=\xi_i\) and
 comparing with \eqref{eq:canonical-Hermite-row-transforms} proves
 \eqref{eq:LII-Hermite-direct-moment-limit}.

 To prove convergence of the row functions themselves, define
 \(g_\tau(t)\coloneq h_\tau\kappa_{1,\tau}(y_\tau(t))\) and
 \(g_\tau^+(t)\coloneq h_\tau\kappa_{1,\tau+1}(y_\tau(t))\).
 Stirling's formula gives convergence of both functions to
 \(\phi_{\widehat a_*}\), uniformly on compact subsets of \(\mathbb R\).
 Their suprema are bounded independently of sufficiently large \(\tau\),
 as follows by evaluating the gamma factors at their maxima.
 Both functions are nonnegative and have integral one. Dominated
 convergence applied to their minima with \(\phi_{\widehat a_*}\)
 consequently gives convergence in \(L^1(\mathbb R)\).

 In the remaining factors, the substitution \(u=h_\tau v\) converts
 \(\kappa_{\rho_h/h_\tau,d_h}(u)\,\mathrm du\) exactly into
 \(\kappa_{\rho_h,d_h}(v)\,\mathrm dv\). Reflection of the variable
 gives
 \[
 V_{0,\tau}=K_{\rho_1,d_1}\cdots K_{\rho_r,d_r}g_\tau,\qquad
 V_{h,\tau}=K_{\rho_h,1}V_{0,\tau},\qquad
 V_{q,\tau}=K_{\rho_1,d_1}\cdots K_{\rho_r,d_r}g_\tau^+.
 \]
 The convolution kernels have integral one. Their action preserves
 \(L^1\)-convergence; the uniform bounds just obtained also give local
 uniform convergence by dominated convergence. Finally, the case
 \(v=0\) of \eqref{eq:LII-Hermite-direct-moment-limit} gives convergence
 of the integrals of \(\e^{\xi_i t}V_{j,\tau}\).
 These functions and their pointwise limits are nonnegative.
 Applying the same minimum argument with
 \(\e^{\xi_i t}U_j^{\mathrm H}(t)\) proves the weighted
 \(L^1\)-convergence in \eqref{eq:LII-Hermite-direct-weight-limit}.
\end{proof}

The diagonal normalizations also specify the change in each polynomial
component. If \(\boldsymbol A_\tau^{\mathrm{II}}\) and
\(\boldsymbol B_\tau^{\mathrm{II}}\) are component vectors for the
original second-kind matrix with columns \(\e^{-\xi_i y/h_\tau}\),
their transformed components are defined by
\begin{equation}
 \widetilde A_\tau^{(i)}(t)
 \coloneq\chi_{i,\tau}^{-1}A_\tau^{(i),\mathrm{II}}(y_\tau(t)),\qquad
 \widetilde B_\tau^{(j)}(t)
 \coloneq\sigma_{j,\tau}^{-1}B_\tau^{(j),\mathrm{II}}(y_\tau(t)).
 \label{eq:LII-Hermite-direct-component-change}
\end{equation}
The affine substitution preserves every component degree bound and the
span of the testing polynomials. Thus these are exactly the components
for \([V_{j,\tau}(t)\e^{\xi_i t}]_{j,i}\,\mathrm dt\).

At a weakly normal index, the homogeneous moment conditions determine a
nonzero vector up to one common scalar. For each limiting \(A\)- or \(B\)-vector, choose one
nonzero coefficient and normalize it to one; use the same coefficient
to normalize the corresponding transformed finite vectors. This expresses
convergence of the projective classes, in which vectors differing by a
nonzero scalar are identified. Another choice of coefficient selects
proportional limiting representatives.

\begin{corollary}[Convergence of the mixed polynomial components]
 \label{cor:LII-Hermite-direct-components}
 Assume that the column nodes are pairwise distinct. Fix a
 Laguerre-admissible row index
 \(\boldsymbol\lambda=(n_1,\ldots,n_r,\eta)\), and suppose that
 the rates \(\rho_h\) with \(n_h>0\) are pairwise distinct.
 For each of the balances
 \(|\boldsymbol m|=|\boldsymbol\lambda|+1\) and
 \(|\boldsymbol\lambda|=|\boldsymbol m|+1\), respectively, the
 transformed \(A\)- and \(B\)-problems are weakly normal for all
 sufficiently large \(\tau\).

 With these coefficient normalizations, in the respective balances,
 \begin{equation}
 \widetilde{\boldsymbol A}_\tau
 \xrightarrow[\tau\to+\infty]{}\boldsymbol A^{\mathrm H},\qquad
 \widetilde{\boldsymbol B}_\tau
 \xrightarrow[\tau\to+\infty]{}\boldsymbol B^{\mathrm H},
 \label{eq:LII-Hermite-direct-component-limit}
 \end{equation}
 coefficientwise, with error \(\mathrm O(\tau^{-1/2})\) in every
 coefficient as \(\tau\to+\infty\). The \(A\)-problems are strongly normal for sufficiently
 large \(\tau\). The same conclusion holds for \(B\) whenever the
 limiting vector satisfies \eqref{eq:Hermite-B-strong-normality-criterion}.
\end{corollary}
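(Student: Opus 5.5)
The plan is to reduce everything to finite linear systems whose coefficient matrices are the moment matrices of Theorem~\ref{thm:LII-Hermite-direct-matrix}. Those matrices converge with error \(\mathrm O(\tau^{-1/2})\) to limiting matrices whose ranks are already known from Section~\ref{subsec:Hermite-components-normality}. By \eqref{eq:LII-Hermite-direct-component-change}, the transformed vectors \(\widetilde{\boldsymbol A}_\tau,\widetilde{\boldsymbol B}_\tau\) are exactly the solution vectors for the matrix \([V_{j,\tau}\e^{\xi_it}]\,\mathrm dt\). Expand each component in the monomial basis in \(t\). The homogeneous \(A\)-conditions then form an \(N\times(N+1)\) system with matrix \(\mathcal M_\tau\), and the \(B\)-conditions an \((N-1)\times N\) system, with \(N=|\boldsymbol\lambda|\). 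Their entries are the fixed mixed moments \(\int t^{v}\e^{\xi_it}V_{j,\tau}\,\mathrm dt\). By \eqref{eq:LII-Hermite-direct-moment-limit} they satisfy \(\mathcal M_\tau=\mathcal M+\mathrm O(\tau^{-1/2})\) entrywise, where \(\mathcal M\) is the corresponding Hermite moment matrix.

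\emph{Limiting rank.} I will take the rank statements directly from the Hermite analysis.
\begin{itemize}
\item For \(A\), the proof of Corollary~\ref{cor:Hermite-main-normality} shows that \(\mathcal M\) factors as \(JVC\) and has rank \(N\). Its one-dimensional kernel is spanned by the Hermite vector \(\boldsymbol A^{\mathrm H}\). All top coefficients of that vector are nonzero, by \eqref{eq:Hermite-A-leading-coefficient}.
\item For \(B\), Proposition~\ref{prop:canonical-Hermite-vector-edge-determinant} gives \(\Delta_{\mathrm H}\ne0\). Hence every adjacent matrix \(\mathcal N_{i_0}\) is invertible, and the \((N-1)\times N\) homogeneous matrix has rank \(N-1\), with kernel spanned by \(\boldsymbol B^{\mathrm H}\).
\end{itemize}
In both cases, pick an \(N\times N\) (resp. \((N-1)\times(N-1)\)) nonzero minor of the limiting matrix, namely the one that omits the column of the chosen normalizing coefficient. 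That coefficient is nonzero in the limit vector, so by Cramer's rule the complementary minor is nonzero. The same minor of \(\mathcal M_\tau\) is a polynomial in finitely many moments, so it is nonzero for large \(\tau\). Rank is at most the number of rows, so the finite rank is maximal. This gives one-dimensional kernels, i.e.\ weak normality for large \(\tau\).

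\emph{Convergence with rate.} Normalize the chosen coefficient to one. The remaining coefficients solve a square system \(M_\tau b_\tau=y_\tau\), obtained by moving the normalized column to the right-hand side. Here \(M_\tau\) is the nonsingular minor above, and the limit system is \(Mb=y\) with \(M_\tau-M,\ y_\tau-y=\mathrm O(\tau^{-1/2})\). The perturbation bound already used in the proof of Proposition~\ref{prop:canonical-Hermite-quantitative-confluence}(4) applies verbatim: since \(\|M_\tau^{-1}\|\le2\|M^{-1}\|\) for large \(\tau\),
\[
b_\tau-b=M_\tau^{-1}\bigl[(y_\tau-y)-(M_\tau-M)b\bigr]=\mathrm O(\tau^{-1/2}).
\]
This proves \eqref{eq:LII-Hermite-direct-component-limit}. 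Because the monomial basis in \(t\) and the normalizing index are fixed, this is coefficientwise convergence.

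\emph{Strong normality.} For \(A\), every top coefficient of \(\boldsymbol A^{\mathrm H}\) with \(m_i>0\) is nonzero. By convergence, the corresponding finite coefficients are nonzero for large \(\tau\), so maximal degrees are attained. For \(B\), the criterion \eqref{eq:Hermite-B-strong-normality-criterion} says precisely that the top coefficients of \(\boldsymbol B^{\mathrm H}\) are nonzero, up to the invertible triangular passage from the Hermite basis to the monomial basis, which preserves top coefficients. The same continuity argument then transfers strong normality to large \(\tau\).

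\emph{Main obstacle.} The step needing care is the moment convergence with uniform rate. It must hold for every \(t^v\e^{\xi_it}\) entering the finite systems, including the \(B\)-side normalization. Theorem~\ref{thm:LII-Hermite-direct-matrix}(2) provides exactly this, since only finitely many \(v\le 2N\) occur. The remaining work is bookkeeping: checking that the normalizing column yields a nonsingular complementary minor in the limit, which is Cramer's rule applied to a rank-maximal matrix with a nonzero kernel coordinate.
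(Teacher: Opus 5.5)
Your proposal is correct and follows essentially the same route as the paper: entrywise $\mathrm O(\tau^{-1/2})$ convergence of the monomial-basis moment matrices from Theorem~\ref{thm:LII-Hermite-direct-matrix}, nonsingularity of the limiting normalized system because the selected coefficient of the unique Hermite vector is nonzero, the bounded-inverse perturbation estimate, and transfer of the nonzero leading coefficients for strong normality. The only cosmetic difference is that the paper adjoins the normalization as an extra row, giving a square system $M_\tau\mathbf c_\tau=\mathbf e$, whereas you move the normalized column to the right-hand side; the two determinants agree up to sign.
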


\begin{proof}
 Proposition~\ref{prop:canonical-Hermite-vector-edge-determinant} and
 Corollary~\ref{cor:Hermite-main-normality} give weak normality
 of both limiting problems. In a monomial basis, the finite moment
 matrices converge entrywise to those matrices, with error
 \(\mathrm O(\tau^{-1/2})\), by
 \eqref{eq:LII-Hermite-direct-moment-limit}.

 Adjoin the selected coefficient normalization to each homogeneous
 system. The resulting limiting square matrix is nonsingular: its
 homogeneous kernel would otherwise contain a nonzero multiple of the
 unique Hermite vector whose selected coefficient is zero. That
 contradicts the choice of the coefficient. Hence the finite square
 matrices are nonsingular for all sufficiently large \(\tau\), and
 their inverses remain bounded. If \(M_\tau\mathbf c_\tau=\mathbf e\)
 and \(M\mathbf c=\mathbf e\) denote either normalized system, then
 \[
 \mathbf c_\tau-\mathbf c
 =M_\tau^{-1}(M-M_\tau)\mathbf c
 =\mathrm O(\tau^{-1/2}),\qquad \tau\to+\infty.
 \]
 This proves \eqref{eq:LII-Hermite-direct-component-limit} for every
 component. A nonzero adjacent moment of the limiting \(B\)-form
 remains nonzero for large \(\tau\), so the finite vector represents
 a nonzero form as well. By
 Corollary~\ref{cor:canonical-Hermite-explicit-A-components} and
 \eqref{eq:Hermite-A-leading-coefficient}, every \(A\)-component with
 positive index has a nonzero prescribed leading coefficient: the
 factors \(G_i(\xi_i)\) do not vanish under the node-separation and
 positivity assumptions. These coefficients remain nonzero for
 sufficiently large \(\tau\), proving strong normality of the finite
 \(A\)-problems. For \(B\), the same argument proves strong normality
 for sufficiently large \(\tau\) whenever
 \eqref{eq:Hermite-B-strong-normality-criterion} holds for the limit.
\end{proof}

For \(q\ge2\), the finite components in
\eqref{eq:LII-Hermite-direct-component-change} have the terminating
representations \eqref{eq:LII-A-multiple-KdF} and
\eqref{eq:LII-B-components-hypergeometric}, in the parameter ranges
specified there, with the substitution
\eqref{eq:LII-Hermite-direct-parameters} and column nodes \(\xi_i/h_\tau\).
Their normalized coefficientwise limits are the Hermite components
\eqref{eq:Hermite-A-single-SD} and
\eqref{eq:canonical-Hermite-B-beta-components}--
\eqref{eq:canonical-Hermite-B-base-component}.
Zero row indices are covered by \eqref{eq:LII-B-zero-row-indices}.
The confluence thus
applies to the individual polynomial vectors, not only to their complete
forms. In this scaling, the \(r\) shifted gamma factors with fixed shapes
give the rows \(K_{\rho_h,1}F\), while the shifted factor whose shape
tends to infinity gives \(F\). The resulting vector is therefore
the \(q\)-row Hermite vector used throughout this section.

In the one-row reduction, the reflection can be reversed to use the
usual increasing change of variable. Let \(\xi_1,\ldots,\xi_p\) be
distinct real numbers and define
\[
 c_{i,\tau}\coloneq1-\frac{\xi_i}{h_\tau},\qquad
 w_{i,\tau}(y)\coloneq y^\tau\e^{-c_{i,\tau}y},\qquad y>0.
\]
All \(c_{i,\tau}\) are positive for sufficiently large \(\tau\).
Use the classical notation \(L_{\boldsymbol\nu}\) and
\(L_{\boldsymbol\nu}^{(i)}\) fixed before
Corollary~\ref{cor:q1-LII-reduction}, and the Hermite notation
\(H_{\boldsymbol\nu}\) and \(H_{\boldsymbol\nu}^{(i)}\) of
Section~\ref{subsec:Hermite-classical-reductions}.

\begin{corollary}[Direct limit for the classical multiple polynomials]
 \label{cor:ordinary-LII-Hermite-direct}
 For every fixed \(\boldsymbol\nu\in\mathbb N_0^p\), with
 \(n\coloneq|\boldsymbol\nu|\ge1\), the monic type-II polynomials satisfy
 \begin{equation}
 h_\tau^{-n}L_{\boldsymbol\nu}
 (\tau+h_\tau t;\boldsymbol c_\tau,\tau)
 \xrightarrow[\tau\to+\infty]{}H_{\boldsymbol\nu}(t;\boldsymbol\xi).
 \label{eq:ordinary-LII-Hermite-type-II}
 \end{equation}
 The normalized type-I components satisfy, for each \(i\),
 \begin{equation}
 h_\tau^n w_{i,\tau}(\tau)L_{\boldsymbol\nu}^{(i)}
 (\tau+h_\tau t;\boldsymbol c_\tau,\tau)
 \xrightarrow[\tau\to+\infty]{}H_{\boldsymbol\nu}^{(i)}(t;\boldsymbol\xi).
 \label{eq:ordinary-LII-Hermite-type-I}
 \end{equation}
 Both limits are coefficientwise, with error
 \(\mathrm O(\tau^{-1/2})\) in each coefficient as \(\tau\to+\infty\).
 No near-diagonal restriction on \(\boldsymbol\nu\) is required in this
 one-row case, where the classical weight systems are AT.
\end{corollary}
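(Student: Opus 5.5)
The proof is the $q=1$, $r=0$ instance of the mechanism behind Theorem~\ref{thm:LII-Hermite-direct-matrix} and Corollary~\ref{cor:LII-Hermite-direct-components}. I will use the increasing variable $y=\tau+h_\tau t$ and normalize each column by its value at the centre. Set
\[
 W_{i,\tau}(t)\coloneq\frac{w_{i,\tau}(\tau+h_\tau t)}{w_{i,\tau}(\tau)}
 =\Bigl(1+\frac{h_\tau t}{\tau}\Bigr)^{\tau}\e^{-c_{i,\tau}h_\tau t},
 \qquad t>-\tau/h_\tau,
\]
extended by zero. Since $h_\tau^2=2\tau$ and $c_{i,\tau}h_\tau=h_\tau-\xi_i$, one has $\tau\log(1+h_\tau t/\tau)-h_\tau t=-t^2+\mathrm O(\tau^{-1/2}t^3)$. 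Hence $W_{i,\tau}(t)\to\e^{-t^2+\xi_it}=w_i^{\mathrm H}(t)$ with $\boldsymbol c=\boldsymbol\xi$.

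\textbf{Step 1: moment estimates.} I first prove that for every fixed $k$,
\[
\int t^kW_{i,\tau}(t)\,\mathrm dt=\int_{\mathbb R}t^kw_i^{\mathrm H}(t)\,\mathrm dt+\mathrm O(\tau^{-1/2}).
\]
The bilateral transform is exact:
\[
\int\e^{zt}W_{i,\tau}(t)\,\mathrm dt=\frac{\Gamma(\tau+1)\,\e^{-z\tau/h_\tau}}{h_\tau\,w_{i,\tau}(\tau)\,(c_{i,\tau}-z/h_\tau)^{\tau+1}}.
\]
Stirling's formula and the logarithmic expansion used for \eqref{eq:LII-Hermite-direct-base-transform} give $\sqrt\pi\,\e^{(\xi_i+z)^2/4}\bigl(1+\mathrm O(\tau^{-1/2})\bigr)$ locally uniformly in $z$, and this is the transform of $w_i^{\mathrm H}$. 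Cauchy's formula then yields the same rate for every fixed derivative at $z=0$, and these derivatives are the moments. This is the one genuinely analytic step, and I expect it to be the main obstacle. The concern is only bookkeeping: the prefactor $w_{i,\tau}(\tau)^{-1}$ must exactly cancel the Stirling growth of $\Gamma(\tau+1)c_{i,\tau}^{-\tau-1}$, which I will check directly.

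\textbf{Step 2: type II.} Put $P_\tau(t)\coloneq h_\tau^{-n}L_{\boldsymbol\nu}(\tau+h_\tau t;\boldsymbol c_\tau,\tau)$. It is monic of degree $n$ in $t$. Under the affine substitution, the orthogonality against $y^kw_{i,\tau}$, $0\le k<\nu_i$, is equivalent to orthogonality against $t^kW_{i,\tau}$, because the two polynomial spans coincide and the positive constant $w_{i,\tau}(\tau)h_\tau$ is irrelevant. The non-leading coefficients of $P_\tau$ therefore solve the linear system $G_\tau\mathbf c_\tau=-\mathbf g_\tau$. Here $G_\tau$ is the moment matrix of the family $t^kW_{i,\tau}$ against $1,\dots,t^{n-1}$, and $\mathbf g_\tau$ is the column of moments against $t^n$. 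By Step~1, $G_\tau=G_{\boldsymbol\nu}+\mathrm O(\tau^{-1/2})$, and $\det G_{\boldsymbol\nu}>0$ by Lemma~\ref{lem:q1-multiple-Hermite-normality}. The resolvent argument from Proposition~\ref{prop:canonical-Hermite-quantitative-confluence}(4) then gives bounded inverses and coefficient errors $\mathrm O(\tau^{-1/2})$. This proves \eqref{eq:ordinary-LII-Hermite-type-II}.

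\textbf{Step 3: type I.} Let $\widetilde A_i(t)\coloneq h_\tau^nw_{i,\tau}(\tau)L^{(i)}_{\boldsymbol\nu}(\tau+h_\tau t)$. Its orthogonality again transfers through the polynomial span. For the normalization, write $y^{n-1}=(y-\tau)^{n-1}+(\text{lower powers})$; the lower moments vanish, so the normalization $\int y^{n-1}\sum_iL^{(i)}w_{i,\tau}\,\mathrm dy=1$, after $\mathrm dy=h_\tau\,\mathrm dt$ and $(y-\tau)^{n-1}=h_\tau^{n-1}t^{n-1}$, becomes
\[
\int t^{n-1}\sum_i\widetilde A_i(t)W_{i,\tau}(t)\,\mathrm dt=1.
\]
So the vector $(\widetilde A_i)$ solves the transposed system $G_\tau^{\top}\mathbf a_\tau=\boldsymbol e_{n}$. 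The limiting system defines $H^{(i)}_{\boldsymbol\nu}(t;\boldsymbol\xi)$, and the same perturbation bound gives \eqref{eq:ordinary-LII-Hermite-type-I} with error $\mathrm O(\tau^{-1/2})$.

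No near-diagonal condition enters, because nonsingularity of the limit comes from the AT property in Lemma~\ref{lem:q1-multiple-Hermite-normality}, not from Proposition~\ref{prop:canonical-Hermite-vector-edge-determinant}. For $\tau$ large, $\det G_\tau\ne0$ also follows from the classical AT property of the multiple Laguerre weights of the second kind.
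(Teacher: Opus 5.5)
Your proposal is correct and follows essentially the same route as the paper. You normalize each transformed weight by its value at the centre, compute the exact bilateral transform by gamma integration, obtain all moments with error \(\mathrm O(\tau^{-1/2})\) from Stirling's formula and Cauchy's estimates, and invert the fixed-size type-II and type-I moment systems, whose limits are nonsingular by Lemma~\ref{lem:q1-multiple-Hermite-normality}. Your check of the type-I normalization through \((y-\tau)^{n-1}=h_\tau^{n-1}t^{n-1}\) and \(\mathrm dy=h_\tau\,\mathrm dt\) is exactly the paper's verification of the factor \(h_\tau^n\).
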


\begin{proof}
 Extend the transformed weights
 \(\widetilde w_{i,\tau}(t)
 \coloneq w_{i,\tau}(\tau+h_\tau t)/w_{i,\tau}(\tau)\)
 by zero for \(t\le-\tau/h_\tau\). Direct gamma integration gives
 \[
 \int_{\mathbb R}\e^{zt}\widetilde w_{i,\tau}(t)\,\mathrm dt
 =\frac{\Gamma(\tau+1)\e^\tau}{h_\tau\tau^\tau}
   \e^{-\tau(z+\xi_i)/h_\tau}
   \left(1-\frac{z+\xi_i}{h_\tau}\right)^{-\tau-1}.
 \]
 The identity holds for \(\operatorname{Re}(z+\xi_i)<h_\tau\);
 each fixed compact subset of \(\mathbb C\) lies in this half-plane
 for all sufficiently large \(\tau\) and every \(i\).
 Stirling's formula and expansion of the logarithm give convergence to
 \(\sqrt\pi\e^{(z+\xi_i)^2/4}\), locally uniformly in \(z\), with
 error \(\mathrm O(\tau^{-1/2})\) as \(\tau\to+\infty\).
 Consequently, every fixed moment converges to the corresponding
 moment of \(\e^{-t^2+\xi_i t}\) with the same rate.
 Lemma~\ref{lem:q1-multiple-Hermite-normality} makes the limiting
 normalized moment systems nonsingular for every \(\boldsymbol\nu\).

 The affine substitution preserves the testing spaces, and multiplication
 by \(h_\tau^{-n}\) makes the transformed type-II polynomial monic.
 For type I, the transformed complete form is
 \(h_\tau^n\sum_iL_{\boldsymbol\nu}^{(i)}
 (\tau+h_\tau t;\boldsymbol c_\tau,\tau)w_{i,\tau}(\tau+h_\tau t)\).
 Its moments of orders below \(n-1\) vanish by the binomial expansion;
 its moment of order \(n-1\) equals one, since the change of variable
 contributes \(h_\tau^{-1}\) and the leading term of
 \((y-\tau)^{n-1}\) contributes \(h_\tau^{-(n-1)}\).
 This verifies the factor \(h_\tau^n\) in
 \eqref{eq:ordinary-LII-Hermite-type-I}. Continuity of the inverses of
 the two fixed-size moment matrices now proves
 \eqref{eq:ordinary-LII-Hermite-type-II} and
 \eqref{eq:ordinary-LII-Hermite-type-I}, with their coefficientwise rates.
\end{proof}

For \(q=1\), the weight scaling underlying this corollary is the
Laguerre-to-Gaussian transition in
\cite[preprint version, equation~(217)]{AdlerVanMoerbekeWang2013},
up to the normalization of the Gaussian variance and a fixed shift of
the large shape parameter. That work relates it to the Wishart and
Gaussian models associated with the two ordinary multiple families.
Theorem~\ref{thm:LII-Hermite-direct-matrix} and
Corollary~\ref{cor:LII-Hermite-direct-components} establish the
mixed-type extension; the two formulas above state its one-row polynomial
normalizations explicitly.

\subsection{Direct confluence from Jacobi}
\label{subsec:Jacobi-Hermite-direct}

The Jacobi weights also admit a direct limit to the same Hermite system
defined above. One beta factor is concentrated near \(1/2\), while the
remaining factors are concentrated near \(1\). All parameters vary
with one parameter \(\tau\); no Laguerre limit is taken first.
For \(q=1\), this is the direct Jacobi--Pi\~neiro-to-Hermite limit
of \cite[Section~5]{BranquinhoDiazFoulquieManas2025Classical}, up to
fixed shifts of the beta parameters. The construction below retains
all \(q\) row weights.

Fix \(q=r+1\), \(d_h,\rho_h>0\) for \(1\le h\le r\), and
\(\widehat a_*\in\mathbb R\). Assume \(\rho_h+\xi_i>0\) for all
\(i,h\). In \eqref{eq:Jacobi-source-base}, choose
\begin{equation}
 a_h(\tau)+1\coloneq\rho_h\sqrt\tau,\quad
 b_h(\tau)-a_h(\tau)\coloneq d_h\quad(1\le h\le r),\quad
 a_q(\tau)+1\coloneq\tau+\widehat a_*\sqrt\tau,\quad
 b_q(\tau)+1\coloneq2\tau.
 \label{eq:Jacobi-Hermite-direct-parameters}
\end{equation}
All beta parameters are positive for sufficiently large \(\tau\).
For this limit, set
\begin{equation}
 \varepsilon_\tau^{\mathrm J}\coloneq\tau^{-1/2},\qquad
 x_\tau^{\mathrm J}(t)\coloneq\tfrac12(1+\varepsilon_\tau^{\mathrm J}t),\qquad
 \alpha_i(\tau)\coloneq\xi_i\sqrt\tau,\qquad
 I_\tau^{\mathrm J}\coloneq(-\sqrt\tau,\sqrt\tau).
 \label{eq:Jacobi-Hermite-direct-variable}
\end{equation}
Write \(w_{j,\tau}^{\mathrm J}\) for the resulting Jacobi rows and define
\[
 Z_{j,\tau}^{\mathrm J}\coloneq
 \int_0^1w_{j,\tau}^{\mathrm J}(x)\,\mathrm dx
 =\prod_{h=1}^q
 \frac{\Gamma(a_h(\tau)+1)}{\Gamma(b_h(\tau)+1+\delta_{h,j})},
 \qquad 0\le j\le q,
\]
where \(\delta_{h,0}=0\). The transformed rows and columns are
\begin{equation}
 V_{j,\tau}^{\mathrm J}(t)\coloneq
 \frac{\varepsilon_\tau^{\mathrm J}}{2Z_{j,\tau}^{\mathrm J}}
 w_{j,\tau}^{\mathrm J}(x_\tau^{\mathrm J}(t)),\qquad
 C_{i,\tau}^{\mathrm J}(t)\coloneq
 (1+\varepsilon_\tau^{\mathrm J}t)^{\alpha_i(\tau)},
 \qquad t\in I_\tau^{\mathrm J}.
 \label{eq:Jacobi-Hermite-direct-entries}
\end{equation}
The products \(V_{j,\tau}^{\mathrm J}C_{i,\tau}^{\mathrm J}\) are
extended by zero outside \(I_\tau^{\mathrm J}\).
For polynomial vectors of the original Jacobi problems, their
transformed components are
\begin{equation}
 \widetilde A_\tau^{(i)}(t)\coloneq
 2^{-\alpha_i(\tau)}A_\tau^{(i),\mathrm J}(x_\tau^{\mathrm J}(t)),\qquad
 \widetilde B_\tau^{(j)}(t)\coloneq
 Z_{j,\tau}^{\mathrm J}B_\tau^{(j),\mathrm J}(x_\tau^{\mathrm J}(t)).
 \label{eq:Jacobi-Hermite-direct-components}
\end{equation}
These are the components for the transformed matrix, since the change
of variable is affine and the row and column normalizations are constant.

\begin{theorem}[Direct Jacobi--Hermite confluence]
 \label{thm:Jacobi-Hermite-direct}
 Under \eqref{eq:Jacobi-Hermite-direct-parameters}--
 \eqref{eq:Jacobi-Hermite-direct-entries}, the following statements hold.
 \begin{enumerate}[label=\textnormal{(\arabic*)}]
 \item For \(1\le j\le q\), \(1\le i\le p\), and every fixed
 \(v\in\mathbb N_0\),
 \begin{equation}
 \int_{I_\tau^{\mathrm J}}t^vV_{j,\tau}^{\mathrm J}(t)
 C_{i,\tau}^{\mathrm J}(t)\,\mathrm dt
 =\int_{\mathbb R}t^vU_j^{\mathrm H}(t)\e^{\xi_i t}\,\mathrm dt
 +\mathrm O(\tau^{-1/2}),\qquad \tau\to+\infty,
 \label{eq:Jacobi-Hermite-direct-moments}
 \end{equation}
 where \(U_h^{\mathrm H}=K_{\rho_h,1}F\), \(1\le h\le r\), and
 \(U_q^{\mathrm H}=F\). The corresponding positive matrix measures
 converge weakly, entry by entry, to
 \([U_j^{\mathrm H}(t)\e^{\xi_i t}]_{j,i}\,\mathrm dt\).
 \item Fix a Laguerre-admissible row index
 \(\boldsymbol\lambda=(n_1,\ldots,n_r,\eta)\).
 Suppose that the column nodes are pairwise distinct and that the
 rates with \(n_h>0\) are pairwise distinct. In each of the balances
 \(|\boldsymbol m|=|\boldsymbol\lambda|+1\) and
 \(|\boldsymbol\lambda|=|\boldsymbol m|+1\), respectively, the
 finite transformed \(A\)- and \(B\)-problems are weakly normal for
 sufficiently large \(\tau\). With the coefficient normalizations
 of Corollary~\ref{cor:LII-Hermite-direct-components},
 \begin{equation}
 \widetilde{\boldsymbol A}_\tau
 \xrightarrow[\tau\to+\infty]{}\boldsymbol A^{\mathrm H},\qquad
 \widetilde{\boldsymbol B}_\tau
 \xrightarrow[\tau\to+\infty]{}\boldsymbol B^{\mathrm H},
 \label{eq:Jacobi-Hermite-direct-polynomial-limit}
 \end{equation}
 coefficientwise, with error \(\mathrm O(\tau^{-1/2})\) in every
 coefficient as \(\tau\to+\infty\).
 \end{enumerate}
\end{theorem}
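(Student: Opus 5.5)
The plan is to mimic the proof of Theorem~\ref{thm:LII-Hermite-direct-matrix} and Corollary~\ref{cor:LII-Hermite-direct-components}, working with centered Mellin transforms as in Theorem~\ref{thm:simultaneous-Laguerre-Hermite-confluence}.

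\textbf{Step 1: centered transforms.} For the rows, define
\[
Q_{j,\tau}^{\mathrm J}(z)\coloneq\int_{I_\tau^{\mathrm J}}(1+\varepsilon_\tau^{\mathrm J}t)^{z\sqrt\tau}V_{j,\tau}^{\mathrm J}(t)\,\mathrm dt
=\frac{2^{-z\sqrt\tau}\,\mathcal M[w_{j,\tau}^{\mathrm J}](1+z\sqrt\tau)}{\mathcal M[w_{j,\tau}^{\mathrm J}](1)}.
\]
This is an exact product of gamma quotients, one per factor. For $h\le r$ the factor is
\[
\frac{\Gamma(\rho_h\sqrt\tau+z\sqrt\tau)\,\Gamma(\rho_h\sqrt\tau+d_h)}{\Gamma(\rho_h\sqrt\tau)\,\Gamma(\rho_h\sqrt\tau+d_h+z\sqrt\tau)}.
\]
The beta-product estimate \eqref{eq:canonical-Hermite-beta-Gamma-expansion} applies verbatim with $P=\rho_h\sqrt\tau$ and $y=z\sqrt\tau$. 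It gives $(\rho_h/(\rho_h+z))^{d_h}(1+\mathrm O(\tau^{-1/2}))$; the shifted row $j=h$ picks up the extra factor $\rho_h/(\rho_h+z)$ up to $\mathrm O(\tau^{-1/2})$.

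The last factor is $2^{-z\sqrt\tau}\Gamma(A+y)\Gamma(B)/(\Gamma(A)\Gamma(B+y))$, with $A=\tau+\widehat a_*\sqrt\tau$, $B=2\tau$ (or $2\tau+1$ for $j=q$), and $y=z\sqrt\tau$. I will apply the cubic Taylor expansion \eqref{eq:canonical-Hermite-log-Gamma-Taylor} to both gamma quotients. The linear terms give
\[
y\log(A/B)-z\sqrt\tau\log2=\widehat a_*z+\mathrm O(\tau^{-1/2}).
\]
The quadratic terms give $\tfrac{y^2}{2}(1/A-1/B)=z^2/4+\mathrm O(\tau^{-1/2})$, and the cubic and remainder terms are $\mathrm O(\tau^{-1/2})$. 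Hence $Q_{j,\tau}^{\mathrm J}\to\Lambda_j$ locally uniformly on $\operatorname{Re}z>-\min_h\rho_h$, with error $\mathrm O(\tau^{-1/2})$, and by Cauchy's formula the same holds for every derivative. The only real work here is keeping both gamma quotients inside a fixed sector; this is the step I expect to need the most care, specifically the bookkeeping that cancels the $\log 2$ term against $2^{-z\sqrt\tau}$.

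\textbf{Step 2: moments and weak convergence.} The moment in \eqref{eq:Jacobi-Hermite-direct-moments} is obtained by applying the finite-difference identities \eqref{eq:canonical-Hermite-finite-difference-moments}--\eqref{eq:canonical-Hermite-finite-difference-integral} with $\varepsilon=\varepsilon_\tau^{\mathrm J}$ at the point $z=\xi_i$. This is legitimate because $C_{i,\tau}^{\mathrm J}=(1+\varepsilon_\tau^{\mathrm J}t)^{\xi_i/\varepsilon_\tau^{\mathrm J}}$ exactly. The nodes $\xi_i+\nu\varepsilon_\tau^{\mathrm J}$ lie in a fixed compact set, so the rate $\mathrm O(\tau^{-1/2})$ carries over. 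For weak convergence I will reuse the tightness and moment-determinacy argument from the proof of Theorem~\ref{thm:simultaneous-Laguerre-Hermite-confluence}, applied to the nonnegative measures $V_{j,\tau}^{\mathrm J}C_{i,\tau}^{\mathrm J}\,\mathrm dt$. Their total masses converge by the $v=0$ case, and the limiting transform $\Lambda_j(z+\xi_i)$ is finite near $z=0$, which gives determinacy.

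\textbf{Step 3: components.} For part (2), the row and column normalizations in \eqref{eq:Jacobi-Hermite-direct-components} are constant and the substitution is affine. Hence the transformed vectors solve the homogeneous moment systems of the transformed matrix, whose moment matrices converge to the Hermite ones at rate $\mathrm O(\tau^{-1/2})$ by part (1). Corollary~\ref{cor:Hermite-main-normality} and Proposition~\ref{prop:canonical-Hermite-vector-edge-determinant} give weak normality of both limiting problems at the admissible index. I then adjoin the chosen coefficient normalization exactly as in the proof of Corollary~\ref{cor:LII-Hermite-direct-components}. The limiting square system is nonsingular, so the finite systems are nonsingular for large $\tau$ with bounded inverses. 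This yields weak normality and
\[
\mathbf c_\tau-\mathbf c=M_\tau^{-1}(M-M_\tau)\mathbf c=\mathrm O(\tau^{-1/2}),
\]
which proves \eqref{eq:Jacobi-Hermite-direct-polynomial-limit}.
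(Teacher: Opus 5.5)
Your proposal follows the paper's proof essentially step for step. Both use the centered transforms as exact gamma quotients, the cubic log-Gamma expansion at $X=\tau+\widehat a_*\sqrt\tau$ and $X=2\tau$ together with the fixed-shape beta estimate, and the exact finite-difference moment identity that relies on $\varepsilon_\tau^{\mathrm J}\sqrt\tau=1$. Both then use tightness with moment determinacy for weak convergence, and the matrix-inverse argument of Corollary~\ref{cor:LII-Hermite-direct-components} for the components.

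There is one sign slip, in exactly the step you flagged. Since $1+\varepsilon_\tau^{\mathrm J}t=2x$, the prefactor is $2^{+z\sqrt\tau}$, not $2^{-z\sqrt\tau}$. The linear terms therefore combine as $y\log(A/B)+z\sqrt\tau\log 2=\widehat a_*z+\mathrm O(\tau^{-1/2})$. As you wrote it, the expansion leaves a divergent term $-2z\sqrt\tau\log 2$.
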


\begin{proof}
 Define the normalized Mellin transforms by
 \[
 Q_{j,\tau}^{\mathrm J}(z)\coloneq
 \frac1{Z_{j,\tau}^{\mathrm J}}
 \int_0^1(2x)^{z\sqrt\tau}w_{j,\tau}^{\mathrm J}(x)\,\mathrm dx.
 \]
 In the next formula the Pochhammer notation is extended to complex
 orders by \((u)_v\coloneq\Gamma(u+v)/\Gamma(u)\).
 The Mellin-convolution identity gives
 \begin{equation}
 Q_{0,\tau}^{\mathrm J}(z)
 =2^{z\sqrt\tau}
 \frac{(\tau+\widehat a_*\sqrt\tau)_{z\sqrt\tau}}
      {(2\tau)_{z\sqrt\tau}}
 \prod_{h=1}^r
 \frac{(\rho_h\sqrt\tau)_{z\sqrt\tau}}
      {(\rho_h\sqrt\tau+d_h)_{z\sqrt\tau}},\qquad
 Q_{j,\tau}^{\mathrm J}(z)
 =\frac{b_j(\tau)+1}{b_j(\tau)+1+z\sqrt\tau}Q_{0,\tau}^{\mathrm J}(z).
 \label{eq:Jacobi-Hermite-direct-transform}
 \end{equation}
 These identities hold on every fixed compact subset of
 \(\mathscr H\coloneq\{z:\operatorname{Re}(z+\rho_h)>0, 1\le h\le r\}\)
 for all sufficiently large \(\tau\); when \(r=0\), take
 \(\mathscr H=\mathbb C\).

 Apply the logarithmic Gamma expansion
 \eqref{eq:canonical-Hermite-log-Gamma-Taylor} at
 \(X=\tau+\widehat a_*\sqrt\tau\) and \(X=2\tau\).
 For \(z\) in any fixed compact subset of \(\mathscr H\), it gives
 \[
 \log\!\left(
 2^{z\sqrt\tau}
 \frac{(\tau+\widehat a_*\sqrt\tau)_{z\sqrt\tau}}
      {(2\tau)_{z\sqrt\tau}}\right)
 =\widehat a_*z+\frac{z^2}{4}+\mathrm O(\tau^{-1/2}),
 \qquad \tau\to+\infty.
 \]
 The fixed-shape Gamma ratios in the remaining factors satisfy
 \[
 \frac{(\rho_h\sqrt\tau)_{z\sqrt\tau}}
      {(\rho_h\sqrt\tau+d_h)_{z\sqrt\tau}}
 =\left(\frac{\rho_h}{\rho_h+z}\right)^{d_h}
  \bigl(1+\mathrm O(\tau^{-1/2})\bigr),\qquad \tau\to+\infty.
 \]
 Consequently, \(Q_{0,\tau}^{\mathrm J}=L_F+\mathrm O(\tau^{-1/2})\)
 locally uniformly on \(\mathscr H\) as \(\tau\to+\infty\).
 The row factor in \eqref{eq:Jacobi-Hermite-direct-transform} tends
 to \(\rho_j/(\rho_j+z)\) for \(j\le r\), and to one for \(j=q\),
 with the same error. Thus \(Q_{j,\tau}^{\mathrm J}\) converges to
 the bilateral transform of \(U_j^{\mathrm H}\). Cauchy's formula
 gives the same estimate for every fixed derivative.

 Put \(\varepsilon=\varepsilon_\tau^{\mathrm J}\).
 The step is chosen so that \(\varepsilon\sqrt\tau=1\), hence
 \((2x)^{\varepsilon\sqrt\tau}=2x\) exactly. Together with
 \(t=(2x-1)/\varepsilon\), binomial expansion therefore gives
 \[
 \int_{I_\tau^{\mathrm J}}t^vV_{j,\tau}^{\mathrm J}(t)
 C_{i,\tau}^{\mathrm J}(t)\,\mathrm dt
 =\varepsilon^{-v}\Delta_\varepsilon^v
 Q_{j,\tau}^{\mathrm J}(\xi_i).
 \]
 The integral representation
 \eqref{eq:canonical-Hermite-finite-difference-integral} expresses
 the right-hand side as an average of the \(v\)-th derivative on
 arguments at distance at most \(v\varepsilon\) from \(\xi_i\).
 The derivative estimates just proved therefore give
 \eqref{eq:Jacobi-Hermite-direct-moments}.

 Convergence of the zeroth and second moments makes each family of
 positive measures tight, with bounded total integrals. The bounds
 on all even moments ensure that every fixed moment passes to any
 weak subsequential limit, by the tail estimate used in the proof of
 Theorem~\ref{thm:simultaneous-Laguerre-Hermite-confluence}.
 The limiting measure \(U_j^{\mathrm H}(t)\e^{\xi_i t}\,\mathrm dt\)
 has finite exponential moments in a neighbourhood of zero, because
 \(\xi_i\in\mathscr H\). Its moments determine it uniquely.
 Hence all weak subsequential limits coincide with that measure,
 proving the first assertion.

 For the second assertion, \eqref{eq:Jacobi-Hermite-direct-components}
 preserves the prescribed polynomial degree bounds and the testing
 spaces. The normalized finite moment matrices converge, by
 \eqref{eq:Jacobi-Hermite-direct-moments}, to the same nonsingular
 Hermite systems as in Corollary~\ref{cor:LII-Hermite-direct-components}.
 The matrix-inverse argument in its proof gives weak normality for
 sufficiently large \(\tau\) and the coefficientwise estimate
 \eqref{eq:Jacobi-Hermite-direct-polynomial-limit}.
\end{proof}

The limiting polynomial vectors are therefore the explicit terminating
representations \eqref{eq:Hermite-A-single-SD} and
\eqref{eq:canonical-Hermite-B-beta-components}--
\eqref{eq:canonical-Hermite-B-base-component}.
The vertical arrow in Figure~\ref{fig:Askey-confluence-diagram}
records this one-parameter limit.

The three routes can now be compared at the same limiting parameters
\(\widehat a_*\), \(\boldsymbol d\), \(\boldsymbol\rho\), and
\(\boldsymbol\xi\). For either route through Laguerre, denote the
parameter of the first, Jacobi-to-Laguerre, limit by \(\kappa\).
All intermediate Laguerre parameters are held fixed while
\(\kappa\to+\infty\); the Hermite scaling parameter \(\tau\) tends to
infinity afterwards.

\begin{corollary}[Agreement of the three Hermite limits]
 \label{cor:Hermite-three-route-agreement}
 Under the index and separation hypotheses of
 Corollary~\ref{cor:LII-Hermite-direct-components}, the direct
 Jacobi--Hermite limit and the two iterated limits through Laguerre
 of the first and second kinds give the same matrix
 \([U_j^{\mathrm H}(t)\e^{\xi_i t}]_{j,i}\,\mathrm dt\) in every fixed
 mixed moment. In the respective \(A\)- and \(B\)-balances, their
 transformed polynomial vectors converge coefficientwise to the same
 \(\boldsymbol A^{\mathrm H}\) and \(\boldsymbol B^{\mathrm H}\) when
 the same nonzero coefficients are normalized to one.
\end{corollary}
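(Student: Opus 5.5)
The plan is to reduce all three routes to the same limiting moment matrices and then use the fact that the normalized component vectors depend continuously on these matrices. The direct route needs no further work: Theorem~\ref{thm:Jacobi-Hermite-direct} already gives convergence of every fixed mixed moment to \(\int t^vU_j^{\mathrm H}\e^{\xi_it}\,\mathrm dt\). It also gives coefficientwise convergence of \(\widetilde{\boldsymbol A}_\tau,\widetilde{\boldsymbol B}_\tau\) to \(\boldsymbol A^{\mathrm H},\boldsymbol B^{\mathrm H}\) under the chosen coefficient normalization.

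For each iterated route, the first step is the inner limit \(\kappa\to+\infty\) with the Laguerre parameters frozen.

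\emph{First-kind route.} Rescaling near \(x=0\), the triangular change \(\widehat{\mathbf U}^{(t)}\) (with \(s=1\), so \(R_t\) is a nonzero scalar) and the Mellin limits \eqref{eq:Jacobi-Laguerre-base-Mellin-limit}, \eqref{eq:Jacobi-Laguerre-Newton-Mellin-limit} give convergence of every fixed mixed moment to those of \(\Lagmat\). Under the nonvanishing of \(c_{\boldsymbol\lambda}\) and the node conditions, the moment matrices with one normalization row adjoined are invertible in the limit. Hence the normalized component vectors converge coefficientwise, by the same matrix-inverse argument used in the proof of Corollary~\ref{cor:LII-Hermite-direct-components}.

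\emph{Second-kind route.} Theorem~\ref{thm:Jacobi-LII-weight-limit} and Proposition~\ref{prop:LII-convergence-rate} give the moment and component limits to \(\LIImat\). The normality input is Corollary~\ref{cor:LII-component-normality} and Theorem~\ref{thm:LII-moment-determinants}. These apply because a Laguerre-admissible \(\boldsymbol\lambda\) is near diagonal, and the row and column distinctness hypotheses are satisfied at the fixed parameters for large \(\tau\). For instance, the rates \(\rho_h/h_\tau\) are distinct, and so are the nodes \(\xi_i/h_\tau\).

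The second step is the outer limit \(\tau\to+\infty\), applied to the Laguerre objects obtained in step one. Corollary~\ref{cor:simultaneous-Laguerre-Hermite-matrix-confluence} with Proposition~\ref{prop:canonical-Hermite-quantitative-confluence} covers the first kind, and Theorem~\ref{thm:LII-Hermite-direct-matrix} with Corollary~\ref{cor:LII-Hermite-direct-components} covers the second kind. Both limits land on the matrix \([U_j^{\mathrm H}\e^{\xi_it}]\). Since the limit matrix is literally the same, the limiting homogeneous systems coincide. By Proposition~\ref{prop:canonical-Hermite-vector-edge-determinant} and Corollary~\ref{cor:Hermite-main-normality}, each system has a one-dimensional solution space. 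Normalizing the same nonzero coefficient to one therefore selects a unique vector, which must be \(\boldsymbol A^{\mathrm H}\), respectively \(\boldsymbol B^{\mathrm H}\), in all three cases.

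The main obstacle is bookkeeping rather than analysis. The intermediate Laguerre normalizations (contour or Mellin representatives, the Gauss--Borel factors \(\gamma_{t,N}\), and the constant row and column scalings \(\sigma_{j,\tau},\chi_{i,\tau}\)) differ from route to route. What must be checked is that each of them is a constant diagonal rescaling or an affine change of variable. Such operations preserve the degree bounds and the testing spaces, so after the coefficient normalization they act trivially on the projective class.

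A second point to verify is that the inner limit is taken at \emph{fixed} Laguerre parameters. This requires the finite-parameter normality hypotheses of the intermediate family to hold for all large \(\tau\). For the second kind this is immediate from Theorem~\ref{thm:LII-moment-determinants}. For the first kind I would first try to show that \(c_{\boldsymbol\lambda}\ne0\) eventually along the scaling. Failing that, I would state the first-kind iterated limit under that proviso, which is what is meant by interpreting the Laguerre routes as iterated limits.
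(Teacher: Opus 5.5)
Your overall structure matches the paper's. The direct route is read off from Theorem~\ref{thm:Jacobi-Hermite-direct}. Each Laguerre route is an inner limit \(\kappa\to+\infty\) at frozen Laguerre parameters, followed by \(\tau\to+\infty\). The routes are identified because all of them solve the same nonsingular, coefficient-normalized Hermite system.

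\textbf{The gap.} The inner limit needs one thing you do not secure: at each fixed large \(\tau\), the intermediate Laguerre system, with the selected coefficient normalization adjoined, must be nonsingular. You try to get this from finite-parameter determinant results. For the first kind you use \(c_{\boldsymbol\lambda}(\tau)\ne0\) with Proposition~\ref{prop:AT-normality-Laguerre-admissible}; for the second kind, Theorem~\ref{thm:LII-moment-determinants}. This has three problems.
\begin{enumerate}
\item For the first kind you leave open a proviso that the statement does not contain. In fact \(c_{\boldsymbol\lambda}(\tau)\ne0\) does hold eventually. A Pochhammer factor with \(u\ge1\) forces \(n_j\ge2\), hence all rates are distinct. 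Each factor is then close to \(D_{j,\tau}+1+k>0\), or is of order \(\varepsilon_\tau^{-1}\) or \(\tau\).
\item Proposition~\ref{prop:AT-normality-Laguerre-admissible} also assumes \(\beta_i(\tau)-\beta_j(\tau)\notin\mathbb Z\). The scaling \(\varepsilon_\tau\beta_i(\tau)\to\xi_i\) does not provide this.
\item Most importantly, even when these results apply, they give only weak normality of the Laguerre problem. They do not show that the coefficient you normalize to one is nonzero in the Laguerre solution at that \(\tau\); it was chosen nonzero only in \(\boldsymbol A^{\mathrm H}\) or \(\boldsymbol B^{\mathrm H}\). If it vanishes, the adjoined Laguerre system is singular, and the coefficient-normalized Jacobi vectors have no controlled limit as \(\kappa\to+\infty\). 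Proposition~\ref{prop:LII-convergence-rate} does not settle this either, since it controls only the Jacobi contour/Mellin normalization.
\end{enumerate}

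\textbf{The fix.} Run the perturbation argument from the Hermite end, which is what the paper does.
\begin{enumerate}
\item The adjoined Hermite system is nonsingular. This follows from Corollary~\ref{cor:Hermite-main-normality} and the kernel argument in the proof of Corollary~\ref{cor:LII-Hermite-direct-components}.
\item The transformed moment matrices of both Laguerre families converge entrywise to it as \(\tau\to+\infty\), by Corollary~\ref{cor:simultaneous-Laguerre-Hermite-matrix-confluence} and Theorem~\ref{thm:LII-Hermite-direct-matrix}.
\item Hence both adjoined Laguerre systems are nonsingular for every sufficiently large fixed \(\tau\), with no finite-parameter determinant evaluation and no proviso.
\item Only then fix such a \(\tau\), and use the Jacobi-to-Laguerre moment convergence with continuity of matrix inversion to take \(\kappa\to+\infty\).
\item Afterwards let \(\tau\to+\infty\).
\end{enumerate}
This ordering also makes your bookkeeping concern about contour/Mellin representatives and the Gauss--Borel factors unnecessary. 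Only the coefficient-normalized systems, in the final Hermite coordinates, are ever inverted.
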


\begin{proof}
 The first limits follow from
 \eqref{eq:Jacobi-Laguerre-base-Mellin-limit},
 \eqref{eq:Jacobi-Laguerre-Newton-Mellin-limit}, and
 Theorem~\ref{thm:Jacobi-LII-weight-limit}. Applying
 Corollary~\ref{cor:simultaneous-Laguerre-Hermite-matrix-confluence}
 and Theorem~\ref{thm:LII-Hermite-direct-matrix} to the two intermediate
 families gives the same Hermite rows
 \(K_{\rho_1,1}F,\ldots,K_{\rho_r,1}F,F\) and columns
 \(\e^{\xi_i t}\). Theorem~\ref{thm:Jacobi-Hermite-direct} gives this
 matrix by the direct route.

 For the components, fix either balance and the selected coefficient
 normalization. The limiting Hermite moment system is nonsingular by
 Corollary~\ref{cor:Hermite-main-normality}. Moment convergence
 therefore makes both normalized Laguerre systems nonsingular for
 every sufficiently large fixed \(\tau\). At each such \(\tau\),
 the corresponding normalized Jacobi systems converge to the Laguerre
 systems as \(\kappa\to+\infty\). Continuity of matrix inversion gives
 this first coefficientwise limit and then the second limit as
 \(\tau\to+\infty\), exactly as in the proof of
 Corollary~\ref{cor:LII-Hermite-direct-components}. The direct
 coefficientwise limit is
 \eqref{eq:Jacobi-Hermite-direct-polynomial-limit}. Since all three
 limiting systems and their normalizations agree, their solutions agree.
\end{proof}

Thus Figure~\ref{fig:Askey-confluence-diagram} commutes in the sense of
these limiting moments and projective polynomial vectors.
The direct arrow uses a single parameter, whereas the paths through
Laguerre use the stated order of two limits. In particular, the
second-kind parameters in \eqref{eq:LII-Hermite-direct-parameters}
depend on \(\tau\) but are fixed during the first limit
\(\kappa\to+\infty\). The finite-parameter Jacobi families are different;
the corollary identifies their common Hermite limit rather than
interchanging two limits.

\section{Conclusions and outlook}
\label{sec:conclusions}

The limits studied here act on the entire \(q\times p\) matrix of
measures. Near \(x=1\), the change of variable \(y=\tau(1-x)\) gives the
Laguerre matrix of the second kind \([F_j(y)\e^{-\xi_i y}]_{j,i}\,\mathrm dy\).
Its row weights are additive convolutions of gamma kernels. For
near-diagonal row multi-indices, the square moment determinants have a
closed product formula for arbitrary positive real shape parameters.
They are nonzero when the rates and column nodes corresponding to
positive indices are distinct within their respective families. This
also supplies the adjacent normalizations whose row multi-indices remain
near the diagonal. The contour and Rodrigues formulas determine the
complete forms. Their polynomial components have terminating
Kamp\'e de F\'eriet representations on the \(A\)-side and, for \(q\ge2\),
linear combinations of at most \(q+1\) Srivastava--Daoust functions on
the \(B\)-side. All parameters and polynomial prefactors are explicit.
For \(q=1\), these formulas recover the classical multiple Laguerre
system of the second kind.

Near \(x=0\), the last Jacobi rows first require a triangular Newton
transformation to obtain distinct limiting rows. The resulting matrix
converges to the Laguerre system of the first kind. At fixed indices, the normalized forms,
recurrence coefficients and bidiagonal-factorization coefficients
converge whenever the limiting shifted minors used in their normalization
are nonzero.

Centering and rescaling the first-kind Laguerre family with one gamma
factor gives the Hermite rows \(K_{\rho_1,1}F,\ldots,K_{\rho_{q-1},1}F,F\).
Under rate and node separation,
\eqref{eq:canonical-Hermite-vector-edge-determinant} gives normality
at every Laguerre-admissible index. The \(A\)-components attain their
maximal degrees and each is one terminating Srivastava--Daoust function.
The nonzero normalized \(B\)-form has unique components, each a
combination of a number of terminating functions bounded independently
of the degrees. Their finite-pole and polynomial contributions are both
explicit; strong normality is characterized by
\eqref{eq:Hermite-B-strong-normality-criterion}. The one-row case
recovers the classical multiple Hermite formulas.

Appendix~\ref{app:Hermite-derivative-limits} explains the restriction
to one final base row. With several derivative rows, Pearson relations
give an exact loss of rank beyond a finite admissible range. The
canonical forms may still converge while their polynomial components
lose uniqueness or diverge. These conclusions concern the more general
limit and do not alter the infinite admissible normal family studied in
the main text.

The direct limit in Section~\ref{subsec:LII-Hermite-direct} shows that
the same Hermite family is also reached from Laguerre of the
second kind. A single gamma shape tends to infinity while the remaining
shapes stay fixed and their rates decrease as \(\tau^{-1/2}\).
After centering, reflection, and diagonal row and column normalizations,
the mixed moments and all normalized polynomial components converge,
with coefficientwise error \(\mathrm O(\tau^{-1/2})\) as
\(\tau\to+\infty\). In the one-row reduction the weight scaling has
the antecedent \cite[preprint version, equation~(217)]
{AdlerVanMoerbekeWang2013}; the extension established here is of mixed
type. The single large-shape gamma factor produces \(F\), while the
other shifted factors produce its exponential convolutions.

Theorem~\ref{thm:Jacobi-Hermite-direct} also obtains the same
Hermite matrix directly from Jacobi by rescaling near \(x=1/2\).
The parameter choice \eqref{eq:Jacobi-Hermite-direct-parameters} gives
convergence of every fixed mixed moment and every normalized polynomial
component at the stated normal indices.
Corollary~\ref{cor:Hermite-three-route-agreement} identifies this limit
with the two iterated limits through Laguerre, both for the matrix moments
and for the polynomial vectors with a common normalization. The direct
Jacobi limit uses one parameter; each Laguerre route takes its two limits
successively. This agreement holds on the family with infinitely many
admissible normal indices. The general first-kind limits with additional
derivative rows are treated separately in the appendix.

The discrete part of the scheme is developed in
\cite{Manas2026HahnBivariate}. Applying Bernstein kernels in two
discrete variables to the Jacobi matrix measure gives a Hahn system
whose normalized factorial bimoments reproduce the continuous mixed moments
exactly. Its confluences give Kravchuk, Meixner-I, Meixner-II, and
additive and multiplicative Charlier systems. The matrix measures are
bivariate, while each component of the two polynomial vectors depends
on only one variable. In the continuous Jacobi and Laguerre limits,
the rescaled measures concentrate on the diagonal and recover the
corresponding univariate matrix measures studied here. These constructions
connect the discrete and continuous parts of an Askey scheme of mixed
type through both the moment identities and the explicit polynomial
components.

A further direction is to enlarge the Jacobi family by allowing a beta
parameter to vary independently from row to row. Rescaling near zero,
with all beta factors tending to gamma weights, then gives Laguerre
weights with distinct scales. Allowing these scales to vary with the
large parameter separates the centres in the Gaussian limit and leads,
at the level of weights and mixed moments, to
\[
 \bigl[\phi_{c_j}(t)\e^{\xi_i t}\bigr]_{j,i}\,\mathrm dt,
 \qquad
 \phi_c(t)\coloneq\pi^{-1/2}\e^{-(t-c)^2},
 \qquad t\in\mathbb R,
\]
where the real centres \(c_1,\ldots,c_q\) and the real column parameters
\(\xi_1,\ldots,\xi_p\) are pairwise distinct within their respective
families. After a common rescaling of the variable and nonzero row and
column normalizations, this is the Gaussian system of mixed type of
Daems and Kuijlaars~\cite[Section~6]{DaemsKuijlaars2007}.
Taking divided differences and then letting the centres coincide recovers
the Gaussian derivative rows. This route requires additional row
parameters beyond those of the Jacobi vector studied here. The principal
open problem is to obtain terminating hypergeometric formulas for the
individual \(A\)- and \(B\)-components throughout this enlarged Jacobi
and Laguerre family.

\appendix
\section{Limits with several derivative rows}
\label{app:Hermite-derivative-limits}

The general first-kind limit produces successive derivatives of one
density. This appendix records the changes to the confluence proof,
then identifies the resulting polynomial relations and their effect on
normality. The estimates and coefficient-reconstruction arguments of
Section~\ref{subsec:Laguerre-to-Hermite-confluence} are used where they
apply without change.

\subsection{The general limit}

Take \(q=r+s\), with \(r\ge0\) and \(s\ge2\), in the Laguerre family of
Section~\ref{sec:source-families}. Suppose
\[
 a_{r+j}(\tau)+1=\tau+\sqrt{2\tau}\,\widehat a_j+\mathrm O(1)
 \quad(1\le j\le s),\qquad \tau\to+\infty,
\]
and put
\[
 \varepsilon_\tau\coloneq\sqrt{2s/\tau},\qquad
 c_\tau\coloneq\tau^s,\qquad
 \widehat a_*\coloneq s^{-1/2}\sum_{j=1}^s\widehat a_j,\qquad
 x_\tau(t)\coloneq c_\tau(1+\varepsilon_\tau t).
\]
For the remaining parameters retain \(a_h(\tau)>-1\),
\(b_h(\tau)>a_h(\tau)\), and assume
\[
 \varepsilon_\tau(a_h(\tau)+1)\xrightarrow[\tau\to+\infty]{}\rho_h>0,
 \qquad b_h(\tau)-a_h(\tau)\xrightarrow[\tau\to+\infty]{}d_h\ge0.
\]
The column nodes \(\xi_i\) are pairwise distinct, satisfy
\(\xi_i+\rho_h>0\), and obey
\(\varepsilon_\tau\beta_i(\tau)\xrightarrow[\tau\to+\infty]{}\xi_i\).
Let \(I_\tau=(-\varepsilon_\tau^{-1},\infty)\),
\(Z_\tau=\mathcal M[w_{0,\tau}](1)\), and
\(Z_{h,\tau}=Z_\tau/(b_h(\tau)+1)\). With
\(v_{\ell,\tau}=\thetaop^{\ell-1}w_{0,\tau}\), define
\[
 \widehat U_{h,\tau}(t)\coloneq
 \frac{c_\tau\varepsilon_\tau}{Z_{h,\tau}}w_{h,\tau}(x_\tau(t)),
 \qquad
 \widehat U_{r+\ell,\tau}(t)\coloneq
 \frac{c_\tau\varepsilon_\tau}{Z_\tau}
 (-\varepsilon_\tau)^{\ell-1}v_{\ell,\tau}(x_\tau(t)).
\]
All these rows are extended by zero outside \(I_\tau\).
For \(F=K_{\rho_1,d_1}\cdots K_{\rho_r,d_r}\phi_{\widehat a_*}\),
the limiting vector is
\begin{equation}
 \boldsymbol U^{\mathrm H}\coloneq
 (K_{\rho_1,1}F,\ldots,K_{\rho_r,1}F,F,F',\ldots,F^{(s-1)}).
 \label{eq:Hermite-derivative-row-vector}
\end{equation}
Fix a Laguerre-admissible index
\(\boldsymbol\lambda=(\boldsymbol n,\boldsymbol\eta)\), and write
\(L\coloneq|\boldsymbol n|\), \(R\coloneq|\boldsymbol\eta|\),
and \(N\coloneq L+R\). The contour and Rodrigues expressions
\eqref{eq:canonical-Hermite-A-limit-contour} and
\eqref{eq:canonical-Hermite-B-Rodrigues} are used with these indices
and column totals \(N+1\) and \(N-1\), respectively. Denote their finite
counterparts, with the required normalizations, by
\[
 \widehat{\mathcal A}_\tau(t)\coloneq
 Z_\tau\varepsilon_\tau^{-R}\mathcal A_\tau^{\mathrm L}(x_\tau(t)),
 \qquad
 \widehat{\mathcal B}_\tau(t)\coloneq
 \frac{c_\tau\varepsilon_\tau^R}{Z_\tau}
 \mathcal B_\tau^{\mathrm L}(x_\tau(t)).
\]

\begin{theorem}[Confluence with several derivative rows]
 \label{thm:Hermite-derivative-confluence}
 The normalized rows satisfy
 \begin{equation}
 \widehat U_{h,\tau}\xrightarrow[\tau\to+\infty]{\mathrm w}K_{\rho_h,1}F,
 \qquad
 \widehat U_{r+\ell,\tau}
 \xrightarrow[\tau\to+\infty]{\mathcal D'(\mathbb R)}F^{(\ell-1)}.
 \label{eq:canonical-Hermite-Euler-limit}
 \end{equation}
 Every fixed mixed moment converges, with error
 \(\mathrm O(\Theta_\tau)\) as \(\tau\to+\infty\), where
 \(\Theta_\tau\) is defined by \eqref{eq:canonical-Hermite-Theta-rate}
 using the present scalings. In the respective balances,
 \[
 \widehat{\mathcal A}_\tau\xrightarrow[\tau\to+\infty]{}
 \mathcal A^{\mathrm H}_{\boldsymbol\lambda,\boldsymbol m},
 \qquad
 \widehat{\mathcal B}_\tau
 \xrightarrow[\tau\to+\infty]{\mathcal D'(\mathbb R)}
 \mathcal B^{\mathrm H}_{\boldsymbol\lambda,\boldsymbol m}.
 \]
 The \(A\)-convergence is locally uniform; its individual polynomial
 components converge coefficientwise. The limiting functions satisfy
 their moment conditions. The estimates for the forms and
 \(A\)-components in Proposition~\ref{prop:canonical-Hermite-quantitative-confluence}
 hold with the scalar \(\eta\) replaced by \(R\).
\end{theorem}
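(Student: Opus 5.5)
The plan is to rerun the proof of Theorem~\ref{thm:simultaneous-Laguerre-Hermite-confluence} and of Theorems~\ref{thm:canonical-Hermite-explicit-forms} and \ref{thm:canonical-Hermite-B-form-confluence} at the level of centered Mellin transforms. The only new input is the asymptotics of a product of \(s\) gamma factors. First set
\[
Q_{0,\tau}(z)\coloneq c_\tau^{-z/\varepsilon_\tau}\mathcal M[w_{0,\tau}](1+z/\varepsilon_\tau)/Z_\tau .
\]
By the exact factorization \eqref{eq:canonical-Hermite-Q0-exact-factorization}, this now contains \(s\) gamma quotients \(\Gamma(\alpha_{j,\tau}+z/\varepsilon_\tau)/(\Gamma(\alpha_{j,\tau})\tau^{z/\varepsilon_\tau})\). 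Apply \eqref{eq:canonical-Hermite-log-Gamma-Taylor} to each with \(y=z/\varepsilon_\tau=z\sqrt{\tau/(2s)}\).

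The quadratic terms give \(s\cdot y^2/(2\tau)=z^2/4\). The linear terms give \(\sqrt{2\tau}\,\widehat a_j\,y/\tau=\widehat a_j z/\sqrt s\), which sum to \(\widehat a_* z\). The errors are \(\mathrm O_K(\varepsilon_\tau)\), as in Lemma~\ref{lem:canonical-Hermite-uniform-Gamma-quotient}. The beta factors are treated unchanged. Hence \(Q_{0,\tau}=L_F+\mathrm O(\Omega_\tau)\) locally uniformly with all derivatives, and the row transforms \(Q_{h,\tau}\) follow as in \eqref{eq:canonical-Hermite-shifted-row-transform}.

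For the Euler rows, use \(\mathcal M[\thetaop^{\ell-1}f](s)=s^{\ell-1}\mathcal M[f](s)\) at \(s=1+z/\varepsilon_\tau\). Multiplied by \((-\varepsilon_\tau)^{\ell-1}\), the centered transform of \(\widehat U_{r+\ell,\tau}\) is
\[
(-z-\varepsilon_\tau)^{\ell-1}Q_{0,\tau}(z)\longrightarrow(-z)^{\ell-1}L_F(z),
\]
and this is the bilateral transform of \(F^{(\ell-1)}\) under the kernel \(\e^{zt}\). Mixed moments then converge with rate \(\Theta_\tau\) by the finite-difference identities \eqref{eq:canonical-Hermite-finite-difference-moments}–\eqref{eq:canonical-Hermite-finite-difference-integral}, evaluated at \(z=\varepsilon_\tau\beta_i(\tau)\).

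Weak convergence for the positive rows \(K_{\rho_h,1}F\) and \(F\) follows by the tightness and moment-determinacy argument already given. The rows with \(\ell\ge2\) are signed, so I would not use positivity for them. Instead, I would write
\[
(-\varepsilon_\tau)^{\ell-1}\thetaop^{\ell-1}
\]
after the change \(x=x_\tau(t)\) as \(\prod\bigl((1+\varepsilon_\tau t)\partial_t\bigr)\), with lower-order \(\varepsilon_\tau\) corrections. This comes from \(\thetaop=-x\partial_x=-\varepsilon_\tau^{-1}(1+\varepsilon_\tau t)\partial_t\). I would then pair against \(\varphi\in C_c^\infty\) using formal adjoints, exactly as in the Rodrigues step of Theorem~\ref{thm:canonical-Hermite-B-form-confluence}. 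The boundary terms at \(t=-\varepsilon_\tau^{-1}\) vanish by the \(x\downarrow0\) decay of \(w_{0,\tau}\). This yields \(\mathcal D'\)-convergence to \(\partial_t^{\ell-1}F\).

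The form limits follow the existing proofs with \(\eta\) replaced by \(R\). For \(\mathcal A\), the contour formula \eqref{eq:Laguerre-A-contour} becomes an exact scaled contour identity analogous to \eqref{eq:canonical-Hermite-A-exact-scaled-contour}. Its numerator is \(\prod_h P^{[n_h]}_{h,\tau}\), its denominator is \(Q_{0,\tau}\prod_iD^{[m_i]}_{i,\tau}\), and the prefactor is \(\varepsilon_\tau^{-R}\), since \(|\boldsymbol m|=L+R+1\). Uniform convergence on the fixed circles gives locally uniform convergence. The per-circle residues and the Vandermonde inversion give coefficientwise component convergence with rate \(\Theta_\tau\). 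For \(\mathcal B\), the finite Rodrigues identity \eqref{eq:canonical-Hermite-B-finite-A-Rodrigues} and the auxiliary seed \(S_\tau\) with \(\boldsymbol b+\boldsymbol n\) give distributional convergence, and \eqref{eq:canonical-Hermite-finite-B-transform} gives the moment estimates.

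The limiting moment conditions follow by passing to the limit in the finite ones, which hold by the residue argument of Lemma~\ref{lem:canonical-Hermite-direct-A-orthogonality} using the \(s\)-row admissibility. Alternatively, they follow from the zeros of the limiting transforms. The main obstacle is the signed derivative rows. For these, I expect to need care with the \(\varepsilon_\tau\)-dependent operator expansion and the boundary decay, rather than the positive-measure argument; the gamma-product expansion itself is routine.
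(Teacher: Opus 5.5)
Your proposal is correct and follows essentially the same route as the paper. The shared steps are the $s$-fold gamma-quotient expansion of $Q_{0,\tau}$, the exact row transforms $(-z-\varepsilon_\tau)^{\ell-1}Q_{0,\tau}$, and finite differences for the mixed moments. You also use, as the paper does, formal adjoints of $((1+\varepsilon_\tau t)\partial_t)^{\ell-1}$ tested against the weakly convergent base row, and the exact scaled contour and Rodrigues identities with $\eta$ replaced by $R$.

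Two small remarks. First, the identity $(-\varepsilon_\tau)^{\ell-1}\thetaop^{\ell-1}=((1+\varepsilon_\tau t)\partial_t)^{\ell-1}$ is exact, so no lower-order corrections arise. Second, the paper checks the limiting $A$-moment conditions directly, using the degree bounds of the connection polynomials $\Psi_{j,k}$, rather than by passing to the limit in the finite orthogonality. Your route also works, because on the ordered step-line $sk+\ell\le R$ for $k<\eta_\ell$.
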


\begin{proof}
 Set \(Q_{0,\tau}(z)\coloneq
 c_\tau^{-z/\varepsilon_\tau}\mathcal M[w_{0,\tau}](1+z/\varepsilon_\tau)/Z_\tau\).
 Relative to \eqref{eq:canonical-Hermite-Q0-exact-factorization},
 the single gamma quotient is replaced by
 \[
 G_\tau(z)\coloneq\prod_{j=1}^s
 \frac{\Gamma(\alpha_{j,\tau}+z/\varepsilon_\tau)}
      {\Gamma(\alpha_{j,\tau})\tau^{z/\varepsilon_\tau}},
 \qquad \alpha_{j,\tau}\coloneq a_{r+j}(\tau)+1.
 \]
 Taylor's formula \eqref{eq:canonical-Hermite-log-Gamma-Taylor}
 gives, for each factor, the logarithm
 \(\widehat a_jz/\sqrt s+z^2/(4s)+\mathrm O_K(\varepsilon_\tau)\).
 Summing, and applying the unchanged beta estimate
 \eqref{eq:canonical-Hermite-beta-Gamma-expansion}, gives
 \[
 Q_{0,\tau}=L_F+\mathrm O_K(\Omega_\tau),
 \qquad \tau\to+\infty,
 \]
 locally uniformly in the common half-plane and with every fixed
 derivative. Here \(\Omega_\tau\) has the definition
 \eqref{eq:canonical-Hermite-Omega-rate} with the present parameters.

 The normalized transforms of the other rows are exactly
 \[
 Q_{h,\tau}(z)=
 \frac{\varepsilon_\tau(b_h(\tau)+1)}
      {\varepsilon_\tau(b_h(\tau)+1)+z}Q_{0,\tau}(z),\qquad
 Q_{r+\ell,\tau}(z)=(-z-\varepsilon_\tau)^{\ell-1}Q_{0,\tau}(z).
 \]
 The finite-difference identities
 \eqref{eq:canonical-Hermite-finite-difference-moments} and
 \eqref{eq:canonical-Hermite-finite-difference-integral} therefore
 give all the mixed-moment limits and their errors. For the base row
 \(f_{0,\tau}=\widehat U_{r+1,\tau}\) and the convolution rows, the
 bounded-even-moment argument in
 Theorem~\ref{thm:simultaneous-Laguerre-Hermite-confluence} gives weak
 convergence.

 For the derivative rows, the change of variable gives
 \(\widehat U_{r+\ell,\tau}
 =((1+\varepsilon_\tau t)\partial_t)^{\ell-1}f_{0,\tau}\).
 This also holds after extension by zero: near the original origin,
 \(\thetaop^kw_{0,\tau}(x)
 =\mathrm O(x^{\underline a_\tau}(1+|\log x|^{M_{k,\tau}}))\)
 as \(x\downarrow0\), for some finite \(M_{k,\tau}\) and
 \(\underline a_\tau=\min_\nu a_\nu(\tau)>-1\).
 Hence \(x\thetaop^kw_{0,\tau}(x)\xrightarrow[x\downarrow0]{}0\) and integration by parts
 introduces no boundary term. The formal adjoint of
 \((1+\varepsilon_\tau t)\partial_t\) is
 \(-\partial_t((1+\varepsilon_\tau t)\,\cdot\,)\).
 Its fixed powers converge on each compactly supported smooth test
 function to the corresponding powers of \(-\partial_t\). Testing
 against such functions proves \eqref{eq:canonical-Hermite-Euler-limit}.

 The exact contour identity
 \eqref{eq:canonical-Hermite-A-exact-scaled-contour} now has exponent
 \(|\boldsymbol m|-|\boldsymbol n|-1=R\).
 Uniform convergence of \(Q_{0,\tau}\) on disjoint circles around
 the \(\xi_i\)'s proves the \(A\)-limit. Division by the column factor
 and evaluation at \(m_i\) fixed points give coefficient convergence,
 with the same Vandermonde argument as in
 Corollary~\ref{cor:canonical-Hermite-explicit-A-components}.
 The exact \(B\)-transform is
 \eqref{eq:canonical-Hermite-finite-B-transform}, with the present
 \(Q_{0,\tau}\), and tends to \(L_FP_{\boldsymbol m}/D_{\boldsymbol n}\).
 The convolution and differential representation in
 Theorem~\ref{thm:canonical-Hermite-B-form-confluence} uses only
 these finite factors and the weak base-row limit, so it also gives
 distributional convergence here. The moment estimates and the contour
 estimates give the asserted rates. The limiting moment conditions are
 checked in the connection calculation below.
\end{proof}

\subsection{Polynomial relations and normality}
\label{subsec:Hermite-several-derivative-rows}

Use the index set \(\mathcal I_{\boldsymbol\lambda}\) and coefficient
matrix \(C\) of \eqref{eq:canonical-Hermite-B-connection-matrix}, now
with \(\lambda_h=n_h\), \(\lambda_{r+\ell}=\eta_\ell\),
\(\mathcal I_{\boldsymbol\lambda}=\{(j,k):1\le j\le r+s,\ 0\le k<\lambda_j\}\), and
\[
 \Lambda_h(z)=\frac{\rho_h}{\rho_h+z}L_F(z),\qquad
 \Lambda_{r+\ell}(z)=(-z)^{\ell-1}L_F(z).
\]
Thus \(\Phi_{j,k}=(D_{\boldsymbol n}/L_F)H_k(\partial_z)\Lambda_j\)
and \(\Psi_{j,k}\coloneq(D_{\boldsymbol n}/L_F)\partial_z^k\Lambda_j\).
The explicit base-row expression
\eqref{eq:canonical-Hermite-Psi-base-explicit} gives
\(\Psi_{F,v}\coloneq(D_{\boldsymbol n}/L_F)\partial_z^vL_F\);
Leibniz's rule supplies all the derivative rows:
\begin{equation}
 \Psi_{r+\ell,k}(z)=
 \sum_{j=0}^{\min(k,\ell-1)}
 \binom kj(-1)^j(\ell-1)_{\underline j}
 (-z)^{\ell-1-j}\Psi_{F,k-j}(z).
 \label{eq:canonical-Hermite-Psi-Euler-explicit}
\end{equation}
Admissibility cancels every denominator: the rational quotient
\(L_F^{-1}\partial_z^k\Lambda_j\) has pole order at most \(k+1\) at its own convolution-row rate and at most
\(k\) at the other rates. The inequalities
\eqref{eq:Laguerre-admissibility-nn}--\eqref{eq:Laguerre-admissibility-etan}
supply precisely these powers in \(D_{\boldsymbol n}\).
Expansion as \(z\to\infty\), where \(L_F'/L_F=z/2+\mathrm O(1)\), yields
\begin{equation}
 \deg\Phi_{h,k}=L-1+k,\qquad
 \deg\Phi_{r+\ell,k}=L+\ell-1+k,\qquad
 [z^{L+\ell-1+k}]\Phi_{r+\ell,k}=(-1)^{\ell-1}2^{-k}.
 \label{eq:Hermite-derivative-connection-degrees}
\end{equation}
The same degrees hold for \(\Psi\), since the lower derivatives in
the monic polynomial \(H_k\) contribute lower powers.
The ordered step-line bounds put all these degrees at most \(N-1\).

This also verifies the \(A\)-moment conditions asserted above. Fubini's
theorem is applicable on the fixed contour, since it lies inside the
common half-plane of integrability, and gives
\[
 \int_{\mathbb R}t^kU_j^{\mathrm H}(t)
       \mathcal A^{\mathrm H}(t)\,\mathrm dt
 =\frac1{2\pi\mathrm i}\oint_{\Sigma_\xi}
       \frac{\Psi_{j,k}(z)}{P_{\boldsymbol m}(z)}\,\mathrm dz=0.
\]
Indeed, \(\deg P_{\boldsymbol m}=N+1\), and the rational integrand
is \(\mathrm O(z^{-2})\) as \(z\to\infty\). For \(B\), the zeros of
\(P_{\boldsymbol m}\) in its transform give the moment conditions.
The factorization \(JVC\) from
Proposition~\ref{prop:canonical-Hermite-B-components} consequently
applies to these same component spaces.

The dependence among the rows follows from a Pearson identity. Its
derivatives also describe every relation needed below.

\begin{proposition}[Pearson relation]
 \label{prop:canonical-Hermite-Pearson-relation}
 For every \(t\in\mathbb R\),
 \begin{equation}
 F'(t)+2(t-\widehat a_*)F(t)
 +2\sum_{h=1}^r\frac{d_h}{\rho_h}U_h^{\mathrm H}(t)=0.
 \label{eq:canonical-Hermite-Pearson-relation}
 \end{equation}
 More generally, for \(k\in\mathbb N_0\),
 \begin{equation}
 F^{(k+1)}+2(t-\widehat a_*)F^{(k)}+2kF^{(k-1)}
 +2\sum_{h=1}^r\frac{d_h}{\rho_h}K_{\rho_h,1}F^{(k)}=0,
 \label{eq:canonical-Hermite-Pearson-derivative-relations}
 \end{equation}
 where the term \(kF^{(k-1)}\) is zero when \(k=0\).
\end{proposition}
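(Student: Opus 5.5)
The plan is to prove both identities on the side of the bilateral Laplace transform and then invert. By \eqref{eq:canonical-Hermite-LF},
\[
\frac{L_F'(z)}{L_F(z)}=\widehat a_*+\frac z2-\sum_{h=1}^r\frac{d_h}{\rho_h+z}.
\]
This gives the scalar identity
\[
zL_F(z)-2\bigl(L_F'(z)-\widehat a_*L_F(z)\bigr)
-2\sum_h d_h\frac{L_F(z)}{\rho_h+z}=0 .
\]
Rewrite each term with the dictionary fixed before \eqref{eq:canonical-Hermite-LF}. The transform of \(F'\) is \(-zL_F\), and that of \(tF\) is \(L_F'\). By \eqref{eq:canonical-Hermite-row-transforms}, the transform of \(U_h^{\mathrm H}=K_{\rho_h,1}F\) is \(\rho_hL_F/(\rho_h+z)\). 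With these, the identity becomes exactly \(-1\) times the transform of the left side of \eqref{eq:canonical-Hermite-Pearson-relation}. Every term is absolutely integrable against \(\e^{zt}\) on a common open strip containing the real segment \((-\min_h\rho_h,\infty)\), because \(F\) has Gaussian decay at \(+\infty\) and is bounded by a Gaussian-gamma convolution at \(-\infty\). Uniqueness of the bilateral Laplace transform on a strip then gives \eqref{eq:canonical-Hermite-Pearson-relation} almost everywhere.

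Pointwise validity for every \(t\) requires continuity of all terms. For \(F'\), I plan to argue directly: \(F\) is \(\phi_{\widehat a_*}\) acted on by the operators \(K_{\rho,d}\), and these are averages of translates. Differentiation therefore commutes with them by dominated convergence, using the Gaussian bounds on all derivatives of \(\phi\). So \(F\) is \(C^\infty\) with \(F^{(k)}=K_{\rho_1,d_1}\cdots K_{\rho_r,d_r}\phi^{(k)}_{\widehat a_*}\). The same argument shows that \(K_{\rho_h,1}F\) is smooth, since \(K\) commutes with \(\partial_t\). Two continuous functions equal almost everywhere agree everywhere.

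For \eqref{eq:canonical-Hermite-Pearson-derivative-relations}, I would differentiate \eqref{eq:canonical-Hermite-Pearson-relation} \(k\) times. Leibniz's rule on \(2(t-\widehat a_*)F\) produces \(2(t-\widehat a_*)F^{(k)}+2kF^{(k-1)}\). Since \(\partial_t^kK_{\rho_h,1}F=K_{\rho_h,1}F^{(k)}\) by the commutation just noted, this yields the general relation. For \(k=0\), the extra term is absent.

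The only delicate step is justifying the commutation of \(\partial_t\) with \(K_{\rho,d}\) and the common strip of absolute convergence. I expect both to follow from the Gaussian majorants of \(\phi^{(k)}\) together with the integrability of \(\kappa_{\rho,d}(u)\e^{-\operatorname{Re}z\,u}\) for \(\operatorname{Re}z>-\rho\).
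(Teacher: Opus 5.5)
Your proposal is correct and follows essentially the same route as the paper. The logarithmic derivative of \(L_F\) makes the transform of the left-hand side vanish. Transform uniqueness together with continuity of every term gives the identity for every \(t\). Finally, \(k\)-fold differentiation, using Leibniz's rule and the commutation of \(\partial_t\) with the gamma convolutions, yields the general relation.
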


\begin{proof}
 The logarithmic derivative
 \(L_F'/L_F=\widehat a_*+z/2-\sum_h d_h/(\rho_h+z)\)
 makes the bilateral transform of the first left-hand side zero.
 Each term is continuous and integrable: Gaussian derivatives have
 these properties, the gamma kernels have finite first moments, and
 hence \(tF\) is also integrable. Fourier uniqueness, followed by
 continuity, proves the identity for every \(t\).
 Differentiation under each gamma convolution is justified by the
 bounded, integrable Gaussian derivatives. Differentiating \(k\)
 times and using the product rule proves the second formula.
\end{proof}

Integration by parts, with vanishing boundary term at infinity, gives
\begin{equation}
 (U_h^{\mathrm H})'=\rho_h(U_h^{\mathrm H}-F),\qquad 1\le h\le r.
 \label{eq:Hermite-convolution-row-derivative}
\end{equation}
When \(\eta_1\ge2\), the coefficient vector in
\eqref{eq:canonical-Hermite-Pearson-relation} respects the component
degree bounds. On the ordered step-line this is precisely \(R>s\).
The next result first separates that range from the normal indices.

\begin{proposition}[Finite normal range]
 \label{prop:Hermite-finite-normal-range}
 For \(R>s\), the Rodrigues function has no polynomial decomposition
 with the prescribed component degrees. For \(R\le s\),
 \begin{equation}
 \Delta_{\mathrm H}=(-1)^{\binom R2}
 \prod_{h=1}^r\left[\rho_h^{n_h}\prod_{k=0}^{n_h-1}(d_h+1)_k\right]
 \prod_{1\le j<h\le r}(\rho_h-\rho_j)^{n_jn_h}.
 \label{eq:Hermite-connection-determinant-several-derivatives}
 \end{equation}
 Thus the decomposition exists uniquely exactly when the rates with
 positive indices are distinct. In that case the \(B\)-problem is
 weakly normal, and its strong-normality criterion is
 \eqref{eq:Hermite-B-strong-normality-criterion}.
 Its derivative-row components are constant; the convolution-row
 components are constant for \(R<s\), and at most linear for \(R=s\).
 In particular,
 \[
 N\le r+R\quad(R<s),\qquad N\le2r+s\quad(R=s).
 \]
\end{proposition}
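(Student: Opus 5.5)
The plan is to reduce the whole statement to the polynomial identity \eqref{eq:canonical-Hermite-B-polynomial-identity},
\[
P_{\boldsymbol m}=\sum_{(j,k)\in\mathcal I_{\boldsymbol\lambda}}b_{j,k}\Phi_{j,k}.
\]
A decomposition of the Rodrigues function with the prescribed degrees is equivalent to this identity. To see this, multiply the row transforms by the common factor \(D_{\boldsymbol n}/L_F\), and invert the bilateral transform using \eqref{eq:canonical-Hermite-B-transform}, exactly as in Corollary~\ref{cor:canonical-Hermite-explicit-B-components}. Existence and uniqueness then become statements about the rank of \(C\) and about whether \(P_{\boldsymbol m}\), of degree \(N-1\), lies in the span of the \(\Phi_{j,k}\). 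I would prove the degree information \eqref{eq:Hermite-derivative-connection-degrees} together with the leading coefficients first, since everything else rests on it.

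\emph{Case \(R>s\).} On the ordered step-line this means \(\eta_1\ge2\). The Pearson relation \eqref{eq:canonical-Hermite-Pearson-relation} is then a nonzero polynomial relation among the rows that respects the degree bounds, so \(C\) has a nontrivial kernel. I would show more: the span of the \(\Phi_{j,k}\) misses the top degree \(N-1\). By \eqref{eq:Hermite-derivative-connection-degrees}, the maximal degree \(L+\ell-1+k\) with \(k\le\eta_\ell-1\) is attained only by the derivative rows. Using \eqref{eq:canonical-Hermite-Pearson-derivative-relations}, I would reduce each \(\Phi_{r+\ell,k}\) with \(k\ge1\) modulo lower-index rows. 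On the transform side, \(H_k(\partial_z)\bigl((-z)^{\ell-1}L_F\bigr)\) is congruent to \((-z)^{\ell-1+k}2^{-k}L_F\) plus lower terms. The top-degree part of the span is therefore generated by the polynomials \(z^{L+a}\) with \(a\) ranging over fewer than \(R\) distinct values, because \(a=\ell-1+k\) repeats across \(\ell\) and \(k\).

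A count then shows that the space of attainable top coefficients has dimension smaller than required. I would make this precise by exhibiting the span as \(D_{\boldsymbol n}\cdot\mathbb P_{s+\eta_1-2}\) plus partial-fraction terms, which has dimension at most \(L+s+\eta_1-1\le N-1\). Since \(P_{\boldsymbol m}\) has degree exactly \(N-1\) and nonzero leading coefficient \((-1)^{N-1}\), it is not in the span. This rules out any decomposition. This dimension count is the step I expect to be the main obstacle. I would try first to organize it through the step-line shape \(\boldsymbol\eta=\boldsymbol\sigma_s(R)\).

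\emph{Case \(R\le s\).} Here every \(\eta_\ell\le1\), so the derivative rows contribute only \(k=0\). Their polynomials are \(\Phi_{r+\ell,0}=(-z)^{\ell-1}D_{\boldsymbol n}\) for \(\ell\le R\). I would then repeat the triangular change of basis from Proposition~\ref{prop:canonical-Hermite-vector-edge-determinant}. The convolution columns reduce to the \(E_{h,k}\) with diagonal factors \((-1)^k\rho_h(d_h+1)_k\), using admissibility \(n_u\ge n_h-1\) and \(R\ge\eta_\ell\). The final block consists of the columns \((-z)^{\ell-1}D_{\boldsymbol n}\), which have leading coefficients \((-1)^{\ell-1}\), in place of \(2^{-k}E_{r+1,k}\). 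Their product contributes \((-1)^{\binom R2}\), replacing \(2^{-\binom\eta2}\), and this gives \eqref{eq:Hermite-connection-determinant-several-derivatives}.

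Weak normality and the strong-normality criterion then follow verbatim from Proposition~\ref{prop:canonical-Hermite-B-components} and Corollary~\ref{cor:canonical-Hermite-explicit-B-components}, via the \(JVC\) factorization. The degree statements come from the index bounds. Every \(\eta_\ell\le1\) forces the derivative-row components to be constant. Near-diagonality with \(\eta_s=0\) when \(R<s\) forces \(n_h\le1\). When \(R=s\) it forces \(n_h\le2\), which gives the bounds \(N\le r+R\) and \(N\le2r+s\).
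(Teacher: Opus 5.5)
Your treatment of the range \(R\le s\) is essentially the paper's. The derivative columns are exactly \(\Phi_{r+\ell,0}=(-1)^{\ell-1}E_{r+1,\ell-1}\). The convolution columns reduce triangularly with diagonal entries \((-1)^k\rho_h(d_h+1)_k\); only \(k\le1\) occurs, since \(n_h\le2\). The signs then combine to \((-1)^{\binom R2}\). For the ``exactly when'' you should add the converse. Your partial-fraction reduction assumes distinct rates, and if \(\rho_h=\rho_u\) with \(n_h,n_u>0\), then \(\Phi_{h,0}=\Phi_{u,0}\), so \(\Delta_{\mathrm H}=0\) and uniqueness fails.

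The gap is in the case \(R>s\), at exactly the step you flag as the obstacle. You propose that the span lies in \(D_{\boldsymbol n}\mathbb P_{s+\eta_1-2}\) plus terms of degree below \(L\), with \(L+s+\eta_1-1\le N-1\). This bound is false when \(R=s+1\). There \(\boldsymbol\eta=(2,1,\ldots,1)\) and \(L+s+\eta_1-2=L+s=N-1\), so nothing excludes \(P_{\boldsymbol m}\); the simplest instance is \(s=2\), \(\boldsymbol\eta=(2,1)\).

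The loss comes from bounding \(\ell\le s\) and \(\eta_\ell\le\eta_1\) separately. On the ordered step-line the longer rows come first. Writing \(R=sQ+R_0\) with \(0\le R_0<s\), one has \(\ell+\eta_\ell\le Q+s\) for every \(\ell\), and near-diagonality gives \(n_h\le Q+1\). The degree formulas \eqref{eq:Hermite-derivative-connection-degrees} then give directly
\[
\operatorname{span}\{\Phi_{j,k}:(j,k)\in\mathcal I_{\boldsymbol\lambda}\}\subseteq\mathbb P_{L+Q+s-2},
\qquad
(N-1)-(L+Q+s-2)=(s-1)(Q-1)+R_0>0\quad(R>s).
\]
So \(P_{\boldsymbol m}\), of exact degree \(N-1\), is not in the span; this is the paper's whole argument.

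No Pearson reduction of the columns is needed, since \eqref{eq:Hermite-derivative-connection-degrees} already gives each degree. The Pearson relation only shows \(\ker C\ne0\) and plays no role in non-existence. No dimension count is needed either: what matters is a degree bound strictly below \(N-1\). Your heuristic that \(\ell-1+k\) takes fewer than \(R\) values becomes correct once stated this way. For \(R>s\) these values form the initial segment \(\{0,\ldots,Q+s-2\}\), which has \(Q+s-1<R\) elements.
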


\begin{proof}
 Write \(R=sQ+R_0\), \(0\le R_0<s\). The step-line has
 \(\eta_\ell=Q+1\) for \(\ell\le R_0\) and \(\eta_\ell=Q\)
 otherwise. Admissibility gives \(n_h\le Q+1\), while
 \(\ell+\eta_\ell\le Q+s\). The degree formulas
 \eqref{eq:Hermite-derivative-connection-degrees} therefore imply
 \begin{equation}
 \operatorname{span}\{\Phi_{j,k}:(j,k)\in\mathcal I_{\boldsymbol\lambda}\}
 \subseteq\mathbb P_{L+Q+s-2}.
 \label{eq:Hermite-singular-degree-bound}
 \end{equation}
 If \(R>s\), then \(L+Q+s-2<N-1\). No combination of these
 polynomials can equal \(P_{\boldsymbol m}\), which has degree \(N-1\)
 in the \(B\)-balance. Taking transforms of a putative component
 decomposition would give precisely that impossible identity.

 If \(R\le s\), the derivative indices are \(R\) ones followed by
 zeros. Use the basis \(E_{h,k}=D_{\boldsymbol n}/(z+\rho_h)^{k+1}\)
 and \(E_{r+1,k}=z^kD_{\boldsymbol n}\) from
 Proposition~\ref{prop:canonical-Hermite-vector-edge-determinant};
 its coefficient determinant is
 \eqref{eq:canonical-Hermite-vector-edge-cleared-determinant}.
 For \(R<s\), every \(n_h\le1\), and the change of basis is diagonal:
 \(\Phi_{h,0}=\rho_hE_{h,0}\) and
 \(\Phi_{r+\ell,0}=(-1)^{\ell-1}E_{r+1,\ell-1}\).
 For \(R=s\), every \(n_h\le2\). If some \(n_h=2\), all
 convolution-row indices are positive, and
 \[
 \frac{\Phi_{h,1}}{D_{\boldsymbol n}}=
 \rho_h\left[\frac{\widehat a_*+z/2}{z+\rho_h}
 -\frac{d_h+1}{(z+\rho_h)^2}
 -\sum_{u\ne h}\frac{d_u}{(z+\rho_h)(z+\rho_u)}\right].
 \]
 For distinct rates, partial fractions leave the diagonal coefficient
 \(-\rho_h(d_h+1)\) of \(E_{h,1}\); every other term uses a simple-pole
 column or \(D_{\boldsymbol n}\). Multiplying these coefficients with
 the preceding determinant proves
 \eqref{eq:Hermite-connection-determinant-several-derivatives}, including its
 sign. If two positive-index rates coincide, the corresponding
 \(\Phi_{h,0}\)'s coincide, so both determinant and product vanish.
 Nonsingularity gives the decomposition and normalizing moment through
 the same \(JVC\) factorization as in
 Proposition~\ref{prop:canonical-Hermite-B-components}.
 The degree assertions and bounds on \(N\) follow from
 \(\eta_\ell\in\{0,1\}\) and the bounds on \(n_h\).
\end{proof}

The components in this finite normal range require only the constant
and double-pole coefficients in
\eqref{eq:Hermite-B-explicit-partial-fractions}, with the present \(R\).
Use its \(\pi_l,\zeta_{J,K},c_J(t),x_h^{(J)}\), and put
\[
 \mathcal T_J\coloneq\frac{\zeta_{J,1}}{d_J+1},\qquad
 \mathcal U_{h,J}\coloneq-d_hx_h^{(J)}\mathcal T_J\quad(h\ne J).
\]
Set both expressions to zero when \(n_J<2\), without evaluating a
rate difference, and set \(\pi_l=0\) outside \(0\le l<R\).
Then
\begin{equation}
 \mathcal B^{\mathrm H}
 =\sum_{h=1}^r B^{(h),\mathrm H}U_h^{\mathrm H}
  +\sum_{\ell=1}^sD^{(\ell),\mathrm H}F^{(\ell-1)},
 \label{eq:Hermite-derivative-component-decomposition}
\end{equation}
with zero components at zero indices and
\begin{align*}
 B^{(h),\mathrm H}(t)
 &=\frac1{\rho_h}\left[\zeta_{h,0}+c_h(t)\mathcal T_h
            +\sum_{J\ne h}(\mathcal U_{h,J}-\mathcal U_{J,h})\right],\\
 D^{(\ell),\mathrm H}(t)
 &=(-1)^{\ell-1}\pi_{\ell-1}
       +\frac{\delta_{\ell,1}}2\sum_{J:n_J>0}\mathcal T_J.
\end{align*}
Indeed, the finite-pole calculation in
Corollary~\ref{cor:canonical-Hermite-explicit-B-components} now has
pole order at most two, so \(\mathcal W_J=0\) and its other terms
are exactly \(\mathcal T_J,\mathcal U_{h,J}\). The polynomial part is represented
directly by the derivative rows, since \(z^lL_F\) is the transform
of \((-1)^lF^{(l)}\). The constant \(\mathcal T_J/2\) left by each double
pole multiplies \(F\), giving the second formula.

For \(R>s\) and distinct rates at positive indices, define
\[
 R=sQ+R_0,\qquad 0\le R_0<s,\qquad
 \kappa\coloneq(s-1)(Q-1)+R_0>0.
\]
For \(0\le k\le s-2\), let \(\boldsymbol r_k(t)\) be the coefficient
vector, in the row order \eqref{eq:Hermite-derivative-row-vector}, of
\begin{equation}
 F^{(k+1)}+2(t-\widehat a_*)F^{(k)}+2kF^{(k-1)}
 +2\sum_{h=1}^r d_h\rho_h^{k-1}U_h^{\mathrm H}
 -2\sum_{h=1}^r d_h\sum_{a=0}^{k-1}\rho_h^{k-a-1}F^{(a)}=0.
 \label{eq:Hermite-reduced-Pearson-relations}
\end{equation}
All functions are evaluated at \(t\); the term \(kF^{(k-1)}\) and
the inner sum are zero when \(k=0\). These identities follow from
\eqref{eq:canonical-Hermite-Pearson-derivative-relations} by iterating
\eqref{eq:Hermite-convolution-row-derivative}:
\[
 (U_h^{\mathrm H})^{(k)}
 =\rho_h^kU_h^{\mathrm H}
  -\sum_{a=0}^{k-1}\rho_h^{k-a}F^{(a)}.
\]

\begin{theorem}[Exact rank loss and polynomial relations]
 \label{thm:Hermite-exact-rank-defect}
 Under the preceding hypotheses, with \(s>1\) and \(R>s\),
 \begin{equation}
 \operatorname{span}\{\Phi_{j,k}:(j,k)\in\mathcal I_{\boldsymbol\lambda}\}
 =\mathbb P_{N-\kappa-1},\qquad
 \operatorname{rank}C=N-\kappa,\qquad \dim\ker C=\kappa.
 \label{eq:Hermite-exact-rank-defect}
 \end{equation}
 The space of polynomial component vectors satisfying
 \(\deg V_j<\lambda_j\) and
 \(\sum_j V_j(t)U_j^{\mathrm H}(t)\equiv0\) has the basis
 \begin{equation}
 \left\{t^a\boldsymbol r_k(t):
       0\le k\le s-2,\quad 0\le a\le\eta_{k+1}-2\right\}.
 \label{eq:Hermite-Pearson-kernel-basis}
 \end{equation}
 Expanding the component polynomials in the Hermite basis identifies this
 space with \(\ker C\).
\end{theorem}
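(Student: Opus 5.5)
The plan is to compare two counts. An upper bound on the span of the \(\Phi_{j,k}\) will come from the degree formulas \eqref{eq:Hermite-derivative-connection-degrees}. A matching lower bound on the relation space will come from the reduced Pearson identities \eqref{eq:Hermite-reduced-Pearson-relations}. Throughout write \(R=sQ+R_0\); the hypothesis \(R>s\) gives \(Q\ge1\).

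First, I check that the degree bound \eqref{eq:Hermite-singular-degree-bound} is already sharp in the required form. A direct computation gives
\[
N-\kappa-1=L+sQ+R_0-(s-1)(Q-1)-R_0-1=L+Q+s-2 .
\]
So the span of the \(\Phi_{j,k}\) lies in \(\mathbb P_{N-\kappa-1}\), and \(\operatorname{rank}C\le N-\kappa\).

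For equality I take the subfamily consisting of the convolution rows together with the base row \(\ell=1\). Its index \((\boldsymbol n,\eta_1)\) is again near the diagonal. The triangular change of basis in the proof of Proposition~\ref{prop:canonical-Hermite-vector-edge-determinant} applies to it without change. Since the positive-index rates are distinct, its determinant is nonzero, and this subfamily spans \(\mathbb P_{L+\eta_1-1}\).

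Each remaining degree \(d\) with \(L+\eta_1\le d\le L+Q+s-2\) is then attained exactly by some \(\Phi_{r+\ell,k}\). By \eqref{eq:Hermite-derivative-connection-degrees} this polynomial has degree \(L+\ell-1+k\) and leading coefficient \((-1)^{\ell-1}2^{-k}\ne0\). For example, take \(\ell=s\) and \(k=d-L-s+1\), which is at most \(Q-1<\eta_s\); when \(d<L+s-1\) take \(k=0\) and \(\ell=d-L+1\). Adjoining one such polynomial for each degree fills \(\mathbb P_{L+Q+s-2}\). This proves the span equality, \(\operatorname{rank}C=N-\kappa\), and \(\dim\ker C=\kappa\).

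Second, I turn to the relation space. By the bilateral Laplace transform, a component vector \(\sum_jV_jU_j^{\mathrm H}\equiv0\) holds if and only if \((L_F/D_{\boldsymbol n})\sum_{j,k}b_{j,k}\Phi_{j,k}=0\), with the \(V_j\) expanded in the Hermite basis. Uniqueness of the transform on the common strip turns this into the identification with \(\ker C\). So it suffices to show that \eqref{eq:Hermite-Pearson-kernel-basis} consists of \(\kappa\) independent admissible vectors. The count is
\[
\sum_{\ell=1}^{s-1}(\eta_\ell-1)=(s-1)(Q-1)+R_0=\kappa ,
\]
using \(\eta_\ell=Q+1\) for \(\ell\le R_0\le s-1\) and \(\eta_\ell=Q\) otherwise.

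For the degree bounds of \(t^a\boldsymbol r_k\), \(0\le a\le\eta_{k+1}-2\), I check each row separately.
\begin{itemize}
\item The row \(F^{(k+1)}\) carries \(t^a\), and \(a\le\eta_{k+2}-1\) since \(\eta_{k+2}\ge\eta_{k+1}-1\).
\item The row \(F^{(k)}\) carries a polynomial of degree \(a+1\le\eta_{k+1}-1\).
\item The rows \(F^{(a')}\) with \(a'<k\) carry degree \(a\le\eta_{a'+1}-1\).
\item The convolution rows carry degree \(a\le n_h-1\), by admissibility \eqref{eq:Laguerre-admissibility-etan}.
\end{itemize}

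For independence I order the vectors by \(k\) and then by \(a\). The entry of \(t^a\boldsymbol r_k\) in the row \(F^{(k+1)}\) is exactly \(t^a\). Every \(\boldsymbol r_{k'}\) with \(k'<k\) vanishes in that row, so a triangularity argument in \(k\), followed by comparison of degrees in \(a\), gives independence. These \(\kappa\) vectors therefore form a basis of the \(\kappa\)-dimensional kernel.

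The step I expect to be most delicate is the lower bound on the span. Proposition~\ref{prop:canonical-Hermite-vector-edge-determinant} must be applied verbatim to the truncated index \((\boldsymbol n,\eta_1)\), and I have to confirm that its triangular reduction used only columns present in that subfamily, with no derivative rows. The other point to watch is that the added \(\Phi_{r+\ell,k}\) have leading degrees covering every value up to \(L+Q+s-2\) with no gap. Each choice above satisfies \(k<\eta_\ell\), because \(\eta_\ell\ge Q\ge1\) and \(k\le Q-1\).
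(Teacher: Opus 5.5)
Your proposal is correct and follows the paper's proof. The shared steps are the degree bound \(\mathbb P_{L+Q+s-2}=\mathbb P_{N-\kappa-1}\), the basis of \(\mathbb P_{L+\eta_1-1}\) from the subfamily \((\boldsymbol n,\eta_1)\) via Proposition~\ref{prop:canonical-Hermite-vector-edge-determinant}, adjoining derivative-row columns of consecutive degrees, and the count, admissibility and independence of the Pearson vectors. The differences are minor and both versions work: the paper adjoins the columns \(\Phi_{r+\ell,Q-1}\), and it proves independence from the top-degree terms \(2tP_k\) in the rows \(F^{(k)}\) rather than your triangularity through the unit entries in the rows \(F^{(k+1)}\).
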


\begin{proof}
 The bound
 \eqref{eq:Hermite-singular-degree-bound} gives the inclusion in
 \(\mathbb P_{L+Q+s-2}\), and
 \(L+Q+s-2=N-\kappa-1\). It remains to prove that every polynomial
 in this space belongs to the span.

 Retain the \(r\) convolution rows and the single row \(F\), with
 lengths \((\boldsymbol n,\eta_1)\). This index is still near the
 diagonal. Its connection polynomials are the corresponding columns of
 \(C\), since both constructions use the same \(D_{\boldsymbol n}\),
 \(L_F\), and Hermite basis. Proposition~\ref{prop:canonical-Hermite-vector-edge-determinant}
 shows that these columns form a basis of
 \(\mathbb P_{L+\eta_1-1}\).

 If \(R_0=0\), then \(\eta_1=Q\). Append the columns
 \(\Phi_{r+\ell,Q-1}\), \(\ell=2,\ldots,s\). Their degrees are
 successively \(L+Q,L+Q+1,\ldots,L+Q+s-2\). If \(R_0>0\), then
 \(\eta_1=Q+1\); append instead the same columns with
 \(\ell=3,\ldots,s\), whose degrees begin at \(L+Q+1\).
 All these columns exist because \(\eta_\ell\ge Q\). Their leading
 coefficients are \((-1)^{\ell-1}2^{-(Q-1)}\), by
 \eqref{eq:Hermite-derivative-connection-degrees}. Thus each appended column
 adds the next degree, proving the span and rank assertions.

 The identities \eqref{eq:Hermite-reduced-Pearson-relations} show that
 the vectors in \eqref{eq:Hermite-Pearson-kernel-basis} represent zero.
 Their degrees are admissible: the coefficient of \(F^{(k)}\) has
 degree \(a+1\le\eta_{k+1}-1\), the coefficient of \(F^{(k+1)}\)
 has degree \(a\le\eta_{k+2}-1\), and all other nonzero coefficients
 have degree at most \(a\). For the coefficient of \(F^{(k+1)}\),
 the required inequality follows from \(\eta_{k+1}\le\eta_{k+2}+1\):
 \(a\le\eta_{k+1}-2\le\eta_{k+2}-1\).
 For an earlier derivative \(F^{(v)}\), \(v<k\), the ordering gives
 \(\eta_{v+1}\ge\eta_{k+1}\), and therefore
 \(a\le\eta_{v+1}-2<\eta_{v+1}\).
 Finally, near diagonality gives \(n_h\ge\eta_{k+1}-1\), whence
 \(a\le n_h-1\) in every convolution row. These inequalities check
 all the nonzero components of \(t^a\boldsymbol r_k\).

 To prove independence, suppose
 \(\sum_{k=0}^{s-2}P_k(t)\boldsymbol r_k(t)=0\), with not all
 \(P_k\) zero, and let \(a\) be their largest degree. At degree
 \(a+1\), the only contributions are \(2tP_k(t)\), each in the
 distinct row corresponding to \(F^{(k)}\). All other coefficients
 of \(\boldsymbol r_k\) are constant, so none can cancel those terms.
 This is a contradiction. Finally, the number of vectors is
 \[
 \sum_{k=0}^{s-2}(\eta_{k+1}-1)
 =R-\eta_s-(s-1)=R-Q-s+1=\kappa.
 \]
 They therefore form a basis of the kernel. The use of monomials here
 instead of Hermite polynomials changes coordinates by an invertible
 triangular matrix and leaves the kernel dimension unchanged.
\end{proof}

Let \(\mathscr S_{\mathrm A}\) and \(\mathscr S_{\mathrm B}\) denote
the homogeneous component-vector solution spaces in the two separate
balances \(|\boldsymbol m|=N+1\) and \(|\boldsymbol m|=N-1\).
For the latter assume \(N\ge1\). The following consequences distinguish
uniqueness of components from a nonzero normalization of their form.

\begin{corollary}[Dimensions of the two moment problems]
 \label{cor:Hermite-solution-dimensions}
 \begin{enumerate}[label=\textnormal{(\arabic*)}]
 \item If \(\Delta_{\mathrm H}\ne0\), both solution spaces are
 one-dimensional and their nonzero vectors represent nonzero forms.
 The \(A\)-problem is strongly normal, and the \(B\)-problem is weakly
 normal, with strong normality characterized by
 \eqref{eq:Hermite-B-strong-normality-criterion}.
 \item If \(R>s\) and the rates with positive indices are distinct, then
 \begin{equation}
 \dim\mathscr S_{\mathrm A}=\kappa+1,\qquad
 \dim\mathscr S_{\mathrm B}=\kappa.
 \label{eq:Hermite-solution-dimensions}
 \end{equation}
 Every vector in \(\mathscr S_{\mathrm B}\) represents zero.
 The canonical \(A\)-representative remains a nonzero solution,
 but the \(A\)-problem is not weakly normal.
 \item If \(\rho_h=\rho_u\) with \(n_h,n_u>0\), put
 \(v\coloneq\min(n_h,n_u)\). Then
 \begin{equation}
 \dim\ker C\ge v,\qquad
 \dim\mathscr S_{\mathrm A}\ge v+1,\qquad
 \dim\mathscr S_{\mathrm B}\ge v.
 \label{eq:Hermite-coincident-rate-dimensions}
 \end{equation}
 The vectors \(t^k(\boldsymbol e_h-\boldsymbol e_u)\), \(0\le k<v\),
 are independent admissible relations representing zero.
 \end{enumerate}
\end{corollary}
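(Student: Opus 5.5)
Everything will pass through the factorization \(JVC\) of Proposition~\ref{prop:canonical-Hermite-B-components}, which stays valid in the present setting, as verified after \eqref{eq:Hermite-derivative-connection-degrees}. In either balance, expand the components in the monic Hermite basis. Under the transform, a component vector \(\mathbf b\) corresponds to the polynomial \(\sum b_{j,k}\Phi_{j,k}=C\mathbf b\in\mathbb P_{N-1}\). The moment conditions amount to requiring that \(g\,C\mathbf b\) vanishes to order \(m_i\) at each \(\xi_i\), where \(g=L_F/D_{\boldsymbol n}\) and \(g(\xi_i)\ne0\). Equivalently, \(C\mathbf b\) must vanish to order \(m_i\) at each \(\xi_i\). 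Hence
\[
\mathscr S=\{\mathbf b: C\mathbf b\in P_{\boldsymbol m}\mathbb P_{N-1-|\boldsymbol m|}\},\qquad
\dim\mathscr S=\dim\ker C+\dim\bigl(\operatorname{im}C\cap P_{\boldsymbol m}\mathbb P_{N-1-|\boldsymbol m|}\bigr).
\]
In the \(A\)-balance \(\deg P_{\boldsymbol m}=N+1>N-1\), so the intersection is zero and \(\mathscr S_{\mathrm A}=\ker C\). In the \(B\)-balance the intersection is \(\operatorname{span}P_{\boldsymbol m}\cap\operatorname{im}C\).

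For (1), \(\Delta_{\mathrm H}\ne0\) gives \(\ker C=0\). The formula above then makes \(\mathscr S_{\mathrm B}\) one-dimensional, spanned by \(C^{-1}\mathbf p\). Its adjacent moment \eqref{eq:canonical-Hermite-B-adjacent-moment} is nonzero, so the form is nonzero. The \(A\)-side, strong normality, and the \(B\) criterion are handled as in Corollary~\ref{cor:Hermite-main-normality} and Corollary~\ref{cor:canonical-Hermite-explicit-B-components}, whose proofs use only \(\Delta_{\mathrm H}\ne0\) and the leading coefficients \eqref{eq:Hermite-A-leading-coefficient}. Note that with \(\ker C=0\) the formula would give \(\dim\mathscr S_{\mathrm A}=0\), contradicting the nonzero contour representative. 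I therefore expect that in the \(A\)-balance the correct count is \(N\) unknowns against \(N+1\) column functionals on the column side. Put differently, the roles of rows and columns swap: the unknowns are the column components and the row functionals are the \(\Phi_{j,k}\). In that case \(\dim\mathscr S_{\mathrm A}=(N+1)-\operatorname{rank}(VC)=(N+1)-\operatorname{rank}C\). This follows because \(V\) is injective on \(\mathbb P_{N-1}\), as argued in Corollary~\ref{cor:Hermite-main-normality}. I will use this form: \(\dim\mathscr S_{\mathrm A}=N+1-\operatorname{rank}C\) and \(\dim\mathscr S_{\mathrm B}=\dim\ker C+[P_{\boldsymbol m}\in\operatorname{im}C]\).

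For (2), Theorem~\ref{thm:Hermite-exact-rank-defect} gives \(\operatorname{rank}C=N-\kappa\), so \(\dim\mathscr S_{\mathrm A}=\kappa+1\). Because this exceeds one, the \(A\)-problem is not weakly normal. The canonical contour representative stays a nonzero solution, since its leading coefficients \eqref{eq:Hermite-A-leading-coefficient} are nonzero and, by the check after \eqref{eq:Hermite-derivative-connection-degrees}, it satisfies the moment conditions. On the \(B\)-side, \(\operatorname{im}C=\mathbb P_{N-\kappa-1}\) does not contain \(P_{\boldsymbol m}\), which has degree \(N-1\) (Proposition~\ref{prop:Hermite-finite-normal-range}). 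Hence \(\mathscr S_{\mathrm B}=\ker C\), which has dimension \(\kappa\). Every such vector has zero transform \(gC\mathbf b=0\), so by uniqueness of the bilateral transform it represents the zero form; this also follows directly from the Pearson basis \eqref{eq:Hermite-Pearson-kernel-basis}.

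For (3), if \(\rho_h=\rho_u\) then \(U_h^{\mathrm H}=U_u^{\mathrm H}\). The vectors \(t^k(\boldsymbol e_h-\boldsymbol e_u)\) for \(k<v\) are admissible and represent zero, and they are clearly independent, so \(\dim\ker C\ge v\). Consequently \(\operatorname{rank}C\le N-v\), which gives \(\dim\mathscr S_{\mathrm A}\ge v+1\), and \(\dim\mathscr S_{\mathrm B}\ge\dim\ker C\ge v\). The main obstacle is keeping the two balances' matrix orientations straight, so that the correct rank–nullity count is applied to \(\mathscr S_{\mathrm A}\) versus \(\mathscr S_{\mathrm B}\). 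I would settle this first by writing out the block sizes in the \(JVC\) factorization for each balance.
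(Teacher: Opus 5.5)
Your final argument is correct and is essentially the paper's proof. The \(JVC\) factorization with \(V\) injective on \(\mathbb P_{N-1}\) gives \(\dim\mathscr S_{\mathrm A}=N+1-\operatorname{rank}C\). In the \(B\)-balance a solution must satisfy \(C\mathbf b\in\operatorname{span}\{P_{\boldsymbol m}\}\), so \(\operatorname{im}C=\mathbb P_{N-\kappa-1}\) forces \(\mathscr S_{\mathrm B}=\ker C\) when \(R>s\), and the coincident rows supply the \(v\) relations in (3).

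Discard your first \(A\)-count (\(\mathscr S_{\mathrm A}=\ker C\), together with the phrase ``\(N\) unknowns against \(N+1\) functionals''), because it reverses the roles. In the \(A\)-balance the \(N+1\) unknowns are the column coefficients and the \(N\) conditions come from the rows, which is exactly the corrected count you then adopt.
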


\begin{proof}
 The \(A\)-assertion for \(N=0\) is immediate. Assume \(N\ge1\).
 The rectangular moment matrices, after the Hermite change of basis,
 factor as \(JVC\), where \(J\) is invertible and \(V\) evaluates
 polynomials and derivatives at the column nodes.
 In the \(A\)-balance its \(N+1\) evaluations are injective on
 \(\mathbb P_{N-1}\): their kernel would be divisible by a polynomial
 of degree \(N+1\). Thus the moment matrix has rank \(\operatorname{rank}C\).
 Subtracting from \(N+1\) gives the asserted \(A\)-dimensions.
 Distinct real exponentials with polynomial coefficients are independent,
 and the leading coefficients \eqref{eq:Hermite-A-leading-coefficient}
 are nonzero; hence the one-dimensional case is strongly normal.

 For \(B\), a nonzero \(\Delta_{\mathrm H}\) gives the adjacent
 determinant \eqref{eq:canonical-Hermite-B-connection-determinant}
 and a nonzero normalizing moment. If \(R>s\),
 \eqref{eq:Hermite-exact-rank-defect} identifies the image of \(C\)
 with \(\mathbb P_{N-\kappa-1}\subseteq\mathbb P_{N-2}\).
 The \(N-1\) evaluations are injective on that image, so
 \(\ker(JVC)=\ker C\). The exact-rank theorem identifies this kernel
 with the polynomial relations representing zero.
 Finally, coincident rates give identical rows \(U_h^{\mathrm H}=U_u^{\mathrm H}\).
 The \(v\) displayed independent relations imply
 \(\operatorname{rank}C\le N-v\), the \(A\)-bound through \(JVC\),
 and the \(B\)-bound directly.
\end{proof}

When \(\kappa=1\), the singular \(B\)-problem is weakly normal in the
component-vector sense of Section~\ref{sec:source-families}, but its sole
direction represents zero and cannot be normalized by a nonzero moment.
When \(\kappa>1\), uniqueness of the component vector also fails.

\subsection{Divergence of the components}

The finite-parameter Rodrigues function can converge although its
polynomial components grow without bound. To state this precisely, use
the scaled components
\begin{equation}
 \widetilde B_{h,\tau}(t)\coloneq
 \frac{\varepsilon_\tau^{R-1}}{b_h(\tau)+1}
 B_\tau^{(h),\mathrm L}(x_\tau(t)),\qquad
 \widetilde D_{\ell,\tau}(t)\coloneq
 (-1)^{\ell-1}\varepsilon_\tau^{R-\ell}
 D_\tau^{(\ell),\mathrm L}(x_\tau(t)).
 \label{eq:canonical-Hermite-B-Euler-component-limit}
\end{equation}
Their linear combination with the normalized rows is
\(\widehat{\mathcal B}_\tau\). Let \(\mathbf b_\tau\) collect their
coefficients in the fixed Hermite basis, and fix any norm on this
finite-dimensional coefficient space.
For \(R\le s\) and separated positive-index rates, these components
converge to those in \eqref{eq:Hermite-derivative-component-decomposition},
with coefficient error \(\mathrm O(\Theta_\tau)\) as \(\tau\to+\infty\).
Indeed, the moments converge and the limiting adjacent determinant is
nonzero, so the fixed-matrix inversion argument of
Proposition~\ref{prop:canonical-Hermite-quantitative-confluence} applies.

\begin{proposition}[Divergence in the singular range]
 \label{prop:Hermite-singular-component-divergence}
 Suppose \(R>s\) and the product \(c_{\boldsymbol\lambda}(\tau)\) in
 \eqref{eq:Laguerre-normality-coefficient-determinant} is nonzero for
 all sufficiently large \(\tau\). Then the finite Rodrigues function
 has unique polynomial components of the prescribed degrees, and
 \begin{equation}
 \|\mathbf b_\tau\|\xrightarrow[\tau\to+\infty]{}+\infty.
 \label{eq:Hermite-singular-component-divergence}
 \end{equation}
\end{proposition}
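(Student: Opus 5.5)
The plan has two parts: existence and uniqueness at finite \(\tau\), then divergence by contradiction against the rank loss of Theorem~\ref{thm:Hermite-exact-rank-defect}.

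For the finite-\(\tau\) assertion I will invoke Proposition~\ref{prop:AT-normality-Laguerre-admissible}. Its exact determinant formula \eqref{eq:Laguerre-normality-moment-determinant} needs \(c_{\boldsymbol\lambda}(\tau)\ne0\). It also needs the column nodes \(\beta_i(\tau)+k+1\) to be pairwise distinct. This holds for large \(\tau\), because \(\varepsilon_\tau\beta_i(\tau)\to\xi_i\) with distinct \(\xi_i\), so different columns are separated by order \(\varepsilon_\tau^{-1}\). Since the parameters may make \(\beta_i-\beta_j\) an integer, I will not use the nonresonance hypothesis. Instead I argue directly from the polynomial identity \eqref{eq:B-component-polynomial-system}: after division by \(G_0\), admissible row combinations correspond to polynomials in \(\mathbb P_{N-1}\) through a coefficient map. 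Its determinant is, up to the nonzero Vandermonde and \(G_0/D\) factors, \(c_{\boldsymbol\lambda}(\tau)\). So the map is bijective, and the Mellin transform \eqref{eq:Laguerre-B-Mellin} of the Rodrigues function has a unique preimage. This gives unique components of the prescribed degrees for each large \(\tau\).

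For the divergence I argue by contradiction. Suppose \(\|\mathbf b_\tau\|\) stays bounded along a sequence \(\tau_k\to\infty\), and pass to a subsequence with \(\mathbf b_{\tau_k}\to\mathbf b\). The mixed moments converge (Theorem~\ref{thm:Hermite-derivative-confluence}), and \(\widehat{\mathcal B}_\tau=\sum_{(j,k)}(\mathbf b_\tau)_{j,k}H_k\widehat U_{j,\tau}\) exactly. Hence every fixed moment \(\int t^\ell\e^{\xi_it}\) of the limit combination \(\sum b_{j,k}H_kU_j^{\mathrm H}\) equals the corresponding moment of \(\mathcal B^{\mathrm H}\), by the moment convergence of \(\widehat{\mathcal B}_\tau\). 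That moment convergence follows from the exact \(B\)-transform and the finite-difference identity \eqref{eq:canonical-Hermite-finite-difference-moments}.

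I then transfer the identity to transforms through the \(JVC\) factorization. The moments of the limit combination are \(\partial_z^\ell[g\,\Phi_{\mathbf b}](\xi_i)\) with \(\Phi_{\mathbf b}=C\mathbf b\in\mathbb P_{N-\kappa-1}\). The moments of \(\mathcal B^{\mathrm H}\) are \(\partial_z^\ell[g P_{\boldsymbol m}](\xi_i)\). Take the adjacent moments as well, for all orders \(\ell<m_i+1\) at every node, which gives \(N-1+p\ge N\) conditions. Matching them shows that \(P_{\boldsymbol m}-C\mathbf b\), a polynomial of degree at most \(N-1\), vanishes to order \(m_i+1\) at each \(\xi_i\). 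The total vanishing order exceeds \(N-1\), so \(P_{\boldsymbol m}=C\mathbf b\). But \(\deg P_{\boldsymbol m}=N-1>N-\kappa-1\) because \(\kappa>0\), contradicting \eqref{eq:Hermite-exact-rank-defect}. Therefore \(\|\mathbf b_\tau\|\to\infty\).

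The main obstacle is justifying the adjacent and higher moment convergence of \(\widehat{\mathcal B}_\tau\) with exponents \(\beta_i(\tau)\), beyond the orders \(k<m_i\) needed for orthogonality. I expect this to follow verbatim from \eqref{eq:canonical-Hermite-finite-B-transform}, whose transform converges locally uniformly with all derivatives, combined with \eqref{eq:canonical-Hermite-finite-difference-integral}. A secondary point is confirming that the finite-\(\tau\) coefficient determinant is exactly proportional to \(c_{\boldsymbol\lambda}(\tau)\) without any nonresonance condition.
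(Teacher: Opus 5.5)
Your argument is correct and follows the paper's proof. You get finite-$\tau$ uniqueness from the determinant formula \eqref{eq:Laguerre-normality-moment-determinant} with $c_{\boldsymbol\lambda}(\tau)\ne0$ and separated nodes, and then show that a bounded subsequence would force $P_{\boldsymbol m}\in\operatorname{span}\{\Phi_{j,k}\}$, which the degree bound rules out; the only difference is that you pass to the limit through finitely many moments (including the adjacent ones) and the $JVC$ factorization, whereas the paper passes to the limit in $\mathcal D'(\mathbb R)$ and invokes Proposition~\ref{prop:Hermite-finite-normal-range}. One small correction: cite the inclusion \eqref{eq:Hermite-singular-degree-bound}, which needs only admissibility, rather than Theorem~\ref{thm:Hermite-exact-rank-defect}, since that theorem assumes distinct limiting rates and $c_{\boldsymbol\lambda}(\tau)\ne0$ does not guarantee them.
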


\begin{proof}
 Augment the column index to \(\boldsymbol m+\boldsymbol e_{i_0}\),
 of length \(N\). Its finite nodes are
 \(\beta_i(\tau)+k+1\), \(0\le k<m_i+\delta_{i,i_0}\).
 Within each group they are distinct; between groups their differences,
 multiplied by \(\varepsilon_\tau\), tend to \(\xi_i-\xi_j\ne0\).
 The gamma factors in
 \eqref{eq:Laguerre-normality-moment-determinant} are finite and
 nonzero for large \(\tau\). That determinant formula and
 \(c_{\boldsymbol\lambda}(\tau)\ne0\) give an invertible reconstruction
 system \eqref{eq:B-component-polynomial-system}, hence unique components.

 If their coefficient norms did not tend to infinity, a bounded
 subsequence would converge to a vector \(\mathbf b\).
 The normalized rows converge in distributions by
 Theorem~\ref{thm:Hermite-derivative-confluence}; multiplication
 by each fixed \(H_k\) preserves this convergence. Passing to the limit
 in their finite linear combination would give
 \[
 \mathcal B^{\mathrm H}_{\boldsymbol\lambda,\boldsymbol m}
 =\sum_{(j,k)\in\mathcal I_{\boldsymbol\lambda}}b_{j,k}H_kU_j^{\mathrm H}.
 \]
 Its left-hand side has the nonzero transform \(L_FP_{\boldsymbol m}/D_{\boldsymbol n}\).
 Its right-hand side would be a decomposition excluded by
 Proposition~\ref{prop:Hermite-finite-normal-range}.
 This contradiction rules out every bounded subsequence.
\end{proof}

At a singular finite parameter, existence is instead the compatibility
condition \(\operatorname{rank}C_\tau^{\mathrm L}
=\operatorname{rank}[C_\tau^{\mathrm L}\mid p_\tau]\), where
\(C_\tau^{\mathrm L}\) and \(p_\tau\) are the coefficient matrix and
right-hand side of \eqref{eq:B-component-polynomial-system}.
Along any sequence on which components do exist, the same argument
forces their divergence. The canonical \(A\)-components, by contrast,
still converge coefficientwise.

\begin{example}[A convergent form with diverging components]
 \label{ex:Hermite-diverging-components}
 Take \(r=0\), \(s=2\), \(\boldsymbol\eta=(2,2)\), \(p=1\),
 \(m_1=3\), \(\beta_1(\tau)=0\), and \(a_1(\tau)=a_2(\tau)=\tau-1\).
 Then \(c_\tau=\tau^2\), \(\varepsilon_\tau=2/\sqrt \tau\), and
 \(\widehat a_*=0\). Since \(r=0\), the product
 \(c_{\boldsymbol\lambda}(\tau)\) is empty and equals one. For the
 augmented column length four the finite nodes are \(1,2,3,4\), so
 the existence and uniqueness hypotheses of
 Proposition~\ref{prop:Hermite-singular-component-divergence} hold.
 Write \(\varepsilon=\varepsilon_\tau\) and
 \(f_\tau=\widehat U_{1,\tau}\); the second normalized row is
 \((1+\varepsilon t)f_\tau'(t)\). The exact scaled components
 \eqref{eq:canonical-Hermite-B-Euler-component-limit} are
 \begin{align*}
 \widetilde D_{1,\tau}(t)
 &=\left(\frac{128}{\varepsilon^2}+80\right)t
          +\frac{80}{\varepsilon}+24\varepsilon+2\varepsilon^3,\\
 \widetilde D_{2,\tau}(t)
 &=\frac{64}{\varepsilon^2}+\frac{16t}{\varepsilon}+24+2\varepsilon^2.
 \end{align*}
 To verify these expressions, the gamma factors in the centered transform
 give
 \[
 \frac{Q_{0,\tau}(z+\varepsilon)}{Q_{0,\tau}(z)}
 =\left(1+\frac{\varepsilon z}{4}\right)^2.
 \]
 Multiplication by \(t\) corresponds to the forward difference divided
 by \(\varepsilon\), and the transform of the second row is
 \((-z-\varepsilon)Q_{0,\tau}(z)\). Substitution of the displayed
 components therefore yields exactly
 \(Q_{0,\tau}(z)(-z)(\varepsilon-z)(2\varepsilon-z)\), the transform in
 \eqref{eq:canonical-Hermite-finite-B-transform} for these indices.
 Consequently,
 \[
 \frac1\tau\bigl(\widetilde D_{1,\tau}(t),\widetilde D_{2,\tau}(t)\bigr)
 \xrightarrow[\tau\to+\infty]{}16(2t,1)
 \]
 coefficientwise, whereas
 \[
 \widetilde D_{1,\tau}f_\tau+\widetilde D_{2,\tau}(1+\varepsilon_\tau t)f_\tau'
 \xrightarrow[\tau\to+\infty]{\mathcal D'(\mathbb R)}\phi_0'''.
 \]
 Thus the coefficient norm grows linearly in \(\tau\), and its leading
 direction is the Pearson relation \(2t\phi_0+\phi_0'=0\).
\end{example}

Weak normality need not be strong even in the finite range:
for \(r=0\), \(s=2\), \(\boldsymbol\eta=(1,1)\), \(p=1\),
\(m_1=1\), and \(\widehat a_*=\xi_1=0\), one has
\(C=\operatorname{diag}(1,-1)\), and the unique components of
\(\mathcal B^{\mathrm H}=\phi_0'\) are \((0,1)\).
The adjacent moment is
\(\int_{\mathbb R}t\phi_0'(t)\,\mathrm dt=-1\), but the first component
does not attain its prescribed degree.

For comparison with coinciding Gaussian centres, when \(r=0\) the
identity \(\phi_a^{(k)}=(-2)^kH_k(t-a)\phi_a\) relates the derivative
rows to \((\phi_a,t\phi_a,\ldots,t^{s-1}\phi_a)\) by an invertible
constant lower triangular matrix. The latter rows are obtained, up to
nonzero constants, by taking divided differences of
\(\phi_0(t)\e^{\zeta t}\) and letting its parameters \(\zeta\) tend to \(2a\).
The corresponding upper triangular change of components preserves the
ordered degree bounds \(\eta_1\ge\cdots\ge\eta_s\).
For \(r>0\), however, the transforms of the convolution rows have
finite singularities at \(-\rho_h\), whereas every finite span of
Gaussian rows and their centre derivatives has entire transforms.
Thus a constant change of row basis cannot identify these two families.

\section*{Funding}
This work was supported by the research project PID2024-155133NB-I00,
\emph{Ortogonalidad, aproximaci\'on e integrabilidad: aplicaciones en procesos
estoc\'asticos cl\'asicos y cu\'anticos}. The funder had no role in the design
of the study, the preparation of the manuscript, or the decision to submit
the work for publication.

\section*{Conflict of interest}
The author declares no conflict of interest.

\section*{Data availability}
No datasets were generated or analyzed in this study.

\printbibliography
	
\end{document}